\newtheorem{prop}{Proposition}[section]
\newtheorem{lem}[prop]{Lemma}
\newtheorem{cor}[prop]{Corollary}
\newtheorem{thm}[prop]{Theorem}
\theoremstyle{definition}
\newtheorem{defi}[prop]{Definition}
\theoremstyle{remark}
\newtheorem{examp}[prop]{Example}
\newtheorem{remar}[prop]{Remark}
\newcommand{\stackcite}[1]{\cite[\href{https://stacks.math.columbia.edu/tag/#1}{Tag #1}]{stacks-project}}
\DeclareMathAlphabet{\mathpzc}{OT1}{pzc}{m}{it}
\DeclareMathOperator{\Hom}{Hom}
\DeclareMathOperator{\Ind}{Ind}
\DeclareMathOperator{\Gal}{Gal}
\def\rank{\mathop{\mathrm{ rank}}\nolimits}
\DeclareMathOperator{\Spec}{Spec}
\DeclareMathOperator{\supp}{Supp}
\DeclareMathOperator{\Rep}{Rep}
\DeclareMathOperator{\Ext}{Ext}
\DeclareMathOperator{\Art}{Art}
\DeclareMathOperator{\Set}{Set}
\DeclareMathOperator{\CRing}{CRing}
\DeclareMathOperator{\Ab}{Ab}
\newcommand{\Indu}[3]{\Ind_{#1}^{#2}{#3}}
\newcommand{\Qp}{\mathbb {Q}_p}
\newcommand{\Zp}{\ZZ_p}
\newcommand{\Qpbar}{\overline{\mathbb{Q}}_p}
\newcommand{\Eins}{\mathbf 1}
\newcommand{\FF}{\mathcal F}
\newcommand{\ZZ}{\mathbb Z}
\newcommand{\DD}{\mathfrak D}
\newcommand{\QQ}{\mathbb Q}
\newcommand{\Aa}{\mathfrak A}
\newcommand{\ab}{\mathrm{ab}}
\newcommand{\Fp}{\mathbb F_p}
\newcommand{\mm}{\mathfrak m}
\newcommand{\OO}{\mathcal O}
\DeclareMathOperator{\ad}{ad}
\DeclareMathOperator{\wtimes}{\widehat{\otimes}}
\newcommand{\pp}{\mathfrak p}
\newcommand{\br}[1]{\llbracket #1\rrbracket}
\newcommand{\alg}{\mathrm{alg}}
\newcommand{\cont}{\mathrm{cont}}
\newcommand{\rhobar}{\overline{\rho}}
\newcommand{\Gm}{\mathbb G_m}
\newcommand{\ps}{\mathrm{ps}}
\newcommand{\gen}{\mathrm{gen}}
\DeclareMathOperator{\Spf}{Spf}
\newcommand{\red}{\mathrm{red}}
\DeclareMathOperator{\Lie}{Lie}
\newcommand{\kbar}{\bar{k}}
\newcommand{\rhobarss}{\bar{\rho}^{\mathrm{ss}}}
\newcommand{\PC}{\mathrm{PC}}
\newcommand{\cPC}{\mathrm{cPC}}
\newcommand{\Map}{\mathrm{Map}}
\newcommand{\hyphen}{\text{-}}
\newcommand{\nat}{\mathrm{nat}}
\newcommand{\psibar}{\overline{\psi}}
\newcommand{\tf}{\mathrm{tf}}
\newcommand{\eqto}{\xrightarrow{\cong}}
\newcommand{\Thetabar}{\overline{\Theta}}
\newcommand{\fDN}{\widehat{\mathfrak D}(N)}
\newcommand{\EE}{\mathcal E}
\newcommand{\rhotilde}{\tilde{\rho}}
\newcommand{\Psibar}{\overline{\Psi}}
\newcommand{\wdp}{\wedge, p}
\newcommand{\abp}{\ab,p}
\newcommand{\Zhat}{\widehat{Z}}
\DeclareMathOperator{\Group}{Group}
\title[On local Galois deformation rings: generalised tori]{On local Galois deformation rings: \\ generalised tori}
\author{Vytautas Pa\v{s}k\={u}nas and  Julian Quast }
\date{\today.}
\begin{document}

\begin{abstract} We study deformation theory of mod $p$ Galois representations 
of $p$-adic fields with values in generalised tori, 
such as $L$-groups of (possibly non-split) tori. We show that the corresponding deformation rings are formally smooth over a group algebra of a finite 
abelian $p$-group. We compute their dimension and the set of irreducible 
components. 
\end{abstract}
\maketitle

\tableofcontents

\maketitle

\section{Introduction}\label{sec_intro}
Let $p$ denote any prime number, let $F$ be a finite extension of $\Qp$, and let $\Gamma_F$ denote its absolute Galois group. Let $L$ be another finite extension of $\Qp$ with ring of integers $\OO$, uniformiser $\varpi$ and residue field $k=\OO/\varpi$.

Let $G$ be a smooth affine group scheme over $\OO$, such that its neutral 
component $G^0$ is a torus and the component group $G/G^0$ is a finite 
\'etale group scheme over $\OO$. We call such group schemes \textit{generalised
tori}. We do not make any assumptions regarding $p$ and the component group 
of $G$. 

After replacing $L$ by a finite unramified extension one 
may assume that $G^0$ is a split torus and $G/G^0$ is 
a constant group scheme. We will assume this for the rest of the introduction. 

An important example of a generalised torus that we have in mind is the $L$-group of a torus defined over $F$:
let $H$ be a torus over $F$, which splits over a finite Galois extension 
$E$ over $F$, then the $L$-group of $H$ is ${}^L H= \hat{H}\rtimes \underline{\Gal(E/F)}$, where $\hat{H}$ is the split $\OO$-torus, such that the character lattice of $\hat{H}$ is equal to the cocharacter lattice 
of $H_E$ and $\underline{\Gal(E/F)}$ is the constant group scheme 
associated to $\Gal(E/F)$. In this example, the surjection 
$G\twoheadrightarrow G/G^0$ has a section of group schemes. We do not assume 
this in general.

We fix a continuous representation $\rhobar:\Gamma_F\rightarrow G(k)$ and denote by $D^{\square}_{\rhobar}: \mathfrak A_{\OO}\rightarrow \Set$ the functor from the category $\mathfrak A_{\OO}$ of local artinian $\OO$-algebras with residue field $k$ to the category of sets, such that for $(A,\mm_A)\in \mathfrak A_{\OO}$, $D^{\square}_{\rhobar}(A)$ is the set of continuous representations 
$\rho_A: \Gamma_F\rightarrow G(A)$, such that 
$$\rho_A(\gamma) \equiv \rhobar(\gamma) \pmod{\mm_A}, \quad \forall\gamma\in \Gamma_F.$$
The 
functor $D^{\square}_{\rhobar}$ of framed deformations of $\rhobar$ is 
pro-represented by a complete local 
noetherian $\OO$-algebra $R^{\square}_{\rhobar}$ with residue field~$k$.

\begin{thm}\label{intro_A} There is a finite extension $L'$ of $L$ 
with ring of integers $\OO'$ and a 
continuous representation $\rho:\Gamma_F \rightarrow G(\OO')$
lifting $\rhobar$.
\end{thm}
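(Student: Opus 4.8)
The plan to prove Theorem~\ref{intro_A} is to reduce to a concrete lifting problem for the split torus $T := G^0$, solve it by a Teichm\"uller construction when $G \to G/G^0$ splits (which covers the $L$-group examples), and then kill the remaining obstruction in general after a further finite extension. First I would make the reductions of the introduction: after replacing $L$ by a finite unramified extension, assume $T := G^0$ is a split $\OO$-torus and $\Delta := G/G^0$ is a constant finite group, so that there is a short exact sequence $1 \to T \to G \to \Delta \to 1$ of smooth affine $\OO$-group schemes. Since the $\Delta$-component is \'etale, $\Ker(G(A) \to G(k)) = \Ker(T(A) \to T(k))$ for every $A$ in $\mathfrak A_{\OO}$ and for $A = \OO'$; write $T^1(\OO') := \Ker(T(\OO') \to T(k))$, a pro-$p$ group. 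Composing $\rhobar$ with $G(k) \to \Delta$ gives a continuous $\bar\delta \colon \Gamma_F \to \Delta$ with open kernel; let $\Delta' := \bar\delta(\Gamma_F)$ and let $F'/F$ be the finite Galois extension fixed by $\Ker(\bar\delta)$, so $\Gal(F'/F) \cong \Delta'$. As $\Delta$ is finite discrete, $\bar\delta$ is already its own lift — every $\OO'$-valued lift of $\rhobar$ induces the same $\bar\delta$ — so all the content lies in the $T$-part, and the conjugation action of $\Gamma_F$ on $T$ and $T^1(\OO')$ factors through $\bar\delta$ and the $\Delta'$-action on the cocharacter lattice $\Lambda := X_*(T)$.

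Restricting to $\Gamma_{F'}$, the representation $\rhobar|_{\Gamma_{F'}} \colon \Gamma_{F'} \to T(k) = (k^\times)^d$ is a $d$-tuple of continuous characters of order prime to $p$; applying the Teichm\"uller section $k^\times \hookrightarrow \OO^\times$ coordinatewise gives a canonical homomorphism $\rho_0' \colon \Gamma_{F'} \to T(\OO)$ lifting it. A short computation shows $\rho_0'$ is $\Gamma_F$-equivariant for the conjugation action through $\bar\delta$: both $\gamma' \mapsto \rho_0'(\gamma \gamma' \gamma^{-1})$ and $\gamma' \mapsto {}^{\rhobar(\gamma)}\rho_0'(\gamma')$ are the coordinatewise Teichm\"uller lifts of $\gamma' \mapsto {}^{\bar\delta(\gamma)}\rhobar(\gamma')$, because the Teichm\"uller map commutes with the $\GL_d(\ZZ)$-action of $\Delta$ on $T = \Gm^d$. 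Lifting $\rhobar$ then reduces to extending $\rho_0'$ to a homomorphism $\rho \colon \Gamma_F \to G(\OO')$ with $\rho \equiv \rhobar \pmod{\varpi'}$. Choosing coset representatives $g_\delta \in \Gamma_F$ of $\Gamma_F/\Gamma_{F'}$ (with $g_1 = 1$) and arbitrary lifts $\hat g_\delta \in G(\OO')$ of $\rhobar(g_\delta)$, one checks $\hat g_{\delta\delta'}^{-1}\hat g_\delta \hat g_{\delta'} = \rho_0'(g_{\delta\delta'}^{-1}g_\delta g_{\delta'}) \cdot \phi(\delta,\delta')$ with $\phi \in Z^2(\Delta', T^1(\OO'))$; adjusting the $\hat g_\delta$ by elements of $T^1(\OO')$ produces a homomorphism $\rho$ extending $\rho_0'$ precisely when $[\phi] = 0$ in $H^2(\Delta', T^1(\OO'))$. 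More generally a lift of $\rhobar$ exists iff the inflation of $[\phi]$ along $\Gamma_F \twoheadrightarrow \Delta'$ vanishes in $H^2(\Gamma_F, T^1(\OO'))$, this inflated class being the intrinsic obstruction attached to $1 \to T^1(\OO') \to G(\OO') \to G(k) \to 1$.

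If $G \to \Delta$ admits a section of group schemes — in particular for ${}^L H = \hat H \rtimes \underline{\Gal(E/F)}$ — this is immediate and no extension of $L$ is needed: the Teichm\"uller section $T(k) \to T(\OO)$ is $\Delta$-equivariant, hence assembles into a group homomorphism $T(k) \rtimes \Delta \to T(\OO) \rtimes \Delta$, i.e.\ a section of $G(\OO) \to G(k)$, and composing it with $\rhobar$ gives the desired lift over $\OO$ itself.

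The main obstacle is the general case, where $G \to \Delta$ need not split and $[\phi]$ need not vanish. Here I would kill $[\phi]$ after enlarging $L$: the group $H^2(\Delta', T^1(\OO'))$ is annihilated by $n := |\Delta'|$, so it suffices to find a finite extension $L''/L'$ for which the inclusion $T^1(\OO') \hookrightarrow T^1(\OO'')$ factors, as a map of $\ZZ[\Delta']$-modules, through multiplication by $n$ on $T^1(\OO'')$ — for then the image of $[\phi]$ equals $n$ times a class in the $n$-torsion group $H^2(\Delta', T^1(\OO''))$, hence is $0$, and by functoriality this is the obstruction class over $\OO''$, so $\rho_0'$ (now over $\OO''$) extends to the sought lift $\rho \colon \Gamma_F \to G(\OO'')$. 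Writing $1 + \varpi'\OO' \cong \ZZ_p^{a} \oplus \mu_{p^\infty}(L')$ as $\ZZ_p$-modules, the torsion-free part poses no difficulty — adjoin to $L'$ the ($p$-part of $n$)-th roots of a finite generating set of $1$-units, which lifts the inclusion of a free module along multiplication by $n$ — and the delicate point, which I expect is exactly where the finite abelian $p$-group of the abstract enters, via local Tate duality applied to $H^2(\Gamma_F, \Lambda \otimes_{\ZZ} k)$, is to control the contribution of the $p$-power roots of unity (a naive $n$-th root does not respect their torsion relations). Carrying this last step out — equivalently, establishing that the framed deformation ring $R^{\square}_{\rhobar}$ is $\OO$-flat, after which Theorem~\ref{intro_A} follows since a complete local Noetherian $\OO$-flat algebra with residue field $k$ always admits an $\OO$-algebra map to the ring of integers of a finite extension of $L$ — is where the real work lies.
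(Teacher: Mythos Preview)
Your proposal correctly identifies the easy case ($G\cong G^0\rtimes\underline{\Delta}$, handled by Teichm\"uller exactly as in Remark~\ref{intro_rmk}) and correctly formulates the general case as an obstruction problem, but the plan for killing the obstruction has a genuine gap.

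You propose to kill the class $[\phi]\in H^2(\Delta',T^1(\OO'))$ by enlarging $\OO'$ so that the inclusion $T^1(\OO')\hookrightarrow T^1(\OO'')$ factors $\Delta'$-equivariantly through multiplication by $n=|\Delta'|$. You already notice the difficulty with the $p$-power roots of unity, and in fact it is a real one: the obstruction to such a factoring lives in $\Ext^1_{\Zp}(1+\mm_{\OO'},\mu_{p^a})$, which is typically nonzero. More seriously, even passing to the colimit over all finite $L''$ need not help: $H^2(\Delta',\Hom(M,1+\mm_{\overline\OO}))$ is not forced to vanish just because $1+\mm_{\overline\OO}$ is divisible (for instance $H^2(\ZZ/2,\mu_{2^\infty})\cong\ZZ/2$ for the inversion action, and such summands arise from the $\Delta'$-action on $M$). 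So there is no reason your class $[\phi]$ should die in $H^2(\Delta',T^1(\OO''))$ for any $\OO''$.

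The point you brush past is that you do not need $[\phi]=0$ in $H^2(\Delta',-)$; you need its \emph{inflation} to $H^2_{\cont}(\Gamma_F,T^1(\OO'))$ to vanish, since a lift of $\rhobar$ is not required to restrict to your specific Teichm\"uller lift $\rho_0'$ on $\Gamma_{F'}$. This is precisely where the arithmetic of $\Gamma_F$ enters, and why the paper emphasises (Remark~\ref{intro_rmk}) that the argument genuinely uses the Euler--Poincar\'e characteristic formula. Your last sentence points to the right target (flatness of $R^{\square}_{\rhobar}$ over $\OO$), but you do not supply the mechanism. The paper's proof is: identify $R^{\square}_{\rhobar}/\varpi$ with the completed group algebra $k\br{((\mathcal E\otimes M)_{\Delta})^{\wdp}}$ via the $1$-cocycle description (Lemma~\ref{over_k}), compute its dimension as $\rank_{\ZZ}M\cdot([F:\Qp]+1)$ using the normal basis theorem for $\OO_E$ over $\OO_F$ (Proposition~\ref{main_rank}), and compare this with $r-s$ in the Mazur presentation $R^{\square}_{\rhobar}\cong\OO\br{x_1,\dots,x_r}/(f_1,\dots,f_s)$ via Euler--Poincar\'e (Proposition~\ref{lci}). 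Equality forces $\varpi,f_1,\dots,f_s$ to be part of a regular sequence, hence $R^{\square}_{\rhobar}$ is $\OO$-flat, and a closed point of the generic fibre gives the lift. None of the ingredients after your Teichm\"uller reduction appear in your sketch.
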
 

\begin{remar}\label{intro_rmk} If $G\cong G^0\rtimes (G/G^0)$ then a lift
as in Theorem \ref{intro_A} can be constructed over $\OO$ using 
the Teichm\"uller lift to define a section of $G^0(\OO)\twoheadrightarrow 
G^0(k)$, which then induces a section $\sigma: G(k)\rightarrow G(\OO)$. 
Then $\sigma \circ \rhobar$ is the required lift. 
In particular, in this case Theorem \ref{intro_A} holds
if we replace $\Gamma_F$ with  any profinite group. 
However, it seems non-trivial to prove \Cref{intro_A} in general and our argument
uses that the Euler--Poincar\'e characteristic formula holds for $\Gamma_F$.
\end{remar}

Let $\Gamma_E$ be the kernel of the composition 
$\Gamma_F \overset{\rhobar}{\longrightarrow} G(k)\rightarrow (G/G^0)(k)$
and let $\Delta$ be the image of this map. We identify $\Delta=\Gal(E/F)$.
Let $M$ be the character lattice of $G^0$. The action of $G$ on $G^0$ by conjugation induces an action of $\Delta$ on $M$. Let $\Gamma_E^{\ab, p}$ 
be the maximal abelian pro-$p$ quotient of $\Gamma_E$. Below 
we will consider the diagonal action of $\Delta$ on $\Gamma_E^{\ab,p}\otimes M$
and $\mu_{p^{\infty}}(E)\otimes M$, where $\mu_{p^{\infty}}(E)$ is 
the subgroup of $p$-power roots of unity in $E$. 

\begin{thm}\label{intro_B} Assume that Theorem \ref{intro_A} holds
with $\OO=\OO'$. Then 
$$R^{\square}_{\rhobar} \cong 
\OO[(\mu_{p^{\infty}}(E)\otimes M)^{\Delta}]\br{x_1, \ldots, x_m},$$
where $m= \rank_{\ZZ} M \cdot ([F:\Qp]+1)$. 
\end{thm}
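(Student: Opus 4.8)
The plan is the following. Since $G/G^0$ is \'etale, the reduction map $(G/G^0)(A)\to(G/G^0)(k)$ is a bijection for every $(A,\mm_A)\in\mathfrak A_{\OO}$, so each $\rho_A\in D^{\square}_{\rhobar}(A)$ has the same image $\Delta$ in $G/G^0$ as $\rhobar$ and hence restricts to a continuous homomorphism $\rho_A|_{\Gamma_E}\colon\Gamma_E\to G^0(A)$ lifting $\rhobar|_{\Gamma_E}$, equivariantly for the conjugation action of $\Gamma_F$ on $\Gamma_E$ and the action on $G^0(A)$ induced by $\rhobar$ (which factors through $\Delta$). First I would combine this with the $\Gamma_F$-equivariant Teichm\"uller section of $G^0(\OO)\twoheadrightarrow G^0(k)$ — available since $G^0$ is a split torus — to pass to the pro-$p$ part: $\rho_A|_{\Gamma_E}$ becomes a $\Delta$-equivariant continuous homomorphism $\Gamma_E^{\ab,p}\to G^0(A)$, and $\rho_A$ is recovered from it together with the finitely many ``connecting'' values $\rho_A(\gamma)$, for $\gamma$ in a set of coset representatives of $\Gamma_E$ in $\Gamma_F$, subject to the cocycle relations. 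When $G\cong G^0\rtimes\Delta$ this exhibits $D^{\square}_{\rhobar}$ as the functor $A\mapsto Z^1(\Gamma_F,G^0(A))$ of cocycles lifting the one determined by $\rhobar$; the general case is reduced to this using the characteristic-zero lift provided by Theorem~\ref{intro_A} to trivialise $G$ for the purposes of the deformation problem.

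Next I would isolate the torsion. By local class field theory there is a $\Delta$-equivariant isomorphism of $\ZZ_p[\Delta]$-modules $\Gamma_E^{\ab,p}\cong\ZZ_p^{[E:\Qp]+1}\oplus\mu_{p^{\infty}}(E)$ in which the torsion subgroup is $\mu_{p^{\infty}}(E)$; restricting a deformation to this torsion subgroup, and making the natural identifications, produces a natural transformation from $D^{\square}_{\rhobar}$ to $A\mapsto\Hom\bigl((\mu_{p^{\infty}}(E)\otimes M)^{\Delta},1+\mm_A\bigr)$, hence an $\OO$-algebra homomorphism $\OO[(\mu_{p^{\infty}}(E)\otimes M)^{\Delta}]\to R^{\square}_{\rhobar}$. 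The core of the argument is that $R^{\square}_{\rhobar}$ is \emph{formally smooth} over this group algebra; granting that, the rest is bookkeeping. Recall that $R^{\square}_{\rhobar}$ has tangent space $Z^1(\Gamma_F,\ad\rhobar)$ and a minimal $\OO$-presentation with at most $\dim_k H^2(\Gamma_F,\ad\rhobar)$ relations, where $\ad\rhobar=\Lie G^0\otimes_{\OO}k$ with $\Gamma_F$ acting through the finite quotient $\Delta$. Since this action factors through a finite quotient, the local Euler--Poincar\'e characteristic formula applies and gives
$$\dim_k Z^1(\Gamma_F,\ad\rhobar)-\dim_k H^2(\Gamma_F,\ad\rhobar)=\dim_k\ad\rhobar\cdot\bigl(1+[F:\Qp]\bigr)=\rank_{\ZZ}M\cdot\bigl([F:\Qp]+1\bigr)=m,$$
while local Tate duality identifies $H^2(\Gamma_F,\ad\rhobar)$ with the dual of a space of $\Delta$-invariants whose $k$-dimension equals the minimal number of generators of the finite abelian $p$-group $(\mu_{p^{\infty}}(E)\otimes M)^{\Delta}$. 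Comparing tangent spaces then forces the relative dimension to be $m$, so that $R^{\square}_{\rhobar}\cong\OO[(\mu_{p^{\infty}}(E)\otimes M)^{\Delta}]\br{x_1,\ldots,x_m}$. Theorem~\ref{intro_A} enters once more here: the characteristic-zero lift forces $R^{\square}_{\rhobar}$ to be $\OO$-flat, which rules out spurious relations involving $\varpi$.

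I expect the main obstacle to be the formal smoothness over $\OO[(\mu_{p^{\infty}}(E)\otimes M)^{\Delta}]$, equivalently the statement that the obstruction classes in $H^2(\Gamma_F,\ad\rhobar)$ decouple from the power series variables and are realised exactly by the $p$-power relations of the group algebra. Since for a torus the adjoint action goes through a finite, essentially abelian quotient, the relevant cup products and higher Massey products among lifting obstructions should vanish, which is what yields the decoupling; making this precise — and in particular pinning down $H^2(\Gamma_F,\ad\rhobar)$ via Tate duality as the appropriate Tate twist of $\mu_{p^{\infty}}(E)\otimes M$ reduced modulo $p$ — is the technical heart. A secondary difficulty is the non-split case, where $G$ is not $G^0\rtimes\Delta$ as a group scheme and one must genuinely invoke the lift from Theorem~\ref{intro_A} in order to reduce to the split computation.
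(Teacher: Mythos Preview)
Your approach has a genuine gap at exactly the point you anticipate difficulty, and it is in fact the route the paper explicitly abandons. When you restrict a cocycle $\Phi\in Z^1(\Gamma_F,G^0(A))$ to $\Gamma_E$, the resulting $\Delta$-equivariance means the induced map $\Gamma_E^{\ab,p}\otimes M\to 1+\mm_A$ factors through the \emph{coinvariants} $(\Gamma_E^{\ab,p}\otimes M)_{\Delta}$, not the invariants; likewise your ``restriction to the torsion subgroup'' lands in $\Hom\bigl((\mu_{p^{\infty}}(E)\otimes M)_{\Delta},1+\mm_A\bigr)$ rather than $\Hom\bigl((\mu_{p^{\infty}}(E)\otimes M)^{\Delta},1+\mm_A\bigr)$. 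The two differ by Tate cohomology of $\Delta$, which need not vanish when $p\mid|\Delta|$, so there is no ``natural identification'' to be made. The paper flags precisely this as a dead end in the introduction. Your subsequent plan to recover the correct structure by showing formal smoothness over the group algebra via vanishing of cup and Massey products is therefore not just technically hard but aimed at the wrong base ring; and even over the correct base, matching $\dim_k H^2$ with a generator count would only bound the number of relations, not force them to be the specific $p$-power relations of a group algebra.

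The paper avoids obstruction analysis altogether. It passes to the Weil group $W_{E/F}$ (through which every deformation factors), and proves an \emph{exact} representability statement: the functor $A\mapsto Z^1(W_{E/F},\Hom(M,A^{\times}))$ is represented by the group algebra $\OO[(\mathcal E\otimes M)_{\Delta}]$, where $\mathcal E$ is the $\ZZ[\Delta]$-module extension of $I_{\Delta}$ by $E^{\times}$ corresponding to the fundamental class. This is established by an explicit functorial section in Shapiro's lemma after inducing $M$ up to $\ZZ[\Delta]\otimes M$ and then taking $\Delta$-invariants. Completing at the relevant maximal ideal gives $R^{\square}_{\rhobar}\cong\OO\br{N}$ with $N=((\mathcal E\otimes M)_{\Delta})^{\wedge,p}$ (this step uses the lift from Theorem~\ref{intro_A}). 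The invariants enter via Langlands' isomorphism $H_1(W_{E/F},M)\cong(E^{\times}\otimes M)^{\Delta}$, which sits inside $(\mathcal E\otimes M)_{\Delta}$ as the kernel of the surjection onto the $\ZZ$-free group $I_{\Delta}M$; completing and computing the torsion subgroup of $((E^{\times}\otimes M)^{\Delta})^{\wedge,p}$ directly gives $(\mu_{p^{\infty}}(E)\otimes M)^{\Delta}$. The Euler--Poincar\'e formula is used, but only to establish $\OO$-flatness (and to produce the lift in the first place), not to analyse the shape of the relations. Once $R^{\square}_{\rhobar}$ is known to be a completed group algebra $\OO\br{N}$, the decomposition $N\cong(\mu_{p^{\infty}}(E)\otimes M)^{\Delta}\times\ZZ_p^m$ immediately yields the claimed power-series structure with no obstruction calculus required.
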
 

Let $\Thetabar$ be the $G$-pseudocharacter associated to $\rhobar$ and 
let $D^{\ps}_{\Thetabar}: \Aa_{\OO} \rightarrow \Set$ be the deformation functor
such that $D^{\ps}_{\Thetabar}(A)$ is the set of 
continuous $A$-valued $G$-pseudocharacters deforming $\Thetabar$. These 
notions are reviewed in \Cref{Laf}. The functor $D^{\ps}_{\Thetabar}$ is 
pro-represented by a complete local 
noetherian $\OO$-algebra $R^{\ps}_{\Thetabar}$ with residue field~$k$
and we denote the universal deformation by $\Theta^u$.
Sending a deformation of $\rhobar$ to its $G$-pseudocharacter induces 
a natural transformation $D^{\square}_{\rhobar} \rightarrow D^{\ps}_{\Thetabar}$, 
and hence a map of local $\OO$-algebras
$R^{\ps}_{\Thetabar}\rightarrow R^{\square}_{\rhobar}$. 
If $G$ is a torus or more generally, when $G$ is commutative,
then representations and $G$-pseudocharacters coincide and this map is an isomorphism.  

\begin{thm}\label{intro_C} The map $R^{\ps}_{\Thetabar} \rightarrow 
R^{\square}_{\rhobar}$ is formally smooth. Moreover, if Theorem 
\ref{intro_A} holds with $\OO'=\OO$ then 
$$ R^{\ps}_{\Thetabar}\cong \OO\br{(\Gamma_E^{\abp}\otimes M)^{\Delta}}\cong 
\OO[(\mu_{p^{\infty}}(E)\otimes M)^{\Delta}]\br{x_1, \ldots, x_r},$$
where $r=\rank_{\ZZ} M \cdot [F:\Qp]+ \rank_{\ZZ} M_{\Delta}$. 
\end{thm}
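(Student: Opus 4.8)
The plan is to first reduce the statement about $R^{\ps}_{\Thetabar}$ to the already-established computation of $R^{\square}_{\rhobar}$ in Theorem \ref{intro_B}, and then to separately establish formal smoothness of $R^{\ps}_{\Thetabar}\to R^{\square}_{\rhobar}$ by a tangent-obstruction argument. For the formal smoothness claim, the key point is that the fibres of $D^{\square}_{\rhobar}\to D^{\ps}_{\Thetabar}$ over an artinian point should be a torsor under (or at least smoothly controlled by) the group $G^0$-coboundaries, i.e. by the quotient $G^0(A)/\text{(something)}$ measuring the failure of a pseudocharacter to pin down a representation up to $\widehat{G^0}$-conjugacy. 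Concretely, given a deformation $\Theta_A$ of $\Thetabar$ to $A$ and a square-zero extension $A'\twoheadrightarrow A$ with kernel $I$, lifting a representation $\rho_A$ inducing $\Theta_A$ to a representation $\rho_{A'}$ inducing a chosen lift $\Theta_{A'}$ is unobstructed because any two lifts of $\Theta_{A'}$ to representations differ by conjugation, and conjugation by $G^0$ is pro-smooth; one then checks the lifting functor for $\rho$ relative to the fixed $\Theta_{A'}$ is formally smooth. I would organise this via the adjoint complex: the relevant obstruction and tangent spaces sit in $H^i(\Gamma_F, \mathrm{ad})$, and the map to pseudocharacters kills exactly the part coming from $H^\bullet$ of the \emph{image} of $\Gamma_F$ acting on $\mathrm{ad}^0 = \Lie(G^0)$-valued cocycles that integrate to inner automorphisms, which is where the factorisation through $\Gamma_E$ and $\Delta$-invariants enters.

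Next, for the explicit shape of $R^{\ps}_{\Thetabar}$, I would argue as follows. Since $G^0$ is a split torus with character lattice $M$, a continuous homomorphism $\Gamma_E\to G^0(A)$ lifting the trivial character is the same as an element of $\Hom_{\cont}(\Gamma_E, (1+\mm_A)\otimes_{\ZZ} M^\vee)$, which by the maximal abelian pro-$p$ quotient is governed by $\Gamma_E^{\abp}\otimes M^\vee$; taking $\Delta$-coinvariants on the source (equivalently $\Delta$-invariants after dualising appropriately) produces the $\Delta$-equivariant part, and the deformation functor of the $G^0$-part of $\rhobar|_{\Gamma_E}$, cut out by $\Delta$-equivariance, is pro-represented by $\OO\br{(\Gamma_E^{\abp}\otimes M)^\Delta}$. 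The identification of this completed group algebra with $\OO[(\mu_{p^\infty}(E)\otimes M)^\Delta]\br{x_1,\dots,x_r}$ is then a structural fact about completed group algebras of finitely generated abelian groups: write $\Gamma_E^{\abp}$ as (torsion) $\oplus$ (free), where the torsion is $\mu_{p^\infty}(E)$ by local class field theory and local Euler characteristic / Tate duality, and the free rank is $[E:\Qp]+1$; tensoring with $M$ and taking $\Delta$-invariants is exact enough on the relevant pieces (the torsion part contributes the finite abelian $p$-group $(\mu_{p^\infty}(E)\otimes M)^\Delta$, whose group algebra is the coefficient ring, and the free part contributes the power series variables), and a count of the free rank of $(\Gamma_E^{\abp}\otimes M)^\Delta$ gives $r=\rank_\ZZ M\cdot[F:\Qp]+\rank_\ZZ M_\Delta$ after using $[E:\Qp]=[E:F][F:\Qp]$, the normal basis / induction identity $(\ZZ[\Delta]^{\oplus a})^\Delta\cong\ZZ^{\oplus a}$, and the fact that the ``$+1$'' summand in $\Gamma_E^{\abp}$ is the $\Delta$-trivial cyclotomic-type line contributing $\rank_\ZZ M_\Delta$ on invariants. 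Finally, comparing with Theorem \ref{intro_B} one sees $m - r = \rank_\ZZ M\cdot([F:\Qp]+1) - (\rank_\ZZ M\cdot[F:\Qp]+\rank_\ZZ M_\Delta) = \rank_\ZZ M - \rank_\ZZ M_\Delta = \rank_\ZZ M^\Delta$-complement, which is precisely the relative dimension one expects for the $\widehat{G^0}/Z$-conjugation fibres, confirming internal consistency.

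The main obstacle I anticipate is the formal smoothness of $R^{\ps}_{\Thetabar}\to R^{\square}_{\rhobar}$ in the generality where $G\to G/G^0$ need not split, since then one cannot simply write a representation as a pair (character of $\Gamma_E$, cocycle) and argue componentwise; instead one must work with the $G$-pseudocharacter formalism recalled in \Cref{Laf} and show directly that deforming a representation compatibly with a given deformation of its $G$-pseudocharacter is an unobstructed problem. The cleanest route is probably to show the fibre of $D^{\square}_{\rhobar}(A')\to D^{\square}_{\rhobar}(A)\times_{D^{\ps}_{\Thetabar}(A)}D^{\ps}_{\Thetabar}(A')$ is non-empty and is a torsor under $H^1(\Gamma_F,\mathrm{ad}^0)/(\text{image of }H^0)$, using that a $G$-pseudocharacter together with a representation inducing it rigidifies the $\widehat{G^0}$-conjugacy ambiguity — and that this $H^1$ is \emph{finitely generated free plus the same torsion}, so that the map on deformation rings is a completed polynomial (hence formally smooth) extension, with the power-series variables accounting for the difference $m-r$. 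Once formal smoothness is in hand, the ring-theoretic identification of $R^{\ps}_{\Thetabar}$ follows from Theorem \ref{intro_B} by descending along the smooth map, or independently from the direct computation of the $\Gamma_E$-character deformation functor sketched above.
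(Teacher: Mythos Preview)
Your proposal has a substantive gap in the computation of $R^{\ps}_{\Thetabar}$, and takes a different (and unresolved) route for formal smoothness.

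For $R^{\ps}_{\Thetabar}$, you propose to identify it with the deformation ring of ``the $G^0$-part of $\rhobar|_{\Gamma_E}$, cut out by $\Delta$-equivariance'', writing that ``taking $\Delta$-coinvariants on the source (equivalently $\Delta$-invariants after dualising appropriately)'' yields $\OO\br{(\Gamma_E^{\abp}\otimes M)^\Delta}$. That parenthetical ``equivalently'' is exactly where the difficulty lies: the norm map from $\Delta$-coinvariants to $\Delta$-invariants has kernel and cokernel the Tate groups $\widehat H^{-1}$ and $\widehat H^0$, which are finite but in general nonzero when $p\mid|\Delta|$. The paper's introduction explicitly flags the restriction-to-$\Gamma_E$ approach as a dead end for this reason. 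Its resolution is to work with the Weil group $W_{E/F}$: one identifies $G$-pseudocharacters with the GIT quotient $\Rep^{W_{E/F}}_{G,\pi}\sslash G^0$ (\Cref{cor_PC_inv}), shows this is $\Spec\OO[H_1(W_{E/F},M)]$ (\Cref{git_isos}), and then invokes Langlands' nontrivial result $H_1(W_{E/F},M)\cong(E^\times\otimes M)^\Delta$ (see \eqref{fundamental}), which genuinely uses the fundamental class. Completing gives $R^{\ps}_{\Thetabar}\cong\OO\br{((E^\times\otimes M)^\Delta)^{\wedge,p}}$; the further identification with $(\Gamma_E^{\abp}\otimes M)^\Delta$ is itself not automatic (pro-$p$ completion does not commute with invariants a priori) and is the content of \Cref{duck_duck}. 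Your rank count is in the right spirit, but the ``$+1$'' summand is the valuation line $E^\times/\OO_E^\times\cong\ZZ$ with trivial $\Delta$-action, not a ``cyclotomic-type'' line.

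For formal smoothness, the paper does not use tangent--obstruction theory at all. After extending scalars to an $\OO'$ over which a lift exists, it computes both rings explicitly and shows $R^{\square}_{\rhobar}\otimes_\OO\OO'$ is a formal power series ring over $R^{\ps}_{\Thetabar}\otimes_\OO\OO'$; this comes from the exact sequence $0\to H_1(W_{E/F},M)\to(\mathcal E\otimes M)_\Delta\to I_\Delta M\to 0$ of \Cref{prop_represent}, since $I_\Delta M$ is $\ZZ$-free. Formal smoothness over $\OO$ then follows by faithfully flat descent (\Cref{form_smooth}). Your direct lifting argument might be made to work, but you correctly identify the non-split case $G\not\cong G^0\rtimes(G/G^0)$ as the obstacle and do not resolve it; the paper's explicit computation sidesteps this entirely.
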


We let $X^{\gen}_{G, \Thetabar}: R^{\ps}_{\Thetabar}\text{-}\alg\rightarrow \Set$
be the functor such that $X^{\gen}_{G, \Thetabar}(A)$ is the set of representations
$\rho: \Gamma_F\rightarrow G(A)$ such that its 
$G$-pseudocharacter $\Theta_{\rho}$ satisfies $\Theta_{\rho} =\Theta^u \otimes_{R^{\ps}_{\Thetabar}} A$. This functor is representable by a 
finite type $R^{\ps}_{\Thetabar}$-algebra $A^{\gen}_{G, \Thetabar}$.

\begin{thm}\label{intro_D} The map 
$R^{\ps}_{\Thetabar}\rightarrow A^{\gen}_{G, \Thetabar}$ is smooth. 
Moreover, if Theorem \ref{intro_A} holds with $\OO'=\OO$ then 
$$ A^{\gen}_{G, \Thetabar}\cong R^{\ps}_{\Thetabar}[t_1^{\pm 1}, \ldots, t_s^{\pm 1}],$$
where $s= \rank_{\ZZ} M -\rank_{\ZZ} M_{\Delta}$. 
\end{thm}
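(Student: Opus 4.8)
The plan is to realise $X^{\gen}_{G,\Thetabar}:=\Spec A^{\gen}_{G,\Thetabar}$, over $\Spec R^{\ps}_{\Thetabar}$, as a trivial torsor under an explicit split torus; both assertions then follow. Set $\mathcal B:=G^0/(G^0)^{\Delta}$, the fppf quotient by the scheme $(G^0)^{\Delta}$ of $\Delta$-fixed points of the conjugation action of $\Delta=G/G^0$ on $G^0$. Since $G^0$ is commutative, $(G^0)^{\Delta}$ is the kernel of the homomorphism $G^0\to\prod_{\delta\in\Delta}G^0$, $g\mapsto(\delta(g)g^{-1})_{\delta}$, so $\mathcal B$ is the image of this homomorphism, hence diagonalisable with character group $I_{\Delta}M:=\sum_{\delta\in\Delta}(\delta-1)M$. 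As a subgroup of the free module $M$ this is free of rank $\rank_{\ZZ}M-\rank_{\ZZ}M_{\Delta}=s$ (note $M/I_{\Delta}M=M_{\Delta}$), so $\mathcal B\cong\Gm^{s}$ over $\OO$ and $\OO[\mathcal B]\cong\OO[t_1^{\pm1},\dots,t_s^{\pm1}]$. The group $\mathcal B$ acts on $X^{\gen}_{G,\Thetabar}$ over $\Spec R^{\ps}_{\Thetabar}$ by $g\cdot\rho=g\rho g^{-1}$: a $G^0$-conjugation does not change the $G$-pseudocharacter, so it preserves the fibre over a point of $X^{\ps}$; and the $G^0$-stabiliser of any $\rho$ lifting $\rhobar$ is the centraliser of $\rho(\Gamma_F)$ in $G^0$, which equals $(G^0)^{\bar\rho(\Gamma_F)}=(G^0)^{\Delta}$ because $\bar\rho(\Gamma_F)=\Delta$. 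Hence the $G^0$-action descends to a \emph{free} action of $\mathcal B$.

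Assume first that \Cref{intro_A} holds with $\OO'=\OO$. By \Cref{intro_C} the map $R^{\ps}_{\Thetabar}\to R^{\square}_{\rhobar}$ is formally smooth; being also flat with trivial residue extension between complete local noetherian rings, it exhibits $R^{\square}_{\rhobar}$ as a power-series ring over $R^{\ps}_{\Thetabar}$. Pulling the universal framed lift $\rho^{\square}$ back along a section (sending the extra variables to $0$) gives $\rho^{u}\colon\Gamma_F\to G(R^{\ps}_{\Thetabar})$ with $\Theta_{\rho^{u}}=\Theta^{u}$. Consider $\phi\colon\mathcal B\times_{\OO}\Spec R^{\ps}_{\Thetabar}\to X^{\gen}_{G,\Thetabar}$, $b\mapsto b\cdot\rho^{u}$. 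Then $\phi$ is a monomorphism, by freeness of the $\mathcal B$-action. And $\phi$ is formally smooth: given a square-zero extension $\widetilde A\twoheadrightarrow A$ of $R^{\ps}_{\Thetabar}$-algebras, $b\in\mathcal B(A)$ and $\widetilde\rho\in X^{\gen}_{G,\Thetabar}(\widetilde A)$ reducing to $b\cdot(\rho^{u}\otimes A)$, lift $b$ to some $\widetilde b_0\in\mathcal B(\widetilde A)$; then $\widetilde b_0\cdot(\rho^{u}\otimes\widetilde A)$ and $\widetilde\rho$ have the same $G$-pseudocharacter $\Theta^{u}\otimes\widetilde A$ and agree modulo the square-zero ideal, so — by the rigidity of representations with prescribed $G$-pseudocharacter, i.e. a relative form of \Cref{intro_C} resting on the theory of $G$-pseudocharacters for generalised tori recalled in \Cref{Laf} — they differ by a coboundary valued in $G^0$ (twisted by $\bar\rho$) that is trivial modulo the ideal, i.e. by an element $c\in\ker(\mathcal B(\widetilde A)\to\mathcal B(A))$, and $\widetilde b:=c\widetilde b_0$ lifts $b$ with $\widetilde b\cdot(\rho^{u}\otimes\widetilde A)=\widetilde\rho$. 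Being formally smooth and locally of finite presentation, $\phi$ is smooth; being also a monomorphism, it is unramified, hence \'etale, hence an open immersion. Finally $\phi$ is surjective: base-changing $\rho^{u}$ shows every fibre is nonempty, and over a separably closed field any two representations into a generalised torus with the same $G$-pseudocharacter are $G^0$-conjugate (such representations are $G$-completely reducible, so this is the reconstruction statement in \Cref{Laf}), hence every geometric point lies in the $\mathcal B$-orbit of $\rho^{u}$. A surjective open immersion is an isomorphism, so $A^{\gen}_{G,\Thetabar}\cong R^{\ps}_{\Thetabar}[t_1^{\pm1},\dots,t_s^{\pm1}]$, which is the "moreover" assertion and, in particular, gives smoothness in this case.

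For the unconditional statement that $R^{\ps}_{\Thetabar}\to A^{\gen}_{G,\Thetabar}$ is smooth, remove the hypothesis by faithfully flat descent: by \Cref{intro_A} there is a finite extension $\OO'$ of $\OO$ over which $\rhobar$ lifts (and over which $G^0$ is still split and $G/G^0$ still constant), so the previous paragraph applies over $\OO'$ and shows $A^{\gen}_{G,\Thetabar}\otimes_{\OO}\OO'$ is smooth over $R^{\ps}_{\Thetabar}\otimes_{\OO}\OO'$; since $\OO\to\OO'$, hence $R^{\ps}_{\Thetabar}\to R^{\ps}_{\Thetabar}\otimes_{\OO}\OO'$, is faithfully flat, smoothness descends. (Alternatively, the formal-smoothness argument for $\phi$ runs unconditionally, once one extracts from the unconditional part of \Cref{intro_C} that $R^{\square}_{\rhobar}$ is a power-series ring over $R^{\ps}_{\Thetabar}$, so that $\rho^{u}$ exists over $R^{\ps}_{\Thetabar}$.)

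The step I expect to be the main obstacle is the input cited from \Cref{Laf}: that for representations of $\Gamma_F$ into a generalised torus, passage to the $G$-pseudocharacter has fibres exactly the $G^0$-conjugacy classes — both in the form "same $G$-pseudocharacter $\Leftrightarrow$ $G^0$-conjugate over a separably closed field", which relies on all such representations being $G$-completely reducible, and in the infinitesimal/relative form used for the formal smoothness of $\phi$ — together with the bookkeeping identifying the conjugation ambiguity with $1$-coboundaries valued in $G^0$ twisted by $\bar\rho$, which is precisely what cuts the torsor group down from $G^0$ to $\mathcal B=G^0/(G^0)^{\Delta}$.
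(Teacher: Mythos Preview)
Your torsor approach is different from the paper's and conceptually appealing. The paper instead computes $\OO(\Rep^{W_{E/F}}_{G,\pi})\cong\OO[(\mathcal E\otimes M)_{\Delta}]$ and $\OO(\PC^{W_{E/F}}_{G,\pi})\cong\OO[H_1(W_{E/F},M)]$ explicitly, reads off from the split exact sequence $0\to H_1(W_{E/F},M)\to(\mathcal E\otimes M)_{\Delta}\to I_{\Delta}M\to 0$ (\Cref{prop_represent}, \Cref{M_ffr}) that the former is a Laurent polynomial ring over the latter, and then base-changes to $R^{\ps}_{\Thetabar}$ (\Cref{paris}). Your identification $\mathcal B\cong\DD(I_{\Delta}M)$, the freeness of its action (this is \Cref{free_action} in the paper, and your direct argument for it is correct), the construction of $\rho^{u}$ from the section furnished by \Cref{intro_C}, and the faithfully flat descent for unconditional smoothness are all fine and match the paper.

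The genuine gap is in the formal smoothness of $\phi$. You assert that two representations in $X^{\gen}_{G,\Thetabar}(\widetilde A)$ with the same $G$-pseudocharacter, agreeing modulo a square-zero ideal, differ by an element of $\ker(\mathcal B(\widetilde A)\to\mathcal B(A))$, invoking ``rigidity'' and ``a relative form of \Cref{intro_C}'' from \Cref{Laf}. But \Cref{Laf} only proves that $\PC^{\Gamma_1}_{G,\pi}$ is the \emph{GIT quotient} $\Rep^{\Gamma_1}_{G,\pi}\sslash G^0$ (\Cref{cor_PC_inv}); it does not prove that $\Rep\to\PC$ is a $\mathcal B$-torsor, and two $\widetilde A$-points with the same image in a GIT quotient need not lie in the same orbit. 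Concretely, your cocycle $\Psi=\widetilde\rho\cdot(\widetilde b_0\cdot\rho^u)^{-1}$ lies in $Z^1(\Gamma_F,\Hom(M,1+I))$; knowing it has trivial image in $\PC$ tells you it lies in the subgroup corresponding to $\Hom(I_{\Delta}M,1+I)$, but identifying this subgroup with the $\mathcal B$-action is exactly the content of \Cref{action_corresponds} together with the exact sequence of \Cref{prop_represent}. Your closing paragraph correctly flags this as the crux, but the results you cite do not supply it; once those ingredients are in hand, the paper's direct route is shorter than the torsor detour. A cleaner fix in your framework would be to show $(A^{\gen}_{G,\Thetabar})^{G^0}=R^{\ps}_{\Thetabar}$ directly (linear reductivity of $G^0$ lets invariants commute with the base change defining $A^{\gen}$), and then argue that a free action of a split torus on an affine scheme is a torsor over its invariant ring---trivialised here by $\rho^u$---though that last step still needs a reference over a general base.
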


We note that $\mu_{p^{\infty}}(E)$ is a finite $p$-group. This implies that 
$\mu:=(\mu_{p^{\infty}}(E)\otimes M)^{\Delta}$ is also a finite $p$-group 
and we denote its order by $p^m$. We assume further that $\OO$ contains 
all $p^m$-th roots of $1$ and let $\mathrm X(\mu)$ be the group 
of characters $\chi: \mu\rightarrow \OO^{\times}$. It then follows from Theorems \ref{intro_B}, \ref{intro_C}, \ref{intro_D} that 
the sets of irreducible components of $R^{\square}_{\rhobar}$, 
$R^{\ps}_{\Thetabar}$
and $A^{\gen}_{G, \Thetabar}$, respectively,  are 
in $\mathrm X(\mu)$-equivariant bijection with $\mathrm X(\mu)$. 
Moreover, the irreducible components and their special fibres are regular.  The identification of the set 
of components with $\mathrm X(\mu)$ is non-canonical in general, as one has to 
distinguish one component, which corresponds to the trivial 
character. However, we explain in \Cref{sec_irr_comp} that there 
is a canonical action of $\mathrm X(\mu)$ on the set of irreducible 
components, which is faithful and transitive.
\begin{remar} Let us point out that if $G\cong G^0\rtimes G/G^0$ then 
the lift constructed in \Cref{intro_rmk} is canonical, as it is minimally ramified. This distinguishes the irreducible component that it lies on. Thus in 
this case there is a canonical $\mathrm X(\mu)$-equivariant bijection between 
$\mathrm X(\mu)$ and the set of irreducible components. 
\end{remar}

The above theorems are used in an essential way in a companion paper \cite{defG},
where we study deformations 
of Galois representations with values in generalised reductive 
$\OO$-group schemes $G$, which means that the neutral  component $G^0$ is 
reductive and the component group $G/G^0$ is finite \'etale. If we 
let $G'$ be the derived subgroup scheme of $G^0$, then 
$G^0/G'$ is a torus and $G/G'$ is a generalised torus as considered 
in this paper. If $\rhobar: \Gamma_F \rightarrow G(k)$ is a continuous 
representation then we show in \cite[Proposition 4.20]{defG} that its deformation ring $R^{\square}_{\rhobar}$ 
can be presented over the deformation ring of 
$\varphi \circ \rhobar: \Gamma_F \rightarrow (G/G')(k)$, where 
$\varphi: G\rightarrow G/G'$ is the quotient map. This allows 
us to split up the arguments into ``torus part", carried out in this paper, 
and ``semisimple part" carried out in \cite{defG}.

If $\mu$ is any finite abelian $p$-group then $\mu\cong \prod_{i=1}^l \ZZ/p^{e_i}$ and 
hence
$$\OO[\mu]\cong \bigotimes_{i=1}^l \frac{\OO[z_i]}{( (1+z_i)^{p^{e_i}}-1)}$$
is complete intersection. It then follows from Theorems \ref{intro_B}, 
\ref{intro_C}, \ref{intro_D} that the rings $R^{\square}_{\rhobar}$, 
$R^{\ps}_{\Thetabar}$ and $A^{\gen}_{G, \Thetabar}$ are locally complete 
intersections. To relate their dimensions to the dimensions of the deformation
rings appearing in \cite{defG} we note that $\rank_{\ZZ} M= \dim G_k$ and,
if $\Delta=(G/G^0)(\kbar)$ (which we may assume without changing the 
functors $D^{\square}_{\rhobar}$, $D^{\ps}_{\Thetabar}$ and $X^{\gen}_{G,\Thetabar}$), then $\rank_{\ZZ} M_{\Delta} = \dim Z(G)_k$.
The scheme $X^{\gen}_{G,\Thetabar}$ 
coincides with the scheme denoted by $X^{\gen}_{G, \rhobarss}$ in \cite{defG}, when 
$G$ is a generalised torus. In \cite{defG} in the definition of $X^{\gen}_{G, \rhobarss}$ an additional condition 
is imposed on representations, called $R^{\ps}_{\Thetabar}$-condensed, and
we show in Lemma \ref{defG2}, that when $G$ is a generalised torus, this condition 
holds automatically. 

\subsection{Arithmetic setting} Let us first recall how one proves 
the result when $G=\Gm$. The first step is to observe that 
Theorem \ref{intro_A} holds with $\OO'=\OO$ for example 
using the Teichm\"uller lift $\sigma: k^{\times}\rightarrow \OO^{\times}$ 
and letting $\rho_0=\sigma \circ \rhobar$. The second step 
is to observe that the map $\rho\mapsto \rho\rho^{-1}_0$ induces
a bijection between $D^{\square}_{\rhobar} \rightarrow D^{\square}_{\Eins}$, 
where $\Eins$ is the trivial representation. One may identify 
$D^{\square}_{\Eins}(A)$ with the set of continuous group homomorphisms
$\Gamma_F \rightarrow 1+\mm_A$ and since the target is an abelian 
$p$-group the functor is representable by $\OO\br{\Gamma_F^{\abp}}$.
An interesting feature of this argument is that $D^{\square}_{\Eins}$ 
takes values in $\Ab$ as opposed to $\Set$ and the map 
$$D_{\Eins}^{\square}(A) \times D_{\rhobar}^{\square}(A)\rightarrow D^{\square}_{\rhobar}(A), \quad (\Phi, \rho)\mapsto \Phi \rho$$
defines an action of an abelian group $D_{\Eins}^{\square}(A)$ on the 
set $D^{\square}_{\rhobar}(A)$. Moreover,  if the set is non-empty then this 
action is faithful and transitive. 

If $G$ is connected then $G=\DD(M)$, where $M$ is the character lattice, 
and so $G(A)=\Hom(M, A^{\times})$. The same argument goes through and we only 
need to observe that 
$$\Hom^{\cont}_{\mathrm{Group}}(\Gamma_F, G(A))\cong 
\Hom^{\cont}_{\mathrm{Group}}(\Gamma_F^{\ab}\otimes M, A^{\times}),$$
which implies that $D^{\square}_{\rhobar}$ is representable by 
$\OO\br{\Gamma_F^{\abp}\otimes M}$. 

Let us consider the general case now. The first issue, alluded to
in \Cref{intro_rmk}, is that it is not clear anymore that such a lift exists. 
However, let us postpone the explanation how we deal with this problem 
until \Cref{sec_intro_lift} and let us give 
ourselves a lift $\rho_0: \Gamma_F \rightarrow G(\OO)$ of $\rhobar$. 
Then the map $\rho \mapsto [\gamma \mapsto \rho(\gamma) \rho_0(\gamma)^{-1}]$ considered above induces a bijection between $D^{\square}_{\rhobar}(A)$
and the set $\Zhat^1(A)$ of continuous $1$-cocycles $\Phi: \Gamma_F \rightarrow \Hom(M, 1+\mm_A)$. Further, one may interpret $\Zhat^1$ as 
$D^{\square}_{\rhobar}$ for the group $G=\DD(M)\rtimes \underline{\Delta}$ 
and $\rhobar: \Gamma_F \rightarrow G(k)$ given by $\rhobar(\gamma)= (1, \pi(\gamma))$, where $\pi: \Gamma_F\rightarrow \Delta$ is the quotient map. 
The functor $\Zhat^1$ is the analogue of $D^{\square}_{\Eins}$ and we have
to find the ring pro-representing the functor $\Zhat^1$. 

Before we describe our solution, let us point out one dead end. If 
$\Phi\in \Zhat^1(A)$ then its restriction to $\Gamma_E$ is 
just a continuous homomorphism 
$$\Phi|_{\Gamma_E}: \Gamma_E \rightarrow \Hom(M, 1+\mm_A),$$
as $\Gamma_E$ acts trivially on $M$. 
This induces a continuous homomorphism 
$$\varphi: \Gamma_E^{\abp}\otimes M \rightarrow 
1+\mm_A,$$ and the fact that the homomorphism is obtained 
by a restriction of a cocycle to $\Gamma_E$ implies that 
$\varphi$ factors through $(\Gamma_E^{\abp}\otimes M)_{\Delta}\rightarrow 1+\mm_A$. However, in Theorem \ref{intro_B} we have $\Delta$-invariants and
not $\Delta$-coinvariants appearing. Tate group cohomology gives us an exact 
sequence
$$ 0\rightarrow \widehat{H}^{-1}(\Delta, \Gamma_E^{\abp}\otimes M)\rightarrow 
    (\Gamma_E^{\abp}\otimes M)_{\Delta} \rightarrow (\Gamma_E^{\abp}\otimes M)^{\Delta}\rightarrow 
    \widehat{H}^0(\Delta, \Gamma_E^{\abp}\otimes M)\rightarrow 0. $$
One may show that the Tate group cohomology groups $\widehat{H}^i(\Delta, 
\Gamma_E^{\abp}\otimes M)$ are finite for all $i\in \ZZ$, but there 
is no reason why they should vanish, unless $p$ does not divide $|\Delta|$. 
We conclude that the restriction to $\Gamma_E$ will lose information in general. 

Our solution is motivated by Langlands' paper \cite{langlands_tori} 
on his correspondence for tori, where he shows that the universal coefficient
theorem induces an isomorphism
\begin{equation}\label{intro_univ_coeff}
H^1(W_{E/F}, \Hom(M, \mathbb C^{\times}))\cong \Hom (H_1(W_{E/F}, M), \mathbb C^{\times}))
\end{equation}
and there are natural isomorphisms 
\begin{equation}\label{intro_fundamental}
H_1(W_{E/F}, M) \cong H_1(E^{\times}, M)^{\Delta} \cong (E^{\times}\otimes M)^{\Delta},
\end{equation}
where the Weil group $W_{E/F}$ fits into a short exact sequence
\begin{equation}\label{intro_weil}
0\rightarrow E^{\times} \rightarrow W_{E/F}\rightarrow \Delta\rightarrow 0,
\end{equation}
corresponding to the fundamental class $[u_{E/F}]\in H^2(\Delta, E^{\times})$. 
Since every continuous representation $\Gamma_F \rightarrow G(A)$ with image in $(G/G^0)(A)$ equal to $\Delta$ factors 
through the profinite completion of $W_{E/F}$, this motivates us 
to study the functor 
$$\CRing \rightarrow \Ab,\quad  A\mapsto Z^1(W_{E/F}, G(A))$$
without imposing any continuity conditions on the cocycles and allowing 
any commutative ring $A$. We determine the ring representing this functor 
and show that the ring representing $\Zhat^1$ arises as a completion of it with respect to the maximal ideal corresponding to the trivial cocycle. 

\subsection{Abstract setting}
We carry out this study in \Cref{sec_Z1} in the following abstract setting:
let $\Gamma_1$ be an abstract group, $\Gamma_2$ a normal subgroup 
of $\Gamma_1$ of finite index and let $\Delta:=\Gamma_1/\Gamma_2$. 
The exact sequence
\begin{equation}\label{EE0}
 0\rightarrow \Gamma_2^{\ab}\rightarrow \Gamma_1/[\Gamma_2, \Gamma_2]\rightarrow \Delta \rightarrow 0
\end{equation} 
defines a class in $H^2(\Delta, \Gamma_2^{\ab})$. Its image 
in $\Ext^1_{\ZZ[\Delta]}(I_{\Delta}, \Gamma_2^{\ab})$, where
$I_{\Delta}$ is the augmentation ideal in the group ring $\ZZ[\Delta]$
defines an extension of $\ZZ[\Delta]$-modules
\begin{equation}\label{EE}
0\rightarrow \Gamma_2^{\ab}\rightarrow \mathcal E\rightarrow I_{\Delta}\rightarrow 0.
\end{equation}

\begin{thm}\label{intro_E} The functor 
$$\CRing \rightarrow \Ab, \quad A\mapsto Z^1(\Gamma_1, \Hom(M, A^{\times}))$$
is represented by the group algebra $\ZZ[(\mathcal E\otimes M)_{\Delta}]$. 

The fpqc sheafification of the functor 
$$ \CRing \rightarrow \Ab, \quad A\mapsto H^1(\Gamma_1, \Hom(M, A^{\times}))$$
is represented by the group algebra $\ZZ[H_1(\Gamma_1, M)]$.
\end{thm}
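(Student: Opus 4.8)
The plan is to compute the functor $A \mapsto Z^1(\Gamma_1, \Hom(M, A^{\times}))$ directly and identify it with $\Spec$ of the stated group algebra, then deduce the $H^1$-statement by dividing out the coboundaries. First I would record the elementary adjunction: a $1$-cocycle $\Phi : \Gamma_1 \to \Hom(M, A^{\times})$ is the same datum as a $1$-cocycle $\Gamma_1 \to A^{\times} \otimes_{\ZZ} \Hom_{\ZZ}(M, \ZZ)$, hmm — more precisely one wants to rewrite $\Hom(M, A^{\times}) = \Hom_{\ZZ}(M, A^{\times})$ and use that $Z^1(\Gamma_1, -)$ commutes with the relevant limits, so that a cocycle corresponds to a $\ZZ[\Delta]$-linear, or rather $\Gamma_1$-equivariant, map out of a universal object. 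The universal object should be exactly $\EE \otimes_{\ZZ} M$ with its diagonal $\Delta$-action: the point is that $Z^1(\Gamma_1, N)$ for a $\Gamma_1$-module $N$ on which $\Gamma_2$ acts trivially (which is our case, since $\Gamma_2$ acts trivially on $M$ hence on $\Hom(M,A^\times)$) is computed by the inflation-restriction / Hochschild--Serre mechanism, and a cocycle restricted to $\Gamma_2$ is a homomorphism $\Gamma_2^{\ab} \to N$, while the extension data over $\Delta$ is precisely governed by the class \eqref{EE0} and its incarnation \eqref{EE}. So I would prove: $Z^1(\Gamma_1, \Hom(M, A^{\times})) \cong \Hom_{\ZZ[\Delta]}(\EE \otimes M, A^{\times}) \cong \Hom_{\Group}((\EE \otimes M)_{\Delta}, A^{\times})$, functorially in $A$, which is represented by $\ZZ[(\EE \otimes M)_{\Delta}]$.

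The key computation, and the step I expect to be the main obstacle, is the middle isomorphism $Z^1(\Gamma_1, \Hom(M,A^\times)) \cong \Hom_{\ZZ[\Delta]}(\EE \otimes M, A^\times)$. One way to see it: tensor the short exact sequence \eqref{EE} with $M$ over $\ZZ$; since $I_\Delta$ is $\ZZ$-free (it is the augmentation ideal of a free $\ZZ$-module of finite rank), the sequence $0 \to \Gamma_2^{\ab} \otimes M \to \EE \otimes M \to I_\Delta \otimes M \to 0$ stays exact. Applying $\Hom_{\ZZ[\Delta]}(-, A^\times)$ relates maps out of $\EE\otimes M$ to pairs (a $\Delta$-equivariant map $\Gamma_2^{\ab}\otimes M \to A^\times$, i.e. a homomorphism $\Phi|_{\Gamma_2}$ on which $\Delta$ acts compatibly) together with an extension-splitting obstruction living in $\Ext^1_{\ZZ[\Delta]}(I_\Delta \otimes M, -)$, and one checks this matches exactly the obstruction to extending a $\Gamma_2$-homomorphism to a $\Gamma_1$-cocycle coming from \eqref{EE0}. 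Concretely I would pick a set-theoretic section $s : \Delta \to \Gamma_1/[\Gamma_2,\Gamma_2]$ of \eqref{EE0}, write any cocycle as determined by its values on $\Gamma_2$ and on the lifts $s(\delta)$, translate the cocycle relation into the statement that these data glue to a $\ZZ[\Delta]$-linear map on $\EE$, and verify this is a bijection natural in $A$. I would take care that $M$ is $\ZZ$-free of finite rank so all the tensor/Hom manipulations are harmless.

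For the second statement, note that the coboundary map gives a natural transformation $\Hom(M, A^\times) = B^1$-generating-data $\to Z^1(\Gamma_1, \Hom(M,A^\times))$, more precisely $N \to Z^1$ with $n \mapsto (\gamma \mapsto \gamma n \cdot n^{-1})$ where $N = \Hom(M, A^\times)$; its image is $B^1$ and $H^1 = Z^1/B^1$ as a quotient of group-valued functors. On representing objects this coboundary corresponds to a ring map $\ZZ[(\EE\otimes M)_\Delta] \to \ZZ[M_? ]$ — I would identify the kernel/image on the level of the underlying abelian groups: the cokernel of $B^1 \to Z^1$ computes $H^1(\Gamma_1, M)$ after the identification of representing groups, because the long exact sequence of \eqref{EE} together with $H_1(\Gamma_1, M) = \coker(I_\Delta \otimes M \to \Gamma_2^{\ab}\otimes M)$-type formulas (the standard computation of $H_1(\Gamma_1,M)$ via the Lyndon--Hochschild--Serre spectral sequence, or directly via \eqref{EE}) shows $(\EE \otimes M)_\Delta$ modulo the image of the coboundary parameters is exactly $H_1(\Gamma_1, M)$. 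Since passing from a functor to its quotient by a (representable) subfunctor action requires fpqc-sheafifying — the naive quotient presheaf $A \mapsto H^1(\Gamma_1, \Hom(M,A^\times))$ need not be a sheaf — the sheafification is represented by the quotient group scheme $\Spec \ZZ[(\EE\otimes M)_\Delta] / \Spec\ZZ[\text{image of coboundaries}]$, whose coordinate ring is $\ZZ[H_1(\Gamma_1, M)]$. The one subtlety to handle carefully here is the exactness of $A^\times \mapsto \Hom(M, A^\times)$ failing to be right exact as a functor on rings, which is exactly why fpqc sheafification (and not Zariski) is the correct operation, so I would phrase the final step using faithfully flat descent for the torsor $Z^1 \to H^1_{\mathrm{sh}}$ under $B^1$.
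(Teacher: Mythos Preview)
Your approach to the $Z^1$ statement is correct and more direct than the paper's. The paper first replaces $M$ by $\ZZ[\Delta]\otimes M$, uses an explicit Shapiro section to split $Z^1(\Gamma_1,\Ind_{\Gamma_2}^{\Gamma_1}V)\cong \Hom(\Gamma_2^{\ab},V)\oplus\Hom(I_\Delta,V)$, then computes how a commuting $\Delta$-action transports across this splitting to recover the $\mathcal E$-twist, and finally takes $\Delta$-invariants (Propositions \ref{univ_prop_of_EE} and \ref{Z1withE}). Your route---record a cocycle by its restriction $\varphi=\Phi|_{\Gamma_2^{\ab}}$ together with $\alpha(c-1):=\Phi(\bar c)$, and check that the cocycle relation $\varphi(\kappa(d,c))=\alpha(d-1)+d\,\alpha(c-1)-\alpha(dc-1)$ plus $\Delta$-equivariance of $\varphi$ is precisely the condition that $(\varphi,\alpha)$ assemble to a $\ZZ[\Delta]$-map out of $\mathcal E$---bypasses the induction entirely. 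The paper's detour buys a clean construction of the universal cocycle $\Phi_{\nat}$ used downstream; your argument is shorter, but you must actually carry out that check rather than assert ``one checks this matches''.

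Your $H^1$ outline has real gaps. First, the sentence ``$(\mathcal E\otimes M)_\Delta$ modulo the image of the coboundary parameters is exactly $H_1(\Gamma_1,M)$'' together with the formula ``$H_1(\Gamma_1,M)=\coker(I_\Delta\otimes M\to\Gamma_2^{\ab}\otimes M)$'' is confused: there is no such natural map, and on abelian groups the correct statement is the exact sequence $0\to H_1(\Gamma_1,M)\to(\mathcal E\otimes M)_\Delta\to I_\Delta M\to 0$ (Proposition \ref{prop_represent}), so $H_1$ appears as a \emph{kernel}, hence as the character group of the \emph{quotient} scheme. Second, $Z^1\to H^1_{\mathrm{sh}}$ is not a torsor under the presheaf $B^1$, which is not representable; the action of $\DD(M)$ factors through $\DD(I_\Delta M)$, which acts freely, and it is under \emph{that} group scheme that you get a torsor. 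Third, your diagnosis of the subtlety (``$A^\times\mapsto\Hom(M,A^\times)$ failing to be right exact'') is off: $M$ is free, so $\Hom(M,-)$ is exact; the issue is that $B^1(\Gamma_1,\Hom(M,A^\times))\subsetneq\Hom(I_\Delta M,A^\times)$ in general, with equality only when $A^\times$ is divisible (Lemma \ref{image_boundaries}). The paper handles all of this at once: it uses the universal-coefficients map $H^1(\Gamma_1,\Hom(M,A^\times))\to\Hom(H_1(\Gamma_1,M),A^\times)$, notes it is an isomorphism for divisible $A^\times$, and then invokes that every ring has a faithfully flat extension with divisible unit group (Proposition \ref{H1_sheafify}).
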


We prove \Cref{intro_E} by first replacing $M$ with $\ZZ[\Delta]\otimes M$. In this 
case 
$$\Hom(\ZZ[\Delta]\otimes M, A^{\times})\cong \Ind^{\Gamma_1}_{\Gamma_2} \Hom(M, A^{\times})$$
and we can use Shapiro's lemma to identify $H^1$ with 
$$ \Hom(\Gamma_2, \Hom(M, A^{\times}))\cong \Hom(\Gamma_2^{\ab}\otimes M, A^{\times})$$
Further, after 
choosing a representative $\overline{c}$ for each coset $c\in \Gamma_1/\Gamma_2$, 
we construct in \Cref{sec_shap} a homomorphism of abelian groups,
$$ H^1(\Gamma_1,\Ind_{\Gamma_2}^{\Gamma_1} V) \rightarrow Z^1(\Gamma_1, \Ind_{\Gamma_2}^{\Gamma_1} V),$$ 
for any $\Gamma_2$-module $V$, which is functorial in $V$ and becomes the identity, 
when composed with the natural quotient map. These ingredients allow us to 
show that the functor
\begin{equation}\label{funct_funct}
\Ab\rightarrow \Ab, \quad V\mapsto Z^1(\Gamma_1, \Hom(\ZZ[\Delta]\otimes M, V))
\end{equation}
is representable by $(\Gamma_2^{\ab}\oplus I_{\Delta})\otimes M$. We then define an action 
of $\Delta$ on $\ZZ[\Delta]\otimes M$, which commutes with the action 
of $\Gamma_1$, such that 
$$ Z^1(\Gamma_1, \Hom(M, V)) \cong Z^1(\Gamma_1, \Hom(\ZZ[\Delta]\otimes M, V))^{\Delta}.$$
To prove \Cref{intro_E} we verify that the action of $\Delta$ 
on  $(\Gamma_2^{\ab}\oplus I_{\Delta})\otimes M$ induced by 
its  action on the functor in \eqref{funct_funct} is isomorphic 
to $\mathcal E\otimes M$. This calculation is carried out in 
\Cref{sec_Z1}.
The last part of 
Theorem \ref{intro_E} follows by observing that \eqref{intro_univ_coeff}
remains an isomorphism if $\mathbb C^{\times}$ is replaced by any divisible 
group and for any $A$ there is a faithfully flat map $A\rightarrow B$ 
such that $B^{\times}$ is divisible.

Let $G$ be a generalised torus over some base ring $\OO$. Let 
$\PC^{\Gamma_1}_{G}: \OO\text{-}\alg\rightarrow \Set$ be the 
functor such that $\PC^{\Gamma_1}_{G}(A)$ is the set of $A$-valued $G$-pseudocharacters of $\Gamma_1$. Let 
$\Rep^{\Gamma_1}_G: \OO\text{-}\alg \rightarrow \Set$ be 
the functor such that $\Rep^{\Gamma_1}_G(A)$ is the set 
of all representations 
$\rho:\Gamma_1 \rightarrow G(A)$. 
\begin{thm}\label{intro_F} Mapping a representation 
to its $G$-pseudocharacter induces an isomorphism of 
schemes 
$$ \Rep^{\Gamma_1}_{G}\sslash G^0\overset{\cong}{\longrightarrow} \PC^{\Gamma_1}_{G}. $$
\end{thm}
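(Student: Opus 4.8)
The plan is to prove that the natural transformation $\Rep^{\Gamma_1}_G \to \PC^{\Gamma_1}_G$ factors through the GIT quotient $\Rep^{\Gamma_1}_G \sslash G^0$ and that the induced map is an isomorphism of schemes, by checking it is an isomorphism after passing to suitable faithfully flat covers and then descending. First I would recall that for a generalised torus $G$ with $G^0 = \DD(M)$ and component group $\Delta$, a representation $\rho\colon \Gamma_1 \to G(A)$ lands in a single ``component class'' $\pi\circ\rho\colon \Gamma_1 \to \Delta$, and $G^0$ acts on $\Rep^{\Gamma_1}_G$ by conjugation. Since $G^0$ is a torus, conjugation by $G^0(A)$ on $G(A)$ is trivial on the $G^0$-part (tori are commutative) and acts on the ``off-diagonal'' cocycle data exactly by coboundaries valued in $\Hom(M, A^\times)$; concretely, after fixing a set-theoretic lift $\Gamma_1 \to G(A)$ of $\pi\circ\rho$, the $G^0$-orbit of $\rho$ is precisely the $H^1$-class of the associated $1$-cocycle, so the coordinate ring of $\Rep^{\Gamma_1}_G\sslash G^0$ should be the ring of $G^0$-invariant functions, which by Theorem \ref{intro_E} is governed by $H_1(\Gamma_1, M)$ after sheafification.

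The key steps, in order, are: (1) For a fixed component map $\delta\colon \Gamma_1 \to \Delta$, identify the fibre of $\Rep^{\Gamma_1}_G$ over $\delta$ with the functor $A \mapsto Z^1(\Gamma_1, \Hom(M,A^\times))$ of the abstract setting (twisting by a chosen lift, exactly as in the arithmetic setting discussion preceding Theorem \ref{intro_E}), so that by Theorem \ref{intro_E} this fibre is represented by $\ZZ[(\mathcal E \otimes M)_\Delta]$ (base changed to $\OO$), and the $G^0$-action is the translation action coming from the coboundary subgroup. (2) Show that the GIT quotient of this fibre by $G^0$ has coordinate ring equal to the $G^0$-invariants; since $G^0$ acts by translation through the homomorphism $\Hom(M,A^\times) = \Hom(\Gamma_2^{\ab}\!/\!\ldots, \ldots)$-valued coboundaries, the invariant ring is the group algebra on the quotient of $(\mathcal E\otimes M)_\Delta$ by the image of coboundaries, which is $H_1(\Gamma_1, M)$ — again by the mechanism of Theorem \ref{intro_E}, whose second assertion says exactly that the fpqc-sheafification of $H^1$ is $\ZZ[H_1(\Gamma_1,M)]$. (3) Identify $\PC^{\Gamma_1}_G$ over the component $\delta$ with the same object: here I would invoke the description of $G$-pseudocharacters for generalised tori (reviewed in \Cref{Laf}), showing that a $G$-pseudocharacter of $\Gamma_1$ valued in $A$ with fixed component map is the same datum as an $H^1$-class, equivalently a point of $\Spec \ZZ[H_1(\Gamma_1,M)]$. (4) Reassemble over all component maps $\delta$ (a finite set, indexed by $\Hom(\Gamma_1,\Delta)$ modulo nothing, or with a fixed $\delta = \pi\circ\rhobar$ in the arithmetic application) and check the maps glue to an isomorphism of schemes.

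The main obstacle I expect is step (2)–(3): proving that the \emph{scheme-theoretic} GIT quotient $\Rep^{\Gamma_1}_G \sslash G^0$ really has coordinate ring the full ring of invariants and that this matches $\PC^{\Gamma_1}_G$ on the nose, rather than merely up to universal homeomorphism or up to fpqc sheafification. The subtlety is that $Z^1 \to H^1$ is only an fppf/fpqc quotient, not a naive one (the obstruction lives in $H^2$ or in the failure of $A^\times$ to be divisible), so one genuinely needs the faithfully flat trick from the proof of Theorem \ref{intro_E} — passing to a cover $A \to B$ with $B^\times$ divisible — and then must verify that $\PC^{\Gamma_1}_G$, being defined via pseudocharacters (which are insensitive to such conjugation and already ``sheafy'' by their functorial definition), is the correct fpqc sheaf quotient. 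Establishing compatibility of the pseudocharacter description with the cocycle/$H^1$ description for generalised tori, and checking this is an isomorphism of schemes and not just of fpqc sheaves (using that both sides are affine of finite type over $\OO$, or representable, so fpqc sheafification does not change them), is where the real work lies; the rest is bookkeeping with the constructions already set up in \Cref{sec_Z1}.
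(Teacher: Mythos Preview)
Your approach is genuinely different from the paper's, and it has a real gap in step~(3). The paper does \emph{not} prove \Cref{intro_F} via the cocycle machinery of \Cref{sec_Z1}; instead it follows Emerson--Morel directly. One writes $\OO(\Rep^{\Gamma_1}_G)$ as a quotient of $\OO(G^{\Gamma_1})$ by an ideal $J$ generated by images of certain $G^0$-equivariant maps $\varphi_{\gamma,\delta}$, and $\OO(\PC^{\Gamma_1}_G)$ as a quotient of $\OO(G^{\Gamma_1})^{G^0}$ by the analogous ideal $I$. Linear reductivity of the torus $G^0$ gives exactness of $(-)^{G^0}$, hence surjectivity of $\OO(\PC^{\Gamma_1}_G)\to\OO(\Rep^{\Gamma_1}_G)^{G^0}$; the inclusion $J^{G^0}\subseteq I$ follows by applying the Reynolds operator, which commutes with the $\varphi_{\gamma,\delta}$. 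No cocycles, no $H_1$, no base point, no fpqc descent.

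The gap in your proposal is that step~(3) is circular. You say you would ``invoke the description of $G$-pseudocharacters for generalised tori (reviewed in \Cref{Laf}), showing that a $G$-pseudocharacter \ldots\ is the same datum as an $H^1$-class.'' But \Cref{Laf} contains no such description independent of \Cref{intro_F}: the identification $\OO(\PC^{\Gamma_1}_{G,\pi})\cong\OO[H_1(\Gamma_1,M)]$ is \Cref{GIT_isos_2}, which is \emph{deduced from} \Cref{intro_F} (via \Cref{cor_PC_inv}). To make your route work you would have to compute the invariant rings $\OO[G^n]^{G^0}$ directly and show that the axioms of \Cref{LafPC} collapse to the data of an $H^1$-class --- essentially reproving the Emerson--Morel comparison by hand in this special case, which is more work than the Reynolds-operator argument. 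A secondary issue is that your step~(1) requires a base point $\rho_0\in\Rep^{\Gamma_1}_{G,\pi}(\OO)$ to identify the fibre with a space of cocycles, whereas \Cref{intro_F} is stated and proved without any such hypothesis; the paper's argument is insensitive to whether a lift exists.
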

The proof of \Cref{intro_F} follows the arguments 
of Emerson--Morel \cite{emerson2023comparison}, where
they prove an analogous result, when $G$ is a connected 
reductive group over a field of characteristic $0$. Since in our case 
$G^0$ is a torus, it is linearly reductive over $\OO$, and 
their arguments carry over integrally. 

Let us assume that $\Rep^{\Gamma_1}_G(\OO)$ is non-empty, 
$G^0$ is split over $\OO$ and $G/G^0$ is a constant group 
scheme. Let $\rho_0: \Gamma_1\rightarrow G(\OO)$ be a 
representation and let $\Rep^{\Gamma_1}_{G, \pi}$ 
be the subfunctor of $\Rep^{\Gamma_1}_{G}$ consisting 
of representations $\rho$ such that $\Pi\circ \rho= \Pi\circ\rho_0$, where $\Pi: G \rightarrow G/G^0$ is the quotient map. One 
may similarly define $\PC^{\Gamma_1}_{G, \pi}$. We show that the map
$$ Z^1(\Gamma_1, G^0(A))\rightarrow \Rep^{\Gamma_1}_{G,\pi}(A), \quad \Phi\mapsto \Phi \rho_0$$
is bijective and use Theorem \ref{intro_E} to prove:

\begin{thm}\label{intro_G} The functors $\Rep^{\Gamma_1}_{G, \pi}$ and 
$\PC^{\Gamma_1}_{G, \pi}$ are represented by the group 
algebras $\OO[(\mathcal E \otimes M)_{\Delta}]$ and $\OO[H_1(\Gamma_1, M)]$,
respectively.
\end{thm}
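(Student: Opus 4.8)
The plan is to deduce \Cref{intro_G} from \Cref{intro_E} and \Cref{intro_F} by reducing the study of $\Rep^{\Gamma_1}_{G,\pi}$ to that of the cocycle functor $Z^1(\Gamma_1, G^0(-))$. The first step is to make precise the twisting bijection announced just before the statement: given the base-point representation $\rho_0 \colon \Gamma_1 \to G(\OO)$, the assignment $\Phi \mapsto \Phi\rho_0$, where $(\Phi\rho_0)(\gamma) = \Phi(\gamma)\rho_0(\gamma)$, sends a $1$-cocycle $\Phi \colon \Gamma_1 \to G^0(A)$ (for the action of $\Gamma_1$ on $G^0(A)$ induced by $\rho_0$ and the conjugation action of $G$ on $G^0$) to a representation $\Gamma_1 \to G(A)$. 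I would check that $\Phi\rho_0$ is a homomorphism exactly because $\Phi$ satisfies the cocycle identity, that $\Pi \circ (\Phi\rho_0) = \Pi \circ \rho_0$ because $G^0 = \Ker\Pi$, and conversely that any $\rho$ with $\Pi\circ\rho = \Pi\circ\rho_0$ has $\rho\rho_0^{-1}$ landing in $G^0(A)$ and satisfying the cocycle condition. This gives a natural isomorphism of functors $Z^1(\Gamma_1, G^0(-)) \xrightarrow{\ \sim\ } \Rep^{\Gamma_1}_{G,\pi}$ on $\OO$-algebras.

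Next I would identify the cocycle functor with the one appearing in \Cref{intro_E}. Since $G^0$ is split over $\OO$ with character lattice $M$, for any $\OO$-algebra $A$ we have $G^0(A) = \Hom(M, A^\times)$, compatibly with the $\Delta$-action on $M$ dual to the conjugation action of $G/G^0$ on $G^0$; the $\Gamma_1$-action on $G^0(A)$ factors through $\Pi\circ\rho_0 \colon \Gamma_1 \twoheadrightarrow \Delta$ (here I use that $\rho_0$ realises the surjection, possibly after noting that changing $\rho_0$ within $\Rep^{\Gamma_1}_{G,\pi}(\OO)$ does not change the subfunctor). Thus $Z^1(\Gamma_1, G^0(A)) = Z^1(\Gamma_1, \Hom(M, A^\times))$ with the module structure of \Cref{intro_E}, and that theorem gives representability by $\OO[(\mathcal E \otimes M)_\Delta]$ after base change from $\ZZ$ to $\OO$ (the group algebra construction commutes with $-\otimes_\ZZ \OO$). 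Composing with the isomorphism of the previous paragraph yields $\Rep^{\Gamma_1}_{G,\pi} \cong \Spec \OO[(\mathcal E\otimes M)_\Delta]$.

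For the pseudocharacter functor I would invoke \Cref{intro_F}: the map $\Rep^{\Gamma_1}_G \sslash G^0 \to \PC^{\Gamma_1}_G$ is an isomorphism of schemes, and it is compatible with the map to $\PC^{\Gamma_1}_{G/G^0} = \Rep^{\Gamma_1}_{G/G^0}$ (pseudocharacters for the commutative group $G/G^0$ being the same as representations). Restricting to the fibre over $\Pi\circ\rho_0$ identifies $\PC^{\Gamma_1}_{G,\pi}$ with $\Rep^{\Gamma_1}_{G,\pi}\sslash G^0$, i.e.\ with $\Spec$ of the ring of $G^0$-invariants of $\OO[(\mathcal E\otimes M)_\Delta]$ for the $G^0$-action coming from conjugation on representations; under the twisting isomorphism this $G^0$-action is the one by which $G^0(A) = \Hom(M,A^\times)$ acts on cocycles by $g \cdot \Phi = [\gamma \mapsto g^{-1}\Phi(\gamma)\,{}^\gamma g]$, i.e.\ translation by coboundaries, so the GIT quotient is the fpqc-sheafification of $H^1(\Gamma_1, \Hom(M,-^\times))$, which by the second half of \Cref{intro_E} is $\Spec \OO[H_1(\Gamma_1, M)]$. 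I would double-check that the base ring $\OO$ enters harmlessly by flat base change of the invariants (legitimate since $G^0$ is linearly reductive over $\OO$, as used in \Cref{intro_F}).

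The main obstacle I anticipate is the bookkeeping around identifying the $G^0$-action on the representing ring $\OO[(\mathcal E\otimes M)_\Delta]$ with coboundary-translation, and hence matching $\Rep^{\Gamma_1}_{G,\pi}\sslash G^0$ with the sheafified $H^1$-functor rather than merely the naive $H^1$ — this is exactly where the distinction between $Z^1$, $H^1$, and its fpqc sheafification in \Cref{intro_E} has to be handled carefully, and where one must confirm that passing to $G^0$-invariants of global functions indeed computes the fpqc sheafification (this is where the divisibility trick behind the second half of \Cref{intro_E}, together with linear reductivity of the torus $G^0$, does the work). A secondary point requiring care is independence of all identifications from the choice of $\rho_0$ and of coset representatives, but this should follow formally once the twisting isomorphism is set up.
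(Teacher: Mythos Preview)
Your proposal is correct and follows essentially the same route as the paper: the twisting bijection and representability of $\Rep^{\Gamma_1}_{G,\pi}$ are carried out in \Cref{Z1_Rep}, the identification of the $G^0$-action with translation by coboundaries and of the GIT quotient with $\Spec \OO[H_1(\Gamma_1,M)]$ via the fpqc-sheafified $H^1$ is \Cref{git_isos}, and the passage from \Cref{intro_F} to the $\pi$-fibre (using linear reductivity of $G^0$ to commute invariants with the base change, exactly as you anticipate) is \Cref{cor_PC_inv} and \Cref{GIT_isos_2}. The only cosmetic difference is that the paper checks directly that the fpqc quotient is affine and hence coincides with the GIT quotient, rather than computing the $G^0$-invariants of $\OO[(\mathcal E\otimes M)_\Delta]$ explicitly.
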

If $\Gamma_1=W_{E/F}$ then $H_1(\Gamma_1, M)\cong (E^{\times}\otimes M)^{\Delta}$
by \eqref{intro_fundamental}. To prove \Cref{intro_C} we 
relate the completions of the local rings of $\Rep^{\Gamma_1}_{G, \pi}$ 
and $\PC^{\Gamma_1}_{G, \pi}$ to deformation rings parameterising 
continuous deformations and $G$-pseudocharacters of the profinite completion 
$\widehat{\Gamma}_1$ of $\Gamma_1$. These arguments are carried out in 
\Cref{sec_profinite}.

We expect that the abstract setting will also be useful studying deformations
of global Galois groups.

\subsection{Producing a lift}\label{sec_intro_lift}
Let us go back to the problem of exhibiting a lift of $\rhobar: \Gamma_F\rightarrow G(k)$ to characteristic zero as claimed in  
\Cref{intro_A}. Using $\rhobar$ instead of $\rho_0$, 
we may identify the restriction of $D^{\square}_{\rhobar}$ to $\Aa_k$ 
with the restriction of $\Zhat^1$ to $\Aa_k$. This allows 
us to conclude that 
$$R^{\square}_{\rhobar}/\varpi \cong k\br{((\mathcal E\otimes M)_{\Delta})^{\wdp}},$$
where $\mathcal E$ is defined by \eqref{EE}, where \eqref{EE0} is 
equal to \eqref{intro_weil}, so that  $\Gamma_1=W_{E/F}$ and 
$\Gamma_2=E^{\times}$ and $(-)^{\wdp}$ denotes the pro-$p$ completion. 

In \Cref{sec_rank} we compute 
$$\rank_{\Zp} ((\mathcal E\otimes M)_{\Delta})^{\wdp}= \rank_{\ZZ} M \cdot ([F:\Qp]+1),$$ which is then also equal to $\dim R^{\square}_{\rhobar}/\varpi$.  On the other hand, using Mazur's obstruction theory and the Euler--Poincar\'e characteristic formula we show that 
$$ R^{\square}_{\rhobar}\cong \frac{\OO\br{x_1, \ldots, x_r}}{(f_1, \ldots, f_s)},$$
where $r-s= \dim_k \Lie G_k\cdot ([F:\Qp]+1)$. Since $\rank_{\ZZ} M = \dim_k \Lie G_k$, a little commutative algebra
implies that $\varpi, f_1, \ldots, f_s$ is a regular sequence in $\OO\br{x_1,\ldots, x_r}$ and hence $R^{\square}_{\rhobar}$ is $\OO$-torsion 
free. A closed point of the generic fibre gives us the required lift. 

\subsection{Overview by section}
In \Cref{gen_tori} we define what a generalised torus is.
In \Cref{sec_shap} we establish an explicit version of Shapiro's lemma. 
In \Cref{sec_Z1} we compute  
the ring representing the functor $A\mapsto Z^1(\Gamma_1, \Hom(M, A^{\times}))$.
This section is the technical heart of the paper.
In \Cref{sec_adm_rep} we relate the functor  $\Rep^{\Gamma_1}_{G, \pi}$ 
to the functor $Z^1(\Gamma_1, G^0(-))$. In \Cref{Laf} we introduce 
Lafforgue's $G$-pseudocharacters and study the functor $\PC^{\Gamma_1}_{G, \pi}$.
In \Cref{sec_profinite} we transfer the results about abstract representations
and $G$-pseudocharacters of $\Gamma_1$ to continuous representations 
and continuous $G$-pseudocharacters of its profinite completion $\widehat{\Gamma}_1$ and prove versions of Theorems
\ref{intro_B}, \ref{intro_C}, \ref{intro_D} with $\Gamma_F$ replaced by 
$\widehat{\Gamma}_1$. In \Cref{sec_rank} we compute $\Zp$-ranks of 
certain pro-$p$ completions  appearing naturally in the arithmetic setting.
We show in Lemma \ref{duck_duck} that the pro-$p$ completion 
of $(E^{\times}\otimes M)^{\Delta}$ is isomorphic to $(\Gamma_E^{\abp}\otimes M)^{\Delta}$. In \Cref{sec_gal_def} we use the results proved in the previous
sections to prove Theorems \ref{intro_A}, \ref{intro_B}, \ref{intro_C}, \ref{intro_D}.

\subsection{Notation} Let $p$ denote any prime number. Let $F$ be a finite extension of $\Qp$. We fix an algebraic closure $\overline{F}$ of $F$. Let $\Gamma_F\colonequals\Gal(\overline{F}/F)$ be the absolute Galois group of $F$. Let $L$ be another finite extension of $\Qp$ with ring of integers $\OO$, uniformiser $\varpi$ and residue field $k$. However, $\OO$ is allowed to be an arbitrary commutative ring such that $\Spec \OO$ is connected in Sections \ref{sec_adm_rep}
and \ref{Laf}.

Let $\Aa_{\OO}$ be the category of local artinian $\OO$-algebras 
with residue field $k$. We let $\widehat{\Aa}_{\OO}$ be the category of
pro-objects of $\Aa_{\OO}$. Concretely, one may identify $\widehat{\Aa}_{\OO}$
with the category of pseudo-compact local $\OO$-algebras with residue field 
$k$. Let $\Aa_k$ be the full subcategory of $\Aa_{\OO}$ consisting of objects 
killed by $\varpi$. Then $\widehat{\Aa}_k$ is the full subcategory of 
$\widehat{\Aa}_{\OO}$ consisting of objects killed by $\varpi$, which can be identified with 
the category of local pseudo-compact $k$-algebras with residue field $k$.

The groups denoted by $\Gamma_1$ and $\Gamma_2$ are abstract groups with 
no topology. We will denote by $\widehat{\Gamma}_1$ the profinite completion 
of $\Gamma_1$. If $\mathcal A$ is an abelian group then we will denote its 
pro-$p$ completion by $\mathcal A^{\wdp}$.

%We denote by $\Set$, $\Ab$, $\CRing$ the categories of sets, abelian groups 
%and commutative rings, respectively.

If $X$ is a scheme then we denote the ring of its global sections by $\OO(X)$.
If $\Gamma$ is a group then we denote by $\OO[\Gamma]$ its group algebra over a ring $\OO$. 
\subsection{Acknowledgements} VP would like to thank Timo Richarz for 
a helpful discussion concerning the Langlands correspondence for tori. The authors would like to
thank Toby Gee and James Newton for their comments on a draft version of this paper. The authors thank the anonymous referee for their helpful comments that improved the quality of the manuscript.

Parts of the paper were written during the research stay of VP at 
the Hausdorff Research Institute for Mathematics in Bonn 
for the Trimester Program \textit{The Arithmetic of the Langlands Program}.
VP would like to thank the organisers 
Frank Calegari, Ana Caraiani, Laurent Fargues and  Peter Scholze
for the invitation and a stimulating research environment. 
The research stay of VP was funded by the Deutsche Forschungsgemeinschaft (DFG, German Research Foundation) under Germany's Excellence Strategy – EXC-2047/1 – 390685813. 

The research of JQ was funded by the Deutsche Forschungsgemeinschaft (DFG, German Research Foundation) – project number 517234220.

\section{Generalised tori}\label{gen_tori}

\begin{defi}\label{def_gen_torus} Let $S$ be a scheme. A \emph{generalised torus} is a smooth affine $S$-group scheme $G$, such that the geometric fibres of $G^0$ are tori and $G/G^0 \to S$ is finite.
\end{defi}

\begin{remar}\label{rem_fin_etale}
    It follows from \cite[Proposition 3.1.3]{bcnrd}, that for a generalised torus $G$ the quotient $G/G^0$ is étale. Sean Cotner has shown in \cite{cotner} that if $S$ is locally noetherian then for a smooth affine $S$-group scheme $G$ the identity component $G^0$ has reductive geometric fibers and $G/G^0$ is finite étale if and only if $G$ is geometrically reductive in the sense of \cite[Definition 9.1.1]{alper}. In particular, this holds over $S=\Spec \OO$. 
\end{remar}

\begin{remar}\label{right_exactness_G0}
    When $G$ is a generalised torus over $\OO$, then $G^0$ is linearly reductive, i.e.~taking $G^0$-invariants is an exact functor on the category of {$\OO(G^0)$-comodules}.
    It follows, that if $A$ is a commutative $\OO$-algebra with trivial $G^0$-action, $M$ is an $A$-module with $G^0$-action and $N$ is an $A$-module with trivial $G^0$-action, then $${(M \otimes_A N)^{G^0} = M^{G^0} \otimes_A N}.$$
\end{remar}

If $M$ is an abelian group then we denote by $\DD(M) := \Spec(\OO[M])$ the diagonalisable group scheme, where the Hopf algebra structure on $\OO[M]$ is given by addition and inversion in $M$.
For any $\OO$-algebra $A$, we have
$$ \DD(M)(A) = \Hom(M, A^{\times}). $$
If $M$ is a finitely generated free abelian group then $\DD(M)$ is a split torus.

\section{Shapiro's lemma} 
\label{sec_shap}

We recall an explicit version of Shapiro's lemma. Let $\Gamma_1$ be a group and let $\Gamma_2$ be a subgroup of $\Gamma_1$ of finite index.
Let $V$ be an abelian group with a $\Gamma_2$-action. We let $\Indu{\Gamma_2}{\Gamma_1}{V}$
be the set of functions $f: \Gamma_1 \rightarrow V$, such that $f(kg)= k f(g)$ for all $k\in \Gamma_2$ and $g\in \Gamma_1$. Then $\Indu{\Gamma_2}{\Gamma_1}{V}$ is naturally an abelian group, isomorphic to a finite 
direct sum of copies of $V$, on which $\Gamma_1$ acts by right translations, that is 
$[g\cdot f](h)\colonequals f(h g)$ for all $g, h\in \Gamma_1$. In this section topology does not play a role, 
all cohomology is just abstract group cohomology, there is no continuity condition on the cocycles.

\begin{prop}\label{shap} Let $\Phi: \Gamma_1 \rightarrow \Indu{\Gamma_2}{\Gamma_1}{V}$ be a 
 $1$-cocycle, then the function $\varphi: \Gamma_2\rightarrow V$, given by 
$$ \varphi(k)=[\Phi(k)](1)$$
is a $1$-cocycle. Moreover, the above map induces an isomorphism in
cohomology
$$H^1(\Gamma_1, \Indu{\Gamma_2}{\Gamma_1}{V})\cong H^1(\Gamma_2, V).$$
\end{prop}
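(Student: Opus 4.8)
The plan is to prove the explicit Shapiro isomorphism in two steps: first exhibit the restriction map $\Phi \mapsto \varphi$ on the level of cocycles, and then show it descends to an isomorphism in cohomology by producing an explicit inverse (or rather, by showing it is a split surjection with a quasi-inverse on cocycles). First I would check that $\varphi(k) = [\Phi(k)](1)$ is a $1$-cocycle for $\Gamma_2$ with values in $V$: for $k, k' \in \Gamma_2$ one has $\Phi(kk') = \Phi(k) + k\cdot\Phi(k')$ in $\Indu{\Gamma_2}{\Gamma_1}{V}$, and evaluating at $1$ gives $\varphi(kk') = [\Phi(k)](1) + [k\cdot\Phi(k')](1) = [\Phi(k)](1) + [\Phi(k')](k)$; since $\Phi(k') \in \Indu{\Gamma_2}{\Gamma_1}{V}$ and $k \in \Gamma_2$, the equivariance property $f(kg) = kf(g)$ with $g=1$ gives $[\Phi(k')](k) = k\cdot[\Phi(k')](1) = k\cdot\varphi(k')$, so $\varphi(kk') = \varphi(k) + k\cdot\varphi(k')$ as desired. (One must be careful that $\Gamma_1$ acts on $\Indu{\Gamma_2}{\Gamma_1}{V}$ by \emph{right} translation $[g\cdot f](h) = f(hg)$, so the cocycle condition unwinds exactly this way.) Sending a coboundary $\Phi(g) = g\cdot f_0 - f_0$ to $\varphi(k) = [g\cdot f_0](1) - f_0(1) = f_0(k) - f_0(1)$, which is the coboundary of $f_0(1) \in V$ for the $\Gamma_2$-action, shows the map is well-defined on $H^1$.

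Next I would construct the inverse on cohomology. Given a $\Gamma_2$-cocycle $\psi: \Gamma_2 \to V$, choose coset representatives $\{\bar c\}$ for $\Gamma_2\backslash\Gamma_1$ with $\bar 1 = 1$, so every $g \in \Gamma_1$ is uniquely $g = \kappa(g)\,\overline{c(g)}$ with $\kappa(g) \in \Gamma_2$. Define $\tilde\psi: \Gamma_1 \to \Indu{\Gamma_2}{\Gamma_1}{V}$ by $[\tilde\psi(g)](h) = \psi(\kappa(hg)\kappa(h)^{-1})$; one checks this lies in $\Indu{\Gamma_2}{\Gamma_1}{V}$ (replacing $h$ by $kh$ for $k \in \Gamma_2$ scales by $k$ via the cocycle identity for $\psi$), that $\tilde\psi$ is a $1$-cocycle for the right-translation action, and that evaluating $[\tilde\psi(k)](1)$ for $k \in \Gamma_2$ recovers $\psi(k)$ since $c(k) = 1$ and $\kappa(k) = k$. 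This shows the restriction map on $H^1$ is surjective. For injectivity, if $\varphi = 0$ in $H^1(\Gamma_2, V)$, i.e.\ $\varphi(k) = k\cdot v_0 - v_0$, one modifies $\Phi$ by the coboundary attached to the element $f_0 \in \Indu{\Gamma_2}{\Gamma_1}{V}$ determined by $f_0(\bar c) = 0$ for all representatives and $f_0(kg) = k f_0(g)$; this reduces to the case $\varphi \equiv 0$ on the nose, and then a direct argument using the cocycle relation and the values on coset representatives shows $\Phi$ itself is a coboundary. Alternatively, one simply invokes the classical Shapiro isomorphism $H^1(\Gamma_1, \Indu{\Gamma_2}{\Gamma_1}{V}) \cong H^1(\Gamma_2, V)$ and checks that the abstract isomorphism is implemented by exactly this evaluation-at-$1$ map.

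The main obstacle is bookkeeping: keeping the left $\Gamma_2$-equivariance condition $f(kg) = kf(g)$ and the right $\Gamma_1$-action $[g\cdot f](h) = f(hg)$ consistent throughout, and verifying that the explicit quasi-inverse $\tilde\psi$ genuinely lands in $\Indu{\Gamma_2}{\Gamma_1}{V}$ and is a cocycle — these are routine but error-prone index manipulations. The conceptual content (that the composite is the identity on $H^1(\Gamma_2, V)$) is immediate once the formulas are set up correctly. Since the paper only needs the induced map in cohomology and will later refine it to an explicit cocycle-level section in \Cref{sec_Z1}, I would present the cocycle computation for $\varphi$ in full and then cite the classical Shapiro lemma for the isomorphism statement, merely noting that the classical isomorphism is realised by the evaluation map just described, so that later constructions can be made compatible with it.
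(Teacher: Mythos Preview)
Your final plan—verify the cocycle identity for $\varphi$ and then cite the classical Shapiro lemma—is exactly what the paper does: its entire proof is the citation \cite[Theorem 3.7]{lang}. The explicit cocycle check you give is correct and more detailed than the paper's one-line reference.

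One caution about your exploratory middle paragraph: the formula $[\tilde\psi(g)](h) = \psi(\kappa(hg)\kappa(h)^{-1})$ does \emph{not} in general define an element of $\Indu{\Gamma_2}{\Gamma_1}{V}$ when $\Gamma_2$ acts nontrivially on $V$. Replacing $h$ by $kh$ gives $\psi(k\,\kappa(hg)\kappa(h)^{-1}k^{-1})$, and the cocycle identity yields $\psi(kxk^{-1}) = \psi(k) + k\psi(x) - kxk^{-1}\psi(k)$, which is $k\psi(x)$ only when $(1-kxk^{-1})\psi(k)=0$. The paper constructs a correct explicit section in its next lemma (\Cref{inv}) via $\Phi_\varphi(g) = \sum_c \bar{c}^{-1} f_\varphi(\bar{c}g\,\overline{cg}^{-1})$, which evaluates at $k\bar{c}$ to $k\varphi(\bar{c}g\,\overline{cg}^{-1})$ rather than $\varphi$ of a conjugate. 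Since you do not rely on $\tilde\psi$ for the actual proof of the proposition, this does not affect correctness here, but you should use the paper's formula if you need the explicit section later.
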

\begin{proof} \cite[Theorem 3.7]{lang}.
\end{proof} 

We will construct an explicit inverse of the map above following
\cite{lang}. For each right coset $c$ of $\Gamma_2$ in $\Gamma_1$ we
fix a coset representative $\overline{c}$, so that the representative
of the trivial coset is $1$. In particular,
$$\Gamma_1= \bigcup_c \Gamma_2 \overline{c}= \bigcup_c \overline{c}^{-1}\Gamma_2.$$ 
Since $c g= \Gamma_2 \overline{cg}$ for every $g\in \Gamma_1$,  we have
$$ \overline{c} g \overline{cg}^{-1}\in \Gamma_2, \quad \forall g\in \Gamma_1.$$
Let $\FF_V$ be the subgroup of $\Indu{\Gamma_2}{\Gamma_1}{V}$ of functions with support in $\Gamma_2$.
By evaluating at $1$ we obtain a canonical isomorphism of
$\Gamma_2$-representations 
between  $\FF_V$ and $V$; the inverse homomorphism is obtained by mapping 
$v$ to the function $g \mapsto g v$.  If $\varphi\in Z^1(\Gamma_2, V)$ then 
 the isomorphism gives us a cocycle $f_{\varphi}\in Z^1(\Gamma_2, \mathcal F_V)$, where
if $h\in \Gamma_2$ then  $f_{\varphi}(h)\in \mathcal \FF_V$ is the function given by 
\begin{equation}\label{f_varphi}
[f_{\varphi}(h)](k)= k \varphi(h), \quad \forall k \in \Gamma_2.
\end{equation}

\begin{lem}\label{inv} Let $\varphi:\Gamma_2\rightarrow V$ be a $1$-cocycle, and let
  $f_{\varphi}: \Gamma_2\rightarrow \FF_V$ be the $1$-cocycle corresponding to
  $\varphi$, via the canonical isomorphism $\FF_V\cong V$ and let
  ${\Phi_{\varphi}:\Gamma_1\rightarrow \Indu{\Gamma_2}{\Gamma_1}{V}}$ be the function given by
$$\Phi_{\varphi}(g)=\sum_c \overline{c}^{-1} f_{\varphi}(\overline{c} g
\overline{cg}^{-1}).$$
Then $\Phi_{\varphi}$ is a $1$-cocycle, such that
$$[\Phi_{\varphi}(k)](1)=\varphi(k), \quad \forall k\in \Gamma_2.$$
\end{lem}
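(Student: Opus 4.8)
The plan is to check the two claims directly from the defining formula, using the cocycle relation for $f_\varphi\in Z^1(\Gamma_2,\FF_V)$ and the fact, recorded just before the lemma, that $\overline{c}\,g\,\overline{cg}^{-1}\in\Gamma_2$ for every right coset $c$ and every $g\in\Gamma_1$. First one observes that $\Phi_\varphi$ is well defined: for each $c$ we have $f_\varphi(\overline{c}\,g\,\overline{cg}^{-1})\in\FF_V\subseteq\Indu{\Gamma_2}{\Gamma_1}{V}$, translating by $\overline{c}^{-1}\in\Gamma_1$ stays inside $\Indu{\Gamma_2}{\Gamma_1}{V}$, and the sum is finite since $\Gamma_2$ has finite index.

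For the cocycle identity $\Phi_\varphi(gh)=\Phi_\varphi(g)+g\cdot\Phi_\varphi(h)$ the key point is the factorisation, valid for every coset $c$,
$$\overline{c}\,gh\,\overline{cgh}^{-1}=\bigl(\overline{c}\,g\,\overline{cg}^{-1}\bigr)\bigl(\overline{cg}\,h\,\overline{cgh}^{-1}\bigr),$$
in which both factors lie in $\Gamma_2$ and the second factor is exactly the term indexed by the coset $cg$ in the formula defining $\Phi_\varphi(h)$. Applying the cocycle relation of $f_\varphi$ to this product and then translating by $\overline{c}^{-1}$ one finds, using $\overline{c}^{-1}(\overline{c}\,g\,\overline{cg}^{-1})=g\,\overline{cg}^{-1}$, that the $c$-th summand of $\Phi_\varphi(gh)$ equals $\overline{c}^{-1}f_\varphi(\overline{c}\,g\,\overline{cg}^{-1})+(g\,\overline{cg}^{-1})\,f_\varphi(\overline{cg}\,h\,\overline{cgh}^{-1})$. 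Summing over $c$, the first terms give $\Phi_\varphi(g)$, and the second terms give $g\cdot\sum_c\overline{cg}^{-1}f_\varphi(\overline{cg}\,h\,\overline{cgh}^{-1})$; since right multiplication by $g$ permutes the right cosets of $\Gamma_2$, reindexing by $c'=cg$ identifies this last sum with $\Phi_\varphi(h)$. This reindexing is the only step that needs care, and is the (very mild) crux of the argument.

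Finally, to evaluate $\Phi_\varphi(k)$ at $1$ for $k\in\Gamma_2$, recall that $\Gamma_1$ acts on $\Indu{\Gamma_2}{\Gamma_1}{V}$ by right translation, so $[\overline{c}^{-1}\cdot f](1)=f(\overline{c}^{-1})$ for any $f\in\Indu{\Gamma_2}{\Gamma_1}{V}$. As $f_\varphi(\overline{c}\,k\,\overline{ck}^{-1})$ has support contained in $\Gamma_2$, it vanishes at $\overline{c}^{-1}$ unless $\overline{c}\in\Gamma_2$, i.e.\ unless $c$ is the trivial coset and $\overline{c}=1$. For that coset $\overline{c}\,k\,\overline{ck}^{-1}=k$, because $k\in\Gamma_2$ fixes the trivial coset, so only this summand survives and $[\Phi_\varphi(k)](1)=[f_\varphi(k)](1)=\varphi(k)$ by \eqref{f_varphi}, as claimed. (Together with \Cref{shap} this exhibits the Shapiro isomorphism at the level of cocycles by a map functorial in $V$, which is the form in which it is used later.)
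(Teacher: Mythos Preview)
Your proof is correct and follows essentially the same line as the paper's: both arguments hinge on the factorisation $\overline{c}\,gh\,\overline{cgh}^{-1}=(\overline{c}\,g\,\overline{cg}^{-1})(\overline{cg}\,h\,\overline{cgh}^{-1})$ in $\Gamma_2$ and the cocycle relation for $f_\varphi$. The only cosmetic difference is that the paper verifies the identity pointwise by evaluating both sides at $k\overline{c}$ (using that $\overline{c}^{-1}f$ has support in the coset $c$), whereas you work directly with the elements of $\Indu{\Gamma_2}{\Gamma_1}{V}$ and reindex the sum by $c'=cg$; these are two renderings of the same computation.
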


\begin{proof} We have to show that 
$$\Phi_{\varphi}(gh)=\Phi_{\varphi}(g)+ g\Phi_{\varphi}(h),\quad   \forall g, h\in \Gamma_1.$$
It is enough to show that the equality holds once we evaluate both
sides at $k \overline{c}$, where $\overline{c}$ is a representative of
the coset $c$ and $k\in \Gamma_2$. If $f\in \FF_V$ then $\supp
\overline{c}^{-1} f\subseteq c$, hence
$$[\Phi_{\varphi}(gh)](k\overline{c})= [f_{\varphi}(\overline{c} gh
\overline{cgh}^{-1})](k), \quad  [\Phi_{\varphi}(g)](k\overline{c})= [f_{\varphi}(\overline{c} g
\overline{cg}^{-1})](k),$$
\begin{equation}
\begin{split}\notag
[g \Phi_{\varphi}(h)](k\overline{c})=[\Phi_{\varphi}(h)](k\overline{c}g)=&
[f_{\varphi}(\overline{cg} h \overline{cgh}^{-1})](k \overline{c} g
\overline{cg}^{-1})\\
=&[\overline{c}g \overline{cg}^{-1}
f_{\varphi}(\overline{cg} h \overline{cgh}^{-1})](k).
\end{split}
\end{equation} 
Since $f_{\varphi}$ is a $1$-cocycle we have 
$$\overline{c}g \overline{cg}^{-1}
f_{\varphi}(\overline{cg} h \overline{cgh}^{-1})+f_{\varphi}(\overline{c} g
\overline{cg}^{-1})=f_{\varphi}(\overline{c}g
\overline{cg}^{-1}\overline{cg} h
\overline{cgh}^{-1})=f_{\varphi}(\overline{c}g h \overline{cgh}^{-1})$$
and hence $\Phi_{\varphi}$ is a $1$-cocycle. Since the representative of the
trivial coset was chosen to be $1$, we have
$$[\Phi_{\varphi}(k)](1)= [f_{\varphi}(k)](1)=\varphi(k), \quad \forall k\in \Gamma_2.$$
\end{proof}

\begin{remar} The map $\varphi\mapsto \Phi_{\varphi}$ depends on the choice of the coset representatives $\bar{c}$ and so it is not canonical. However once these representatives have been
fixed it is immediate from the formulas that the map is functorial in $V$. If 
$\alpha:V\rightarrow W$ is a $\Gamma_2$-equivariant homomorphism of abelian groups equipped with  $\Gamma_2$-action and $\psi\in Z^1(\Gamma_2, W)$ is the image of $\varphi$ under 
the map $Z^1(\Gamma_2, V)\rightarrow Z^1(\Gamma_2, W)$ induced by $\alpha$ 
 then $\Phi_{\varphi}$ maps to $\Phi_{\psi}\in Z^1(\Gamma_1, 
\Indu{\Gamma_2}{\Gamma_1}{W})$.
\end{remar}

\section{\texorpdfstring{The space of $1$-cocycles}{The space of 1-cocycles}}\label{sec_Z1}

We shall later identify our space of deformations with a space of $1$-cocycles.
In this section we study the space of $1$-cocycles in a more abstract situation.
We fix the following notation:
\begin{itemize}
    \item $\Gamma_1$ is an abstract group.
    \item $M$ is an abelian group equipped with a linear left $\Gamma_1$-action with kernel $\Gamma_2$ of finite index in $\Gamma_1$.
    \item $\Delta \colonequals \Gamma_1/\Gamma_2$. We denote the projection $\pi : \Gamma_1 \to \Delta$.
\end{itemize}
We start by constructing an abelian group $\EE$, which represents the functor
$$ \Ab \to \Ab, ~V \mapsto Z^1(\Gamma_1, \Ind^{\Gamma_1}_{\Gamma_2} V). $$
It turns out, that $\EE$ carries an action of $\Delta$, such that $(\EE \otimes M)_{\Delta}$ represents the functor
$$ \Ab \rightarrow \Ab, ~V \mapsto Z^1(\Gamma_1, \Hom(M, V)). $$
Our strategy is to realize $\Hom(M, V)$ as $\Delta$-invariants 
in $\Ind^{\Gamma_1}_{\Gamma_2}\Hom(M, V)$, and then use the explicit version of Shapiro's lemma of \Cref{sec_shap} to compute $$Z^1(\Gamma_1, \Ind^{\Gamma_1}_{\Gamma_2} \Hom(M, V)).$$
At the end of this section we study the functors $\CRing\rightarrow \Ab$ defined by 
\begin{align*}
     A\mapsto Z^1(\Gamma_1, \Hom(M, A^{\times})), \quad 
    A \mapsto H^1(\Gamma_1, \Hom(M, A^{\times})).
\end{align*}
 
Let $V$ be an abelian group equipped with a trivial $\Gamma_2$-action. We may identify $\Ind^{\Gamma_1}_{\Gamma_2} V$ with the space of functions
 $f: \Delta \rightarrow V$ where the action is given by $[g\cdot f](c)= f(cg)$ for all $g\in \Gamma_1$ and $c\in \Delta$.  
We also have an action of $\Delta$ on $\Ind_{\Gamma_2}^{\Gamma_1} V$, which commutes 
with the action of $\Gamma_1$ and is given by $[d\cdot f](c)= f(d^{-1} c)$ for all $d\in \Delta$. 
This induces an action of $\Delta$ on $Z^1(\Gamma_1, \Ind^{\Gamma_1}_{\Gamma_2} V)$, which is given 
explicitly by 
\begin{equation}\label{action_Delta}
[[d\ast \Phi](g)](c)= [\Phi(g)]( d^{-1} c). 
\end{equation}
Since the action of $\Gamma_2$ on $V$ is trivial we have 
\begin{equation}\label{trivial_action}
Z^1(\Gamma_2, V)= H^1(\Gamma_2, V)= \Hom(\Gamma_2, V),
\end{equation}
where $\Hom$ stands for group homomorphisms. For each $c\in \Delta$ we fix a coset representative 
$\overline{c}\in \Gamma_1$ as in the previous section.

\begin{cor}\label{4_1} Let $\varphi\in \Hom(\Gamma_2, V)$. Then the $1$-cocycle $\Phi_{\varphi}$ constructed in 
\Cref{inv} is given by
$$\Phi_{\varphi}(g)=\sum_{c\in \Delta} \varphi(\overline{c} g \overline{cg}^{-1})\Eins_c,$$
where $\Eins_c$ is the indicator function for the coset $c$.
\end{cor}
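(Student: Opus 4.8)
The plan is to specialise the construction of \Cref{inv} to the present situation, in which $V$ carries the trivial $\Gamma_2$-action, and to read off the formula by unwinding the two identifications that enter that construction: the canonical isomorphism $\FF_V\cong V$ and the identification of $\Indu{\Gamma_2}{\Gamma_1}{V}$ with the space of functions $\Delta\to V$. No new idea is needed beyond what is already in \Cref{sec_shap}.

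First I would write down the formula furnished by \Cref{inv}, namely $\Phi_{\varphi}(g)=\sum_{c\in\Delta}\overline{c}^{-1}f_{\varphi}(\overline{c}g\overline{cg}^{-1})$, recalling that each argument $\overline{c}g\overline{cg}^{-1}$ lies in $\Gamma_2$ as noted in \Cref{sec_shap}, so that $\varphi$ may be applied to it. Next I would observe that, because $\Gamma_2$ acts trivially on $V$, the defining formula \eqref{f_varphi} for $f_{\varphi}$ simplifies to $[f_{\varphi}(h)](k)=\varphi(h)$ for every $k\in\Gamma_2$; in terms of the isomorphism $\FF_V\cong V$ this says that $f_{\varphi}(h)$, viewed as a function on $\Gamma_1$ supported on $\Gamma_2$, equals $\varphi(h)\Eins_{e}$, where $e\in\Delta$ is the trivial coset. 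The one remaining computation is $\overline{c}^{-1}\cdot\Eins_{e}$: using the description of the action as $[g\cdot f](c')=f(c'\pi(g))$ for functions on $\Delta$, together with $\pi(\overline{c}^{-1})=c^{-1}$, one finds $[\overline{c}^{-1}\cdot\Eins_{e}](c')=\Eins_{e}(c'c^{-1})$, which is $1$ exactly when $c'=c$; hence $\overline{c}^{-1}\cdot\Eins_{e}=\Eins_{c}$. Plugging this in gives $\overline{c}^{-1}f_{\varphi}(\overline{c}g\overline{cg}^{-1})=\varphi(\overline{c}g\overline{cg}^{-1})\Eins_{c}$, and summation over $c\in\Delta$ yields the claimed formula.

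I do not expect any genuine obstacle: the corollary is a direct transcription of \Cref{inv} once the trivial-action hypothesis is imposed. The only point requiring care is the bookkeeping of conventions — keeping the left action by right translations, $[g\cdot f](h)=f(hg)$, consistent through the identification with functions on $\Delta$ — so that translating the indicator of the trivial coset by $\overline{c}^{-1}$ indeed produces $\Eins_{c}$ and not $\Eins_{c^{-1}}$.
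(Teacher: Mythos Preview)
Your argument is correct and follows exactly the same approach as the paper's proof: use the triviality of the $\Gamma_2$-action to simplify \eqref{f_varphi} to $[f_{\varphi}(h)](k)=\varphi(h)$, and then read off the claimed formula from the defining sum in \Cref{inv}. You have simply made explicit the identification $f_{\varphi}(h)=\varphi(h)\Eins_e$ and the translation $\overline{c}^{-1}\cdot\Eins_e=\Eins_c$ that the paper leaves to the reader.
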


\begin{proof} Since the action of $\Gamma_2$ on $V$ is trivial by assumption it follows from 
\eqref{f_varphi} that $[f_{\varphi}(h)](k)= \varphi(h)$ for all $k, h\in \Gamma_2$. 
The assertion then follows from \Cref{inv}.
\end{proof}

\Cref{shap} and \eqref{trivial_action} give us an exact sequence of abelian groups: 
$$0\rightarrow B^1(\Gamma_1, \Ind^{\Gamma_1}_{\Gamma_2} V) \rightarrow 
Z^1(\Gamma_1, \Ind^{\Gamma_1}_{\Gamma_2} V)\rightarrow \Hom(\Gamma_2, V)\rightarrow 0$$
and \Cref{inv} gives us a section so that 
\begin{equation}\label{section_shap}
Z^1(\Gamma_1, \Ind^{\Gamma_1}_{\Gamma_2} V)\cong B^1(\Gamma_1, \Ind^{\Gamma_1}_{\Gamma_2} V)\oplus 
 \Hom(\Gamma_2, V)
 \end{equation}
We want to understand the action of $\Delta$ on  $B^1(\Gamma_1, \Ind^{\Gamma_1}_{\Gamma_2} V)\oplus \Hom(\Gamma_2, V)$ via this isomorphism. It follows from \eqref{action_Delta} that 
$[[d\ast \Phi](g)](dc) = [\Phi(g)](c)$. Hence 
$$[d\ast \Phi_{\varphi}](g)=\sum_{c\in \Delta} \varphi(\overline{c} g \overline{cg}^{-1})\Eins_{dc}.$$
for $\varphi\in \Hom(\Gamma_2, V)$.
Since $\Gamma_2$ is normal in $\Gamma_1$, $c k = c$ for all $k\in \Gamma_2$  and all $c\in \Delta$. 
Hence, $\overline{ck}= \overline{c}$ and we conclude that 
$$[[d\ast \Phi_{\varphi}](k)](1)=\sum_c \varphi( \overline{c} k \overline{c}^{-1}) \Eins_{dc}(1)= 
\varphi( \overline{d^{-1}} k \overline{d^{-1}}^{-1})=\varphi(\overline{d}^{-1} k \overline{d})=:[d\cdot \varphi](k).$$
We note that  $\varphi((\overline{d} h)^{-1} k (\overline{d} h))= \varphi(\overline{d}^{-1} k \overline{d})$ for all $h,k\in \Gamma_2$, which justifies the third equality and we take the last equality as the definition $d\cdot \varphi$.
Since $[[\Phi_{d\cdot\varphi}](k)](1)=[d\cdot \varphi](k)$ by \Cref{inv}, \Cref{shap} implies that
the cocycle  $d\ast \Phi_{\varphi} - \Phi_{d\cdot \varphi}$ 
is a $1$-coboundary and thus there is a function $f:\Delta\rightarrow V$ depending on $d$ such that for all $g\in \Gamma_1$ we have 
\begin{equation}\label{define_f}
 [d\ast \Phi_{\varphi}](g) - \Phi_{d\cdot \varphi}(g)= (g-1) f.
\end{equation}
After subtracting a constant function we may assume that $f(1)=0$. By evaluating 
both sides of \eqref{define_f} at $1$, we obtain: 
\begin{equation}\label{find_f}
\begin{split}
f(g) &= f(g)-f(1)= [[d\ast \Phi_{\varphi}](g)](1) - [\Phi_{d\cdot \varphi}(g)](1)\\
&= \varphi( \overline{d^{-1}} g \overline{d^{-1}g}^{-1}) - [d\cdot\varphi](g \overline{g}^{-1})\\
&=\varphi( \overline{d^{-1}} g \overline{d^{-1}g}^{-1}) - \varphi( \overline{d^{-1}} g \overline{g}^{-1} \overline{d^{-1}}^{-1})\\
&=\varphi( (\overline{d^{-1}} g \overline{g}^{-1} \overline{d^{-1}}^{-1})^{-1} \overline{d^{-1}} g \overline{d^{-1}g}^{-1})\\
&=\varphi( \overline{d^{-1}} \overline{g} \overline{d^{-1} g}^{-1}).
\end{split}
\end{equation}
We note that in the above calculation $\overline{g}$ is the fixed coset representative of the 
coset $\Gamma_2 g$. In particular, $\overline{g}\in \Gamma_1$ and not in $\Delta$. The function 
$f$ satisfies $f(kg)=f(g)$ for all $k\in \Gamma_2$, so we may consider as a function on $\Delta$.
In this case the formula \eqref{find_f} applied with $g=\overline{c}$ gives us 
\begin{equation}\label{find_f_c}
f(c)= \varphi( \overline{d^{-1}} \overline{c} \overline{d^{-1} c}^{-1}), \quad \forall c\in \Delta.
\end{equation}
The function $f$ corresponds to a group homomorphism $\alpha_f: \ZZ[\Delta]\rightarrow V$, 
given by $\alpha_f(c)= f(c)$. Since $f(1)=0$ the restriction of $\alpha_f$ to the augmentation ideal 
$I_{\Delta}$ is given by 
\begin{equation}\label{define_alpha_f}
\alpha_f( c-1) = \varphi( \overline{d^{-1}} \overline{c} \overline{d^{-1} c}^{-1}), \quad \forall c\in \Delta.
\end{equation}

Since $V$ is abelian we have $\Hom(\Gamma_2, V)= \Hom(\Gamma_2^{\ab}, V)$, where $\Gamma_2^{\ab}$ is the maximal 
abelian  quotient of $\Gamma_2$. For $d, c\in \Delta$ we define 
$\kappa(d, c)$ to be the image of $\overline{d} \overline{c} \overline{dc}^{-1}$ in $\Gamma_2^{\ab}$.
The map $\kappa: \Delta\times \Delta \rightarrow \Gamma_2^{\ab}$ 
is the $2$-cocycle corresponding to the extension 
\begin{equation*}
 0\rightarrow  \Gamma_2^{\ab}\rightarrow \Gamma_1/[\Gamma_2, \Gamma_2] \rightarrow \Delta\rightarrow 0.
\end{equation*} 
We define an action of $\Delta$ on $\Gamma_2^{\ab}\oplus I_{\Delta}$ by letting 
\begin{equation}\label{ast_action}
    \begin{split}
        d\ast ( g, c-1): = ( d g d^{-1} \cdot \kappa(d, c), d c - d).
    \end{split}
\end{equation} 
This action defines an action of $\Delta$ on $\Hom(\Gamma_2^{\ab}\oplus I_{\Delta}, V)$ by 
\begin{equation}\label{ast_dual}
\begin{split}
[ d\ast (\varphi, \alpha)]( g, c-1)&= (\varphi, \alpha) ( d^{-1}\ast (g, c-1))\\
&= \varphi(d^{-1} g d) + \varphi(\kappa(d^{-1}, c))+ \alpha( d^{-1}c -d^{-1}).
\end{split}
\end{equation}

We obtain an exact sequence of $\ZZ[\Delta]$-modules 
\begin{equation}\label{Eextension}
    \begin{split}
        0\rightarrow \Gamma_2^{\ab} \rightarrow \mathcal E \rightarrow I_{\Delta}\rightarrow 0.
    \end{split}
\end{equation}
by letting $\mathcal E= \Gamma_2^{\ab}\oplus I_{\Delta}$ with the $\Delta$-action defined as above.

\begin{remar}\label{CE_rmk} Using the exact sequence 
    $0\rightarrow I_{\Delta}\rightarrow \ZZ[\Delta]\rightarrow \ZZ\rightarrow 0$ 
    of $\ZZ[\Delta]$-modules one obtains a canonical identification 
    $$ \Ext^1_{\ZZ[\Delta]}(I_{\Delta}, \Gamma_2^{\ab}) \cong \Ext^2_{\ZZ[\Delta]}(\ZZ, \Gamma_2^{\ab})\cong H^2(\Delta, \Gamma_2^{\ab}), $$
    see \cite[Chapter XIV, §4, Remark]{CE}.
    The extension class of \eqref{Eextension} gives an element in 
    $\Ext^1_{\ZZ[\Delta]}( I_{\Delta},\Gamma_2^{\ab})$. One can show, that the 
    image of this class in $H^2(\Delta, \Gamma_2^{\ab})$ is equal to the class of $\kappa$.
\end{remar}

To $\alpha \in \Hom(I_{\Delta}, V)$ we may associate a function $f_{\alpha}: \Delta\rightarrow V$ by letting $f_{\alpha}(c)\colonequals \alpha(c-1)$. We note that $f_{\alpha}(1)=0$. We then define 
a $1$-coboundary $b_{\alpha}\in B^1(\Gamma_1, \Ind^{\Gamma_1}_{\Gamma_2} V)$ 
by $b_{\alpha}(g)\colonequals (g-1) f_{\alpha}$. We may recover $f_{\alpha}$ from $b_{\alpha}$ as the 
unique function $f:\Delta\rightarrow V$ satisfying $f(1)=0$ and $b_{\alpha}(g)= (g-1)f$ for 
all $g\in \Gamma_1$. Thus the map 
\begin{equation}\label{B_one}
 \Hom(I_{\Delta}, V) \rightarrow B^1(\Gamma_1, \Ind^{\Gamma_1}_{\Gamma_2} V), \quad \alpha \mapsto b_{\alpha}
\end{equation} 
is an isomorphism. We now record the consequence of our calculations: 

\begin{prop}\label{univ_prop_of_EE}
    If $V$ is an abelian group  with the trivial $\Gamma_2$-action then
    sending $(\varphi, \alpha)$ to $\Phi_{\varphi}+b_{\alpha}$ induces an isomorphism 
    $$ \Hom ( \Gamma_2^{\ab}\oplus I_{\Delta}, V) \overset{\cong}{\longrightarrow}  
    Z^1(\Gamma_1, \Ind^{\Gamma_1}_{\Gamma_2} V),$$
    which is $\Delta$-equivariant for the actions defined in \eqref{ast_dual} and \eqref{action_Delta}.
\end{prop}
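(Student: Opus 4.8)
The plan is to deduce the statement from the explicit Shapiro computation already carried out above: first I would build the isomorphism of abelian groups from the split exact sequence, and then check that it intertwines the two $\Delta$-actions, using the formulas derived in \eqref{define_f}--\eqref{find_f_c}.

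\textbf{The underlying isomorphism of abelian groups.} Since $\Gamma_2$ acts trivially on $V$, \Cref{shap} together with \eqref{trivial_action} gives the short exact sequence $0 \to B^1(\Gamma_1, \Ind^{\Gamma_1}_{\Gamma_2} V) \to Z^1(\Gamma_1, \Ind^{\Gamma_1}_{\Gamma_2} V) \to \Hom(\Gamma_2, V) \to 0$, and \Cref{inv} provides the section $\varphi \mapsto \Phi_{\varphi}$; this is the splitting \eqref{section_shap}. Since $V$ is abelian, $\Hom(\Gamma_2, V) = \Hom(\Gamma_2^{\ab}, V)$, and by \eqref{B_one} the map $\alpha \mapsto b_{\alpha}$ is an isomorphism $\Hom(I_{\Delta}, V) \xrightarrow{\cong} B^1(\Gamma_1, \Ind^{\Gamma_1}_{\Gamma_2} V)$. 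Combining these, $(\varphi, \alpha) \mapsto \Phi_{\varphi} + b_{\alpha}$ is an isomorphism from $\Hom(\Gamma_2^{\ab} \oplus I_{\Delta}, V) = \Hom(\Gamma_2^{\ab}, V) \oplus \Hom(I_{\Delta}, V)$ onto $Z^1(\Gamma_1, \Ind^{\Gamma_1}_{\Gamma_2} V)$; additivity in $(\varphi, \alpha)$ and functoriality in $V$ are clear from the formula in \Cref{4_1} and from the definition of $b_{\alpha}$.

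\textbf{$\Delta$-equivariance.} Fix $d \in \Delta$. The computation \eqref{define_f}--\eqref{find_f_c} shows that $d \ast \Phi_{\varphi} - \Phi_{d \cdot \varphi}$ is the coboundary $b_{\alpha_f}$, where $d\cdot\varphi$ is $\varphi$ composed with conjugation by $\overline{d}$ (which factors through $\Gamma_2^{\ab}$) and $\alpha_f(c-1) = \varphi(\overline{d^{-1}}\,\overline{c}\,\overline{d^{-1}c}^{-1}) = \varphi(\kappa(d^{-1}, c))$ by the definition of $\kappa$. For the coboundary term, since the $\Delta$-action on $\Ind^{\Gamma_1}_{\Gamma_2} V$ commutes with the $\Gamma_1$-action, $d \ast b_{\alpha}$ is the coboundary $g \mapsto (g-1)(d \cdot f_{\alpha})$; the function $d \cdot f_{\alpha}$ sends $c$ to $\alpha(d^{-1}c - 1)$, and subtracting the constant $\alpha(d^{-1} - 1)$ (which does not change the coboundary) normalises it to the function $c \mapsto \alpha(d^{-1}c - d^{-1})$ vanishing at $1$, so $d \ast b_{\alpha} = b_{\alpha'}$ with $\alpha'(c-1) = \alpha(d^{-1}c - d^{-1})$. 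Adding up, $d \ast (\Phi_{\varphi} + b_{\alpha}) = \Phi_{d\cdot\varphi} + b_{\alpha_f + \alpha'}$; comparing the $\Gamma_2^{\ab}$-component ($d\cdot\varphi$, i.e.\ $g \mapsto \varphi(\overline{d}^{-1} g \overline{d})$) and the $I_{\Delta}$-component ($c-1 \mapsto \varphi(\kappa(d^{-1},c)) + \alpha(d^{-1}c - d^{-1})$) with the formula \eqref{ast_dual} identifies the pair $(d\cdot\varphi,\, \alpha_f + \alpha')$ with $d \ast (\varphi, \alpha)$. This is precisely the asserted equivariance.

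\textbf{Expected difficulty.} There is no genuine obstacle here; the real content is the Shapiro computation preceding the statement, and what remains is bookkeeping. The points needing care are the normalisations — keeping functions on $\Delta$ vanishing at $1$ so that one really recovers elements of $\Hom(I_{\Delta}, V)$ — and consistency of the $d$-versus-$d^{-1}$ and left-versus-right conventions among \eqref{action_Delta}, \eqref{ast_action} and \eqref{ast_dual}; in particular one must confirm that the extra coboundary $b_{\alpha_f}$ produced by $d \ast \Phi_{\varphi}$ supplies exactly the $\kappa(d^{-1}, c)$-term appearing in \eqref{ast_dual}, and that $d\cdot\varphi$ descends to the conjugation part of the action \eqref{ast_action} on $\EE$.
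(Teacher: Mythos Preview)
Your proposal is correct and follows essentially the same approach as the paper. The isomorphism part is identical (split short exact sequence via \eqref{section_shap} and \eqref{B_one}); for equivariance, the paper verifies $d\ast b_{\alpha}=b_{d\ast\alpha}$ by evaluating both sides at $c$, whereas you obtain it by shifting $f_{\alpha}$ and renormalising at $1$ --- these are the same computation phrased two ways --- and both treatments of $d\ast\Phi_{\varphi}$ use \eqref{define_f}--\eqref{find_f_c} to identify the leftover coboundary with the $\kappa(d^{-1},c)$-term of \eqref{ast_dual}.
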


\begin{proof}
    It follows from \eqref{B_one} and \eqref{section_shap} that the map is an 
    isomorphism of abelian groups. We have to check that it is $\Delta$-equivariant.
    We have 
    \begin{equation*}
    \begin{split}
    [[d\ast b_{\alpha}](g)](c)&= [b_{\alpha}(g)](d^{-1} c)= f_{\alpha}(d^{-1}c g)-
    f_{\alpha}(d^{-1}c)\\
    &= \alpha( d^{-1}c g -1) - \alpha( d^{-1}c -1)=\alpha( d^{-1}c g - d^{-1} c),
    \end{split}
    \end{equation*}
    \begin{equation*}
    \begin{split}
    [b_{d\ast \alpha}(g)](c)&=f_{d\ast \alpha}(cg) - f_{d\ast\alpha}(c)=
    [d\ast \alpha](cg-1) - [d\ast \alpha]( c-1)\\
    &= \alpha(d^{-1} cg -d^{-1}) - \alpha(d^{-1}c -d^{-1})=\alpha(d^{-1} cg -d^{-1}c).
    \end{split}
    \end{equation*}
    Hence, $d\ast b_{\alpha} = b_{d\ast \alpha}$. It follows from \eqref{define_f}, \eqref{find_f_c} and 
    \eqref{define_alpha_f} that 
    $d\ast \Phi_{\varphi} = \Phi_{d\cdot \varphi} + b_{\beta}$, where $\beta( c-1)= \varphi(\kappa(d^{-1}, c))$ for 
    all $c\in \Delta$. On the other hand, it follows from \eqref{ast_dual} that 
    $d\ast (\varphi, 0) = (d\cdot \varphi, \beta)$. Hence, 
    $d\ast (\varphi, 0)$ is mapped to $\Phi_{d\cdot\varphi} + b_{\beta} = d\ast \Phi_{\varphi}$. 
\end{proof}

Recall, that $M$ is an abelian group with an action of $\Delta$, so $\Gamma_2$ acts trivially on $\Hom(M, V)$.
We have natural isomorphisms
of $\Gamma_1$-representations
\begin{equation}\label{induction}
\Ind^{\Gamma_1}_{\Gamma_2} \Hom( M, V) \cong \Hom ( \ZZ[\Delta], \Hom(M, V)) \cong \Hom(\ZZ[\Delta]\otimes M, V),
\end{equation}
where the action of $\Gamma_1$ on the last term is induced by its action on $\ZZ[\Delta]\otimes M$, which 
is given by $g \cdot (c \otimes m) = cg^{-1} \otimes m$, so that $[g \cdot f](x)=f(g^{-1}x)$ for all $x\in \ZZ[\Delta]\otimes M$. 
The diagonal action of $\Delta$ on $\ZZ[\Delta]\otimes M$ is given by $d\cdot (c\otimes m) = dc \otimes dm$, commutes with the action of $\Gamma_1$ and 
induces a (left) action of $\Delta$ on $\Ind^{\Gamma_1}_{\Gamma_2} \Hom( M, V)$, which commutes with the action of 
$\Gamma_1$. If $M=\ZZ$ with the trivial $\Delta$-action then the construction recovers the action of 
$\Delta$ on $\Ind^{\Gamma_1}_{\Gamma_2} V$ considered earlier. 

\begin{lem}\label{proj_form} As $\Gamma_1$-representations $(\Ind^{\Gamma_1}_{\Gamma_2} \Hom( M, V))^{\Delta}$  
is naturally isomorphic to $\Hom(M, V)$ with $\Gamma_1$ acting on it via $\Delta$.
\end{lem}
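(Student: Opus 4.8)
The plan is to make the isomorphism completely explicit using the identification \eqref{induction}, namely
$$\Ind^{\Gamma_1}_{\Gamma_2}\Hom(M,V)\cong \Hom(\ZZ[\Delta]\otimes M, V),$$
under which the $\Delta$-action is dual to the diagonal action $d\cdot(c\otimes m)=dc\otimes dm$ on $\ZZ[\Delta]\otimes M$, and the $\Gamma_1$-action is dual to $g\cdot(c\otimes m)=cg^{-1}\otimes m$. Taking $\Delta$-invariants on the $\Hom$ side corresponds to taking the $\Delta$-coinvariants of $\ZZ[\Delta]\otimes M$: that is,
$$\bigl(\Hom(\ZZ[\Delta]\otimes M, V)\bigr)^{\Delta}\cong \Hom\bigl((\ZZ[\Delta]\otimes M)_{\Delta}, V\bigr),$$
since a homomorphism is $\Delta$-fixed precisely when it kills the submodule generated by $d\cdot x - x$.

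First I would observe that the map $\ZZ[\Delta]\otimes M\to M$, $c\otimes m\mapsto c^{-1}m$ (equivalently $c\otimes m\mapsto cm$, up to the inversion convention one settles on) is surjective and $\Delta$-equivariant for the diagonal action on the source and the given action on $M$; I would check that it identifies $M$ with the coinvariants $(\ZZ[\Delta]\otimes M)_{\Delta}$, the point being that $\ZZ[\Delta]\otimes M$ is a free $\ZZ$-module on the set $\{c\otimes m\}$ and the diagonal $\Delta$-action permutes a basis up to the $M$-action, so modding out by the $\Delta$-action collapses the $\ZZ[\Delta]$-factor and leaves exactly $M$. Dualizing, $\Hom(M,V)\iso \Hom((\ZZ[\Delta]\otimes M)_{\Delta}, V)\cong \bigl(\Hom(\ZZ[\Delta]\otimes M,V)\bigr)^{\Delta}\cong \bigl(\Ind^{\Gamma_1}_{\Gamma_2}\Hom(M,V)\bigr)^{\Delta}$, and I would track through the $\Gamma_1$-actions to see that the residual $\Gamma_1$-action on the left is exactly the one factoring through $\Delta$ and acting on $\Hom(M,V)$ via $M$: this is because the $\Gamma_1$-action on $\ZZ[\Delta]\otimes M$ only moves the $\Delta$-coordinate, which has been killed in the coinvariants, so on $M$ it descends to the $\Delta$-action, hence the dual $\Gamma_1$-action on $\Hom(M,V)$ is the one through $\Delta$.

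The naturality (in $V$) is immediate since every map in sight is a $\Hom(-,V)$ of a fixed module, or an application of the functor $\Ind^{\Gamma_1}_{\Gamma_2}(-)$, which is additive and functorial. The only mild subtlety — and the step I expect to require the most care — is bookkeeping the two competing conventions: the right-translation action defining $\Ind^{\Gamma_1}_{\Gamma_2}$ versus the left action $g\cdot(c\otimes m)=cg^{-1}\otimes m$ on $\ZZ[\Delta]\otimes M$, together with the inversions in the diagonal $\Delta$-action, so that the surjection $\ZZ[\Delta]\otimes M\twoheadrightarrow M$ one writes down is genuinely $\Delta$-equivariant and the resulting $\Gamma_1$-action matches the claimed one on the nose rather than up to an automorphism. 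I would verify this by evaluating both sides on an arbitrary element of $\Gamma_1$ and an arbitrary $c\otimes m$, exactly as in the computations surrounding \eqref{action_Delta} and \eqref{induction}; there is nothing deep here, it is just a matter of being consistent with the conventions already fixed in this section. With the identification of coinvariants in hand, the lemma follows.
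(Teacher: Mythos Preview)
Your proposal is correct and matches the paper's argument almost verbatim: the paper also passes through \eqref{induction}, identifies $(\ZZ[\Delta]\otimes M)_{\Delta}\cong M$ for the diagonal $\Delta$-action (phrased there via the untwisting automorphism $c\otimes m\mapsto c\otimes c^{-1}m$, whose composite with the obvious projection is exactly your map $c\otimes m\mapsto c^{-1}m$), and then reads off the $\Gamma_1$-action. One small point of care: your heuristic that ``the $\Gamma_1$-action only moves the $\Delta$-coordinate, which has been killed in the coinvariants'' would by itself suggest the \emph{trivial} action descends, not the $\Delta$-action; the correct mechanism is that the quotient map $c\otimes m\mapsto c^{-1}m$ converts the shift $c\mapsto cg^{-1}$ into multiplication by $\pi(g)$ on $M$, exactly as you say you would verify by direct computation.
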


\begin{proof} The map $\ZZ[\Delta]\otimes M \rightarrow \ZZ[\Delta]\otimes M$ , $c\otimes m \mapsto c\otimes c^{-1}m$ is an isomorphism of $\Delta \times \Gamma_1$-representations, where on the source the action 
is the one considered above, and on the target $\Delta$ acts by $d \ast ( c\otimes m) = dc \otimes m$ and 
$\Gamma_1$ acts by $g \ast (c\otimes m) = c g^{-1} \otimes g m$. 
It is immediate by considering the $\ast$-action that $(\ZZ[\Delta]\otimes M)_{\Delta} \cong M$ and
$\Gamma_1$ acts on $M$  via $\Delta$. Using \eqref{induction} we can then translate this statement 
to a statement about invariants. 
\end{proof}

\begin{prop}\label{Z1withE} Let $\mathcal E$ be the $\ZZ[\Delta]$-module constructed above. Then for all abelian groups $V$, we 
have an isomorphism of abelian groups functorial in $V$: 
\begin{align}
    \Hom( (\mathcal E\otimes M)_{\Delta}, V) \cong Z^1(\Gamma_1, \Hom(M, V)), \label{Z1withE_iso}
\end{align}
where the action of $\Delta$ on $\mathcal E\otimes M$ is diagonal and $\Gamma_1$ acts on $\Hom(M, V)$
via $\Delta$.
\end{prop}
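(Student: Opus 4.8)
The plan is to reduce the statement for a general $\Delta$-module $M$ to the already-established universal property of $\EE$ from \Cref{univ_prop_of_EE}, by taking $\Delta$-invariants on both sides. Concretely, I would start from the natural isomorphism \eqref{induction}, which identifies $\Ind^{\Gamma_1}_{\Gamma_2}\Hom(M,V)$ with $\Hom(\ZZ[\Delta]\otimes M, V)$ as $\Gamma_1$-representations, compatibly with the diagonal $\Delta$-action. Applying $Z^1(\Gamma_1,-)$ and using that $Z^1$ commutes with the $\Delta$-invariants functor (since taking invariants under the finite group $\Delta$ on the $\Gamma_1$-module $\Ind^{\Gamma_1}_{\Gamma_2}\Hom(M,V)$ is computed pointwise, and $Z^1(\Gamma_1,-)$ is left exact), I get
\[
Z^1(\Gamma_1, \Hom(M,V)) \cong Z^1\bigl(\Gamma_1, (\Ind^{\Gamma_1}_{\Gamma_2}\Hom(M,V))^{\Delta}\bigr) \cong Z^1\bigl(\Gamma_1, \Ind^{\Gamma_1}_{\Gamma_2}\Hom(M,V)\bigr)^{\Delta},
\]
where the first isomorphism is \Cref{proj_form}. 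So it suffices to compute the right-hand side as a $\Delta$-module and take invariants.

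Next I would identify $Z^1(\Gamma_1, \Ind^{\Gamma_1}_{\Gamma_2}\Hom(M,V))$. Since $\Gamma_2$ acts trivially on $\Hom(M,V)$ (as it acts trivially on $M$), \Cref{univ_prop_of_EE} applied with the abelian group $\Hom(M,V)$ in place of $V$ gives a $\Delta$-equivariant isomorphism
\[
\Hom(\Gamma_2^{\ab}\oplus I_{\Delta}, \Hom(M,V)) = \Hom(\EE, \Hom(M,V)) \overset{\cong}{\longrightarrow} Z^1\bigl(\Gamma_1, \Ind^{\Gamma_1}_{\Gamma_2}\Hom(M,V)\bigr).
\]
Here I must be careful about which $\Delta$-action on $\Ind^{\Gamma_1}_{\Gamma_2}\Hom(M,V)$ is meant: in \Cref{univ_prop_of_EE} it is the action \eqref{action_Delta} coming from viewing $\Hom(M,V)$ as a bare abelian group, whereas above I want the \emph{diagonal} action (twisting also by the $\Delta$-action on $M$). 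The point is that the diagonal action is the composite of the action \eqref{action_Delta} with the functoriality of everything in the coefficient module $\Hom(M,V)$, on which $\Delta$ acts through its action on $M$; so the two combine into a single $\Delta$-action on $\Hom(\EE,\Hom(M,V))$ which, via the tensor-hom adjunction $\Hom(\EE,\Hom(M,V))\cong\Hom(\EE\otimes M, V)$, becomes precisely the action dual to the diagonal action on $\EE\otimes M$. Spelling this adjunction out and checking the two $\Delta$-actions match is the one genuinely computational point, but it is a direct unwinding of \eqref{ast_dual} and the definition $d\cdot(c\otimes m)=dc\otimes dm$.

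Finally I would take $\Delta$-invariants. Using $\Hom(\EE\otimes M, V)^{\Delta} \cong \Hom((\EE\otimes M)_{\Delta}, V)$ — valid because $\Delta$ acts trivially on $V$, so a $\Delta$-equivariant homomorphism out of $\EE\otimes M$ is the same as a homomorphism out of the coinvariants $(\EE\otimes M)_{\Delta}$ — I obtain
\[
Z^1(\Gamma_1,\Hom(M,V)) \cong \Hom(\EE\otimes M,V)^{\Delta} \cong \Hom\bigl((\EE\otimes M)_{\Delta}, V\bigr),
\]
which is the desired isomorphism \eqref{Z1withE_iso}, and functoriality in $V$ is inherited from the functoriality already recorded in \Cref{univ_prop_of_EE} and in \eqref{induction}. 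I expect the main obstacle to be purely bookkeeping: keeping straight the several commuting $\Delta$-actions (the one from right translation on $\Ind$, the diagonal twist involving $M$, and the induced action on $\EE=\Gamma_2^{\ab}\oplus I_{\Delta}$ given by \eqref{ast_action}) and verifying that under the adjunction and under \Cref{proj_form} they assemble exactly into the dual of the diagonal action on $\EE\otimes M$; none of the individual verifications is hard, but care is needed to ensure the invariants are taken with respect to the correct action at each stage.
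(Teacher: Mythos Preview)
Your proposal is correct and follows essentially the same route as the paper: apply \Cref{univ_prop_of_EE} with $\Hom(M,V)$ in place of $V$, use the tensor--hom adjunction $\Hom(\EE,\Hom(M,V))\cong\Hom(\EE\otimes M,V)$, take $\Delta$-invariants, and invoke \Cref{proj_form}. Your discussion of how the diagonal $\Delta$-action on $\Ind^{\Gamma_1}_{\Gamma_2}\Hom(M,V)$ decomposes as the action \eqref{action_Delta} combined with functoriality in the coefficient module is exactly the bookkeeping the paper suppresses when it simply asserts that the isomorphisms from \Cref{univ_prop_of_EE} are ``$\Delta$-equivariant''.
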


\begin{proof} \Cref{univ_prop_of_EE} gives us  $\Delta$-equivariant isomorphisms: 
\begin{equation*}
\Hom( \mathcal E\otimes M, V) \cong \Hom ( \mathcal E, \Hom(M, V)) \cong Z^1(\Gamma_1, \Ind^{\Gamma_1}_{\Gamma_2} \Hom(M, V)).
\end{equation*}
The assertion follows after taking $\Delta$-invariants and applying \Cref{proj_form}.
\end{proof}

The exact sequence $0\rightarrow I_{\Delta}\rightarrow \ZZ[\Delta]\rightarrow \ZZ\rightarrow 0$ 
remains exact after tensoring with $M$. Taking $\Delta$-coinvariants yields an exact sequence 
\begin{equation*}
0\rightarrow H_1(\Delta, M)\rightarrow (I_{\Delta}\otimes M)_{\Delta} \rightarrow M \rightarrow M_{\Delta}\rightarrow 0.
\end{equation*}

This gives us a surjection $(I_{\Delta}\otimes M)_{\Delta} \twoheadrightarrow I_{\Delta} M$. 
By composing this map with the surjection $(\mathcal E \otimes M)_{\Delta} \twoheadrightarrow
(I_{\Delta}\otimes M)_{\Delta}$ induced by 
\eqref{Eextension} we obtain a surjection $(\mathcal E\otimes M)_{\Delta}\twoheadrightarrow I_{\Delta} M$. 
This in turn yields an injection 
$$\Hom(I_{\Delta} M, V)\hookrightarrow \Hom((\mathcal E\otimes M)_{\Delta}, V).$$ 

\begin{lem}\label{image_boundaries} The isomorphism \eqref{Z1withE_iso} in \Cref{Z1withE} identifies the space
of $1$-co\-boun\-daries $B^1(\Gamma_1, \Hom(M, V))$ with  a subgroup of $\Hom(I_{\Delta} M, V)$.
Moreover, it induces an isomorphism between the two groups if $V$ is divisible. 
\end{lem}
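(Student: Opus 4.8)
The plan is to reduce everything to the explicit construction of the isomorphism \eqref{Z1withE_iso} together with one homological input. Recall that \eqref{Z1withE_iso} is obtained from the $\Delta$-equivariant isomorphisms of \Cref{univ_prop_of_EE}, applied with $V$ there replaced by $\Hom(M,V)$, namely
$$\Hom(\mathcal E\otimes M, V)\cong\Hom\bigl(\mathcal E,\Hom(M,V)\bigr)\cong Z^1\bigl(\Gamma_1,\Ind^{\Gamma_1}_{\Gamma_2}\Hom(M,V)\bigr),$$
by passing to $\Delta$-invariants and using \Cref{proj_form}. Under these isomorphisms the subgroup $B^1(\Gamma_1,\Ind^{\Gamma_1}_{\Gamma_2}\Hom(M,V))$ of $1$-coboundaries corresponds, by \eqref{section_shap} and \eqref{B_one}, exactly to the direct summand $\Hom(I_{\Delta},\Hom(M,V))=\Hom(I_{\Delta}\otimes M,V)$ of $\Hom(\mathcal E,\Hom(M,V))=\Hom(\mathcal E\otimes M,V)$; moreover, since $\Gamma_2^{\ab}\otimes M$ is a $\Delta$-submodule of $\mathcal E\otimes M$ with quotient $I_{\Delta}\otimes M$, this identification is $\Delta$-equivariant. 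Passing to $\Delta$-invariants it carries $B^1(\Gamma_1,\Ind^{\Gamma_1}_{\Gamma_2}\Hom(M,V))^{\Delta}$ isomorphically onto $\Hom((I_{\Delta}\otimes M)_{\Delta},V)\subseteq\Hom((\mathcal E\otimes M)_{\Delta},V)$.

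I would then track a single coboundary. Let $\iota\colon\Hom(M,V)\hookrightarrow\Ind^{\Gamma_1}_{\Gamma_2}\Hom(M,V)$ be the $\Gamma_1$-equivariant embedding of \Cref{proj_form} onto the $\Delta$-invariants. For $\phi\in\Hom(M,V)$, the $\Gamma_1$-equivariance of $\iota$ shows that \eqref{Z1withE_iso} carries the $1$-coboundary $g\mapsto(g-1)\phi$ to the element of $\Hom((\mathcal E\otimes M)_{\Delta},V)$ which, via \Cref{univ_prop_of_EE}, corresponds to the $1$-coboundary $g\mapsto(g-1)\iota(\phi)$ of $\Ind^{\Gamma_1}_{\Gamma_2}\Hom(M,V)$ (this is $\Delta$-fixed, as the $\Delta$- and $\Gamma_1$-actions commute). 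By \eqref{B_one} the latter corresponds to the unique function $f\colon\Delta\to\Hom(M,V)$ with $f(1)=0$ and $(g-1)f=(g-1)\iota(\phi)$, that is $f=\iota(\phi)-\phi$; inserting the formula for $\iota$ from the proof of \Cref{proj_form} gives $f(c)=(c-1)\cdot\phi$, so the associated homomorphism $\alpha_{\phi}\colon I_{\Delta}\otimes M\to V$ is $(c-1)\otimes m\mapsto\phi(c^{-1}m-m)$. This is exactly $\phi$ composed with the homomorphism $I_{\Delta}\otimes M\to M$ inducing the map $(I_{\Delta}\otimes M)_{\Delta}\to M$ of the four-term exact sequence preceding the lemma, whose image is $I_{\Delta}M$. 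Hence, tracing back through \eqref{Z1withE_iso}, the coboundary $g\mapsto(g-1)\phi$ is carried to $\phi|_{I_{\Delta}M}$ precomposed with the surjection $(\mathcal E\otimes M)_{\Delta}\twoheadrightarrow I_{\Delta}M$, i.e.\ to an element of the image of $\Hom(I_{\Delta}M,V)$. As $\phi$ is arbitrary, \eqref{Z1withE_iso} identifies $B^1(\Gamma_1,\Hom(M,V))$ with the image of the restriction map $\Hom(M,V)\to\Hom(I_{\Delta}M,V)$ attached to $I_{\Delta}M\hookrightarrow M$, which is a subgroup of $\Hom(I_{\Delta}M,V)$. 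This proves the first assertion.

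For the second assertion it only remains to observe that this restriction map is surjective when $V$ is divisible. Applying $\Hom_{\ZZ}(-,V)$ to $0\to I_{\Delta}M\to M\to M_{\Delta}\to 0$ yields an exact sequence $\Hom(M,V)\to\Hom(I_{\Delta}M,V)\to\Ext^1_{\ZZ}(M_{\Delta},V)$, and $\Ext^1_{\ZZ}(M_{\Delta},V)=0$ since a divisible abelian group is an injective $\ZZ$-module. Therefore $B^1(\Gamma_1,\Hom(M,V))=\Hom(I_{\Delta}M,V)$ in that case.

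The step I expect to be the main obstacle is the bookkeeping in the middle paragraph: one has to run the explicit Shapiro section of \Cref{inv} and \Cref{4_1}, the splitting \eqref{section_shap}--\eqref{B_one}, and the identification of $\Delta$-invariants from \Cref{proj_form} simultaneously, keeping careful track of the inversions $c\mapsto c^{-1}$ produced by the isomorphism used in the proof of \Cref{proj_form}. Once the formula $\alpha_{\phi}\colon(c-1)\otimes m\mapsto\phi(c^{-1}m-m)$ is established, recognising it as $\phi$ composed with the natural surjection onto $I_{\Delta}M$ and deducing the divisible case are both routine.
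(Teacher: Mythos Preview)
Your proposal is correct and follows essentially the same approach as the paper: both track an explicit coboundary $g\mapsto(g-1)\phi$ through the inclusion $\iota$ into the induced module, subtract the constant function $\phi$ to normalise at $1$, and arrive at the formula $(c-1)\otimes m\mapsto\phi(c^{-1}m-m)$, then invoke injectivity of divisible $V$ for the converse. Your first paragraph adds a clean structural remark (that $B^1$ of the induced module corresponds $\Delta$-equivariantly to $\Hom(I_\Delta\otimes M,V)$) which the paper leaves implicit, but the actual computation and the divisibility argument are identical.
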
 

\begin{proof}
    The isomorphism $(\ZZ[\Delta]\otimes M)_{\Delta}\cong M$ for the diagonal action of $\Delta$
    on $\ZZ[\Delta]\otimes M$ is realised by the map $c\otimes m \mapsto c^{-1} m$. The image 
    of $I_{\Delta}\otimes M$ under this map is equal to $I_{\Delta}M$. Thus 
    the isomorphism $$\vartheta: \Hom(M, V) \overset{\cong}{\longrightarrow} (\Ind^{\Gamma_1}_{\Gamma_2} \Hom(M,V))^{\Delta}$$ in \Cref{proj_form} is given explicitly by 
    $$[\vartheta(\alpha)(c)](m)=\alpha(c^{-1}m), \quad \forall c\in \Delta, \quad \forall m\in M .$$
    
    Let $b\in B^1(\Gamma_1, \Hom(M,V))$ be a boundary with $b(g)=(g-1)\alpha$ for all $g\in \Gamma_1$. Its image in 
    $B^1(\Gamma_1, \Ind^{\Gamma_1}_{\Gamma_2}\Hom(M,V))$ is the boundary 
    $b'$ given by $b'(g)= (g-1) \vartheta(\alpha)$. The constant function $f'(c)=\alpha$ for all $c\in \Delta$
    is $\Gamma_1$-invariant thus $$b'(g)= (g-1)( \vartheta(\alpha) - f'), \quad \forall g\in \Gamma_1.$$ 
    Since $(\vartheta(\alpha)-f')(1)= \alpha -\alpha =0$, by \Cref{univ_prop_of_EE} $b'=b_{\beta}$, where
    $\beta: I_{\Delta}\rightarrow \Hom(M, V)$ is given by 
    $$ [\beta(c-1)](m)= [(\vartheta(\alpha)- f')(c)](m)= \alpha(c^{-1}m)-\alpha(m)= \alpha(c^{-1}m -m).$$
    We conclude that the image of $B^1(\Gamma_1, \Hom(M,V))$ under the 
    isomorphism in \Cref{Z1withE} is contained in $\Hom(I_{\Delta}M, V)$.  
    
    Conversely, if we start with a homomorphism $\beta': I_{\Delta}M \rightarrow V$ then 
    by pulling it back under the surjection $I_{\Delta}\otimes M \twoheadrightarrow I_{\Delta}M$ 
    we obtain a homomorphism $\beta: I_{\Delta}\otimes M \rightarrow V$ given by 
    $\beta( (c-1)\otimes m)= \beta'(c^{-1}m -m)$. The corresponding coboundary 
    $b_{\beta} \in B^1(\Gamma_1, \Ind^{\Gamma_1}_{\Gamma_2}\Hom(M,V))$ is given by 
    $b_{\beta}(g)= (g-1)f_{\beta}$, where
    $$[f_{\beta}(c)](m)=\beta((c-1)\otimes m)= \beta'(c^{-1} m -m).$$  
    If $V$ is divisible then $V$ is injective in $\Ab$ and the exact sequence $$0\rightarrow I_{\Delta}M \rightarrow M \rightarrow M_{\Delta}\rightarrow 0$$ gives rise to an exact sequence 
    $$0\rightarrow \Hom(M_{\Delta}, V)\rightarrow \Hom(M, V)\rightarrow \Hom (I_{\Delta}M, V)\rightarrow 0$$
    and we conclude that there is $\alpha\in \Hom(M, V)$ mapping to $\beta'$, so that 
    $$ \beta'(c^{-1} m -m)=\alpha(c^{-1}m) - \alpha(m).$$
    Let $f'$ be the constant function $f'(c)=\alpha$ for all $c\in \Delta$. Then 
    the boundary $g\mapsto (g-1)(f_{\beta}+f')$ is equal to $b_{\beta}$ and
    $f_{\beta}+f'= \vartheta(\alpha)$.
\end{proof}

By taking $V= (\mathcal E\otimes M)_{\Delta}$ in \Cref{Z1withE} we obtain a obtain a natural cocycle 
$$\Phi_{\nat}\in Z^1(\Gamma_1, \Hom(M, (\mathcal E\otimes M)_{\Delta})),$$ 
which corresponds to the identity in $\Hom( (\mathcal E\otimes M)_{\Delta}, (\mathcal E\otimes M)_{\Delta})$. 

\begin{lem}\label{lem:univ_coeff}
    If $V$ is any abelian group then for all $\Gamma_1$-modules $N$ and all $i\ge 0$ we have a canonical map
    \begin{equation}\label{univ_coeff}
    H^i(\Gamma_1, \Hom(N, V)) \rightarrow \Hom( H_i(\Gamma_1, N), V),
    \end{equation}
    which is an isomorphism if $V$ is divisible. 
\end{lem}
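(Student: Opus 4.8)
The plan is to reduce the statement to the classical universal coefficient theorem by passing to a projective resolution. First I would fix a projective resolution $P_\bullet \to \ZZ$ of the trivial $\ZZ[\Gamma_1]$-module $\ZZ$. By definition $H_i(\Gamma_1, N)$ is the $i$-th homology of the chain complex $C_\bullet \colonequals P_\bullet \otimes_{\ZZ[\Gamma_1]} N$, while $H^i(\Gamma_1, \Hom(N, V))$ is the $i$-th cohomology of the cochain complex $\Hom_{\ZZ[\Gamma_1]}(P_\bullet, \Hom(N, V))$. The tensor–hom adjunction gives natural isomorphisms $\Hom_{\ZZ[\Gamma_1]}(P_i, \Hom(N, V)) \cong \Hom(P_i \otimes_{\ZZ[\Gamma_1]} N, V) = \Hom(C_i, V)$ that are compatible with the differentials (which on both sides are induced by the differential of $P_\bullet$), so they assemble to an isomorphism of cochain complexes $\Hom_{\ZZ[\Gamma_1]}(P_\bullet, \Hom(N, V)) \cong \Hom(C_\bullet, V)$, and hence $H^i(\Gamma_1, \Hom(N, V)) \cong H^i(\Hom(C_\bullet, V))$.

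It then suffices to exhibit, for an arbitrary chain complex $C_\bullet$ of abelian groups, a natural evaluation map $H^i(\Hom(C_\bullet, V)) \to \Hom(H_i(C_\bullet), V)$ which is an isomorphism whenever $V$ is divisible. For the map: a cocycle $\phi \in \Hom(C_i, V)$ satisfies $\phi \circ \partial_{i+1} = 0$, so $\phi$ vanishes on the boundaries $B_i = \Image \partial_{i+1}$; restricting $\phi$ to the cycles $Z_i = \Ker \partial_i$ and passing to the quotient produces a homomorphism $H_i(C_\bullet) = Z_i/B_i \to V$. A coboundary $\phi = \psi \circ \partial_i$ vanishes on $Z_i$, so the assignment is well defined, and it is evidently natural in $C_\bullet$ and in $V$; tracing it back through the adjunction of the previous paragraph yields precisely the canonical map \eqref{univ_coeff}.

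Finally, when $V$ is divisible it is injective in $\Ab$, so $\Hom(-, V)$ is an exact contravariant functor. Applying it to the short exact sequences $0 \to Z_i \to C_i \to B_{i-1} \to 0$ and $0 \to B_i \to Z_i \to H_i(C_\bullet) \to 0$ and chasing the resulting diagrams identifies $H^i(\Hom(C_\bullet, V))$ with $\Hom(H_i(C_\bullet), V)$, and one checks that this identification is the evaluation map just constructed. I do not expect any real obstacle: the only points requiring care are the compatibility of the adjunction isomorphism with the differentials — so that it genuinely is an isomorphism of complexes — and the verification that the abstract identification in the divisible case coincides with the explicit evaluation map, both of which are routine diagram chases. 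The case $i = 0$, where $\Hom(N, V)^{\Gamma_1} = \Hom(N_{\Gamma_1}, V)$ makes the map an isomorphism for every $V$, serves as a consistency check.
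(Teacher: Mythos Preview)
Your proposal is correct and follows essentially the same approach as the paper: both fix a projective resolution $P_\bullet \to \ZZ$, use the tensor--hom adjunction to identify $\Hom_{\ZZ[\Gamma_1]}(P_\bullet, \Hom(N,V))$ with $\Hom((P_\bullet \otimes_{\ZZ[\Gamma_1]} N), V)$, construct the evaluation map $H^i(\Hom(C_\bullet, V)) \to \Hom(H_i(C_\bullet), V)$ for an arbitrary complex of abelian groups, and observe that divisibility of $V$ means injectivity in $\Ab$ so that $\Hom(-,V)$ is exact. The only difference is expository order---the paper treats the abstract complex first and then specialises, while you specialise first---and your argument spells out the short exact sequences in the divisible case in slightly more detail.
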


\begin{proof} 
    If $(C_{\bullet}, d_{\bullet})$ is a complex of abelian groups then we
    let $Z_n = \ker(d_n:C_n\rightarrow C_{n-1})$ and $Z^n=\ker(d_{n+1}^{\ast}: \Hom(C_n, V)\rightarrow 
    \Hom(C_{n+1}, V))$. The evaluation pairing $Z_n \times Z^n\rightarrow V$ induces a bilinear map 
    $H_n(C_{\bullet})\times H^n(\Hom(C_{\bullet}, V))\rightarrow V$, which induces 
    a canonical map
    $H^n(\Hom(C_{\bullet}, V))\rightarrow \Hom (H_n(C_{\bullet}), V)$. If $V$ is divisible then $V$ is 
    injective in $\Ab$ and the map is an isomorphism.
    
    Let $P_{\bullet}\twoheadrightarrow \ZZ$ be a resolution of $\ZZ$ by projective $\ZZ[\Gamma_1]$-modules. The complex $\Hom_{\Gamma_1}(P_{\bullet}, \Hom(N, V))\cong 
    \Hom ( (P_{\bullet}\otimes N)_{\Gamma_1}, V)$ computes the cohomology groups $H^i(\Gamma_1, \Hom(N, V))$. 
    The complex $(P_{\bullet}\otimes N)_{\Gamma_1}$ computes the homology groups $H_i(\Gamma_1, N)$.
    We apply the previous discussion to $C_{\bullet}\colonequals (P_{\bullet}\otimes N)_{\Gamma_1}$ to obtain the 
    required homomorphism.
\end{proof}

Applying \Cref{lem:univ_coeff} with $N=M$ and $V=(\mathcal E\otimes M)_{\Delta}$
we obtain a homomorphism $\varphi_{\nat}: H_1(\Gamma_1, M)\rightarrow (\mathcal E \otimes M)_{\Delta}$ corresponding 
to the cohomology class $[\Phi_{\nat}]$.

\begin{lem}\label{image_nat} For all abelian groups $V$ the composition
\begin{equation}\label{eq_nat}
\begin{split}
\Hom((\mathcal E \otimes M)_{\Delta}, V) & \overset{\eqref{Z1withE_iso}}{\longrightarrow} Z^1(\Gamma_1, \Hom(M, V))\twoheadrightarrow \\
&H^1(\Gamma_1, \Hom(M, V)) \xrightarrow{\eqref{univ_coeff}} \Hom(H_1(\Gamma_1, M), V)
\end{split}
\end{equation}
is given by $\psi\mapsto \psi\circ \varphi_{\nat}$.
\end{lem}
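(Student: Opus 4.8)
The plan is to deduce this from the Yoneda lemma, once one checks that each of the three maps composed in \eqref{eq_nat} is functorial in $V$. First I would record these functorialities. The isomorphism \eqref{Z1withE_iso} is functorial in $V$ by \Cref{Z1withE}. The surjection $Z^1(\Gamma_1, \Hom(M,V)) \twoheadrightarrow H^1(\Gamma_1, \Hom(M,V))$ is functorial because a homomorphism $V\to W$ induces $\Hom(M,V)\to \Hom(M,W)$, hence a map of cochain complexes compatible with the passage to cocycles and to cohomology. Finally, the map \eqref{univ_coeff} is functorial in $V$: in the proof of \Cref{lem:univ_coeff} it was constructed from the evaluation pairing $Z_n\times Z^n\to V$ attached to the \emph{fixed} complex $C_\bullet=(P_\bullet\otimes M)_{\Gamma_1}$ (for a projective resolution $P_\bullet\twoheadrightarrow\ZZ$ of $\ZZ[\Gamma_1]$-modules), and this pairing is visibly compatible with any homomorphism $V\to W$.

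Granting this, the composition \eqref{eq_nat} is a natural transformation of functors $\Ab\to\Ab$:
$$\Hom\big((\mathcal E\otimes M)_{\Delta},\,-\big)\ \Longrightarrow\ \Hom\big(H_1(\Gamma_1, M),\,-\big).$$
By the Yoneda lemma every such transformation has the form $\psi\mapsto\psi\circ\varphi$ for a unique $\varphi\in\Hom\big(H_1(\Gamma_1,M),(\mathcal E\otimes M)_{\Delta}\big)$, and $\varphi$ is recovered by evaluating the transformation at $V=(\mathcal E\otimes M)_{\Delta}$ on the identity morphism.

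It then remains to identify $\varphi$ with $\varphi_{\nat}$, and this is immediate from the definitions: at $V=(\mathcal E\otimes M)_{\Delta}$, the identity of $\Hom\big((\mathcal E\otimes M)_{\Delta},(\mathcal E\otimes M)_{\Delta}\big)$ is sent by \eqref{Z1withE_iso} to the cocycle $\Phi_{\nat}$, then by $Z^1\twoheadrightarrow H^1$ to the class $[\Phi_{\nat}]$, and then by \eqref{univ_coeff} to $\varphi_{\nat}$ — the last step being precisely the definition of $\varphi_{\nat}$. Hence $\varphi=\varphi_{\nat}$ and the composition \eqref{eq_nat} equals $\psi\mapsto\psi\circ\varphi_{\nat}$. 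The only slightly delicate point in this argument is the naturality of \eqref{univ_coeff} in $V$; once the construction in \Cref{lem:univ_coeff} is recalled this is routine, so I do not anticipate a genuine obstacle.
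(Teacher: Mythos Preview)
Your proposal is correct and follows essentially the same approach as the paper: both arguments rest on the functoriality of each step in $V$, evaluate the composition at $V=(\mathcal E\otimes M)_{\Delta}$ on the identity, and trace through the definitions of $\Phi_{\nat}$ and $\varphi_{\nat}$. The only difference is cosmetic --- you invoke the Yoneda lemma explicitly, whereas the paper unwinds the same principle as a commutative-diagram chase.
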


\begin{proof} 
    This follows from the fact that all our constructions are functorial in $V$. If 
    we denote the four functors appearing in \eqref{eq_nat} with $A$, $B$, $C$, $D$ then 
    for a homomorphism of abelian groups $\psi:W\rightarrow V$ we obtain a diagram:
    \begin{center}
        \begin{tikzcd}
            A(W)\arrow[d, "\psi\circ"] \arrow[ r] & 
            B(W) \arrow[d, "\psi\circ"] \arrow[r] & C(W) \arrow[d, "\psi\circ"] \arrow[r] & D(W)
            \arrow[d, "\psi\circ"] \\
            A(V)\arrow[r] & B(V)\ar[r] & C(V)\ar[r]& D(V)
        \end{tikzcd}
    \end{center}
    with commutative squares. If we take $W=(\mathcal E \otimes M)_{\Delta}$ then 
    the identity map in $A((\mathcal E \otimes M)_{\Delta})$ maps to $\varphi_{\nat}$ in $D((\mathcal E \otimes M)_{\Delta})$ by construction and hence to $\psi\circ \varphi_{\nat}$ in $D(V)$. Since
    the identity maps to $\psi$ in $A(V)$ we obtain the assertion.
\end{proof}

\begin{prop}\label{prop_represent} There is an exact sequence of abelian groups
\begin{equation}\label{exact_seq_nat}
0\rightarrow H_1(\Gamma_1, M) \xrightarrow{\varphi_{\nat}} (\mathcal E \otimes M)_{\Delta}
\xrightarrow{q} I_{\Delta} M\rightarrow 0
\end{equation}
which is functorial in $M$ and for all abelian groups $V$ the diagram 
\begin{center}
    \begin{tikzcd}
        B^1(\Gamma_1, \Hom(M, V)) \arrow[hookrightarrow, d]\arrow[hookrightarrow, r] & Z^1(\Gamma_1, \Hom(M, V))\ar[r,"\eqref{univ_coeff}"]\arrow[d, "\cong"]  & \Hom( H_1(\Gamma_1, M), V) \arrow[d,"="]\\
        \Hom (I_{\Delta}M, V)  \arrow[hookrightarrow, r] & 
        \Hom((\mathcal E\otimes M)_{\Delta}, V) \arrow[r, "\varphi_{\nat}^{\ast}"] & \Hom(H_1(\Gamma_1, M), V)   
        \end{tikzcd}
    \end{center}
commutes. 
\end{prop}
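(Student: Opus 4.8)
The plan is to establish the exact sequence \eqref{exact_seq_nat} by reducing to the case of divisible coefficients via the commutative diagram, which itself commutes for formal reasons.

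First I would check that the two squares of the diagram commute for every abelian group $V$. The right-hand square is precisely the content of \Cref{image_nat}, the middle vertical arrow being the inverse of the isomorphism \eqref{Z1withE_iso} of \Cref{Z1withE}. The left-hand square commutes by the very construction of the map $B^1(\Gamma_1,\Hom(M,V))\hookrightarrow \Hom(I_\Delta M,V)$ in \Cref{image_boundaries}: that map was defined so that postcomposing it with the inclusion $\Hom(I_\Delta M,V)\hookrightarrow \Hom((\mathcal E\otimes M)_\Delta,V)$ induced by the surjection $q$ recovers the restriction of \eqref{Z1withE_iso} to the subgroup of coboundaries. No hypothesis on $V$ is used here, so the whole diagram will commute for all $V$ once we know its bottom row is a complex; the latter will drop out below.

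Next I would take $V$ to be divisible. Then the left vertical arrow is an isomorphism by \Cref{image_boundaries}, the middle one by \Cref{Z1withE}, and the right one is the identity; moreover \eqref{univ_coeff} is an isomorphism by \Cref{lem:univ_coeff}, so the top horizontal map $Z^1(\Gamma_1,\Hom(M,V))\to \Hom(H_1(\Gamma_1,M),V)$ — being the composite of the quotient $Z^1\twoheadrightarrow H^1$ with the isomorphism \eqref{univ_coeff} — is surjective with kernel $B^1(\Gamma_1,\Hom(M,V))$. Hence the top row is a short exact sequence, and transporting it through the vertical isomorphisms shows that
$$ 0 \to \Hom(I_\Delta M,V)\xrightarrow{q^{*}} \Hom((\mathcal E\otimes M)_\Delta,V)\xrightarrow{\varphi_{\nat}^{*}} \Hom(H_1(\Gamma_1,M),V)\to 0 $$
is exact for every divisible $V$.

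Finally I would specialise to $V=\QQ/\ZZ$: the last display then says that $\Hom(-,\QQ/\ZZ)$ carries the sequence $H_1(\Gamma_1,M)\xrightarrow{\varphi_{\nat}}(\mathcal E\otimes M)_\Delta\xrightarrow{q}I_\Delta M$, padded with zeros, to an exact sequence. Since $\QQ/\ZZ$ is an injective cogenerator of $\Ab$, the contravariant functor $\Hom(-,\QQ/\ZZ)$ is exact and faithful, hence reflects exact sequences; in particular it forces $q\circ\varphi_{\nat}=0$, and then $0\to H_1(\Gamma_1,M)\xrightarrow{\varphi_{\nat}}(\mathcal E\otimes M)_\Delta\xrightarrow{q}I_\Delta M\to 0$ is exact, which is \eqref{exact_seq_nat}. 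Functoriality in $M$ is inherited from the functoriality in $M$ of each term ($\mathcal E$ itself does not depend on $M$) and of the maps $q$ and $\varphi_{\nat}$, the latter being defined through the natural transformation \eqref{univ_coeff}; the commutativity of the diagram for all $V$ is then what was shown in the first step. I expect the only genuinely delicate point to be the bookkeeping in identifying the two bottom horizontal maps with $q^{*}$ and $\varphi_{\nat}^{*}$ — equivalently, after dualising, with the Pontryagin duals of $q$ and $\varphi_{\nat}$ — since $\varphi_{\nat}$ is pinned down only indirectly via \Cref{lem:univ_coeff} and \Cref{image_nat}; everything else is formal, resting on exactness and faithfulness of Pontryagin duality.
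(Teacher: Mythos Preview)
Your proposal is correct and follows essentially the same route as the paper's proof: establish the commutative diagram via \Cref{image_boundaries} and \Cref{image_nat}, use divisibility of $V$ to make both outer vertical arrows isomorphisms and the top row short exact, then dualise to obtain \eqref{exact_seq_nat}. The only cosmetic difference is that you invoke the single injective cogenerator $\QQ/\ZZ$ to reflect exactness, whereas the paper tests vanishing against all divisible $V$; these are equivalent formulations of the same argument.
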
 

\begin{proof} 
    Lemmas \ref{image_boundaries} and \ref{image_nat} give us the commutative diagram 
    above. Moreover, if $V$ is divisible then by \Cref{lem:univ_coeff} the top row is exact, the last arrow in the top row is surjective
    and the first vertical arrow is an isomorphism. 
    We deduce that for all divisible $V$ the maps $q:(\mathcal E\otimes M)_{\Delta}\twoheadrightarrow I_{\Delta}M$ and $\varphi_{\nat}: H_1(\Gamma, M)\rightarrow (\mathcal E\otimes M)_{\Delta}$ induce an exact sequence
    \begin{equation}\label{eq_hom_V}
    0\rightarrow \Hom(I_{\Delta}M, V)\rightarrow \Hom((\mathcal E\otimes M)_{\Delta}, V)\rightarrow 
    \Hom(H_1(\Gamma, M), V)\rightarrow 0.
    \end{equation}
    Since 
    divisible groups are precisely injective objects in $\Ab$, which has enough injectives, an 
    abelian group $A$ is zero if and only if $\Hom(A, V)=0$ for all divisible groups $V$. Using
    this and the exactness of \eqref{eq_hom_V} we obtain that $\mathrm{Im}(q\circ \varphi_{\nat})=0$, thus 
    \eqref{exact_seq_nat} is a complex and a further application of the same argument shows that 
    \eqref{exact_seq_nat} is exact.
\end{proof}

\begin{cor}\label{cor_rings_Z1} The functor $\CRing \rightarrow \Ab$ given by 
$$A\mapsto Z^1(\Gamma_1, \Hom(M, A^{\times}))$$
is represented by the group algebra $\ZZ[(\mathcal E\otimes M)_{\Delta}]$.  
\end{cor}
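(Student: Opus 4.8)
The plan is to deduce Corollary~\ref{cor_rings_Z1} directly from \Cref{Z1withE} (Proposition~\ref{Z1withE}) by a formal adjunction argument: the functor $V \mapsto Z^1(\Gamma_1, \Hom(M, V))$ on $\Ab$ is corepresented by the abelian group $N \colonequals (\mathcal E \otimes M)_{\Delta}$, and we want to upgrade this to a corepresentation on $\CRing$ of the functor $A \mapsto Z^1(\Gamma_1, \Hom(M, A^{\times}))$. First I would note that for a commutative ring $A$, the group $A^{\times}$ is just the underlying abelian group of the units, so by \Cref{Z1withE} applied with $V = A^{\times}$ there is a natural-in-$A^{\times}$ (hence natural-in-$A$, since $A \mapsto A^{\times}$ is a functor $\CRing \to \Ab$) bijection
\begin{equation*}
Z^1(\Gamma_1, \Hom(M, A^{\times})) \cong \Hom_{\Ab}(N, A^{\times}).
\end{equation*}

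Next I would invoke the standard adjunction between the group algebra functor and the units functor: for any abelian group $N$ and any commutative ring $A$,
\begin{equation*}
\Hom_{\Ab}(N, A^{\times}) \cong \Hom_{\CRing}(\ZZ[N], A),
\end{equation*}
naturally in both $N$ and $A$; this is exactly the statement that $\ZZ[-] : \Ab \to \CRing$ is left adjoint to $(-)^{\times} : \CRing \to \Ab$, and it is precisely how $\DD(N) = \Spec \ZZ[N]$ was described in \Cref{gen_tori} (there over $\OO$, here over $\ZZ$). Composing the two displayed natural bijections gives a natural isomorphism of functors $\CRing \to \Set$ (in fact $\Ab$, since all the bijections are group isomorphisms for the pointwise group structures),
\begin{equation*}
Z^1(\Gamma_1, \Hom(M, A^{\times})) \cong \Hom_{\CRing}(\ZZ[N], A) = \Hom_{\CRing}(\ZZ[(\mathcal E \otimes M)_{\Delta}], A),
\end{equation*}
which is exactly the assertion that $\ZZ[(\mathcal E \otimes M)_{\Delta}]$ represents the functor in question.

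The only point that needs a word of care — and it is the closest thing to an obstacle, though a mild one — is checking that the bijection of \Cref{Z1withE} really is natural in $A$ and not merely in the abelian group $A^{\times}$; but this is immediate because a ring homomorphism $A \to A'$ induces a homomorphism of abelian groups $A^{\times} \to A'^{\times}$, and \Cref{Z1withE} already records functoriality in $V$, so no extra work is required. One should also observe that the group structure on $Z^1(\Gamma_1, \Hom(M, A^{\times}))$ — pointwise multiplication of cocycles, using the multiplicative structure of $A^{\times}$ — matches the group structure on $\Hom_{\Ab}(N, A^{\times})$ and hence on $\Hom_{\CRing}(\ZZ[N], A)$ transported from the Hopf-algebra (that is, group-scheme) structure on $\ZZ[N]$; this again is automatic from \Cref{Z1withE}'s assertion that the isomorphism there is one of abelian groups. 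Thus the corollary follows with essentially no further computation.
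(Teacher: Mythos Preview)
Your proposal is correct and takes essentially the same approach as the paper: apply \Cref{Z1withE} with $V=A^{\times}$, then use the standard adjunction $\Hom_{\Ab}(W,A^{\times})\cong\Hom_{\CRing}(\ZZ[W],A)$ with $W=(\mathcal E\otimes M)_{\Delta}$. The paper's proof is a terse two-line version of exactly this argument.
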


\begin{proof} 
    If $W$ is an abelian group then we may identify $W$ with a subgroup 
    of units of the group ring $\ZZ[W]$. The map 
    \begin{equation*}\label{rings_groups}
    \Hom_{\CRing}(\ZZ[W], A) \rightarrow \Hom( W, A^{\times}), \quad \psi \mapsto \psi|_W
    \end{equation*}
    is an isomorphism. Applying this observation to 
    $W=(\mathcal E \otimes M)_{\Delta}$  and using \Cref{Z1withE} yields the assertion. 
\end{proof} 

\begin{cor}\label{M_ffr} If $M$ is a free $\ZZ$-module of finite rank then 
\begin{align}
    \ZZ[(\mathcal E\otimes M)_{\Delta}]\cong \ZZ[H_1(\Gamma_1, M)] [t_1^{\pm 1}, \ldots, t_s^{\pm 1}], \label{poly_over_H1}
\end{align}
where $s =\rank_{\ZZ} M -\rank_{\ZZ} M_{\Delta}$.
\end{cor}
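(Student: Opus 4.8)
The plan is to read the statement off directly from the short exact sequence \eqref{exact_seq_nat} of \Cref{prop_represent}, namely $0\to H_1(\Gamma_1, M) \xrightarrow{\varphi_{\nat}} (\mathcal E \otimes M)_{\Delta} \xrightarrow{q} I_{\Delta} M\to 0$. First I would observe that, since $M$ is free of finite rank over the noetherian ring $\ZZ$, the submodule $I_{\Delta}M\subseteq M$ is finitely generated and torsion-free, hence free of some finite rank. To identify that rank with $s$, I would tensor the tautological exact sequence $0\to I_{\Delta}M\to M\to M_{\Delta}\to 0$ with $\QQ$; flatness of $\QQ$ over $\ZZ$ gives $\rank_{\ZZ}I_{\Delta}M = \rank_{\ZZ}M - \rank_{\ZZ}M_{\Delta} = s$, so $I_{\Delta}M\cong \ZZ^{s}$ as abelian groups.

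Next, because $I_{\Delta}M$ is free, and hence projective, the sequence \eqref{exact_seq_nat} splits, yielding an isomorphism of abelian groups $(\mathcal E\otimes M)_{\Delta}\cong H_1(\Gamma_1,M)\oplus \ZZ^{s}$. (The splitting is not canonical, which is harmless since \eqref{poly_over_H1} only asserts an abstract isomorphism.)

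Finally I would pass to group algebras: for abelian groups $A$ and $B$ there is a natural ring isomorphism $\ZZ[A\oplus B]\cong \ZZ[A]\otimes_{\ZZ}\ZZ[B]$, and $\ZZ[\ZZ^{s}]\cong \ZZ[t_1^{\pm 1},\ldots,t_s^{\pm 1}]$ via sending the $i$-th standard generator of $\ZZ^s$ to $t_i$. Composing these isomorphisms with the one from the previous paragraph gives $\ZZ[(\mathcal E\otimes M)_{\Delta}]\cong \ZZ[H_1(\Gamma_1,M)]\otimes_{\ZZ}\ZZ[\ZZ^s]\cong \ZZ[H_1(\Gamma_1,M)][t_1^{\pm 1},\ldots,t_s^{\pm 1}]$, which is \eqref{poly_over_H1}. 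I do not expect any genuine obstacle here: the only steps meriting a sentence are the freeness of $I_{\Delta}M$ and the computation of its rank, both entirely routine, so the "main obstacle" amounts to nothing more than invoking \Cref{prop_represent} correctly and noting that a free module is projective.
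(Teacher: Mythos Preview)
Your proposal is correct and follows essentially the same route as the paper: both use the exact sequence \eqref{exact_seq_nat}, observe that $I_{\Delta}M\subseteq M$ is free of rank $s=\rank_{\ZZ}M-\rank_{\ZZ}M_{\Delta}$, split the sequence, and pass to group algebras. Your write-up is slightly more explicit about the rank computation and the group-algebra identifications, but there is no substantive difference.
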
 

\begin{proof}
    Since $I_{\Delta} M$ is a submodule of $M$, which is free of finite rank over $\ZZ$, 
    we have an isomorphism $I_{\Delta} M \cong \ZZ^{s}$, where $s$ is as above. By choosing 
    a section to the surjection $(\mathcal E\otimes M)_{\Delta}\twoheadrightarrow I_{\Delta}M$ in 
    \eqref{exact_seq_nat} we obtain an isomorphism
    $(\mathcal E\otimes M)_{\Delta} \cong H_1(\Gamma_1, M) \times \ZZ^s$ and this implies the assertion. 
\end{proof}

\begin{prop}\label{H1_sheafify} The map \eqref{univ_coeff} induces a map of presheaves $\CRing \to \Set$
    \begin{align}
        H^1(\Gamma_1, \Hom(M, (-)^{\times})) \to \Spec(\ZZ[H_1(\Gamma_1,M)]) \label{fpqc_sheafification}
    \end{align}
    which exhibits the right hand side as the fpqc sheafification of the left hand side.
\end{prop}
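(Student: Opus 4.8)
The plan is to work with the functors valued in $\Set$ obtained from $A \mapsto H^1(\Gamma_1, \Hom(M, A^\times))$ and $A \mapsto \Hom(H_1(\Gamma_1, M), A^\times)$, and to exploit that the latter functor is already a sheaf: indeed for any abelian group $W$ the functor $A \mapsto \Hom(W, A^\times)$ is represented by $\Spec(\ZZ[W])$, hence is an fpqc sheaf (representable functors are sheaves for the fpqc topology). So the target in \eqref{fpqc_sheafification} is a sheaf, and the map \eqref{univ_coeff} specialised to $N=M$ gives a natural transformation of presheaves from $H^1(\Gamma_1, \Hom(M, (-)^\times))$ to it. By the universal property of sheafification it then suffices to check that this map induces an isomorphism after fpqc sheafification, equivalently that it is a \emph{local isomorphism}: every section of the target is fpqc-locally in the image, and two sections of the source that agree in the target agree fpqc-locally.

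The key input is \Cref{lem:univ_coeff}: the comparison map $H^1(\Gamma_1, \Hom(M, V)) \to \Hom(H_1(\Gamma_1, M), V)$ is an isomorphism whenever $V$ is divisible. So first I would record the elementary fact that every commutative ring $A$ admits a faithfully flat ring map $A \to B$ with $B^\times$ divisible; concretely one may adjoin $p$-power roots of every unit for every prime $p$, e.g.\ take $B = \varinjlim_n A[x_u : u \in A^\times]/(x_u^{n!} - u)$ (a filtered colimit of finite free, hence faithfully flat, $A$-algebras), iterated countably many times; $B$ is faithfully flat over $A$ and $B^\times$ is divisible. For such $B$ the map \eqref{univ_coeff} is bijective by \Cref{lem:univ_coeff}. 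This immediately gives local surjectivity: any section over $A$ of the target, i.e.\ an element of $\Hom(H_1(\Gamma_1,M), A^\times)$, pulls back to $\Hom(H_1(\Gamma_1,M), B^\times)$ and there lifts uniquely to $H^1(\Gamma_1, \Hom(M, B^\times))$.

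For local injectivity, suppose $\xi, \xi' \in H^1(\Gamma_1, \Hom(M, A^\times))$ have the same image in $\Hom(H_1(\Gamma_1, M), A^\times)$. Pull back along $A \to B$ as above; since \eqref{univ_coeff} is injective over $B$, the images of $\xi,\xi'$ in $H^1(\Gamma_1, \Hom(M, B^\times))$ coincide. As $A \to B$ is faithfully flat this is exactly the statement that $\xi,\xi'$ become equal after an fpqc cover, which is what is needed. Combining, the natural map of presheaves becomes an isomorphism after fpqc sheafification, and since the target is already an fpqc sheaf, \eqref{fpqc_sheafification} exhibits $\Spec(\ZZ[H_1(\Gamma_1, M)])$ as the fpqc sheafification of $A \mapsto H^1(\Gamma_1, \Hom(M, A^\times))$.

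The main obstacle is purely bookkeeping: making precise the construction of a faithfully flat $A \to B$ with $B^\times$ divisible and checking that "agreeing over $B$" is genuinely the sheaf-theoretic equality condition for the fpqc topology (i.e.\ that a single faithfully flat map is a cover and that one may work with the single-object cover $\{ \Spec B \to \Spec A\}$ rather than arbitrary covers). Everything else is a formal consequence of \Cref{lem:univ_coeff} together with representability of $A \mapsto \Hom(W, A^\times)$ by $\Spec(\ZZ[W])$.
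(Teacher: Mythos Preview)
Your proposal is correct and follows essentially the same approach as the paper: both use that the target is representable (hence an fpqc sheaf), that every ring admits a faithfully flat extension $A\to B$ with $B^{\times}$ divisible, and that \Cref{lem:univ_coeff} then gives a bijection over $B$. The paper cites \cite[Lemma 4.1.1]{Zou} for the existence of such $B$ and is terser about the descent argument, whereas you sketch the construction of $B$ explicitly and spell out both local surjectivity and local injectivity; these are minor differences in presentation, not in substance.
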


\begin{proof}
    The right hand side is an fpqc sheaf, since it is representable \stackcite{023Q}.
    So it suffices to find for every ring $A$ and every homomorphism ${f : \ZZ[H_1(\Gamma_1,M)] \to A}$ an fpqc cover of $A$ and a descent datum in the left hand side, which maps to a descent datum in the right hand side, which descends to $f$.
    By \cite[Lemma 4.1.1]{Zou} for every ring $A$ there is a faithfully flat map $A \to B$, such that $B^{\times}$ is divisible. By \Cref{lem:univ_coeff} the map
    $$ H^1(\Gamma_1, \Hom(M, B^{\times})) \rightarrow \Hom_{\CRing}(\ZZ[H_1(\Gamma_1, M)], B) $$
    is bijective. So the canonical descent datum associated with $f$ and the map $A \to B$ comes from a descent datum in the right hand side.
\end{proof}

\section{Admissible representations}
\label{sec_adm_rep}

In this section, let $\OO$ be an arbitrary commutative ring, such that 
$\Spec \OO$ is connected. Let $G$ be a generalised torus over $\OO$, such that $G^0$ is split and such that $G/G^0$ is constant.
Let $\Pi : G \to G/G^0$ be the projection map.
We define the character lattice of $G^0$ by 
\begin{align}
    M \colonequals \Hom_{\OO\hyphen\mathrm{GrpSch}}(G^0, \Gm). \label{character_lattice}
\end{align}
We may identify  $G^0$  with the split torus $\DD(M)$ introduced in \Cref{gen_tori}.
We can write $G/G^0 = \underline\Delta$ for a finite group $\Delta$, where $\underline{\Delta} \colonequals \Spec(\Map(\Delta, \OO))$.
For any $\OO$-algebra $A$, we have a natural map $\Delta \to \underline\Delta(A) = \Hom_{\OO\hyphen\alg}(\Map(\Delta,\OO), A)$ defined by evaluation.
The $\OO$-group scheme $G$ acts on $G^0$ by conjugation  and this action factors over $G/G^0 = \underline\Delta$.
So we have a well-defined action map $G/G^0 \times G^0 \to G^0, ~(g,h) \mapsto ghg^{-1}$ with the property that for every $\OO$-algebra $A$ and every $g \in (G/G^0)(A)$, the map $G^0(A) \to G^0(A), ~h \mapsto g h g^{-1}$ is a group automorphism of $G^0(A)$.  This induces a (left) action of $\Delta$ on $M$ via \eqref{character_lattice}.

Let $\Gamma_1$ be an abstract group and let $\pi : \Gamma_1 \twoheadrightarrow \Delta$ be a surjective homomorphism with kernel $\Gamma_2$.
So we are in the situation of \Cref{sec_Z1}. 

\begin{defi}
    We say that a representation $\rho: \Gamma_1\rightarrow G(A)$ is \textit{admissible} if $\Pi \circ \rho : \Gamma_1 \to \underline{\Delta}(A)$, sends
    $\gamma \in \Gamma_1$ to the image of $\pi(\gamma) \in \Delta$ in $\underline\Delta(A)$.
\end{defi}
This terminology is motivated by admissible Galois representations into $L$-groups appearing in the Langlands correspondence, see \cite[\S9]{borel_corvallis}.

Let $\Rep^{\Gamma_1}_{G,\pi}(A)$ be the set of admissible representations $\rho: \Gamma_1\rightarrow G(A)$. 
The group $\DD(M)(A)$ acts on $\Rep^{\Gamma_1}_{G,\pi}(A)$ by conjugation. This defines a scheme-theoretic action
\begin{equation}\label{DM_action_defi}
\DD(M) \times \Rep^{\Gamma_1}_{G,\pi}\rightarrow \Rep^{\Gamma_1}_{G,\pi}.
\end{equation}

\begin{prop}\label{Z1_Rep}
    Assume, there is a representation $\rho_0 \in \Rep^{\Gamma_1}_{G,\pi}(\OO)$.
    Then 
    \begin{align}
        Z^1(\Gamma_1, \DD(M)(A)) \to \Rep^{\Gamma_1}_{G,\pi}(A), \quad \Phi \mapsto [\gamma \mapsto \Phi(\gamma) \rho_0(\gamma)] \label{eq_bij_Z1}
    \end{align}
    is a  bijection, which is natural in $A \in \OO\hyphen\alg$.
    In particular, $\Rep^{\Gamma_1}_{G,\pi}$ is representable by an $\OO$-algebra and \eqref{eq_bij_Z1} induces a natural isomorphism
    \begin{align}
        \OO(\Rep^{\Gamma_1}_{G,\pi}) \eqto \OO[(\mathcal E\otimes M)_{\Delta}]. \label{ORep_iso}
    \end{align}
    Moreover, \eqref{eq_bij_Z1} is $\DD(M)(A)$-equivariant and induces a natural bijection between the set of $\DD(M)(A)$-orbits in $\Rep^{\Gamma_1}_{G,\pi}(A)$ and $H^1(\Gamma_1, \DD(M)(A))$. In particular, \eqref{ORep_iso} is $\DD(M)$-equivariant.
\end{prop}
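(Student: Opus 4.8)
The plan is to exhibit an explicit inverse to \eqref{eq_bij_Z1}, then deduce representability and \eqref{ORep_iso} by transporting \Cref{cor_rings_Z1} (base changed to $\OO$) along the resulting natural isomorphism, and finally to read the equivariance assertions off the same computation together with Yoneda. First I would fix on $\DD(M)(A)=\Hom(M,A^{\times})$ the $\Gamma_1$-action induced through $\pi$ from the $\Delta$-action on $M$ of \eqref{character_lattice}, so that $Z^1(\Gamma_1,\DD(M)(A))$ is literally the group studied in \Cref{sec_Z1}. The key elementary fact is that, since the conjugation action of $G$ on $G^0$ factors through $G/G^0=\underline\Delta$ and $\rho_0$ is admissible, one has $\rho_0(\gamma)\,h\,\rho_0(\gamma)^{-1}=\gamma\cdot h$ for all $\gamma\in\Gamma_1$ and all $h\in G^0(A)=\DD(M)(A)$, where $\gamma\cdot(-)$ is the action just fixed. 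Now if $\rho\in\Rep^{\Gamma_1}_{G,\pi}(A)$, then $\Pi\circ\rho$ and $\Pi\circ\rho_0$ are both the standard homomorphism $\Gamma_1\xrightarrow{\pi}\Delta\to\underline\Delta(A)$, so $\gamma\mapsto\rho(\gamma)\rho_0(\gamma)^{-1}$ takes values in $\ker(\Pi(A)\colon G(A)\to\underline\Delta(A))=G^0(A)=\DD(M)(A)$, and using the displayed identity a one-line computation shows that it is a $1$-cocycle. This assignment $\rho\mapsto(\gamma\mapsto\rho(\gamma)\rho_0(\gamma)^{-1})$ is inverse to \eqref{eq_bij_Z1}; conversely the same identity shows that $\gamma\mapsto\Phi(\gamma)\rho_0(\gamma)$ is a homomorphism for $\Phi\in Z^1(\Gamma_1,\DD(M)(A))$, necessarily admissible because $\Phi$ is $G^0(A)$-valued, and both maps are visibly natural in $A$. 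Since by \Cref{cor_rings_Z1}, applied with base ring $\OO$, the functor $A\mapsto Z^1(\Gamma_1,\DD(M)(A))$ on $\OO$-algebras is represented by $\OO[(\mathcal E\otimes M)_{\Delta}]$, transporting along this natural bijection yields representability of $\Rep^{\Gamma_1}_{G,\pi}$ and the isomorphism \eqref{ORep_iso}.

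For the remaining assertions I would let $\DD(M)(A)$ act on $Z^1(\Gamma_1,\DD(M)(A))$ by $(h\cdot\Phi)(\gamma)\colonequals h\,\Phi(\gamma)\,(\gamma\cdot h)^{-1}$. The identity $\rho_0(\gamma)\,h\,\rho_0(\gamma)^{-1}=\gamma\cdot h$ shows at once that \eqref{eq_bij_Z1} carries this action to the conjugation action \eqref{DM_action_defi}. Since the coefficient module is abelian, $h\cdot\Phi$ differs from $\Phi$ by the coboundary of the $0$-cochain $h$, so the $\DD(M)(A)$-orbit of $\Phi$ is the coset $\Phi\cdot B^1(\Gamma_1,\DD(M)(A))$ and the set of orbits is $H^1(\Gamma_1,\DD(M)(A))$; this gives the claimed natural bijection between the set of $\DD(M)(A)$-orbits in $\Rep^{\Gamma_1}_{G,\pi}(A)$ and $H^1(\Gamma_1,\DD(M)(A))$. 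Finally, the formula for this action is a universal expression in $h$ and $\Phi$, hence it defines a $\DD(M)$-action on the affine scheme representing $A\mapsto Z^1(\Gamma_1,\DD(M)(A))$, and this action is compatible under \eqref{eq_bij_Z1} with \eqref{DM_action_defi} for every $A$; since $\DD(M)$ and both functors are representable, Yoneda forces the scheme isomorphism underlying \eqref{ORep_iso} to be $\DD(M)$-equivariant.

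I expect the only point requiring genuine care to be the structural identity $\rho_0(\gamma)\,h\,\rho_0(\gamma)^{-1}=\gamma\cdot h$: this is exactly where the factorisation of $G$-conjugation on $G^0$ through $\underline\Delta$ and the admissibility of $\rho_0$ are used, and once it is in hand the cocycle relation, the inverse map, the orbit description and both equivariance statements all drop out formally — the rest being Yoneda, the base change of \Cref{cor_rings_Z1}, and the elementary description of twisting-orbits for an abelian coefficient module.
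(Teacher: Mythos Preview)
Your proposal is correct and follows essentially the same route as the paper: both construct the inverse $\rho\mapsto\rho\rho_0^{-1}$, verify the cocycle condition via the identity $\rho_0(\gamma)\,h\,\rho_0(\gamma)^{-1}={}^{\gamma}h$ (which the paper uses implicitly in its one-line computation \eqref{Z1_computation}), invoke \Cref{cor_rings_Z1} for representability, and identify the conjugation action with translation by coboundaries $\beta_g(\gamma)=g\,({}^{\gamma}g)^{-1}$ to obtain the orbit description. The only cosmetic difference is that you isolate and name the conjugation identity up front, whereas the paper folds it into the displayed calculation.
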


\begin{proof}
    For $\Phi \in Z^1(\Gamma_1, \DD(M)(A))$, we have
    \begin{equation}\label{Z1_computation}
        \begin{split}
        \Phi(\gamma_1\gamma_2) \rho_0(\gamma_1\gamma_2) &= \Phi(\gamma_1){}^{\gamma_1}\Phi(\gamma_2)\rho_0(\gamma_1)\rho_0(\gamma_2) \\
        &= \Phi(\gamma_1) \rho_0(\gamma_1) \Phi(\gamma_2)\rho_0(\gamma_2), 
        \end{split}
    \end{equation}
    where in the last equality we use that $\rho_0$ is admissible. Thus $\Phi\rho_0$ is a homomorphism. By applying the projection $\Pi : G(A) \to \underline\Delta(A)$, we verify that $\Phi\rho_0 \in \Rep^{\Gamma_1}_{G,\pi}(A)$.
    It is clear, that \eqref{eq_bij_Z1} is a bijection $\Map(\Gamma_1, G(A)) \to \Map(\Gamma_1, G(A))$ with inverse $\rho \mapsto \rho\rho_0^{-1}$.
    If $\rho \in \Rep^{\Gamma_1}_{G,\pi}(A)$, we see that $\Phi \colonequals \rho\rho_0^{-1}$ is a $1$-cocycle by reverting the computation in \eqref{Z1_computation}. The claim about representability follows from \Cref{cor_rings_Z1}.

    The $\DD(M)(A)$-conjugacy classes are the $B^1(\Gamma_1, \DD(M)(A))$-orbits: if we write an admissible homomorphism $\rho : \Gamma_1 \to G(A)$ as above as $\rho(\gamma) = \Phi(\gamma)\rho_0(\gamma)$ and $g \in \DD(M)(A)$, then
    \begin{equation*}
    \begin{split}
        g\rho(\gamma) g^{-1} &= g\Phi(\gamma) ({}^{\gamma}g^{-1}) \rho_0(\gamma) \\
        &= \beta_g(\gamma) \Phi(\gamma) \rho_0(\gamma),
    \end{split}
    \end{equation*}
    where $\beta_g \in B^1(\Gamma_1, \DD(M)(A))$ is the coboundary $\beta_g(\gamma)= g ({}^{\gamma}g^{-1})$. It follows that the $\DD(M)(A)$-orbits are the classes in $H^1(\Gamma_1, \DD(M)(A))$.
\end{proof}

\begin{remar}
    If $G = G^0 \rtimes \underline\Delta$, then the composition of $\pi$ with the natural map $\Delta \to \underline\Delta(\OO) \to G^0(\OO) \rtimes \underline\Delta(\OO) = G(\OO)$ is a canonical choice for a representation $\rho_0 \in \Rep^{\Gamma_1}_{G,\pi}(\OO)$. In general, the isomorphisms \eqref{eq_bij_Z1} and \eqref{ORep_iso} depend on $\rho_0$.
\end{remar}

\begin{prop}\label{action_corresponds} Assume, there is a representation $\rho_0 \in \Rep^{\Gamma_1}_{G,\pi}(\OO)$. Under the isomorphism \eqref{eq_bij_Z1} the action \eqref{DM_action_defi}
corresponds to the ring homomorphism
$$ \OO[(\mathcal E\otimes M)_{\Delta}]\rightarrow \OO[(\mathcal E\otimes M)_{\Delta}]\otimes \OO[M],$$ 
which sends $x\in (\mathcal E\otimes M)_{\Delta}$ to $x\otimes q(x)$, where 
$q: (\mathcal E\otimes M)_{\Delta}\twoheadrightarrow I_{\Delta}M$ is the map of \eqref{exact_seq_nat}.
\end{prop}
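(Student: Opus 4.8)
The plan is to translate everything into the language of $1$-cocycles via \Cref{Z1_Rep}, to compute the co-action ring map by evaluating the action on the universal point (Yoneda), and to read off the answer from the explicit description of coboundaries in the proof of \Cref{image_boundaries}.

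By the $\DD(M)$-equivariance asserted in \Cref{Z1_Rep}, the bijection \eqref{eq_bij_Z1} carries the conjugation action \eqref{DM_action_defi} to the action of $\DD(M)$ on $Z^1(\Gamma_1,\DD(M)(-))$ sending $(g,\Phi)$ to $\beta_g\Phi$, where, as computed in the proof of \Cref{Z1_Rep}, $\beta_g\in B^1(\Gamma_1,\DD(M)(A))$ is the coboundary $\gamma\mapsto g\cdot{}^{\gamma}(g^{-1})$; thus $g\mapsto\beta_g$ is the usual boundary map $\DD(M)(A)\to B^1(\Gamma_1,\DD(M)(A))$ and $\Phi\mapsto\beta_g\Phi$ is translation by $\beta_g$ inside the abelian group $Z^1(\Gamma_1,\DD(M)(A))$. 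So it remains to compute the co-action ring map $c\colon\OO[(\mathcal E\otimes M)_{\Delta}]\to\OO[(\mathcal E\otimes M)_{\Delta}]\otimes\OO[M]$ attached to this action under the identification $\OO(\Rep^{\Gamma_1}_{G,\pi})\cong\OO[(\mathcal E\otimes M)_{\Delta}]$ of \Cref{Z1_Rep} and \Cref{cor_rings_Z1}.

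By Yoneda, $c$ is determined by applying the action to the universal point over $R\colonequals\OO[(\mathcal E\otimes M)_{\Delta}]\otimes\OO[M]$, namely the pair $(\Phi_{\univ},g_{\univ})$, where $g_{\univ}\in\DD(M)(R)=\Hom(M,R^{\times})$ is the tautological character $m\mapsto 1\otimes[m]$ and $\Phi_{\univ}$ is the base change to $R$ of the universal $1$-cocycle, which under \Cref{Z1withE} (with $V=R^{\times}$) corresponds to the homomorphism $(\mathcal E\otimes M)_{\Delta}\to R^{\times}$, $x\mapsto[x]\otimes 1$. Then $c$ is the ring map corresponding, under \Cref{Z1withE}, to $\beta_{g_{\univ}}\Phi_{\univ}$. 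To identify this cocycle, observe that $\beta_g$ is a coboundary, so by \Cref{image_boundaries} its image under \eqref{Z1withE_iso} lies in $\Hom(I_{\Delta}M,V)\subseteq\Hom((\mathcal E\otimes M)_{\Delta},V)$, i.e.\ factors through $q$; and the proof of \Cref{image_boundaries} identifies the coboundary attached to $\alpha\in\Hom(M,V)$ with the pullback along $q$ of the restriction $\alpha|_{I_{\Delta}M}$. Feeding in the $\alpha$ that produces $\beta_g$ — which is $g$ itself, once one accounts for the inversion in $\beta_g(\gamma)=g\cdot{}^{\gamma}(g^{-1})$ and for the sign in the identification $(\ZZ[\Delta]\otimes M)_{\Delta}\cong M$ of \Cref{proj_form} entering the definition of $q$ — one finds that $\beta_g$ corresponds to the homomorphism $(\mathcal E\otimes M)_{\Delta}\to V$, $x\mapsto g(q(x))$. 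Since \eqref{Z1withE_iso} is an isomorphism of abelian groups and multiplication of cocycles goes to multiplication of homomorphisms, $\beta_{g_{\univ}}\Phi_{\univ}$ corresponds to $x\mapsto g_{\univ}(q(x))\cdot([x]\otimes 1)=(1\otimes[q(x)])\,([x]\otimes 1)=[x]\otimes[q(x)]$, which is precisely the homomorphism underlying the ring map $[x]\mapsto[x]\otimes[q(x)]$. As $q$ is a group homomorphism this rule does define a map of rings, and it is the claimed co-action.

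The one substantive point is the last step: composing the explicit isomorphisms of \Cref{proj_form}, \Cref{Z1withE} and \Cref{image_boundaries} and tracking all the signs and inversions so that $q$ — rather than its negative — appears in the final formula. The remaining ingredients (the reduction to $Z^1$, the Yoneda computation, and the fact that adding a coboundary is a translation on $Z^1$) are formal.
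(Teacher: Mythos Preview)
Your proof is correct and follows essentially the same route as the paper's. The paper also translates the conjugation action to translation by coboundaries via \Cref{Z1_Rep}, then identifies the boundary map $\DD(M)(A)\to Z^1(\Gamma_1,\DD(M)(A))$ on the ring side as induced by $(\mathcal E\otimes M)_{\Delta}\xrightarrow{q}I_{\Delta}M\hookrightarrow M$; the only packaging difference is that the paper cites the commutative left square of \Cref{prop_represent} for this identification, while you unwind the same content by hand from \Cref{image_boundaries} together with a Yoneda evaluation on the universal point.
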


\begin{proof}
    We have seen in the proof of \Cref{Z1_Rep}, that the action of $\DD(M)$ on $\Rep^{\Gamma_1}_{G,\pi}$ corresponds under the isomorphism \eqref{eq_bij_Z1} to the action via the boundary map $$\DD(M)(A) \to Z^1(\Gamma_1, \DD(M)(A))$$ and group multiplication. 
    By commutativity of the left square in \Cref{prop_represent} this map is induced by the composition $(\mathcal E \otimes M)_{\Delta} \xrightarrow{q} I_{\Delta}M \to M$. As the comultiplication on $\OO[(\mathcal E \otimes M)_{\Delta}]$ is given by $x \mapsto x \otimes x$ for all $x \in (\mathcal E \otimes M)_{\Delta}$, the claim follows by composing with $\OO[(\mathcal E\otimes M)_{\Delta}] \to \OO[M]$ in the second factor.
\end{proof}

\begin{cor}\label{free_action} Assume that there is a representation $\rho_0 \in \Rep^{\Gamma_1}_{G,\pi}(\OO)$. The action of $\DD(M)$ on $\Rep^{\Gamma_1}_{G,\pi}$ factors through the 
action of $\DD(I_{\Delta}M)$, which acts freely.
\end{cor}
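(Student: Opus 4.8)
The plan is to read off both statements from the explicit description of the coaction in \Cref{action_corresponds}. Write $R=\OO[(\mathcal E\otimes M)_{\Delta}]$, so that $\Rep^{\Gamma_1}_{G,\pi}=\Spec R$ by \Cref{Z1_Rep}, and let $c\colon R\to R\otimes_{\OO}\OO[M]$ be the ring map encoding the action \eqref{DM_action_defi}. By \Cref{action_corresponds}, $c$ sends each $x\in(\mathcal E\otimes M)_{\Delta}$ (viewed as a unit of $R$) to $x\otimes q(x)$, where $q\colon(\mathcal E\otimes M)_{\Delta}\twoheadrightarrow I_{\Delta}M$ is the surjection of \eqref{exact_seq_nat}. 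Since $I_{\Delta}M$ is a subgroup of $M$, its inclusion dualises to a surjection of diagonalisable group schemes $\DD(M)\twoheadrightarrow\DD(I_{\Delta}M)$, and the image of $c$ lies in $R\otimes_{\OO}\OO[I_{\Delta}M]$. Thus $c$ is the pullback along $\OO[I_{\Delta}M]\hookrightarrow\OO[M]$ of a coaction $\bar c\colon R\to R\otimes_{\OO}\OO[I_{\Delta}M]$; equivalently the $\DD(M)$-action factors through an action of $\DD(I_{\Delta}M)$ via $\DD(M)\twoheadrightarrow\DD(I_{\Delta}M)$. This gives the first assertion.

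For freeness I would make the structure of $\bar c$ explicit using the splitting of \eqref{exact_seq_nat}. As $M$ is the character lattice of the split torus $G^0$, it is free of finite rank over $\ZZ$, hence so is its subgroup $I_{\Delta}M$; therefore $q$ admits a section and $(\mathcal E\otimes M)_{\Delta}\cong H_1(\Gamma_1,M)\oplus I_{\Delta}M$, i.e.\ $R\cong\OO[H_1(\Gamma_1,M)]\otimes_{\OO}\OO[I_{\Delta}M]$, as already recorded in \Cref{M_ffr}. Using $q\circ\varphi_{\nat}=0$ together with the formula $c(x)=x\otimes q(x)$, one checks that under this identification $\bar c$ becomes $\id\otimes\delta$, where $\delta$ is the comultiplication of the Hopf algebra $\OO[I_{\Delta}M]$. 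In scheme-theoretic terms this says that $\Rep^{\Gamma_1}_{G,\pi}\cong\Spec\OO[H_1(\Gamma_1,M)]\times_{\OO}\DD(I_{\Delta}M)$ $\DD(I_{\Delta}M)$-equivariantly, where $\DD(I_{\Delta}M)$ acts trivially on the first factor and by translation on itself.

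It then remains to observe that such an action is free, i.e.\ that $(g,x)\mapsto(gx,x)$ defines a monomorphism $\DD(I_{\Delta}M)\times\Rep^{\Gamma_1}_{G,\pi}\to\Rep^{\Gamma_1}_{G,\pi}\times_{\OO}\Rep^{\Gamma_1}_{G,\pi}$: for the translation action of $\DD(I_{\Delta}M)$ on itself this morphism is even an isomorphism with inverse $(a,b)\mapsto(ab^{-1},b)$, and adjoining a factor carrying the trivial action preserves the property. Alternatively one can bypass the splitting and argue directly that the ring map $R\otimes_{\OO}R\to R\otimes_{\OO}\OO[I_{\Delta}M]$, $r_1\otimes r_2\mapsto\bar c(r_1)\cdot(r_2\otimes 1)$, is surjective: its image contains every $r\otimes 1$ and every $\bar c(x)=x\otimes q(x)$, hence, after multiplying the latter by the unit $(-x)\otimes 1$, every $1\otimes q(x)$; since $q$ is surjective these generate $R\otimes_{\OO}\OO[I_{\Delta}M]$ as an $\OO$-algebra, and a surjective ring map gives a closed immersion, in particular a monomorphism. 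I do not expect a genuine obstacle here; the only point that needs care is the bookkeeping of the identifications of group algebras with their Hopf structures and the verification that $\bar c$ is precisely the translation coaction of $\DD(I_{\Delta}M)$.
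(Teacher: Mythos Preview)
Your argument is correct. The factoring step is identical to the paper's, and both of your freeness arguments go through.

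The paper's own proof of freeness is a bit more streamlined: rather than splitting \eqref{exact_seq_nat}, it simply observes that for each $\OO$-algebra $A$ the set $\Rep^{\Gamma_1}_{G,\pi}(A)$ is identified via \eqref{ORep_iso} with the abelian group $H(A)=\Hom((\mathcal E\otimes M)_{\Delta},A^{\times})$, that the surjectivity of $q$ embeds $\DD(I_{\Delta}M)(A)=\Hom(I_{\Delta}M,A^{\times})$ as a subgroup of $H(A)$, and that the action is just multiplication inside $H(A)$, hence free on $A$-points for every $A$. This avoids invoking the freeness of $I_{\Delta}M$ and the choice of a section. Your splitting approach, on the other hand, gives the stronger and more explicit conclusion that $\Rep^{\Gamma_1}_{G,\pi}$ is equivariantly a product $\Spec\OO[H_1(\Gamma_1,M)]\times\DD(I_{\Delta}M)$, which is useful to have recorded; and your alternative surjectivity argument is essentially a scheme-theoretic rephrasing of the paper's pointwise one.
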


\begin{proof}
    As in the proof of \Cref{action_corresponds} we use the composition $(\mathcal E \otimes M)_{\Delta} \xrightarrow{q} I_{\Delta}M \to M$ to see, that the boundary map $\DD(M)(A) \to Z^1(\Gamma_1, \DD(M)(A))$ factors through $\DD(I_{\Delta}M)$.
    %, so the presheaf quotient $\Rep^{\Gamma_1}_{G,\pi} /\DD(M)$ coincides with the presheaf quotient $\Rep^{\Gamma_1}_{G,\pi} /\DD(I_{\Delta}M)$.
    
    Let $A$ be any $\OO$-algebra and let $H(A)$ be the group $\Hom( (\mathcal E \otimes M)_{\Delta}, A^{\times})$. Since $q: (\mathcal E \otimes M)_{\Delta} \rightarrow I_{\Delta}M$ is surjective, we may identify $\DD(I_{\Delta}M)(A)$ with a subgroup of $H(A)$.  
    The isomorphism \eqref{ORep_iso} 
    identifies $\Rep^{\Gamma_1}_{G,\pi}(A)$ with  $H(A)$
    and the action of $\DD(I_{\Delta}M)(A)$ on $\Rep^{\Gamma_1}_{G,\pi}(A)$
    is identified with the action of  $\DD(I_{\Delta}M)(A)$ on $H(A)$ by multiplication, which is a free action. Hence, the action of $\DD(I_{\Delta}M)$ on $\Rep^{\Gamma_1}_{G,\pi}$ is free.
\end{proof}

\begin{cor}\label{git_isos} Assume that there is a representation $\rho_0 \in \Rep^{\Gamma_1}_{G,\pi}(\OO)$. The isomorphisms \eqref{fpqc_sheafification} and \eqref{eq_bij_Z1} induce an isomorphism
\begin{align}
    \Rep^{\Gamma_1}_{G,\pi} /\DD(M) \eqto \Spec(\OO[H_1(\Gamma_1, M)]),\label{fpqc_quotient}
\end{align}
where $\Rep^{\Gamma_1}_{G,\pi} /\DD(M)$ denotes the fpqc sheafification of the presheaf quotient.
Moreover this quotient is a GIT quotient. In particular, \eqref{fpqc_quotient} induces isomorphisms
\begin{align}
    \OO[H_1(\Gamma_1, M)] \eqto \OO(\Rep^{\Gamma_1}_{G,\pi})^{G^0} \eqto \OO[(\mathcal E\otimes M)_{\Delta}]^{G^0} \label{GIT_isos}
\end{align}
and the composition of the maps in \eqref{GIT_isos} is induced by $\varphi_{\nat}$.
\end{cor}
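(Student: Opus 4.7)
The plan is to combine \Cref{Z1_Rep} with \Cref{H1_sheafify}. By the last assertion of \Cref{Z1_Rep}, for every $\OO$-algebra $A$ the $\DD(M)(A)$-orbits in $\Rep^{\Gamma_1}_{G,\pi}(A)$ are in natural bijection with $H^1(\Gamma_1, \Hom(M, A^{\times}))$; equivalently, the presheaf quotient $\Rep^{\Gamma_1}_{G,\pi}/\DD(M)$ on $\OO$-algebras agrees with $A \mapsto H^1(\Gamma_1, \Hom(M, A^{\times}))$. By \Cref{H1_sheafify}, applied over $\OO$, the fpqc sheafification of the latter is represented by $\Spec(\OO[H_1(\Gamma_1, M)])$, which gives \eqref{fpqc_quotient}.

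Next I turn to the GIT statement. Since $G^0 = \DD(M)$ is linearly reductive over $\OO$ (cf.\ \Cref{right_exactness_G0}), the categorical quotient $\Rep^{\Gamma_1}_{G,\pi}\sslash G^0$ is represented by the spectrum of the invariant subring, and it suffices to compute $\OO[(\mathcal E \otimes M)_{\Delta}]^{G^0}$ via the isomorphism \eqref{ORep_iso}. By \Cref{action_corresponds}, the comodule structure sends each group-like element $x \in (\mathcal E \otimes M)_{\Delta}$ to $x \otimes q(x)$, where $q : (\mathcal E \otimes M)_{\Delta} \twoheadrightarrow I_{\Delta} M$ is the surjection from \eqref{exact_seq_nat}. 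Expanding a general element in the group-like basis and comparing coefficients in $\OO[(\mathcal E \otimes M)_{\Delta}] \otimes \OO[M]$ shows that the invariants are spanned as an $\OO$-module by $\{[x] : q(x) = 0\}$, that is, by $\OO[\ker q]$. Exactness of \eqref{exact_seq_nat} together with the injectivity of $\varphi_{\nat}$ then yields an isomorphism $\OO[H_1(\Gamma_1, M)] \eqto \OO[\ker q]$ induced by $\varphi_{\nat}$, establishing \eqref{GIT_isos}.

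The final compatibility assertion, namely that the map $\OO[H_1(\Gamma_1, M)] \to \OO[(\mathcal E \otimes M)_{\Delta}]$ obtained by composing the isomorphisms of \eqref{GIT_isos} coincides with the map induced on global sections by \eqref{fpqc_quotient}, follows from the commutative diagram of \Cref{prop_represent}, which identifies the natural transformation $Z^1(\Gamma_1, \Hom(M, V)) \twoheadrightarrow H^1(\Gamma_1, \Hom(M, V)) \to \Hom(H_1(\Gamma_1, M), V)$ with pullback along $\varphi_{\nat}$. I expect the main point requiring care to be this last bookkeeping of compatibilities; once it is in place, the identification of the fpqc quotient and of the invariant ring are formal consequences of the groundwork laid in Sections~\ref{sec_Z1} and \ref{sec_adm_rep}.
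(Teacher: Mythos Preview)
Your proof is correct, and the first part (deriving \eqref{fpqc_quotient} from \Cref{Z1_Rep} and \Cref{H1_sheafify}) matches the paper exactly. The difference lies in how you establish the GIT statement.

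The paper argues as follows: any $\DD(M)$-equivariant morphism from $\Rep^{\Gamma_1}_{G,\pi}$ to an affine scheme with trivial action factors through the fpqc sheaf quotient (because affine schemes are fpqc sheaves); since the fpqc quotient is itself affine by \eqref{fpqc_quotient}, it therefore satisfies the universal property of the GIT quotient, and the two coincide as morphisms out of $\Rep^{\Gamma_1}_{G,\pi}$. This yields \eqref{GIT_isos} immediately, and the $\varphi_{\nat}$ compatibility is then checked on $B$-points via \Cref{image_nat}.

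You instead compute the invariant ring directly from the explicit coaction of \Cref{action_corresponds}, obtaining $\OO[(\mathcal E\otimes M)_{\Delta}]^{G^0}=\OO[\ker q]$ and identifying this with $\OO[H_1(\Gamma_1,M)]$ via $\varphi_{\nat}$ using \eqref{exact_seq_nat}. This is valid and has the pleasant feature that the $\varphi_{\nat}$ identification falls out of the computation itself. The cost is that you must then separately verify that the isomorphism you have produced agrees with the one induced by \eqref{fpqc_quotient}; your final paragraph handles this correctly via the commutative diagram of \Cref{prop_represent} (equivalently, \Cref{image_nat}). Note that your phrasing of the ``final compatibility assertion'' is slightly tangled: the first map in \eqref{GIT_isos} \emph{is} by definition the one coming from \eqref{fpqc_quotient}, so what you are really checking is that your direct computation of the invariant ring agrees with it. The substance is fine.
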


\begin{proof}
    The isomorphism \eqref{fpqc_quotient} follows directly from \Cref{H1_sheafify} and \Cref{Z1_Rep}.
    To see that $\Rep^{\Gamma_1}_{G,\pi} /\DD(M)$ is also a GIT quotient, we observe that every $\DD(M)$-equivariant map from $\Rep^{\Gamma_1}_{G,\pi}$ to an affine scheme equipped with the trivial $\DD(M)$-action factors through $\Rep^{\Gamma_1}_{G,\pi} /\DD(M)$ by the universal property of the fpqc sheaf quotient.
    Since $\Rep^{\Gamma_1}_{G,\pi} /\DD(M)$ is affine it is indeed the GIT quotient and this gives the first isomorphism in \eqref{GIT_isos}.
    The second isomorphism in \eqref{GIT_isos} comes from \eqref{ORep_iso}.

    Let $B \in \OO\hyphen\alg$.
    After removing the $G^0$-invariants in \eqref{GIT_isos} we get two maps
    $$ \Hom((\mathcal E\otimes M)_{\Delta}, B^{\times}) \eqto \Rep_{G,\pi}^{\Gamma_1}(B) \to \Hom(H_1(\Gamma_1, M), B^{\times}) $$
    The first map comes from \eqref{Z1withE_iso} and \eqref{eq_bij_Z1}. The second map comes from \eqref{univ_coeff} and \eqref{eq_bij_Z1}.
    Hence the composition is \eqref{eq_nat} with $V = B^{\times}$, so the last assertion follows from \Cref{image_nat}.
\end{proof}

\begin{lem}\label{condensed} Let $\tau: G\hookrightarrow \mathbb A^n$ be a closed immersion 
of $\OO$-group schemes and let $\rho\in \Rep^{\Gamma_1}_{G, \pi}(A)$. 
Assume that $\Rep^{\Gamma_1}_{G, \pi}(\OO)$ is non-empty. 
Then $\tau(\rho(\Gamma_1))$ is contained in a finitely generated
$\OO[H_1(\Gamma_1, M)]$-submodule of $A^n = \mathbb A^n(A)$. 
\end{lem}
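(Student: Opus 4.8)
The plan is to reduce to the universal representation and then exploit the $M$-grading on $\OO(G)$ induced by left translation by $G^0 = \DD(M)$. Write $\Lambda \colonequals \OO[H_1(\Gamma_1, M)]$ and $R \colonequals \OO(\Rep^{\Gamma_1}_{G,\pi})$; by \eqref{ORep_iso}, $R = \OO[(\mathcal E \otimes M)_\Delta]$, and by \Cref{git_isos} the map $\varphi_{\nat}$ induces the structure morphism $\Lambda \hookrightarrow R$, through which $A$ acquires its $\OO[H_1(\Gamma_1, M)]$-algebra structure for a given $\rho \in \Rep^{\Gamma_1}_{G,\pi}(A)$. Writing $\theta\colon R \to A$ for the classifying homomorphism of $\rho$ (a map of $\Lambda$-algebras) and $\rho^{\univ}$ for the universal representation, one has $\rho = G(\theta)\circ\rho^{\univ}$; since the coordinate functions defining $\tau$ lie in $\OO(G)$, evaluation commutes with $\theta$, so $\tau(\rho(\gamma))$ is the image of $\tau(\rho^{\univ}(\gamma))$ under the map $R^n \to A^n$ induced by $\theta$. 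Hence it suffices to produce a finitely generated $\Lambda$-submodule $N \subseteq R$ with $\tau(\rho^{\univ}(\gamma)) \in N^n$ for all $\gamma \in \Gamma_1$; then $\theta(N)^n \subseteq A^n$ contains $\tau(\rho(\Gamma_1))$.

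For this I would fix $\rho_0 \in \Rep^{\Gamma_1}_{G,\pi}(\OO)$ (which exists by hypothesis) and use \Cref{Z1_Rep} to write $\rho^{\univ}(\gamma) = \Phi_{\nat}(\gamma)\rho_0(\gamma)$ in $G(R)$, where $\Phi_{\nat}(\gamma) \in \DD(M)(R) = \Hom(M, R^\times)$ and in fact $\Phi_{\nat}(\gamma)(\chi) \in (\mathcal E \otimes M)_\Delta \subseteq R^\times$ for all $\chi \in M$. Writing $\tau$ through coordinate functions $x_1,\dots,x_n \in \OO(G)$ and decomposing each $x_i = \sum_{\chi \in \Sigma_i} x_{i,\chi}$ into its $M$-homogeneous components for the left $G^0$-action (each $\Sigma_i \subseteq M$ finite, $x_{i,\chi}$ of degree $\chi$), the defining property of the grading gives
\[
x_i(\rho^{\univ}(\gamma)) \;=\; \sum_{\chi\in \Sigma_i}\Phi_{\nat}(\gamma)(\chi)\cdot x_{i,\chi}(\rho_0(\gamma)), \qquad x_{i,\chi}(\rho_0(\gamma)) \in \OO .
\]
The key point will be that $\Phi_{\nat}(\gamma)(\chi)$ depends on $\gamma$ only through $\pi(\gamma) \in \Delta$ up to a unit of $\Lambda$; precisely, that $\Phi_{\nat}(k)(\chi) \in \varphi_{\nat}(H_1(\Gamma_1, M)) \subseteq \Lambda^\times$ for all $k \in \Gamma_2$. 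Granting this, for general $\gamma$ write $\gamma = k\overline{c}$ with $c = \pi(\gamma)$ and $k = \gamma\overline{c}^{-1} \in \Gamma_2$; as $\Gamma_1$ acts on $\Hom(M,(\mathcal E\otimes M)_\Delta)$ through $\Delta$, the cocycle identity gives $\Phi_{\nat}(\gamma)(\chi) = \Phi_{\nat}(k)(\chi)\cdot\Phi_{\nat}(\overline{c})(\chi)$ in $R^\times$ with the first factor in $\Lambda^\times$, so $x_i(\rho^{\univ}(\gamma)) \in \sum_{\chi\in \Sigma_i} \Lambda\cdot\Phi_{\nat}(\overline{c})(\chi)$. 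One may then take $N \colonequals \sum_{i=1}^n\sum_{\chi\in \Sigma_i}\sum_{c\in\Delta} \Lambda\cdot\Phi_{\nat}(\overline{c})(\chi)$, which is finitely generated since $\Delta$ and the $\Sigma_i$ are finite.

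The hard part is the claim $\Phi_{\nat}(k)(\chi) \in \varphi_{\nat}(H_1(\Gamma_1, M))$ for $k \in \Gamma_2$; this is the only step using the detailed structure theory of \Cref{sec_Z1}. Since $\Gamma_2^{\ab} = \ker(\mathcal E \twoheadrightarrow I_\Delta)$, the surjection $q\colon (\mathcal E \otimes M)_\Delta \twoheadrightarrow I_\Delta M$ annihilates the image of $\Gamma_2^{\ab}\otimes M$ in $(\mathcal E\otimes M)_\Delta$; by \eqref{exact_seq_nat} that image lies in $\ker q = \varphi_{\nat}(H_1(\Gamma_1, M))$, which sits inside $\Lambda^\times$ because $\Lambda = \OO[H_1(\Gamma_1,M)]$ includes into $R$ via $\varphi_{\nat}$. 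So it suffices to show $\Phi_{\nat}(k)(\chi)$ lies in the image of $\Gamma_2^{\ab}\otimes M$. To see this I would trace the isomorphism of \Cref{Z1withE}: the cocycle $\Phi_{\nat}$ corresponds to $\mathrm{id}_{(\mathcal E\otimes M)_\Delta}$, which under \Cref{univ_prop_of_EE} — applied with $\mathcal E = \Gamma_2^{\ab}\oplus I_\Delta$ and $V = \Hom(M,(\mathcal E\otimes M)_\Delta)$ — is the pair $(\varphi_0,\alpha_0)$ whose first component satisfies $[\varphi_0(g)](m) = $ image of $g\otimes m \in \Gamma_2^{\ab}\otimes M$ in $(\mathcal E\otimes M)_\Delta$, hence corresponds to the cocycle $\Phi_{\varphi_0} + b_{\alpha_0}$ in $Z^1(\Gamma_1, \Ind^{\Gamma_1}_{\Gamma_2} V)$. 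For $k \in \Gamma_2$ the coboundary $b_{\alpha_0}$ vanishes (as $\Gamma_2$ acts trivially on $\Ind^{\Gamma_1}_{\Gamma_2} V$) and $[\Phi_{\varphi_0}(k)](1) = \varphi_0(k)$ by \Cref{inv}; passing through the evaluation-at-$1$ isomorphism $(\Ind^{\Gamma_1}_{\Gamma_2} V)^\Delta \cong \Hom(M,(\mathcal E\otimes M)_\Delta)$ of \Cref{proj_form} then identifies $\Phi_{\nat}(k)(\chi)$ with $[\varphi_0(k)](\chi)$, i.e.\ with the image of $k\otimes\chi$, as needed.
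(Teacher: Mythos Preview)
Your argument is correct and follows essentially the same strategy as the paper's: reduce to the universal representation $\rho^{\univ}=\Phi_{\nat}\rho_0$, use that $\Gamma_1$ is a finite union of cosets $\Gamma_2\overline{c}$, and exploit that the values $\Phi_{\nat}(k)(\chi)$ for $k\in\Gamma_2$ lie in the image of $\Gamma_2^{\ab}\otimes M$ in $(\mathcal E\otimes M)_\Delta$, hence in $H_1(\Gamma_1,M)\subset\Lambda^{\times}$ by \eqref{exact_seq_nat}.

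The presentations differ slightly. The paper phrases the finite-index step as a reduction to the case $G=\DD(M)$, $\Gamma_1=\Gamma_2$ (where $R=\Lambda$ and there is nothing to prove), and asserts without further justification that the classifying map $\OO[\Gamma_2^{\ab}\otimes M]\to\OO[(\mathcal E\otimes M)_\Delta]$ for $\rho^{\univ}|_{\Gamma_2}$ is the map induced by the natural inclusion $\Gamma_2^{\ab}\hookrightarrow\mathcal E$. You instead work directly with the $M$-grading on $\OO(G)$ and explicitly verify the identity $\Phi_{\nat}(k)(\chi)=\text{image of }k\otimes\chi$ by tracing through \Cref{univ_prop_of_EE} and \Cref{proj_form}; this is precisely what the paper is tacitly using. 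Your route is thus more self-contained, at the cost of some extra bookkeeping.
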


\begin{proof} Proposition \ref{Z1_Rep} implies that it is enough 
to prove the statement, when $A=\OO[(\mathcal E\otimes M)_{\Delta}]$
and 
$\rho= \Phi_{\nat}\rho_0$, where $\rho_0$ is any representation in
$\Rep^{\Gamma_1}_{G, \pi}(\OO)$ and $\Phi_{\nat}$ is the cocycle defined in Section \ref{sec_Z1}.

Since $\Gamma_2$ is of finite index in $\Gamma_1$ it is enough to show that $\tau(\rho(\Gamma_2))$ is contained in a finitely 
generated $\OO[H_1(\Gamma_1, M)]$-submodule of $(\OO[(\mathcal E\otimes M)_{\Delta}])^n$. Since $\Gamma_2$ acts trivially on $M$ we have 
a canonical isomorphism 
$$ H_1(\Gamma_2, M)\cong \Gamma_2^{\ab}\otimes M,$$
and it follows from \eqref{exact_seq_nat} that 
the image of $\OO[\Gamma_2^{\ab}\otimes M]$ 
in $\OO[(\mathcal E\otimes M)_{\Delta}]$ is contained in 
$\OO[H_1(\Gamma_1, M)]$. We thus may assume that $G=\DD(M)$ 
and $\Gamma_2=\Gamma_1$. In this case, 
$\OO[H_1(\Gamma, M)]= \OO[(\mathcal E\otimes M)_{\Delta}]$, so there
is nothing to prove. 
\end{proof}

\section{\texorpdfstring{Lafforgue's $G$-pseudocharacters}{Lafforgue's G-pseudocharacters}}\label{Laf}

We keep the notation of \Cref{sec_adm_rep}.
We now recall Lafforgue's notion of \emph{$G$-pseudocharacter} in the form of \cite[Definition 3.1]{quast}.
The difference to Lafforgue’s original definition \cite[Section 11]{Laf} is that we work over the base ring $\OO$ and allow $G$ to be disconnected.
The definition works for generalised reductive $\OO$-group schemes as defined in \cite{defG}, but this generality is not needed here.

\begin{defi}\label{LafPC} Let $A$ be a commutative $\OO$-algebra. A \emph{$G$-pseudocharacter} $\Theta$ of $\Gamma$ over $A$ is a sequence $(\Theta_n)_{n \geq 1}$ of $\OO$-algebra maps
$$\Theta_n : \OO[G^n]^{G^0} \to \mathrm{Map}(\Gamma^n,A)$$ for $n \geq 1$, satisfying the following conditions\footnote{Here $G$ acts on $G^n$ by $g \cdot (g_1, \dots, g_n) = (gg_1g^{-1}, \dots, gg_ng^{-1})$. This induces a rational action of $G$ on the affine coordinate ring $\OO[G^n]$ of $G^n$. The submodule $\OO[G^n]^{G^0} \subseteq \OO[G^n]$ is defined as the rational invariant module of the $G^0$-representation $\OO[G^n]$. It is an $\OO$-subalgebra, since $G$ acts by $\OO$-linear automorphisms.}:
\begin{enumerate}
    \item For each $n,m \geq 1$, each map $\zeta : \{1, \dots, m\} \to \{1, \dots,n\}$, $f \in \OO[G^m]^{G^0}$ and $\gamma_1, \dots, \gamma_n \in \Gamma$, we have
    $$ \Theta_n(f^{\zeta})(\gamma_1, \dots, \gamma_n) = \Theta_m(f)(\gamma_{\zeta(1)}, \dots, \gamma_{\zeta(m)}) $$
    where $f^{\zeta}(g_1, \dots, g_n) = f(g_{\zeta(1)}, \dots, g_{\zeta(m)})$.
    \item For each $n \geq 1$, for each $\gamma_1, \dots, \gamma_{n+1} \in \Gamma$ and each $f \in \OO[G^n]^{G^0}$, we have
    $$ \Theta_{n+1}(\hat f)(\gamma_1, \dots, \gamma_{n+1}) = \Theta_n(f)(\gamma_1, \dots, \gamma_n\gamma_{n+1}) $$
    where $\hat f(g_1, \dots, g_{n+1}) = f(g_1, \dots, g_ng_{n+1})$.
\end{enumerate}
\end{defi}

We denote the set of $G$-pseudocharacters of $\Gamma_1$ over $A$ by $\PC_G^{\Gamma_1}(A)$.
The functor $A \mapsto \PC_G^{\Gamma_1}(A)$ is representable by an affine $\OO$-scheme $\PC_G^{\Gamma_1}$ \cite[Theorem 3.20]{quast}.
When $\varphi : G \to H$ is a homomorphism of generalised tori over $\OO$, the induced maps $\varphi^*_n : \OO[H^n]^{H^0} \to \OO[G^n]^{G^0}$ give rise to an $H$-pseudocharacter $(\Theta_n \circ \varphi^*_n)_{n \geq 1}$. By analogy with the notation for representations we denote this $H$-pseudocharacter by $\varphi \circ \Theta$. Thus we also have an induced map $\PC_G^{\Gamma}(A) \to \PC_H^{\Gamma}(A)$.
It is easy to verify that specialisation along $f : A \to B$ commutes with composition with $\varphi$, i.e. $(\varphi \circ \Theta) \otimes_A B = \varphi \circ (\Theta \otimes_A B)$.

Recall that for every homomorphism $\rho : \Gamma_1 \to G(A)$, there is an associated $G$-pseudocharacter $\Theta_{\rho} \in \PC^{\Gamma_1}_G(A)$, which depends on $\rho$ only up to $G^0(A)$-conjugation. For $m \geq 1$, the homomorphism $(\Theta_{\rho})_m : \OO(G^m)^{G^0} \to \Map(\Gamma_1^m, A)$ is defined by
\begin{align}
    (\Theta_{\rho})_m(f)(\gamma_1, \dots, \gamma_m) \colonequals f(\rho(\gamma_1), \dots, \rho(\gamma_m)). \label{assoc_pc}
\end{align}
There is a natural $G^0(A)$-equivariant map
\begin{align}
    \Rep^{\Gamma_1}_G(A) \to \PC^{\Gamma_1}_G(A), \quad\rho \mapsto \Theta_{\rho} \label{assoc_pc_map},
\end{align}
which induces maps of $\OO$-schemes
\begin{equation}
    \Rep^{\Gamma_1}_G \to \PC^{\Gamma_1}_G \label{rep_pc_map}.
\end{equation}    
Since $G^0$-acts trivially on the target \eqref{rep_pc_map} factors as    
   \begin{equation} 
    \Rep^{\Gamma_1}_G \sslash G^0 \to \PC^{\Gamma_1}_G. \label{git_pc_map}
\end{equation}

If $\rho \in \Rep^{\Gamma_1}_{G,\pi}(A)$, then $\Theta_{\rho}$ is \emph{admissible} in the sense, that the functorial image of $\Theta_{\rho}$ under the map $G \to \underline\Delta$ maps to the $\underline \Delta$-pseudocharacter attached to $\pi$ in $\PC^{\Gamma_1}_{\underline \Delta}(A)$. We denote the set of admissible $G$-pseudocharacters by $\PC^{\Gamma_1}_{G,\pi}(A)$. The functor $A \mapsto \PC^{\Gamma_1}_{G,\pi}(A)$ is representable by an affine $\OO$-scheme $\PC^{\Gamma_1}_{G,\pi}$. We have natural maps
\begin{align}
    \Rep^{\Gamma_1}_{G, \pi} \to \PC^{\Gamma_1}_{G, \pi} \label{rep_pc_without_git} \\
    \Rep^{\Gamma_1}_{G, \pi} \sslash G^0 \to \PC^{\Gamma_1}_{G, \pi} \label{git_pc_pi_map}
\end{align}
as above.

We will show, that \eqref{git_pc_map} and \eqref{git_pc_pi_map} are isomorphisms. The result is specific to generalised tori\footnote{We don't expect \eqref{git_pc_map} and \eqref{git_pc_pi_map} to be isomorphisms when $G$ is an arbitrary generalised reductive group in the sense of \cite[Definition 2.3]{quast}. However, we don't know of an example where \eqref{git_pc_map} is not an isomorphism.} and might be of independent interest.

\begin{prop}\label{git_pc_iso}
    The map
    \begin{align}
        \OO(\PC^{\Gamma_1}_G) \to \OO(\Rep^{\Gamma_1}_G)^{G^0} \label{git_pc_map_alg}
    \end{align}
    corresponding to \eqref{git_pc_map} is an isomorphism.
\end{prop}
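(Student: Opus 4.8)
The plan is to show that the map \eqref{git_pc_map_alg}, which I will denote $\phi^{\ast}$, is surjective and injective, following Emerson--Morel \cite{emerson2023comparison}; the only feature of $G$ that enters is that $G^0$ is linearly reductive over $\OO$ (\Cref{right_exactness_G0}). For surjectivity, note that $\Rep^{\Gamma_1}_G$ is the closed, $G^0$-stable subscheme of $\prod_{\gamma\in\Gamma_1}G$ cut out by the relations $\rho(\gamma)\rho(\gamma')=\rho(\gamma\gamma')$, so $\OO(\Rep^{\Gamma_1}_G)$ is a $G^0$-equivariant quotient of $\bigotimes_{\gamma\in\Gamma_1}\OO[G]=\colim_{S}\OO[G^{S}]$, where $S$ ranges over finite subsets of $\Gamma_1$ and $G^0$ acts diagonally by conjugation. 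Since $G^0$ is linearly reductive, $(-)^{G^0}$ is exact and commutes with filtered colimits, so $\OO(\Rep^{\Gamma_1}_G)^{G^0}$ is the image of $\colim_{S}\OO[G^{S}]^{G^0}$; enumerating $S=\{\gamma_1,\dots,\gamma_n\}$, an element of $\OO[G^{S}]^{G^0}$ is an invariant $f\in\OO[G^n]^{G^0}$ whose image in $\OO(\Rep^{\Gamma_1}_G)$ is precisely $(\Theta_{\rho^{\univ}})_n(f)(\gamma_1,\dots,\gamma_n)$ for the universal representation $\rho^{\univ}$, cf.\ \eqref{assoc_pc}. As $\OO(\PC^{\Gamma_1}_G)$ is generated as an $\OO$-algebra by the components of the universal pseudocharacter (this is how the representing object is built in \cite[Theorem 3.20]{quast}), the elements $(\Theta_{\rho^{\univ}})_n(f)(\gamma_1,\dots,\gamma_n)$ are exactly the images under $\phi^{\ast}$ of a set of generators, so $\phi^{\ast}$ is surjective and \eqref{git_pc_map} is a closed immersion, in particular a monomorphism.

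For injectivity, by surjectivity it is enough to produce a faithfully flat extension $R:=\OO(\PC^{\Gamma_1}_G)\to B$ and a representation $\rho\in\Rep^{\Gamma_1}_G(B)$ realising the base change to $B$ of the universal $G$-pseudocharacter: then that pseudocharacter, as a $B$-point of $\PC^{\Gamma_1}_G$, lifts (through the $G^0$-conjugacy class of $\rho$) to $\Rep^{\Gamma_1}_G\sslash G^0$; the lift is unique because \eqref{git_pc_map} is a monomorphism; hence $(\Rep^{\Gamma_1}_G\sslash G^0)\times_{\PC^{\Gamma_1}_G}\Spec B\to\Spec B$ is a monomorphism admitting a section, hence an isomorphism, and faithfully flat descent upgrades this to an isomorphism \eqref{git_pc_map}. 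The existence of $(B,\rho)$ is the substantive point: every $G$-pseudocharacter of $\Gamma_1$ over an $\OO$-algebra $A$ should, after a faithfully flat base change, be $\Theta_{\rho}$ for an honest representation $\rho$. Following \cite{emerson2023comparison}, one may reduce to $\Gamma_1=F_r$ free of finite rank by writing $\Gamma_1$ as a filtered colimit of its finitely generated subgroups and a finitely generated group as a quotient $F_r/N$ (using that $\Rep^{(-)}_G\sslash G^0$ and $\PC^{(-)}_G$ turn these operations into filtered colimits of rings, resp.\ closed subschemes cut out by matching ideals). For $\Gamma_1=F_r$ one has $\Rep^{F_r}_G=G^r$, and the claim becomes a first--fundamental--theorem--type statement that $\OO[G^r]^{G^0}$ is generated, modulo the identities (1) and (2) of \Cref{LafPC}, by the functions $(g_1,\dots,g_r)\mapsto f(g_{w_1},\dots,g_{w_n})$ indexed by words $w_i$ in the generators; concretely, one restricts a $G$-pseudocharacter to the free generators to get a point of $G^r\sslash G^0$, lifts it fppf-locally to a tuple $\rho\in G(B)^r$ using linear reductivity of $G^0$, and checks using (1) and (2) that $\Theta_{\rho}$ recovers the pseudocharacter.

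The step I expect to be the main obstacle is exactly this realisation statement for free groups --- equivalently, that the identities (1) and (2) of \Cref{LafPC} already generate all $G^0$-invariant relations among the coordinate functions of $G^r$, so that $\PC^{F_r}_G$ carries no functions beyond those pulled back from $G^r\sslash G^0$. This is Emerson--Morel's theorem transplanted to the present disconnected, integral setting; what needs checking is that their argument uses nothing about $G$ beyond the linear reductivity of $G^0$, so that it runs verbatim over $\OO$ and for a generalised torus with arbitrary finite component group.
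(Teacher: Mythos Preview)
Your surjectivity argument matches the paper's. For injectivity, however, you take a substantially harder route than the paper does. The paper (following \cite[Proposition 2.11(i)]{emerson2023comparison}) argues purely algebraically: both $\OO(\PC^{\Gamma_1}_G)$ and $\OO(\Rep^{\Gamma_1}_G)^{G^0}$ are realised as quotients of $\OO(G^{\Gamma_1})^{G^0}$ by ideals $I$ and $J^{G^0}$ respectively, where $J\subseteq\OO(G^{\Gamma_1})$ is generated by the images of the $G^0$-equivariant maps $\varphi_{\gamma,\delta}:\OO(G\times G^{\Gamma_1})\to\OO(G^{\Gamma_1})$ encoding the relation $\rho(\gamma)\rho(\delta)=\rho(\gamma\delta)$, and $I$ is generated by the $\varphi_{\gamma,\delta}$ applied to $G^0$-invariants. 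The inclusion $I\subseteq J^{G^0}$ is immediate; for the reverse, one writes any $h\in J^{G^0}$ as $\sum_i\varphi_{\gamma_i,\delta_i}(f_i)$ and applies the Reynolds operator $E$: since each $\varphi_{\gamma_i,\delta_i}$ is $G^0$-equivariant, $h=E(h)=\sum_i\varphi_{\gamma_i,\delta_i}(E(f_i))\in I$. That is the entire proof of injectivity.

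Your reconstruction strategy (lift the universal pseudocharacter to a representation after a faithfully flat base change, then descend) is a legitimate alternative in spirit, but the step you flag as the main obstacle is genuinely delicate and the paper avoids it entirely. Two concrete issues: it is not obvious that $G^r\to G^r\sslash G^0$ admits fppf-local sections merely from linear reductivity of $G^0$; and verifying that axioms (1) and (2) of \Cref{LafPC} pin down a pseudocharacter of $F_r$ from its value on the generating $r$-tuple requires dealing with inverses, which those axioms do not address directly. In the paper's logic the flow is reversed: the reconstruction statement (\Cref{comes_from_rep}) is proved \emph{after} this proposition, and only in the admissible setting with a chosen base point, using the explicit description of $\OO(\Rep^{\Gamma_1}_{G,\pi})$ from \Cref{sec_adm_rep} rather than a free-group reduction.
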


\begin{proof}
    As $G^0$ is linearly reductive the proof of \cite[Proposition 2.11 (i)]{emerson2023comparison} applies, we recall the argument here. Let $\OO(G^{\Gamma_1})$ be the $\OO$-algebra which represents the functor $$\OO\hyphen\alg \to \Set, ~A \mapsto \Map(\Gamma_1, G(A)).$$
    We have a natural surjection $\OO(G^{\Gamma_1}) \twoheadrightarrow \OO(\Rep^{\Gamma_1}_G)$ with kernel $J$.
    We also have a natural surjection $\OO(G^{\Gamma_1})^{G^0} \twoheadrightarrow \OO(\PC^{\Gamma_1}_G)$ with kernel $I$.
    The structure of $J$ and $I$ is described explicitly in \cite[Proposition 2.5]{emerson2023comparison}; note that there $G$ is assumed to be connected, but the description generalises easily to our situation. Namely $J$ is generated by the image of the $G^0$-equivariant maps 
    $$\varphi_{\gamma, \delta} : \OO(G \times G^{\Gamma_1}) \to \OO(G^{\Gamma_1})$$ 
    for all $\gamma, \delta \in \Gamma_1$, where 
    $$ \varphi_{\gamma, \delta}(f)((g_{\alpha})_{\alpha \in \Gamma_1}) = f(g_{\gamma\delta}, (g_{\alpha})_{\alpha \in \Gamma_1}) - f(g_{\gamma}g_{\delta}, (g_{\alpha})_{\alpha \in \Gamma_1}). $$
    The ideal $I$ is generated by the image of $\varphi_{\gamma, \delta}(\OO(G \times G^{\Gamma_1})^{G^0})$ for all $\gamma, \delta \in \Gamma_1$.

    Since taking $G^0$-invariants is exact, the natural map $\OO(G^{\Gamma_1})^{G^0}/I = \OO(\PC^{\Gamma_1}_G) \to \OO(\Rep^{\Gamma_1}_G)^{G^0} = \OO(G^{\Gamma_1})^{G^0}/J^{G^0}$ is surjective.
    So it remains to show, that $J^{G^0} \subseteq I$.
    Let $h \in J^G$ and write $h = \sum_{i=1}^n \varphi_{\gamma_i, \delta_i}(f_i)$ where $\gamma_i, \delta_i \in \Gamma_1$ and $f_i \in \OO(G \times G^{\Gamma_1})$.
    Denote the Reynolds operator on $G^0$-modules by $E$, it commutes with the $G^0$-equivariant maps $\varphi_{\gamma_i, \delta_i}$, so that we have
    $$ h = E(h) = \sum_{i=1}^n \varphi_{\gamma_i, \delta_i}(E(f_i)) \in I. $$
\end{proof}

\begin{lem}\label{pc_is_rep} If the conjugation action of $G^0$ on all $\OO(G^m)$ is trivial then the following hold:
  \begin{enumerate}
  \item \eqref{rep_pc_map} is an isomorphism;
  \item if $\Gamma_1$ is a topological group and $A$ is a topological $\OO$-algebra, then \eqref{assoc_pc_map} induces a bijection between continuous representations and continuous $G$-pseudocharacters;
  \end{enumerate} 
  In particular,  if $G = G^0$ or $G^0$ is trivial then \eqref{rep_pc_map} is an isomorphism and hence $\Rep^{\Gamma_1}_{\underline\Delta} \cong \PC^{\Gamma_1}_{\underline\Delta}$.
\end{lem}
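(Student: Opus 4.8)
The plan is to reduce both statements to a direct comparison of defining data, exploiting that the conjugation action of $G^0$ on each $\OO(G^m)$ being trivial makes $\OO[G^m]^{G^0} = \OO[G^m]$, so that a $G$-pseudocharacter is literally a compatible family of $\OO$-algebra maps $\OO[G^n] \to \Map(\Gamma_1^n, A)$ with no invariants thrown away.

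First I would prove part (1). One direction is automatic: \eqref{rep_pc_map} is always defined, sending $\rho$ to $\Theta_\rho$ with $(\Theta_\rho)_n(f)(\gamma_1,\dots,\gamma_n) = f(\rho(\gamma_1),\dots,\rho(\gamma_n))$. To build an inverse, given a $G$-pseudocharacter $\Theta = (\Theta_n)$ over $A$, I would use $\Theta_1 : \OO[G] \to \Map(\Gamma_1, A)$: evaluating at $\gamma \in \Gamma_1$ gives an $\OO$-algebra homomorphism $\OO[G] \to A$, i.e.\ a point $\rho(\gamma) \in G(A)$. Condition~(2) of \Cref{LafPC} with $n=1$ (combined with condition~(1) to identify the two slots) forces $\rho(\gamma_1\gamma_2) = \rho(\gamma_1)\rho(\gamma_2)$ at the level of $\OO[G]$-points, because $\widehat{f}(g_1,g_2) = f(g_1 g_2)$ encodes multiplication on $G$; so $\rho$ is a homomorphism $\Gamma_1 \to G(A)$. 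Then one checks $\Theta_{\rho} = \Theta$: the $n=1$ level holds by construction, and the higher levels follow from condition~(2) applied repeatedly, which expresses $\Theta_n$ in terms of $\Theta_1$ once $G^0$-invariants are irrelevant (writing $f \in \OO[G^n]$ via the multiplication map $G \to G^n$... more precisely, using that $\Theta_n(f)(\gamma_1,\dots,\gamma_n)$ is determined by pulling back along $(\rho(\gamma_1),\dots,\rho(\gamma_n)) : \Spec A \to G^n$). These two assignments are mutually inverse and natural in $A$, giving the isomorphism of functors, hence of representing schemes.

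For part (2), the same bijection $\rho \leftrightarrow \Theta$ is in play; I would just observe it is compatible with continuity. If $\rho$ is continuous then each $(\Theta_\rho)_n(f)$ is a composite of $\rho^n$ with a polynomial map, hence continuous, so $\Theta_\rho$ is a continuous $G$-pseudocharacter. Conversely, if $\Theta$ is continuous, then $\Theta_1(f) : \Gamma_1 \to A$ is continuous for every $f \in \OO[G]$; since $G \hookrightarrow \mathbb{A}^N$ is a closed immersion for some $N$ (as $G$ is affine of finite type), continuity of the coordinate functions $\gamma \mapsto (\text{coordinate of }\rho(\gamma))$ gives continuity of $\rho : \Gamma_1 \to G(A) \subseteq A^N$. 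The last sentence is then immediate: for $G = G^0$ the $G^0$-conjugation action on $G$ is conjugation of a commutative group scheme on itself, which is trivial, and for $G^0$ trivial there is nothing to conjugate; in either case the hypothesis applies, and specialising to $H = \underline\Delta$ (where $H^0$ is trivial) gives $\Rep^{\Gamma_1}_{\underline\Delta} \cong \PC^{\Gamma_1}_{\underline\Delta}$.

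The main obstacle is the careful bookkeeping in reconstructing $\Theta_n$ from $\rho$ for $n \geq 2$: one must verify that conditions~(1) and~(2) of \Cref{LafPC} together pin down $\Theta_n$ uniquely in terms of $\Theta_1$, i.e.\ that the algebra map $\OO[G^n] \to \Map(\Gamma_1^n, A)$ is forced to be $f \mapsto [(\gamma_i) \mapsto f(\rho(\gamma_1),\dots,\rho(\gamma_n))]$. This is precisely where triviality of the $G^0$-action is essential — otherwise only the invariant part $\OO[G^n]^{G^0}$ is constrained and the pseudocharacter genuinely forgets the conjugacy class rather than the representation itself. I expect this to be a short diagram-chase using the fact that $\OO[G^n] = \OO[G]^{\otimes n}$ as an $\OO$-algebra and that condition~(2) identifies $\Theta_n$ on the image of multiplication maps, together with condition~(1) to permute arguments, which suffices to generate everything. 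Everything else is routine.
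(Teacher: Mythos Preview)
Your proposal is correct and follows essentially the same route as the paper: extract $\rho$ from $\Theta_1$, verify the homomorphism property via condition~(2) of \Cref{LafPC}, and then pin down $\Theta_n$ for $n\ge 2$ by using that $\OO(G^n)\cong\OO(G)^{\otimes n}$ is generated by pullbacks of coordinate functions along the projections, so condition~(1) determines everything; the continuity argument is identical. One small clarification: in your middle paragraph you first attribute the reconstruction of $\Theta_n$ from $\Theta_1$ to ``condition~(2) applied repeatedly,'' which is not quite right---condition~(2) only relates $\Theta_{n+1}$ to $\Theta_n$ along the last-factor multiplication and does not by itself reach the pure tensors $f_i(g_j)$---but your final paragraph correctly identifies condition~(1) (with $\zeta:\{1\}\to\{1,\dots,n\}$) as the tool that does this, exactly as in the paper.
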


\begin{proof}
    Choose $\OO$-algebra generators $f_1, \dots, f_r \in \OO(G)$ and let $\Theta \in \PC^{\Gamma_1}_G(A)$.
    The functions $\Theta_1(f_1), \dots, \Theta_1(f_r)$ define a unique map $\rho : \Gamma_1 \to G(A)$ such that ${\Theta_1(f_1) = f_i \circ \rho}$ for all $i=1, \dots, r$.
    Let $\mu : \OO(G) \to \OO(G) \otimes_{\OO} \OO(G)$ be the comultiplication map. We can write
    $$ \mu(f_i) = \sum_{j,k} a_{ijk} f_j \otimes f_k $$
    for some $a_{ijk} \in \OO$.
    By (2) of \Cref{LafPC}, we have
    \begin{align*}
        f_i(\rho(\gamma_1\gamma_2)) &= \Theta_1(f_i)(\gamma_1\gamma_2) = \Theta_2(\mu(f_i))(\gamma_1, \gamma_2) \\
        &= \sum_{j,k} a_{ijk} \Theta_1(f_j)(\gamma_1) \Theta_1(f_k)(\gamma_2) \\
        &= \sum_{j,k} a_{ijk} f_j(\rho(\gamma_1)) f_k(\rho(\gamma_2)) \\
        &= \mu(f_i)(\rho(\gamma_1), \rho(\gamma_2)) = f_i(\rho(\gamma_1) \rho(\gamma_2)),
    \end{align*}
    so $\rho$ is a homomorphism.
    For every $m \geq 1$, every function $f \in \OO(G^m)$ can be written as a linear combination of functions of the form $(g_1, \dots, g_m) \mapsto f_i(g_j)$.
    So by rule (1) of \Cref{LafPC} the function $\Theta_m(f)$ is determined by $\Theta_1(f_1), \dots, \Theta_1(f_r)$.
    It follows, that $\Theta$ is the only $G$-pseudocharacter satisfying $\Theta_1(f_i) = f_i \circ \rho$ for all $i$.
    By definition $\Theta = \Theta_{\rho}$, so this shows surjectivity and injectivity of \eqref{rep_pc_map}.

    For the claim about continuity we observe, that continuity of $\Theta_1(f_1), \dots, \Theta_1(f_r)$ is equivalent to continuity of $\rho$.
\end{proof}

\begin{cor}\label{cor_PC_inv}
    The map
    \begin{align}
        \OO(\PC^{\Gamma_1}_{G,\pi}) \to \OO(\Rep^{\Gamma_1}_{G,\pi})^{G^0} \label{rep_pc_pi_map_alg}
    \end{align}
    corresponding to \eqref{git_pc_pi_map} is an isomorphism.
\end{cor}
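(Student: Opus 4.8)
The plan is to deduce this from \Cref{git_pc_iso} by base change along the admissibility condition. Write $S \colonequals \OO(\Rep^{\Gamma_1}_{\underline\Delta})$; by \Cref{pc_is_rep} we also have $S = \OO(\PC^{\Gamma_1}_{\underline\Delta})$, and the fixed homomorphism $\pi : \Gamma_1 \to \Delta$ determines an augmentation $\varepsilon : S \to \OO$. First I would record that the admissibility conditions realise $\Rep^{\Gamma_1}_{G,\pi}$ and $\PC^{\Gamma_1}_{G,\pi}$ as the fibre products
$$ \Rep^{\Gamma_1}_{G,\pi} = \Rep^{\Gamma_1}_G \times_{\Rep^{\Gamma_1}_{\underline\Delta}} \Spec\OO, \qquad \PC^{\Gamma_1}_{G,\pi} = \PC^{\Gamma_1}_G \times_{\PC^{\Gamma_1}_{\underline\Delta}} \Spec\OO, $$
the map $\Spec\OO \to \Rep^{\Gamma_1}_{\underline\Delta} \cong \PC^{\Gamma_1}_{\underline\Delta}$ being $\varepsilon$. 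This is immediate from the definitions of admissible representation and admissible $G$-pseudocharacter together with \Cref{pc_is_rep}. On coordinate rings this reads $\OO(\Rep^{\Gamma_1}_{G,\pi}) = \OO(\Rep^{\Gamma_1}_G)\otimes_S\OO$ and $\OO(\PC^{\Gamma_1}_{G,\pi}) = \OO(\PC^{\Gamma_1}_G)\otimes_S\OO$, where the tensor products are taken along the natural map $G \to \underline\Delta$ and along $\varepsilon$.

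Next I would observe that the conjugation action of $G^0$ on $\Rep^{\Gamma_1}_G$ becomes trivial after projecting to $\Rep^{\Gamma_1}_{\underline\Delta}$, since $\Pi$ kills $G^0$; hence the structure map $S \to \OO(\Rep^{\Gamma_1}_G)$ lands in $\OO(\Rep^{\Gamma_1}_G)^{G^0}$ and $S$ carries the trivial $G^0$-action. Thus $\OO(\Rep^{\Gamma_1}_G)$ is an $S$-algebra with $G^0$-action and $\OO$ (viewed via $\varepsilon$) is an $S$-module with trivial $G^0$-action, so \Cref{right_exactness_G0} applies and gives
$$ \OO(\Rep^{\Gamma_1}_{G,\pi})^{G^0} = \bigl(\OO(\Rep^{\Gamma_1}_G)\otimes_S\OO\bigr)^{G^0} = \OO(\Rep^{\Gamma_1}_G)^{G^0}\otimes_S\OO. $$
By \Cref{git_pc_iso} the natural map $\OO(\PC^{\Gamma_1}_G) \to \OO(\Rep^{\Gamma_1}_G)^{G^0}$ is an isomorphism of $S$-algebras, so tensoring with $\OO$ over $S$ and comparing with the displayed identity yields an isomorphism $\OO(\PC^{\Gamma_1}_{G,\pi}) \eqto \OO(\Rep^{\Gamma_1}_{G,\pi})^{G^0}$.

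The step I expect to need the most care is verifying that the isomorphism produced above is precisely the map \eqref{rep_pc_pi_map_alg} attached to \eqref{git_pc_pi_map}, i.e.\ that the square relating $\rho \mapsto \Theta_\rho$ for $G$ and for $\underline\Delta$ commutes and is compatible with restriction to the admissible loci. This reduces to a diagram chase using the functoriality of $\rho \mapsto \Theta_\rho$ in the target group (as recalled after \Cref{LafPC}) and the compatibility of $G \to \underline\Delta$ with the formation of associated pseudocharacters; once that is in place, the identification of the two fibre-product presentations with the base changes of \Cref{git_pc_iso} is formal. Everything else in the argument is routine given \Cref{git_pc_iso} and \Cref{right_exactness_G0}.
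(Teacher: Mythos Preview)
Your proposal is correct and follows essentially the same argument as the paper: realise both $\Rep^{\Gamma_1}_{G,\pi}$ and $\PC^{\Gamma_1}_{G,\pi}$ as fibre products over $\Rep^{\Gamma_1}_{\underline\Delta}\cong\PC^{\Gamma_1}_{\underline\Delta}$ (using \Cref{pc_is_rep}), invoke \Cref{right_exactness_G0} to commute $G^0$-invariants with the base change, and then apply \Cref{git_pc_iso}. The paper handles the compatibility you flag in your final paragraph by simply noting that \eqref{rep_pc_pi_map_alg} is obtained from \eqref{git_pc_map_alg} by applying $-\otimes_{\OO(\Rep^{\Gamma_1}_{\underline\Delta})}\OO$, which is exactly the diagram chase you describe.
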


\begin{proof}
    Via the canonical projection $\Pi : G \to \underline\Delta$ we have a natural $G^0$-action on $\OO(\Rep^{\Gamma_1}_{\underline\Delta})$, which is trivial, since $\underline\Delta^0 = 1$.
    So the natural $G^0$-equivariant map $\OO(\Rep^{\Gamma_1}_{\underline\Delta}) \to \OO(\Rep^{\Gamma_1}_G)$ induces a map $\OO(\Rep^{\Gamma_1}_{\underline\Delta}) \to \OO(\Rep^{\Gamma_1}_G)^{G^0}$.
    The representation $\pi : \Gamma_1 \to \Delta$ corresponds to a map $\OO(\Rep^{\Gamma_1}_{\underline\Delta}) \to \OO$.
    The tensor product $\OO(\Rep^{\Gamma_1}_G) \otimes_{\OO(\Rep^{\Gamma_1}_{\underline\Delta})} \OO$ represents $\Rep^{\Gamma_1}_{G,\pi}$.
    By virtue of \Cref{right_exactness_G0} we have a natural isomorphism
    \begin{align*}
        \OO(\Rep^{\Gamma_1}_G)^{G^0} \otimes_{\OO(\Rep^{\Gamma_1}_{\underline\Delta})} \OO \eqto \OO(\Rep^{\Gamma_1}_{G,\pi})^{G^0}.
    \end{align*}
    On the other hand $\OO(\PC^{\Gamma_1}_{G,\pi})$ is representable by
    $$\OO(\PC^{\Gamma_1}_G) \otimes_{\OO(\PC^{\Gamma_1}_{\underline\Delta})} \OO \cong \OO(\PC^{\Gamma_1}_G) \otimes_{\OO(\Rep^{\Gamma_1}_{\underline\Delta})} \OO$$
    by \Cref{pc_is_rep}. We conclude, that \eqref{rep_pc_pi_map_alg} is obtained from \eqref{git_pc_map_alg} by applying $- \otimes_{\OO(\Rep^{\Gamma_1}_{\underline\Delta})} \OO$. So by \Cref{git_pc_iso} \eqref{rep_pc_pi_map_alg} is an isomorphism as well.
\end{proof}

\begin{cor}\label{GIT_isos_2}
    Assume, there is a representation $\rho_0 \in \Rep^{\Gamma_1}_{G,\pi}(\OO)$.
    Then we have isomorphisms $\OO(\PC^{\Gamma_1}_{G,\pi}) \cong \OO[H_1(\Gamma_1, M)] \cong \OO[(\mathcal E\otimes M)_{\Delta}]^{G^0}$ via \eqref{GIT_isos} and \eqref{rep_pc_pi_map_alg}.
\end{cor}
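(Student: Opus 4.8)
The plan is to assemble this statement directly from \Cref{cor_PC_inv} and \Cref{git_isos}, both of which are available under the standing hypothesis that $\Rep^{\Gamma_1}_{G,\pi}(\OO)$ contains the fixed representation $\rho_0$. First I would invoke \Cref{cor_PC_inv} to identify $\OO(\PC^{\Gamma_1}_{G,\pi})$ with the invariant ring $\OO(\Rep^{\Gamma_1}_{G,\pi})^{G^0}$ through the map \eqref{rep_pc_pi_map_alg}. Next, since $G^0=\DD(M)$ is split and $\rho_0$ is given, \Cref{git_isos} applies, and its chain \eqref{GIT_isos} provides isomorphisms $\OO[H_1(\Gamma_1, M)]\eqto\OO(\Rep^{\Gamma_1}_{G,\pi})^{G^0}\eqto\OO[(\mathcal E\otimes M)_{\Delta}]^{G^0}$. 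Composing the isomorphism from \Cref{cor_PC_inv} with these yields the desired chain $\OO(\PC^{\Gamma_1}_{G,\pi})\cong\OO[H_1(\Gamma_1, M)]\cong\OO[(\mathcal E\otimes M)_{\Delta}]^{G^0}$, compatibly with the cited maps.

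There is essentially no obstacle to overcome: all the substantive work — the explicit description $\OO(\Rep^{\Gamma_1}_{G,\pi})\cong\OO[(\mathcal E\otimes M)_{\Delta}]$ in \Cref{Z1_Rep}, the fpqc-sheafification statement \Cref{H1_sheafify}, the GIT-quotient identification in \Cref{git_isos}, and the fact that passing to $G$-pseudocharacters is the same as passing to $G^0$-invariants (\Cref{git_pc_iso}, \Cref{pc_is_rep}, \Cref{cor_PC_inv}) — has already been carried out. The one bookkeeping point worth stating explicitly is that the $G^0$-invariant rings appearing in \Cref{cor_PC_inv} and in \Cref{git_isos} are literally the same, since $G^0=\DD(M)$ in both; once this is noted the corollary follows by transitivity of the three isomorphisms, and the compatibility with $\varphi_{\nat}$ recorded in \Cref{git_isos} pins down the resulting composite.
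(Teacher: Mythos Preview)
Your proposal is correct and matches the paper's approach: the corollary is stated without proof precisely because it follows immediately by combining \Cref{cor_PC_inv} with \Cref{git_isos}, exactly as you describe.
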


\begin{lem}\label{comes_from_rep}
    Assume, that $\Rep^{\Gamma_1}_{G,\pi}(\OO)$ is non-empty.
    Then the map $\OO(\PC^{\Gamma_1}_{G,\pi}) \to \OO(\Rep^{\Gamma_1}_{G,\pi})$ corresponding to \eqref{rep_pc_without_git} has a section as $\OO$-algebras.
    In particular, for all $A \in \OO\hyphen\alg$ and all ${\Theta \in \PC^{\Gamma_1}_{G,\pi}(A)}$, there is a representation $\rho \in \Rep^{\Gamma_1}_{G, \pi}(A)$, such that $\Theta = \Theta_{\rho}$.
\end{lem}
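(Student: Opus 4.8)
The plan is to identify the $\OO$-algebra map dual to \eqref{rep_pc_without_git} with an explicit inclusion of group algebras, for which a retraction is manifest.

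First I would fix a representation $\rho_0\in\Rep^{\Gamma_1}_{G,\pi}(\OO)$, which exists by hypothesis. By \Cref{Z1_Rep} this identifies $\OO(\Rep^{\Gamma_1}_{G,\pi})$ with $\OO[(\mathcal E\otimes M)_{\Delta}]$, and by \Cref{GIT_isos_2} it identifies $\OO(\PC^{\Gamma_1}_{G,\pi})$ with $\OO[H_1(\Gamma_1,M)]$. Since \eqref{rep_pc_without_git} factors through the $G^0$-quotient followed by \eqref{git_pc_pi_map}, chasing \Cref{git_isos}, \Cref{cor_PC_inv} and \Cref{GIT_isos_2} shows that under these identifications the dual $\OO$-algebra homomorphism $\iota\colon\OO(\PC^{\Gamma_1}_{G,\pi})\to\OO(\Rep^{\Gamma_1}_{G,\pi})$ becomes the map $\OO[H_1(\Gamma_1,M)]\to\OO[(\mathcal E\otimes M)_{\Delta}]$ induced by the inclusion $\varphi_{\nat}\colon H_1(\Gamma_1,M)\hookrightarrow(\mathcal E\otimes M)_{\Delta}$ of the exact sequence \eqref{exact_seq_nat}.

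Next I would use that $M$, being the character lattice of the split torus $G^0$, is free of finite rank over $\ZZ$; hence its submodule $I_{\Delta}M$ is free of finite rank over $\ZZ$ as well, so the short exact sequence \eqref{exact_seq_nat},
$$0\rightarrow H_1(\Gamma_1,M)\xrightarrow{\varphi_{\nat}}(\mathcal E\otimes M)_{\Delta}\xrightarrow{q}I_{\Delta}M\rightarrow 0,$$
splits. Choosing a retraction $r\colon(\mathcal E\otimes M)_{\Delta}\to H_1(\Gamma_1,M)$ with $r\circ\varphi_{\nat}=\id$ and passing to group algebras gives an $\OO$-algebra homomorphism $\OO[r]\colon\OO[(\mathcal E\otimes M)_{\Delta}]\to\OO[H_1(\Gamma_1,M)]$ with $\OO[r]\circ\OO[\varphi_{\nat}]=\id$. (Concretely this is also visible from \Cref{M_ffr}, which presents $\OO[(\mathcal E\otimes M)_{\Delta}]$ as $\OO[H_1(\Gamma_1,M)][t_1^{\pm1},\dots,t_s^{\pm1}]$ with $\varphi_{\nat}$ inducing the standard inclusion, so that $t_i\mapsto 1$ furnishes a retraction.) Transporting $\OO[r]$ back through the identifications above produces an $\OO$-algebra map $s\colon\OO(\Rep^{\Gamma_1}_{G,\pi})\to\OO(\PC^{\Gamma_1}_{G,\pi})$ with $s\circ\iota=\id_{\OO(\PC^{\Gamma_1}_{G,\pi})}$, which is the asserted section of \eqref{rep_pc_without_git}.

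For the last assertion, given $A\in\OO\hyphen\alg$ and $\Theta\in\PC^{\Gamma_1}_{G,\pi}(A)$ corresponding to an $\OO$-algebra map $f_{\Theta}\colon\OO(\PC^{\Gamma_1}_{G,\pi})\to A$, I would let $\rho\in\Rep^{\Gamma_1}_{G,\pi}(A)$ be the representation corresponding to $f_{\Theta}\circ s\colon\OO(\Rep^{\Gamma_1}_{G,\pi})\to A$. Since $\rho\mapsto\Theta_{\rho}$ on $A$-points is the map induced by $\iota$, the pseudocharacter $\Theta_{\rho}$ corresponds to $f_{\Theta}\circ s\circ\iota=f_{\Theta}$, whence $\Theta_{\rho}=\Theta$. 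I do not expect a genuine obstacle: the one substantive point is the splitting of \eqref{exact_seq_nat}, which is immediate once $I_{\Delta}M$ is seen to be a free $\ZZ$-module; everything else is bookkeeping with the identifications already established in \Cref{Z1_Rep}, \Cref{git_isos}, \Cref{cor_PC_inv} and \Cref{GIT_isos_2}.
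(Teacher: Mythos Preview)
Your proposal is correct and follows essentially the same approach as the paper: both identify the map $\OO(\PC^{\Gamma_1}_{G,\pi})\to\OO(\Rep^{\Gamma_1}_{G,\pi})$ with the inclusion $\OO[H_1(\Gamma_1,M)]\hookrightarrow\OO[(\mathcal E\otimes M)_{\Delta}]$ induced by $\varphi_{\nat}$, and then split it using the freeness of $I_{\Delta}M$ (the paper packages this via the diagram and the explicit retraction $t_i\mapsto 1$ from \eqref{poly_over_H1}, which you also mention). The deduction of the final assertion is identical.
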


\begin{proof}
    Let $\rho_0 \in \Rep^{\Gamma_1}_{G,\pi}(\OO)$.
    In the diagram
    \begin{center}
        \begin{tikzcd}[column sep=small]
            \OO(\PC^{\Gamma_1}_{G,\pi}) \arrow[r, "\eqref{rep_pc_pi_map_alg}"] & \OO(\Rep^{\Gamma_1}_{G,\pi})^{G^0} \arrow[r, "\sim"] \arrow[d, hookrightarrow] & \OO[(\mathcal E \otimes M)_{\Delta}]^{G^0} \arrow[d, hookrightarrow] & \OO[H_1(\Gamma_1, M)] \arrow[d, hookrightarrow] \arrow[l, swap, "\eqref{GIT_isos}"] \\
            & \OO(\Rep^{\Gamma_1}_{G,\pi}) \arrow[r, "\eqref{ORep_iso}"] \arrow[ul, "\sigma", dashed, bend left=20, twoheadrightarrow] & \OO[(\mathcal E \otimes M)_{\Delta}] & \OO[H_1(\Gamma_1, M)][t_1^{\pm 1}, \dots, t_s^{\pm 1}] \arrow[u, swap, dashed, bend right=40, twoheadrightarrow] \arrow[l, swap, "\eqref{poly_over_H1}"]
        \end{tikzcd}
    \end{center}    
    the left solid square commutes, since \eqref{ORep_iso} is a $G^0$-equivariant isomorphism. The right solid square commutes, since \eqref{GIT_isos} is by \Cref{cor_PC_inv} induced by $\varphi_{\nat}$. The section of the right vertical arrow is defined by mapping $t_1, \dots, t_s$ to $1$ and since all horizontal maps are isomorphisms defines a section $\sigma$ of the left vertical arrow.
    Given a $G$-pseudocharacter $\Theta \in \PC^{\Gamma_1}_{G,\pi}(A)$, it corresponds to a homomorphism $\OO(\PC^{\Gamma_1}_{G,\pi}) \to A$ and by restriction along $\sigma$, we get a representation $\rho \in \Rep^{\Gamma_1}_{G,\pi}(A)$, which by restriction along \eqref{rep_pc_pi_map_alg} recovers $\Theta$, so $\Theta = \Theta_{\rho}$.
\end{proof}

\section{Profinite completion} \label{sec_profinite}

So far we have worked with representations and pseudocharacters of 
an abstract group $\Gamma_1$ and topology did not play a role. In 
this section we transfer some of the results proved in 
Sections \ref{sec_adm_rep} and \ref{Laf} to the  profinite completion $\widehat{\Gamma}_1$ of $\Gamma_1$ and impose continuity conditions
on the representations and pseudocharacters of $\widehat{\Gamma}_1$.

Let $\OO$ be the ring of integers in a finite extension of $L$ with 
uniformiser $\varpi$ and residue field $k$. 
\subsection{Deformations of representations} Let $\rhobar: \widehat{\Gamma}_1\rightarrow G(k)$ 
be a continuous representation, such that $\rhobar|_{\Gamma_1}\in \Rep^{\Gamma_1}_{G,\pi}(k)$. 

Let $D^{\square}_{\rhobar}:\Aa_{\OO}\rightarrow \Set$ be the functor such that 
$D^{\square}_{\rhobar}(A)$ is the set of continuous representations 
$\rho_A: \widehat{\Gamma}_1\rightarrow G(A)$ such that $\rho_A\equiv \rhobar\pmod{\mm_A}$. 
The functor $D^{\square}_{\rhobar}$ is pro-representable by $R^{\square}_{\rhobar}\in \widehat{\Aa}_{\OO}$.

\begin{lem}\label{limit_square} There is a natural isomorphism in $\widehat{\Aa}_{\OO}$:
\begin{equation}\label{eq_limit}
 R^{\square}_{\rhobar}\cong \varprojlim_I \OO(\Rep^{\Gamma_1}_G)_{\mm}/I,
\end{equation} 
where $\mm$ is the maximal ideal of $\OO(\Rep^{\Gamma_1}_G)$ corresponding 
to $\rhobar$ and the limit is taken over all the ideals $I$, such that the 
quotient is finite.
\end{lem}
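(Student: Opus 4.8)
The strategy is to prove that $\varprojlim_I \OO(\Rep^{\Gamma_1}_G)_{\mm}/I$ pro-represents the functor $D^{\square}_{\rhobar}$, and then invoke uniqueness of pro-representing objects in $\widehat{\Aa}_{\OO}$. The starting observation is that every $A \in \Aa_{\OO}$ is a \emph{finite} ring: it is artinian, hence Noetherian, local with residue field $k$, and $k$ is finite and $\mm_A$ nilpotent, so each $\mm_A^i/\mm_A^{i+1}$ is a finite-dimensional $k$-vector space. Since $G$ is of finite type over $\OO$, it follows that $G(A) = \Hom_{\OO\hyphen\alg}(\OO(G), A)$ is finite for such $A$. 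Therefore any homomorphism $\rho : \Gamma_1 \to G(A)$ has a kernel of finite index in $\Gamma_1$ and factors, by the universal property of the profinite completion, through a \emph{unique} continuous homomorphism $\widehat{\rho} : \widehat{\Gamma}_1 \to G(A)$; conversely restriction along $\Gamma_1 \to \widehat{\Gamma}_1$ takes a continuous homomorphism $\widehat{\Gamma}_1 \to G(A)$ back to such a $\rho$, and the two operations are mutually inverse because $\Gamma_1$ has dense image in $\widehat{\Gamma}_1$ and $G(A)$ is discrete and Hausdorff. These bijections are compatible with reduction modulo $\mm_A$: if $\widehat{\rho}$ extends $\rho$, then $\widehat{\rho} \bmod \mm_A$ is a continuous homomorphism $\widehat{\Gamma}_1 \to G(k)$ restricting to $\rho \bmod \mm_A$ on the dense subgroup, hence it equals $\rhobar$ as soon as $\rho \bmod \mm_A = \rhobar|_{\Gamma_1}$.

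Next I would feed this into the functor of points of $\Rep^{\Gamma_1}_G$. By definition $\mm$ is the kernel of the $\OO$-algebra map $\OO(\Rep^{\Gamma_1}_G) \to k$ determined by $\rhobar|_{\Gamma_1}$, so it is a maximal ideal with residue field $k$; set $R \colonequals \OO(\Rep^{\Gamma_1}_G)_{\mm}$, a local $\OO$-algebra with residue field $k$. For $A \in \Aa_{\OO}$, an $\OO$-algebra homomorphism $\OO(\Rep^{\Gamma_1}_G) \to A$ whose reduction mod $\mm_A$ is the $k$-point $\mm$ is the same datum as a representation $\rho \in \Rep^{\Gamma_1}_G(A)$ with $\rho \otimes_A k = \rhobar|_{\Gamma_1}$, and, since $A$ is local, such homomorphisms are exactly the local $\OO$-algebra homomorphisms $R \to A$. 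Combining with the previous paragraph yields a bijection, natural in $A \in \Aa_{\OO}$,
\[
 D^{\square}_{\rhobar}(A) \;\cong\; \Hom_{\OO\hyphen\alg,\,\mathrm{loc}}(R, A).
\]

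Finally I would check that $\varprojlim_I R/I$ pro-represents the functor on the right. Each quotient $R/I$ with $R/I$ finite is an object of $\Aa_{\OO}$ (a finite local $\OO$-algebra with residue field $k$), and these ideals form a directed system under reverse inclusion — given $I_1, I_2$ one has $R/(I_1 \cap I_2) \hookrightarrow R/I_1 \times R/I_2$, which is finite — so $\varprojlim_I R/I$ genuinely lies in $\widehat{\Aa}_{\OO}$. For $A \in \Aa_{\OO}$, any local $\OO$-algebra homomorphism $R \to A$ has image of finite cardinality, hence kernel of finite colength, and therefore factors through some $R/I$ in the system; thus $\Hom_{\OO\hyphen\alg,\,\mathrm{loc}}(R, A) = \colim_I \Hom_{\Aa_{\OO}}(R/I, A)$, which is precisely the statement that $\varprojlim_I R/I$ pro-represents $A \mapsto \Hom_{\OO\hyphen\alg,\,\mathrm{loc}}(R, A)$. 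Since $R^{\square}_{\rhobar}$ pro-represents $D^{\square}_{\rhobar}$ and the two functors agree naturally by the displayed bijection, uniqueness of pro-representing objects in $\widehat{\Aa}_{\OO}$ produces the asserted natural isomorphism $R^{\square}_{\rhobar} \cong \varprojlim_I \OO(\Rep^{\Gamma_1}_G)_{\mm}/I$.

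The main obstacle I anticipate is the bookkeeping in the first paragraph: one must verify that the (standard) identification of abstract representations of $\Gamma_1$ into a finite group with continuous representations of $\widehat{\Gamma}_1$ carries ``reduces to $\rhobar|_{\Gamma_1}$'' to ``reduces to $\rhobar$'', which is where density of $\Gamma_1$ in $\widehat{\Gamma}_1$ is essential. A secondary point deserving care is that $\OO(\Rep^{\Gamma_1}_G)$, being a quotient of an infinite tensor power of $\OO(G)$ when $\Gamma_1$ is infinite, need not be Noetherian, so $R$ need not be either; one should therefore argue directly with the cofinal system of finite-colength ideals of $R$ rather than identifying $\varprojlim_I R/I$ with an $\mm R$-adic completion, the two agreeing only when $R$ happens to be Noetherian.
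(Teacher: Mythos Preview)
Your proof is correct and follows essentially the same approach as the paper's: both identify $D^{\square}_{\rhobar}(A)$ with local $\OO$-algebra maps $\OO(\Rep^{\Gamma_1}_G)_{\mm}\to A$ by passing between abstract representations of $\Gamma_1$ and continuous representations of $\widehat{\Gamma}_1$ via finiteness of $G(A)$, and then observe that such maps factor through some finite quotient $R/I$. The paper additionally notes that for local $A$ the map $\OO(\Rep^{\Gamma_1}_G)\to A$ factors through $\OO(\Rep^{\Gamma_1}_{G,\pi})$ (using $(G/G^0)(A)=(G/G^0)(k)$), but this is not needed for the lemma itself and your argument is slightly cleaner for bypassing it; your explicit remarks on density and on avoiding the $\mm$-adic completion in the non-noetherian setting are well taken.
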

\begin{proof} If $A\in \Aa_{\OO}$ and $\rho\in D^{\square}_{\rhobar}(A)$ then 
$\rho|_{\Gamma_1}\in \Rep^{\Gamma_1}_{G, \pi}(A)$ and hence we obtain a natural 
homomorphism of $\OO$-algebras $\OO(\Rep^{\Gamma_1}_G)\rightarrow A$. The 
condition $\rho\equiv \rhobar\pmod{\mm_A}$ implies that this map 
extends to the localisation $\OO(\Rep^{\Gamma_1}_G)_{\mm}\rightarrow A$.
Since $A$ is finite the kernel of this map is one of the ideals $I$ appearing 
in the inductive system in \eqref{eq_limit}.

Conversely, if $I$ is an ideal of $\OO(\Rep^{\Gamma_1}_G)_{\mm}$ such that 
the quotient $A$ is finite then $A\in \Aa_{\OO}$. The map 
$\OO(\Rep^{\Gamma_1}_G)\rightarrow k$ corresponding to $\rhobar$ 
factors through $\OO(\Rep^{\Gamma_1}_{G, \pi})\rightarrow k$ 
as $\rhobar|_{\Gamma_1}\in \Rep^{\Gamma_1}_{G,\pi}(k)$ by assumption.
Since 
$G/G^0$ is constant and $A$ is a local ring we have $(G/G^0)(A)=(G/G^0)(k)$ and hence the map $\OO(\Rep^{\Gamma_1}_G)\rightarrow A$ factors through $\OO(\Rep^{\Gamma_1}_{G, \pi})\rightarrow A$. 
By specialising the 
universal representation $\Gamma_1\rightarrow G(\OO(\Rep^{\Gamma_1}_{G, \pi}))$ 
along this map we obtain a representation
$\rho: \Gamma_1\rightarrow G(A)$. Since the target is finite $\rho$
extends uniquely to a continuous representation $\rhotilde:\widehat{\Gamma}_1\rightarrow G(A)$, which defines 
a point in $D^{\square}_{\rhobar}(A)$.

Hence, the right hand side of \eqref{eq_limit} pro-represents $D^{\square}_{\rhobar}$ and the assertion follows.

\end{proof}

\begin{lem}\label{abstract_limit} Let $\mathcal A$ be an abelian group and let $\mm$ be a 
maximal ideal of the group ring $\OO[\mathcal A]$ with residue field 
$k$. Then there is an isomorphism in $\widehat{\Aa}_{\OO}$: 
\begin{equation}\label{eq_abs_limit}
\OO\br{N}\cong  \varprojlim_I \OO[\mathcal A]_{\mm}/I,
\end{equation}
where $N$ is the pro-$p$ completion of $\mathcal A$ and the limit is taken over all the ideals $I$, such that the quotient is finite.
\end{lem}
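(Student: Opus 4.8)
The plan is to exhibit both sides of \eqref{eq_abs_limit} as pro-representing objects of the same functor $\Aa_{\OO}\to\Set$, and then invoke the Yoneda lemma. Since $\OO$ is a complete discrete valuation ring with finite residue field, any ring homomorphism $\OO\to k$ fails to be injective and hence is the canonical surjection; so the maximal ideal $\mm$ corresponds to a group homomorphism $\chi\colon\mathcal A\to k^{\times}$, namely the restriction of $\OO[\mathcal A]\to\OO[\mathcal A]/\mm=k$ to $\mathcal A\subseteq\OO[\mathcal A]^{\times}$. As $k$ is finite of characteristic $p$, the order of $k^{\times}$ is prime to $p$ and there is a Teichm\"uller section $[\,\cdot\,]\colon k^{\times}\to\OO^{\times}$ of the reduction map; for $A\in\Aa_{\OO}$ write $[\chi(-)]\colon\mathcal A\to A^{\times}$ for the composite $\mathcal A\xrightarrow{\chi}k^{\times}\xrightarrow{[\,\cdot\,]}\OO^{\times}\to A^{\times}$.

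Fix $A\in\Aa_{\OO}$; then $A$ is a finite local ring with residue field $k$. On the one hand, a morphism $\varprojlim_I\OO[\mathcal A]_{\mm}/I\to A$ in $\widehat{\Aa}_{\OO}$ is the same as a local $\OO$-algebra map $\OO[\mathcal A]_{\mm}\to A$ (it automatically factors through a quotient $\OO[\mathcal A]_{\mm}/I$ with $I$ of finite colength, as $A$ is finite), which in turn is the same as a group homomorphism $\phi\colon\mathcal A\to A^{\times}$ with $\phi\bmod\mm_A=\chi$; twisting by $\phi\mapsto\phi\cdot[\chi(-)]^{-1}$ identifies these with $\Hom_{\Group}(\mathcal A,1+\mm_A)$. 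On the other hand, $\OO\br{N}=\varprojlim_U\OO[N/U]$ over open subgroups $U\leq N$ is a pseudo-compact local $\OO$-algebra with residue field $k$ (each $\OO[N/U]$ is local because $N/U$ is a finite $p$-group and $p\in\mm_{\OO}$), and a morphism $\OO\br{N}\to A$ in $\widehat{\Aa}_{\OO}$ is the same as a continuous homomorphism $N\to 1+\mm_A$. Now $\mm_A$ is nilpotent and contains $p$, so $1+\mm_A$ is killed by a fixed power of $p$ and hence is a finite abelian $p$-group; therefore every homomorphism $\mathcal A\to 1+\mm_A$ extends uniquely and continuously over $N=\mathcal A^{\wdp}$, giving $\Hom_{\Group}(\mathcal A,1+\mm_A)=\Hom^{\cont}_{\Group}(N,1+\mm_A)$. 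Thus both sides of \eqref{eq_abs_limit} pro-represent $A\mapsto\Hom_{\Group}(\mathcal A,1+\mm_A)$, and the resulting isomorphism in $\widehat{\Aa}_{\OO}$ is the one extending $\OO[\mathcal A]\to\OO\br{N}^{\times}$, $a\mapsto[\chi(a)]\,\bar a$ (with $\bar a$ the image of $a$ in $N$), which sends $\mm$ into the maximal ideal and hence extends over the completion.

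The argument involves no serious difficulty; it is essentially bookkeeping. The two points that require care are, first, checking that $\varprojlim_I\OO[\mathcal A]_{\mm}/I$ genuinely lies in $\widehat{\Aa}_{\OO}$ (the transition maps are local surjections) and pro-represents the expected functor even though $\OO[\mathcal A]$ need not be noetherian when $\mathcal A$ is infinitely generated; and second, the computation that $1+\mm_A$ is $p$-power torsion, which is exactly what forces the pro-$p$ completion of $\mathcal A$ to appear on the left.
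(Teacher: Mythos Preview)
Your proof is correct and follows essentially the same approach as the paper: both use the Teichm\"uller lift of the character $\chi$ to reduce to (or twist to) the augmentation ideal, and both identify the finite local quotients with maps into $1+\mm_A$, which is a finite $p$-group. The only cosmetic difference is that the paper writes down the twisting ring automorphism $a\mapsto\psi(a)a$ explicitly and then checks cofinality of the two inverse systems, whereas you package the same computation via Yoneda.
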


\begin{proof} Let $\psibar: \mathcal A \rightarrow k^{\times}$ be the 
character obtained by the composition $\mathcal A\rightarrow \OO[\mathcal A]/\mm=k$ and let $\psi: \mathcal A\rightarrow \OO^{\times}$ be any character lifting $\psibar$ (for example the Teichm\"uller lift). Then $\mm$ is generated by 
    $\varpi$ and $( \psi(a)^{-1} a - 1)$ for all $a\in \mathcal A$. 
    The map $\varphi: \OO[\mathcal A]\rightarrow \OO[\mathcal A]$, 
    which sends $a\in \mathcal A$ to $\psi(a) a$ is an isomorphism of 
    $\OO$-algebras, which sends $\mm$ to the maximal ideal corresponding to the trivial character. We thus may assume that $\mm$ is generated by $\varpi$
    and the augmentation ideal of $\OO[\mathcal A]$.

    If $P$ is a finite $p$-power order quotient of $\mathcal A$ then $\OO/\varpi^n[P]$ 
    is a finite local ring and the surjection $\OO[\mathcal A]\twoheadrightarrow \OO/\varpi^n[P]$ maps 
    $\mm$ to the maximal ideal of $\OO/\varpi^n[P]$. Hence, the map factors through 
    $\OO[\mathcal A]_{\mm}\twoheadrightarrow \OO/\varpi^n[P]$. Conversely, if $A$ is a quotient of $\OO[\mathcal A]_{\mm}$, which 
    is finite (as a set), then the image of $\mathcal A$ is contained in $1+\mm_A$, and 
    hence is a finite group of $p$-power order. This implies the assertion.
\end{proof}

\begin{lem}\label{noeth_comp} Let $\mathcal A$ be an abelian group and let $\mm$ be a 
maximal ideal of the group ring $\OO[\mathcal A]$ with residue field 
$k$ and let $N$ be the pro-$p$ completion of $\mathcal A$. Then the 
following are equivalent:
\begin{enumerate}
\item $N$ is a finitely generated $\Zp$-module;
\item $\mathcal A/ p \mathcal A$ is a finite $p$-group;
\item $\OO\br{N}$ is noetherian.
\end{enumerate}
If the equivalent conditions hold then $\OO\br{N}$ is naturally isomorphic to the $\mm$-adic completion 
of $\OO[\mathcal A]$.
\end{lem}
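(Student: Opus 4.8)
The plan is to prove $(2)\Rightarrow(1)\Rightarrow(2)$ and $(1)\Rightarrow(3)\Rightarrow(1)$, and then the last assertion. Write $N=\mathcal A^{\wdp}$; I shall use its universal property (for a finite $p$-group $Q$, $\Hom_{\cont}(N,Q)=\Hom(\mathcal A,Q)$) and the fact that once $\mathcal A/p\mathcal A$ is finite one has $N=\varprojlim_n\mathcal A/p^n\mathcal A$, because then every finite $p$-group quotient of $\mathcal A$ of exponent $p^n$ factors through $\mathcal A/p^n\mathcal A$. For $(2)\Rightarrow(1)$: if $\mathcal A/p\mathcal A$ is finite, Nakayama over the local ring $\ZZ/p^n$ shows each $\mathcal A/p^n\mathcal A$ is generated by $d\colonequals\dim_{\Fp}\mathcal A/p\mathcal A$ elements, hence so is the topological $\Zp$-module $N=\varprojlim_n\mathcal A/p^n\mathcal A$, i.e.\ $N$ is a quotient of $\Zp^d$. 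For $(1)\Rightarrow(2)$: if $N$ is a finitely generated $\Zp$-module then $N/pN$ is finite, so $\Hom(\mathcal A,\Fp)=\Hom_{\cont}(N,\Fp)=\Hom_{\Fp}(N/pN,\Fp)$ is finite, and since the $\Fp$-dual of an infinite-dimensional $\Fp$-vector space is infinite, $\mathcal A/p\mathcal A$ is finite.

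For $(1)\Rightarrow(3)$: by the structure theorem $N\cong\Zp^r\oplus T$ with $T$ a finite $p$-group, so $\OO\br{N}$ is module-finite over $\OO\br{x_1,\dots,x_r}$ (using the classical identification $\OO\br{\Zp}\cong\OO\br{x}$), hence noetherian. For $(3)\Rightarrow(1)$: $\OO\br{N}$ is a complete local $\OO$-algebra with maximal ideal $\mathfrak n=\varpi\OO\br{N}+I$, where $I\colonequals\ker(\OO\br{N}\to\OO)$ is its augmentation ideal; the standard isomorphism $I/I^2\cong N\otimes_{\Zp}\OO$ for completed group algebras yields $\mathfrak n/\mathfrak n^2\cong k\oplus(N/pN\otimes_{\Fp}k)$, so if $\OO\br{N}$ is noetherian then $N/pN$ is finite and $N$ is a finitely generated $\Zp$-module by topological Nakayama. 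I expect the isomorphism $I/I^2\cong N\otimes_{\Zp}\OO$ to be the main technical point of the argument; it follows by passing to the limit over the open subgroups $U\le N$ in the classical identification $I_P/I_P^2\cong P$ for finite abelian $p$-groups $P$ — the system $(I_{N/U})_U$ having surjective transition maps — and then applying the exact functor $\OO\otimes_{\Zp}-$.

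For the final claim, assume $(1)$–$(3)$ hold; I want to identify $\OO\br{N}$ with the $\mm$-adic completion of $\OO[\mathcal A]$. As in the proof of \Cref{abstract_limit}, we may assume $\mm=\varpi\OO[\mathcal A]+I_{\mathcal A}$ with $I_{\mathcal A}=\ker(\OO[\mathcal A]\to\OO)$. Then $\mm/\mm^2$ is spanned over $k$ by $\varpi$ and the image of the homomorphism $\mathcal A\to\mm/\mm^2$, $a\mapsto a-1$, on which multiplication by $p$ vanishes modulo $\mm^2$; since $\mathcal A/p\mathcal A$ is finite, $\mm/\mm^2$ is finite over $k$, hence each $\OO[\mathcal A]/\mm^n$ is finite, being an iterated extension of subquotients of $\Sym^j_k(\mm/\mm^2)$. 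Thus the powers $\mm^n$ occur among the cofinite ideals $I$ of $\OO[\mathcal A]_{\mm}$ appearing in \Cref{abstract_limit}, and conversely every such $I$ contains some power of $\mm\OO[\mathcal A]_{\mm}$ because $\OO[\mathcal A]_{\mm}/I$ is local Artinian. The two inverse systems are therefore cofinal, so \Cref{abstract_limit} gives $\OO\br{N}\cong\varprojlim_I\OO[\mathcal A]_{\mm}/I\cong\varprojlim_n\OO[\mathcal A]/\mm^n$, the $\mm$-adic completion of $\OO[\mathcal A]$, compatibly with the natural maps out of $\OO[\mathcal A]$.
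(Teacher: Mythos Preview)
Your proof is correct and for the most part parallels the paper's argument closely: the equivalence $(1)\Leftrightarrow(2)$ via Nakayama, the implication $(1)\Rightarrow(3)$ via the structure theorem and $\OO\br{\Zp}\cong\OO\br{x}$, and the final cofinality argument (reducing to the augmentation maximal ideal, showing $\mm/\mm^2$ is finite over $k$, and invoking \Cref{abstract_limit}) are essentially identical to what the paper does.

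The one genuine difference is in $(3)\Rightarrow(1)$. The paper argues by contrapositive: if $N$ is not finitely generated over $\Zp$, choose a strictly increasing chain of finitely generated (hence compact, hence closed) $\Zp$-submodules $N_i\subset N$; the kernels of $\OO\br{N}\twoheadrightarrow\OO\br{N/N_i}$ then form a strictly ascending chain of ideals. You instead compute the cotangent space, using $I/I^2\cong N\otimes_{\Zp}\OO$ to get $\mathfrak n/\mathfrak n^2\cong k\oplus(N/pN\otimes_{\Fp}k)$, so that noetherianity of $\OO\br{N}$ forces $N/pN$ finite. Your route is natural and arguably more conceptual, but note that the identification $I/I^2\cong N\otimes_{\Zp}\OO$ you obtain by passing to the limit really computes $I/\overline{I^2}$ (closure in the pseudocompact topology); this is harmless here since $\mathfrak n/\mathfrak n^2$ surjects onto $\mathfrak n/\overline{\mathfrak n^2}$, and it is only the latter you need to be infinite-dimensional when $N/pN$ is infinite. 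The paper's ascending-chain argument sidesteps this subtlety entirely, at the cost of being slightly more ad hoc.
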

\begin{proof} The equivalence of (1) and (2) follows from Nakayama's lemma
and the fact that $N/pN$ is the pro-$p$ completion of $\mathcal A/p \mathcal A\cong \oplus_I \Fp$ for some set $I$.

If $N$ is finitely generated as a $\Zp$-module, then $N\cong \mu\oplus \Zp^r$, 
where $\mu$ is a finite $p$-group and $r=\dim_{\Qp} N\otimes_{\Zp} \Qp$. 
Hence, $\OO\br{N}\cong \OO[\mu]\br{x_1, \ldots, x_r}$ and hence (1) implies (3).

If $N$ is not finitely generated as a $\Zp$-module then we may 
find a strictly increasing nested family of finitely generated 
$\Zp$-submodules $N_i \subset N$. Since they are finitely generated 
they are closed in $N$. The kernels of $\OO\br{N}\twoheadrightarrow \OO\br{N/N_i}$
form a strictly increasing nested family of ideals in $\OO\br{N}$. Hence, 
(3) implies (1).

As in the proof of Lemma \ref{abstract_limit} we may assume that 
$\mm$ is generated by $\varpi$ and the augmentation ideal of $\OO[\mathcal A]$. 
The map $\mathcal A\rightarrow \mm$, $a\mapsto a-1$ induces an isomorphism 
of $k$-vector spaces $k\otimes_{\ZZ} \mathcal A \cong \mm/(\varpi, \mm^2)$. 
If (2) holds then $\dim_k \mm/(\varpi, \mm^2)$ is finite. Hence,  the powers 
    of $\mm$ form a cofinal system in the inverse system in \eqref{eq_abs_limit} and the last assertion follows from 
    Lemma \ref{abstract_limit}.
\end{proof}

\begin{lem}\label{completion_def_square} If $\Gamma_2^{\ab}/p \Gamma_2^{\ab}$ is a finite $p$-group 
and $\Rep^{\Gamma_1}_{G, \pi}(\OO)$ is non-empty then 
$R^{\square}_{\rhobar}$ is naturally isomorphic to the $\mm$-adic completion 
of $\OO(\Rep^{\Gamma_1}_{G, \pi})$, where $\mm$ is the maximal ideal 
corresponding to $\rhobar|_{\Gamma_1}$. 
\end{lem}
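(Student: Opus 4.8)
The plan is to assemble Lemmas \ref{limit_square}, \ref{abstract_limit} and \ref{noeth_comp}, after identifying the local ring in question with a group algebra via \Cref{Z1_Rep}.

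First I would invoke \Cref{limit_square} to write $R^{\square}_{\rhobar}\cong\varprojlim_I\OO(\Rep^{\Gamma_1}_G)_{\mm}/I$, the limit being over the ideals $I$ with finite quotient. The next step is to replace $\Rep^{\Gamma_1}_G$ by $\Rep^{\Gamma_1}_{G,\pi}$. The quotient map $\OO(\Rep^{\Gamma_1}_G)\twoheadrightarrow\OO(\Rep^{\Gamma_1}_{G,\pi})$ localises to a surjection $\OO(\Rep^{\Gamma_1}_G)_{\mm}\twoheadrightarrow\OO(\Rep^{\Gamma_1}_{G,\pi})_{\mm}$, and the argument from the proof of \Cref{limit_square} shows that any finite quotient $A$ of $\OO(\Rep^{\Gamma_1}_G)_{\mm}$ lies in $\Aa_{\OO}$ and that the corresponding representation $\Gamma_1\to G(A)$ is automatically $\pi$-admissible, because $G/G^0$ is constant and $A$ is local, which forces $(G/G^0)(A)=(G/G^0)(k)$. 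Hence the kernel of the surjection above is contained in every such $I$, so $R^{\square}_{\rhobar}\cong\varprojlim_I\OO(\Rep^{\Gamma_1}_{G,\pi})_{\mm}/I$.

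Next, using the hypothesis that $\Rep^{\Gamma_1}_{G,\pi}(\OO)$ is non-empty, I would fix $\rho_0\in\Rep^{\Gamma_1}_{G,\pi}(\OO)$ and apply \Cref{Z1_Rep} to get an isomorphism $\OO(\Rep^{\Gamma_1}_{G,\pi})\cong\OO[(\mathcal E\otimes M)_{\Delta}]$ carrying $\mm$ to a maximal ideal with residue field $k$ (the image of $\rhobar|_{\Gamma_1}$). Then \Cref{abstract_limit}, applied with $\mathcal A=(\mathcal E\otimes M)_{\Delta}$, gives $R^{\square}_{\rhobar}\cong\OO\br{N}$ with $N$ the pro-$p$ completion of $(\mathcal E\otimes M)_{\Delta}$. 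To conclude it remains to check that $(\mathcal E\otimes M)_{\Delta}$ satisfies condition (2) of \Cref{noeth_comp}, that is, that $(\mathcal E\otimes M)_{\Delta}/p$ is finite; granting this, \Cref{noeth_comp} identifies $\OO\br{N}$ with the $\mm$-adic completion of $\OO[(\mathcal E\otimes M)_{\Delta}]\cong\OO(\Rep^{\Gamma_1}_{G,\pi})$, which is the assertion. For the finiteness I would note that $(\mathcal E\otimes M)_{\Delta}/p$ is a quotient of $(\mathcal E\otimes M)/p=(\Gamma_2^{\ab}\otimes M)/p\oplus(I_{\Delta}\otimes M)/p$; since $M$ is free of finite rank (being the character lattice of the split torus $G^0$) the first summand is isomorphic to $(\Gamma_2^{\ab}/p\Gamma_2^{\ab})^{\rank_{\ZZ}M}$, which is finite by hypothesis, and since $\Delta$ is finite the module $I_{\Delta}\otimes M$ is finitely generated, so the second summand is finite as well.

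The step I expect to be the main obstacle is the passage from $\Rep^{\Gamma_1}_G$ to $\Rep^{\Gamma_1}_{G,\pi}$: one has to be sure that imposing admissibility does not alter the completed local ring at $\rhobar$. This is precisely where the constancy of $G/G^0$ enters — equivalently, that $\Rep^{\Gamma_1}_{G,\pi}$ is open and closed in $\Rep^{\Gamma_1}_G$ in a neighbourhood of $\rhobar$ — and once it is in place the remainder is a routine chaining of the three cited lemmas with the finiteness computation.
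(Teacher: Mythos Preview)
Your proof is correct and follows the same route as the paper's: identify $\OO(\Rep^{\Gamma_1}_{G,\pi})$ with $\OO[(\mathcal E\otimes M)_{\Delta}]$ via \Cref{Z1_Rep}, check that $(\mathcal E\otimes M)_{\Delta}/p$ is finite from the hypothesis on $\Gamma_2^{\ab}/p\Gamma_2^{\ab}$ and the extension \eqref{Eextension}, and then combine \Cref{limit_square} with \Cref{noeth_comp}. You spell out two points the paper leaves implicit---the passage from $\Rep^{\Gamma_1}_G$ to $\Rep^{\Gamma_1}_{G,\pi}$ (already handled inside the proof of \Cref{limit_square}) and the intermediate use of \Cref{abstract_limit} (which \Cref{noeth_comp} invokes internally)---but the argument is otherwise identical.
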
 
\begin{proof} Proposition \ref{Z1_Rep} allows us to identify 
$\OO(\Rep^{\Gamma_1}_{G,\pi})$ with $\OO[(\mathcal E\otimes M)_{\Delta}]$. 
It follows from \eqref{Eextension} that if $\Gamma_2^{\ab}/p \Gamma_2^{\ab}$
is finite then $\mathcal E/p \mathcal E$ is also finite. Since $M$ is a finite free $\ZZ$-module, the assertion 
follows from \Cref{limit_square} and Lemma \ref{noeth_comp} applied to $\mathcal A=(\mathcal E\otimes M)_{\Delta}$.
\end{proof} 

\begin{lem} \label{over_k}
There is a natural isomorphism of local $k$-algebras between 
$R^{\square}_{\rhobar}/\varpi$ and the completed group algebra $k\br{((\mathcal E\otimes M)_{\Delta})^{\wdp}.}$.
\end{lem}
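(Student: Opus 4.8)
The plan is to restrict $D^{\square}_{\rhobar}$ to $\widehat{\Aa}_k$ and to identify the resulting functor, via the results of \Cref{sec_adm_rep} applied over the residue field $k$ in place of $\OO$, as the one pro-represented by the completed group algebra $k\br{((\mathcal E\otimes M)_{\Delta})^{\wdp}}$.

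First I would observe that $R^{\square}_{\rhobar}/\varpi$ pro-represents the restriction of $D^{\square}_{\rhobar}$ to $\widehat{\Aa}_k$, so that for $A\in \Aa_k$ this functor assigns the set of continuous lifts $\rho_A: \widehat{\Gamma}_1\rightarrow G(A)$ of $\rhobar$. As in the proof of \Cref{limit_square}, since $A$ is finite such a $\rho_A$ is the same datum as a representation $\rho_A|_{\Gamma_1}: \Gamma_1\rightarrow G(A)$ lifting $\rhobar|_{\Gamma_1}$, and since $G/G^0$ is constant and $A$ is local we have $(G/G^0)(A)=(G/G^0)(k)$, so $\rho_A|_{\Gamma_1}$ automatically lies in $\Rep^{\Gamma_1}_{G,\pi}(A)$, where $\pi: \Gamma_1\rightarrow \Delta$ is the homomorphism underlying $\rhobar$. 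Because $\rhobar|_{\Gamma_1}\in \Rep^{\Gamma_1}_{G,\pi}(k)$, the whole of \Cref{sec_adm_rep} is available over the base ring $k$ with $\rho_0:=\rhobar|_{\Gamma_1}$; \Cref{Z1_Rep} then identifies $\Rep^{\Gamma_1}_{G,\pi}$ over $k$ with $\Spec k[(\mathcal E\otimes M)_{\Delta}]$, sending $\rhobar|_{\Gamma_1}$ to the augmentation ideal $\mm$, and under this identification a point of $\Rep^{\Gamma_1}_{G,\pi}(A)$ lifts $\rhobar|_{\Gamma_1}$ precisely when the associated $k$-algebra map $k[(\mathcal E\otimes M)_{\Delta}]\rightarrow A$ carries $(\mathcal E\otimes M)_{\Delta}$ into $1+\mm_A$, i.e. factors through the localisation $k[(\mathcal E\otimes M)_{\Delta}]_{\mm}$. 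Running the bookkeeping of \Cref{limit_square} over $k$ therefore yields a natural isomorphism in $\widehat{\Aa}_k$
\[
 R^{\square}_{\rhobar}/\varpi \;\cong\; \varprojlim_I\, k[(\mathcal E\otimes M)_{\Delta}]_{\mm}/I,
\]
the limit being taken over the ideals $I$ of $k[(\mathcal E\otimes M)_{\Delta}]_{\mm}$ with finite quotient.

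Finally I would invoke \Cref{abstract_limit} with $\mathcal A=(\mathcal E\otimes M)_{\Delta}$: its proof goes through verbatim with $\OO$ replaced by the field $k$ and the Teichm\"uller lift replaced by $\psibar$ itself (equivalently, $\mm$ is already the augmentation ideal here, so no change of coordinates is needed), giving a natural isomorphism of the displayed inverse limit with $k\br{\mathcal A^{\wdp}}=k\br{((\mathcal E\otimes M)_{\Delta})^{\wdp}}$. Composing the two isomorphisms gives the claim, and each map in sight is one of local $k$-algebras. The only genuinely delicate point is the licence to run \Cref{sec_adm_rep} and \Cref{abstract_limit} over $k$ — using $\rhobar$ as its own model over $k$ — and to recognise $D^{\square}_{\rhobar}|_{\Aa_k}$ as the functor of deformations of $\rhobar|_{\Gamma_1}$ inside $\Rep^{\Gamma_1}_{G,\pi}$; once this is set up, \Cref{Z1_Rep} and \Cref{abstract_limit} supply the rest.
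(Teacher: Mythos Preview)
Your proposal is correct and follows essentially the same approach as the paper: both restrict $D^{\square}_{\rhobar}$ to $\Aa_k$, apply \Cref{Z1_Rep} over the base ring $k$ using $\rhobar|_{\Gamma_1}$ as the point $\rho_0$, and then invoke \Cref{limit_square} and \Cref{abstract_limit} (the latter with $k$ in place of $\OO$) to identify the resulting inverse limit with $k\br{((\mathcal E\otimes M)_{\Delta})^{\wdp}}$. The paper's write-up is terser but the ingredients and their order are the same.
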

\begin{proof} The ring $R^{\square}_{\rhobar}/\varpi$ represents 
the restriction of $D^{\square}_{\rhobar}$ to $\Aa_k$. Lemma \ref{limit_square}
implies that 
\begin{equation}\label{eq1}
R^{\square}_{\rhobar}/\varpi\cong \varprojlim_I \OO(\Rep^{\Gamma_1}_G)_{\mm}/(\varpi,I).
\end{equation}
Since $\rhobar|_{\Gamma_1}\in \Rep^{\Gamma_1}_{G, \pi}(k)$ we may apply Proposition \ref{Z1_Rep}
with the base ring $\OO$ equal to $k$ to obtain 
\begin{equation}\label{eq2}
\OO(\Rep^{\Gamma_1}_{G,\pi})/\varpi \cong k[(\mathcal E\otimes M)_{\Delta}].
\end{equation}
The assertion then follows from \eqref{eq1}, \eqref{eq2} and Lemma \ref{abstract_limit}.
\end{proof}

Let $\Gamma\twoheadrightarrow \widehat{\Gamma}_1$ be a surjection of 
profinite groups, such that for all $A\in \Aa_{\OO}$ every  
continuous representation $\rho_A: \Gamma\rightarrow G(A)$ factors 
through $\widehat{\Gamma}_1$. 

Let $\ad \rhobar$ be the representation of $\Gamma$ on 
$\Lie G_k$ 
obtained by composing $\rhobar$ with the adjoint representation. 
We write $H^i_{\cont}(\Gamma, \ad \rhobar)$ for the $i$-th continuous
group cohomology and denote by $h^i_{\cont}(\Gamma, \ad\rhobar)$ 
its dimension as a $k$-vector space. 

\begin{prop}\label{lci} If $h^1_{\cont}(\Gamma, \ad\rhobar)$ is 
finite then $((\mathcal E\otimes M)_{\Delta})^{\wdp}$ is a finitely generated $\Zp$-module. Moreover, 
if $h^2_{\cont}(\Gamma, \ad\rhobar)$ is also finite and 
\begin{equation}\label{inequality}
h^1_{\cont}(\Gamma, \ad\rhobar) -h^0_{\cont}(\Gamma, \ad\rhobar)-
h^2_{\cont}(\Gamma, \ad\rhobar) \ge \rank_{\Zp} ((\mathcal E\otimes M)_{\Delta})^{\wdp} -\dim G_k
\end{equation}
then $R^{\square}_{\rhobar}$ is complete intersection, $\OO$-flat
of relative dimension $\rank_{\Zp} ((\mathcal E\otimes M)_{\Delta})^{\wdp}$.
\end{prop}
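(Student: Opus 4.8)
The plan is to combine \Cref{over_k}, which identifies $R^{\square}_{\rhobar}/\varpi$ with the completed group algebra $k\br{N}$ for $N\colonequals((\mathcal E\otimes M)_{\Delta})^{\wdp}$, with Mazur's obstruction theory and a dimension count. Throughout I regard $D^{\square}_{\rhobar}$ as the framed deformation functor of $\rhobar$ viewed as a representation of $\Gamma$ (legitimate by the factorisation hypothesis on $\Gamma\twoheadrightarrow\widehat{\Gamma}_1$): its tangent space is $Z^1_{\cont}(\Gamma,\ad\rhobar)$ and, since $G$ is smooth, the obstruction to lifting along a small surjection lies in $H^2_{\cont}(\Gamma,\ad\rhobar)$.

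For the first assertion, note $h^0_{\cont}(\Gamma,\ad\rhobar)\le\dim_k\Lie G_k<\infty$, so finiteness of $h^1_{\cont}(\Gamma,\ad\rhobar)$ forces the tangent space $Z^1_{\cont}(\Gamma,\ad\rhobar)$ to be finite-dimensional (using $B^1_{\cont}\cong\ad\rhobar/(\ad\rhobar)^{\Gamma}$). Hence $R^{\square}_{\rhobar}$ is noetherian, so $\OO\br{N}$ is noetherian (it is $\varpi$-adically complete with noetherian reduction $k\br{N}\cong R^{\square}_{\rhobar}/\varpi$), and \Cref{noeth_comp} shows that $N$ is a finitely generated $\Zp$-module.

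Now set $d\colonequals\rank_{\Zp}N$ and fix a finite $p$-group $\mu$ with $N\cong\mu\oplus\Zp^d$, so $R^{\square}_{\rhobar}/\varpi\cong k[\mu]\br{x_1,\ldots,x_d}$ has Krull dimension $d$. Mazur's theory gives a presentation
$$ R^{\square}_{\rhobar}\cong\OO\br{y_1,\ldots,y_r}/(f_1,\ldots,f_s), $$
with the $f_i$ in the maximal ideal, $r=\dim_k Z^1_{\cont}(\Gamma,\ad\rhobar)=h^1_{\cont}(\Gamma,\ad\rhobar)-h^0_{\cont}(\Gamma,\ad\rhobar)+\dim G_k$ (using $\dim G_k=\dim_k\Lie G_k$ as $G$ is smooth) and $s\le h^2_{\cont}(\Gamma,\ad\rhobar)$. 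Combined with \eqref{inequality} this gives $r-s\ge h^1_{\cont}-h^0_{\cont}-h^2_{\cont}+\dim G_k\ge d$. Reducing the presentation modulo $\varpi$ identifies $R^{\square}_{\rhobar}/\varpi$ with $k\br{y_1,\ldots,y_r}/(\bar f_1,\ldots,\bar f_s)$; as $k\br{y_1,\ldots,y_r}$ is regular local of dimension $r$, cutting by $s$ elements lowers the dimension by at most $s$, so $d\ge r-s$, whence $r-s=d$. Thus $\bar f_1,\ldots,\bar f_s$ cut the dimension of the Cohen--Macaulay ring $k\br{y_1,\ldots,y_r}$ by exactly $s$ and so form a regular sequence; equivalently, in the regular local ring $\OO\br{y_1,\ldots,y_r}$ of dimension $r+1$ the elements $\varpi,f_1,\ldots,f_s$ have quotient of dimension $(r+1)-(s+1)$ and hence form a regular sequence. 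By permutability of regular sequences in a noetherian local ring, $f_1,\ldots,f_s$ is a regular sequence (so $R^{\square}_{\rhobar}$ is a complete intersection) and $\varpi$ is a nonzerodivisor on $R^{\square}_{\rhobar}$, so $R^{\square}_{\rhobar}$ is $\OO$-flat; flatness then gives relative dimension $\dim R^{\square}_{\rhobar}/\varpi=r-s=d=\rank_{\Zp}N$.

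The main obstacle is really the careful application of Mazur's obstruction theory to a smooth affine (possibly disconnected) group scheme — the tangent space identity $r=h^1_{\cont}-h^0_{\cont}+\dim G_k$ and the bound $s\le h^2_{\cont}$ for the adjoint representation on $\Lie G_k$ — together with the two elementary commutative-algebra facts used: in a Cohen--Macaulay local ring a sequence that drops the dimension maximally is regular, and regular sequences in a noetherian local ring are permutable.
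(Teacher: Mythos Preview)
Your argument is correct and follows essentially the same route as the paper: Mazur's presentation with $r=\dim_k Z^1_{\cont}(\Gamma,\ad\rhobar)$ generators and at most $h^2_{\cont}$ relations, the identification $R^{\square}_{\rhobar}/\varpi\cong k\br{N}$ from \Cref{over_k}, and the dimension count forcing $\varpi,f_1,\ldots,f_s$ to be a regular sequence in $\OO\br{y_1,\ldots,y_r}$. The only cosmetic differences are that the paper simply takes $s=h^2_{\cont}$ (padding with zero relations) rather than $s\le h^2_{\cont}$, and that you are more explicit both about the first assertion (deducing $\OO\br{N}$ noetherian from $k\br{N}$ noetherian via $\varpi$-adic completeness, then invoking \Cref{noeth_comp}) and about the permutability of regular sequences at the end; the paper absorbs these into the phrase ``can be extended to a system of parameters''.
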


\begin{proof} The assumption on $\Gamma$ implies that we could have
equivalently defined the deformation problem $D^{\square}_{\rhobar}$ 
with $\Gamma$ instead of $\widehat\Gamma_1$. 

 By standard obstruction theory due to Mazur, we have a presentation 
\begin{equation}\label{present_again}
\frac{\OO\br{x_1, \ldots, x_r}}{(f_1, \ldots, f_s)}\cong R^{\square}_{\rhobar},
\end{equation}
where $r=\dim_k Z^1_{\cont}(\Gamma, \ad \rhobar)$ and $s=h^2_{\cont}(\Gamma, \ad\rhobar)$.
The exact sequence
$$0\rightarrow (\ad \rhobar)^{\Gamma}\rightarrow \ad \rhobar
\rightarrow Z^1_{\cont}(\Gamma, \ad \rhobar)\rightarrow H^1_{\cont}(\Gamma, 
\ad \rhobar)\rightarrow 0$$
implies that $r= h^1_{\cont}(\Gamma, \ad\rhobar) -h^0_{\cont}(\Gamma, \ad\rhobar)+\dim G_k$. Let $N:=((\mathcal E\otimes M)_{\Delta})^{\wdp}$. By considering \eqref{present_again} modulo $\varpi$
and using Lemma \ref{over_k} we deduce that $\rank_{\Zp} N\ge r -s$. Moreover, 
the assumption \eqref{inequality} implies that $r-s \ge \rank_{\Zp} N$. Hence,  $r-s = \rank_{\Zp} N$
and $\varpi, f_1, \ldots, f_s$ can be extended to a system of 
parameters of a regular ring $\OO\br{x_1, \ldots, x_r}$. Thus $\varpi, f_1, \ldots, f_s$ are a part of a regular sequence, and hence $R^{\square}_{\rhobar}$ is complete intersection, flat over $\OO$  of 
relative dimension of $\rank_{\Zp} N$. 
\end{proof} 

\begin{cor}\label{7_7} If the assumptions of \Cref{lci} hold then there is
an isomorphism of local $\OO'$-algebras:
$$ \OO'\otimes_{\OO} R^{\square}_{\rhobar}\cong \OO'\br{((\mathcal E\otimes M)_{\Delta})^{\wdp}},$$
where $\OO'$ is the ring of integers in  a finite extension of $L$.
In particular, $\Rep^{\widehat\Gamma_1}_{G,\pi}(\OO')$ and $\Rep^{\Gamma_1}_{G,\pi}(\OO')$ are  non-empty.
\end{cor}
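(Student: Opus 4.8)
The plan is to feed the output of \Cref{lci} into the results of \Cref{sec_adm_rep}: first extract an integral lift over a finite extension, then identify $\OO'\otimes_\OO R^{\square}_{\rhobar}$ with a completion of the group algebra $\OO'[(\mathcal{E}\otimes M)_{\Delta}]$, and finally recognise that completion as $\OO'\br{((\mathcal{E}\otimes M)_{\Delta})^{\wdp}}$. By \Cref{lci}, $R:=R^{\square}_{\rhobar}$ is $\OO$-flat, hence $\OO$-torsion free and non-zero, and $N:=((\mathcal{E}\otimes M)_{\Delta})^{\wdp}$ is a finitely generated $\Zp$-module. Since $R$ is $\OO$-torsion free and non-zero, $\Spec R[1/\varpi]$ is non-empty; pick a closed point, with residue field $L'$ a finite extension of $L$, ring of integers $\OO'$ and residue field $k'\supseteq k$. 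Because complete local Noetherian rings are excellent, the normalisation of $R/\pp$ in $L'$ is module-finite over $R/\pp$, and being a complete local normal domain with fraction field $L'$ in which $\varpi$ is a non-unit it equals $\OO'$; the composite $R\to R/\pp\hookrightarrow\OO'$ is then a local $\OO$-algebra homomorphism inducing $k\hookrightarrow k'$ on residue fields. Composing the universal deformation $\rho^{\univ}\colon\widehat\Gamma_1\to G(R)$ with $R\to\OO'$ yields a continuous representation $\rho_0\colon\widehat\Gamma_1\to G(\OO')$ lifting $\rhobar$.

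Its restriction to $\Gamma_1$ is admissible: since $G/G^0$ is constant and $\Spec\OO'$ is connected we have $(G/G^0)(\OO')=(G/G^0)(k')=\Delta$, so $\Pi\circ(\rho_0|_{\Gamma_1})$ is determined by its reduction $\Pi\circ(\rhobar|_{\Gamma_1})=\pi$. Hence $\rho_0|_{\Gamma_1}\in\Rep^{\Gamma_1}_{G,\pi}(\OO')$ and $\rho_0\in\Rep^{\widehat\Gamma_1}_{G,\pi}(\OO')$, which proves the last assertion. This is the step I expect to be the main obstacle: passing from mere $\OO$-flatness of $R^{\square}_{\rhobar}$ to an honest $\OO'$-point relies on the standard but not-quite-formal fact that a closed point of the rigid generic fibre of a complete local Noetherian $\OO$-algebra comes from a local $\OO$-algebra map to the ring of integers of a finite extension of $L$; everything afterwards is bookkeeping.

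Now, with $\rho_0$ in hand, \Cref{Z1_Rep} applied over the base ring $\OO'$ gives $\OO'(\Rep^{\Gamma_1}_{G,\pi})\cong\OO'[(\mathcal{E}\otimes M)_{\Delta}]$. Apply \Cref{limit_square} over $\OO$ to write $R^{\square}_{\rhobar}\cong\varprojlim_I \OO(\Rep^{\Gamma_1}_G)_{\mm}/I$ and tensor this inverse system with the finite free $\OO$-module $\OO'$; since the fibre of $\Spec\OO'(\Rep^{\Gamma_1}_G)$ over $\mm$ is the artinian local ring $k\otimes_\OO\OO'=\OO'/\varpi\OO'$, it is a single point $\mm'$, and we get $\OO'\otimes_\OO R^{\square}_{\rhobar}\cong\varprojlim_I \OO'(\Rep^{\Gamma_1}_G)_{\mm'}/I$. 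The reduction to admissible representations in the proof of \Cref{limit_square} lets us replace $\Rep^{\Gamma_1}_G$ by $\Rep^{\Gamma_1}_{G,\pi}$; feeding in $\OO'(\Rep^{\Gamma_1}_{G,\pi})\cong\OO'[(\mathcal{E}\otimes M)_{\Delta}]$ and then applying \Cref{abstract_limit} to $\mathcal A=(\mathcal{E}\otimes M)_{\Delta}$ over $\OO'$ (here $\mathcal A/p\mathcal A$ is finite by \Cref{noeth_comp}, as $N$ is finitely generated over $\Zp$) identifies the limit with $\OO'\br{((\mathcal{E}\otimes M)_{\Delta})^{\wdp}}$. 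The remaining points — matching maximal ideals under the base change $\OO\to\OO'$ and re-using the internals of \Cref{limit_square} — are routine.
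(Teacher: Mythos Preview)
Your proof is correct and follows essentially the same approach as the paper: extract an $\OO'$-valued lift from the $\OO$-flatness of $R^{\square}_{\rhobar}$ established in \Cref{lci}, then combine \Cref{Z1_Rep}, \Cref{limit_square}, and \Cref{abstract_limit} to identify $\OO'\otimes_{\OO} R^{\square}_{\rhobar}$ with the completed group algebra. The only cosmetic difference is that the paper handles the base change by observing that $\OO'\otimes_{\OO} R^{\square}_{\rhobar}$ pro-represents $D^{\square}_{\rhobar_{k'}}$ and then reduces to the case $\OO'=\OO$, whereas you carry the tensor product through the inverse system by hand; both arrive at the same place.
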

\begin{proof} It follows from Proposition \ref{lci} that there
is a homomorphism of $\OO$-algebras $x:R^{\square}_{\rhobar}\rightarrow \Qpbar$.
Then $\kappa(x)$ is a finite extension of $L$ and the image of 
$x$ is contained in the ring of integers of $\kappa(x)$, which we denote 
by $\OO'$. Let $\rho_x: \widehat{\Gamma}_1\rightarrow G(\OO')$ be the specialisation of the universal deformation along $x: R^{\square}_{\rhobar}\rightarrow \OO'$. Then $\rho_x\in \Rep^{\widehat\Gamma_1}_{G,\pi}(\OO')$ and its restriction 
to $\Gamma_1$ defines a point in $\Rep^{\Gamma_1}_G(\OO')$.

Since $\OO'\otimes_{\OO} R^{\square}_{\rhobar}$ represents the 
functor $D^{\square}_{\rhobar_{k'}}: \Aa_{\OO'}\rightarrow \Set$
where $\rhobar_{k'}$ is the composition of $\rhobar$
with $G(k)\hookrightarrow G(k')$, where $k'$ is the residue field 
of $\OO'$, we may assume that $\OO'=\OO$.

Proposition \ref{Z1_Rep} allows us to identify 
$\OO(\Rep^{\Gamma_1}_{G,\pi})$ with $\OO[(\mathcal E\otimes M)_{\Delta}]$.
This identification depends on the lift $\rho_x$. The assertion follows
from Lemmas \ref{limit_square} and \ref{abstract_limit}.
\end{proof}

\subsection{Deformations of pseudocharacters} If $A$ is a topological ring then a $G$-pseudocharacter $\Theta \in \PC_{G,\pi}^{\widehat{\Gamma}_1}(A)$ is \emph{continuous},  if every $\Theta_m$ takes values in the subset $\mathcal C(\widehat{\Gamma}_1^m, A) \subseteq \Map(\widehat{\Gamma}_1^m, A)$ of continuous maps.
We write $\cPC^{\widehat{\Gamma}_1}_{G,\pi}(A) \subseteq \PC^{\widehat{\Gamma}_1}_{G,\pi}(A)$ for the subset of continuous $G$-pseudocharacters.

\begin{lem}\label{cPC_PC_bijection}
    Assume, that $\Rep^{\Gamma_1}_{G,\pi}(\OO)$ is non-empty.
    Let $A$ be a finite discrete $\OO$-algebra.
    Then the natural map $\cPC^{\widehat{\Gamma}_1}_{G,\pi}(A) \to \PC^{\Gamma_1}_{G,\pi}(A), \Theta\mapsto \Theta|_{\Gamma_1}$ is bijective.
\end{lem}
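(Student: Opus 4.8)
The plan is to reduce the statement to the already-proven comparison results by exploiting that $A$ is finite discrete. First I would observe that for finite discrete $A$, a continuous map $\widehat{\Gamma}_1^m \to A$ is the same thing as a map that factors through a finite discrete quotient $\widehat{\Gamma}_1^m \twoheadrightarrow (\widehat{\Gamma}_1/U)^m$ for some open normal subgroup $U$, and since $\Gamma_1$ is dense in $\widehat{\Gamma}_1$ the restriction map $\mathcal C(\widehat{\Gamma}_1^m, A) \to \Map(\Gamma_1^m, A)$ is injective with image exactly those maps that factor through a finite quotient of $\Gamma_1^m$. This immediately gives injectivity of $\Theta \mapsto \Theta|_{\Gamma_1}$: if two continuous $G$-pseudocharacters of $\widehat{\Gamma}_1$ agree on $\Gamma_1$, then each pair $\Theta_m, \Theta'_m$ agrees on the dense subset $\Gamma_1^m$ of $\widehat{\Gamma}_1^m$, hence agrees everywhere by continuity.

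For surjectivity, I would use \Cref{comes_from_rep}. Given $\Theta \in \PC^{\Gamma_1}_{G,\pi}(A)$ with $A$ finite, \Cref{comes_from_rep} produces a representation $\rho \colon \Gamma_1 \to G(A)$ with $\Theta = \Theta_\rho$. Since $A$ is finite, $G(A)$ is finite, so $\rho$ factors through a finite quotient of $\Gamma_1$ and therefore extends uniquely to a continuous representation $\widehat\rho \colon \widehat{\Gamma}_1 \to G(A)$ (this is the same argument already used in the proof of \Cref{limit_square}). One checks that $\widehat\rho$ is admissible: admissibility is a condition on $\Pi \circ \widehat\rho \colon \widehat{\Gamma}_1 \to \underline\Delta(A)$, and since $G/G^0$ is constant and $A$ is local, $\underline\Delta(A) = \underline\Delta(k) = \Delta$, so $\Pi\circ\widehat\rho$ is determined by $\Pi\circ\rho$, which agrees with $\pi$ by admissibility of $\rho$; hence $\widehat\rho \in \Rep^{\widehat{\Gamma}_1}_{G,\pi}(A)$ after possibly re-examining how $\pi$ is promoted to $\widehat{\Gamma}_1 \to \Delta$. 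Then $\Theta_{\widehat\rho} \in \cPC^{\widehat{\Gamma}_1}_{G,\pi}(A)$ is continuous (its components are $f \mapsto f(\widehat\rho(-), \dots, \widehat\rho(-))$, continuous because $\widehat\rho$ is), and its restriction to $\Gamma_1$ is $\Theta_\rho = \Theta$ by formula \eqref{assoc_pc}.

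The step I expect to be the main obstacle — really the only place requiring care — is the bookkeeping around the map $\pi$ and the group $\Delta$ when passing between $\Gamma_1$ and $\widehat{\Gamma}_1$. One must make sure that the surjection $\Gamma_1 \twoheadrightarrow \Delta$ used to define admissibility for $\Gamma_1$-representations factors through $\widehat{\Gamma}_1$ (it does, as $\Delta$ is finite), giving a compatible $\widehat\pi \colon \widehat{\Gamma}_1 \twoheadrightarrow \Delta$, and that the two notions of admissibility — via $\pi$ for $\Gamma_1$ and via $\widehat\pi$ for $\widehat{\Gamma}_1$ — correspond under restriction. Once this compatibility is recorded, the bijection follows formally from density of $\Gamma_1$ in $\widehat{\Gamma}_1$ together with the finiteness of $G(A)$ and \Cref{comes_from_rep}; no further commutative-algebra input is needed. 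It is also worth noting explicitly that the uniqueness of the continuous extension $\widehat\rho$ is what makes the assignment well-defined and two-sided inverse to restriction.
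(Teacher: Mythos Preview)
Your proof is correct and follows essentially the same approach as the paper: injectivity by density of $\Gamma_1$ in $\widehat{\Gamma}_1$ (the paper cites \cite[Lemma 3.2]{quast} for this), and surjectivity by applying \Cref{comes_from_rep} to produce a representation and then extending it to $\widehat{\Gamma}_1$ using finiteness of $G(A)$. Your extra bookkeeping on admissibility is harmless, though note that $A$ need not be local---the admissibility check still goes through by continuity and density, since $\Pi\circ\widehat\rho$ and the map induced by $\widehat\pi$ are continuous maps to the finite discrete set $\underline\Delta(A)$ agreeing on the dense subset $\Gamma_1$.
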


\begin{proof}
    As the image of $\Gamma_1$ in $\widehat{\Gamma}_1$ is dense, injectivity follows from \cite[Lemma 3.2]{quast}.
    Let $\Theta \in \PC^{\Gamma_1}_{G,\pi}(A)$.
    By \Cref{comes_from_rep} there is a representation $\rho \in \Rep^{\Gamma_1}_{G,\pi}(A)$, such that $\Theta = \Theta_{\rho}$.
    Since $A$ is finite, $\rho$ extends to a continuous representation $\tilde\rho : \widehat\Gamma_1 \to G(A)$ and we have $\Theta_{\tilde\rho}|_{\Gamma_1} = \Theta$.
\end{proof}

Let $\Thetabar$ be the $G$-pseudocharacter of $\widehat{\Gamma}_1$ associated to $\rhobar$. Let $D^{\ps}_{\Thetabar}: \Aa_{\OO}\rightarrow \Set$ be the deformation functor which sends $A$ to the 
set of continuous $G$-pseudocharacters $\Theta$ of $\widehat{\Gamma}_1$ valued in $A$ such that 
$\Theta\otimes_A k= \Thetabar.$ The functor $D^{\ps}_{\Thetabar}$ is pro-represented by $R^{\ps}_{\Thetabar}\in \widehat{\Aa}_{\OO}$ by 
\cite[Theorem 5.4]{quast}.

\begin{prop} \label{prop_Rps}
    Assume, that $\Rep^{\Gamma_1}_{G,\pi}(\OO)$ is non-empty. Then there is 
    an isomorphism in $\widehat{\Aa}_{\OO}$:
    \begin{equation}\label{defi_R}
R^{\ps}_{\Thetabar} \cong \varprojlim_I \OO(\PC^{\Gamma_1}_{G,\pi})_{\mm}/ I,
\end{equation} 
where $\mm$ is the maximal ideal of $\OO(\PC^{\Gamma_1}_{G,\pi})$ corresponding 
to $\Thetabar|_{\Gamma_1}$, and the limit is taken over all ideals $I$ such that 
the quotient is finite. 
    
\end{prop}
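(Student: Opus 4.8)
The plan is to mimic the proof of \Cref{limit_square}: I will show that the right hand side of \eqref{defi_R} pro-represents the functor $D^{\ps}_{\Thetabar}$, which by \cite[Theorem 5.4]{quast} is also pro-represented by $R^{\ps}_{\Thetabar}$; uniqueness of the pro-representing object in $\widehat{\Aa}_{\OO}$ then gives the asserted isomorphism. Note first that the inverse system on the right hand side consists of finite quotients of the local ring $\OO(\PC^{\Gamma_1}_{G,\pi})_{\mm}$, all of which lie in $\Aa_{\OO}$, so the limit lies in $\widehat{\Aa}_{\OO}$.

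First I would treat the ``forward'' direction. Let $A\in\Aa_{\OO}$ and $\Theta\in D^{\ps}_{\Thetabar}(A)$. Its restriction $\Theta|_{\Gamma_1}$ lies in $\PC^{\Gamma_1}_G(A)$ and is moreover admissible: the image of $\Theta$ under $G\to\underline\Delta$ is, by \Cref{pc_is_rep}, a continuous homomorphism $\widehat{\Gamma}_1\to\underline\Delta(A)$, which reduces mod $\mm_A$ to the homomorphism underlying $\Thetabar$, namely the continuous extension of $\pi$ (here we use $\rhobar|_{\Gamma_1}\in\Rep^{\Gamma_1}_{G,\pi}(k)$); since $G/G^0$ is constant and $A$ is local we have $\underline\Delta(A)=\underline\Delta(k)$, so this homomorphism is already the continuous extension of $\pi$, and hence $\Theta|_{\Gamma_1}\in\PC^{\Gamma_1}_{G,\pi}(A)$. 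Therefore $\Theta|_{\Gamma_1}$ corresponds to an $\OO$-algebra homomorphism $\OO(\PC^{\Gamma_1}_{G,\pi})\to A$, and the deformation condition $\Theta\otimes_A k=\Thetabar$ forces it to send $\mm$ into $\mm_A$, hence to factor through $\OO(\PC^{\Gamma_1}_{G,\pi})_{\mm}$; as $A$ is finite, its kernel is one of the ideals $I$ occurring in \eqref{defi_R}.

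Conversely, given such an ideal $I$ with finite quotient $A\colonequals\OO(\PC^{\Gamma_1}_{G,\pi})_{\mm}/I\in\Aa_{\OO}$, specialising the universal admissible $G$-pseudocharacter along $\OO(\PC^{\Gamma_1}_{G,\pi})\to A$ produces $\Theta_0\in\PC^{\Gamma_1}_{G,\pi}(A)$, and since $I\subseteq\mm$ we have $\Theta_0\otimes_A k=\Thetabar|_{\Gamma_1}$. Here the one genuinely non-formal input enters: by \Cref{cPC_PC_bijection} (which rests on \Cref{comes_from_rep}, and ultimately on the standing hypothesis that $\Rep^{\Gamma_1}_{G,\pi}(\OO)$ is non-empty), the restriction map $\cPC^{\widehat{\Gamma}_1}_{G,\pi}(A)\to\PC^{\Gamma_1}_{G,\pi}(A)$ is a bijection for finite discrete $A$; thus $\Theta_0$ extends uniquely to a continuous $G$-pseudocharacter $\widehat{\Theta}$ of $\widehat{\Gamma}_1$ over $A$. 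It remains only to check $\widehat{\Theta}\otimes_A k=\Thetabar$, and for this I observe that both sides restrict to $\Theta_0\otimes_A k=\Thetabar|_{\Gamma_1}$ on the dense subgroup $\Gamma_1$, hence coincide by \cite[Lemma 3.2]{quast}. So $\widehat{\Theta}\in D^{\ps}_{\Thetabar}(A)$.

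Finally I would record that these two assignments are mutually inverse, natural in $A$, and compatible with the transition maps of the inverse system — exactly the bookkeeping already carried out in \Cref{limit_square} — so that $\varprojlim_I\OO(\PC^{\Gamma_1}_{G,\pi})_{\mm}/I$ pro-represents $D^{\ps}_{\Thetabar}$, and comparison with \cite[Theorem 5.4]{quast} yields the isomorphism in $\widehat{\Aa}_{\OO}$. The main obstacle, and the only place real content is needed rather than the localisation-and-completion formalism used above, is the passage between abstract $G$-pseudocharacters of $\Gamma_1$ and continuous ones of $\widehat{\Gamma}_1$; this is precisely \Cref{cPC_PC_bijection}, so in the present write-up it is already available.
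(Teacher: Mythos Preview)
Your argument is correct and follows essentially the same approach as the paper: both proofs identify, for $A\in\Aa_{\OO}$, the set $\Hom_{\widehat{\Aa}_{\OO}}(R^{\ps}_{\Thetabar},A)$ with $\{\Theta\in\cPC^{\widehat\Gamma_1}_{G,\pi}(A):\Theta\otimes_A k=\Thetabar\}$ and $\Hom_{\text{local }\OO\hyphen\alg}(\OO(\PC^{\Gamma_1}_{G,\pi})_{\mm},A)$ with $\{\Theta\in\PC^{\Gamma_1}_{G,\pi}(A):\Theta\otimes_A k=\Thetabar|_{\Gamma_1}\}$, and then invoke \Cref{cPC_PC_bijection} to match the two. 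Your version is simply more explicit about the admissibility check (via $\underline\Delta(A)=\underline\Delta(k)$) and the verification that the extended pseudocharacter $\widehat{\Theta}$ satisfies $\widehat{\Theta}\otimes_A k=\Thetabar$; the paper leaves these implicit.
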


\begin{proof} If $A \in \Aa_{\OO}$ then we  have natural bijections
    \begin{align*}
        \Hom_{\widehat{\Aa}_{\OO}}(R^{\ps}_{\Thetabar}, A) \cong \{\Theta \in \cPC^{\widehat\Gamma_1}_{G, \pi}(A) \mid \Theta \otimes_A k = \overline{\Theta}\},\\
        \Hom_{\text{local }\OO\hyphen\alg}(\OO(\PC^{\Gamma_1}_{G,\pi})_{\mm}, A) \cong \{\Theta \in \PC^{\Gamma_1}_{G, \pi}(A) \mid \Theta \otimes_A k = \overline{\Theta}\}.
    \end{align*}
    As $\cPC^{\widehat{\Gamma}_1}_{G, \pi}(A) \cong \PC^{\Gamma_1}_{G, \pi}(A)$ by \Cref{cPC_PC_bijection}, the claim follows.
\end{proof}

\begin{lem}\label{no_name} Assume, that $\Rep^{\Gamma_1}_{G,\pi}(\OO)$ is non-empty. Then $R^{\ps}_{\Thetabar}$ is isomorphic to the completed group algebra $\OO\br{H_1(\Gamma_1, M)^{\wdp}}$.
\end{lem}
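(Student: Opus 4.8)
The statement is a formal consequence of results already in hand, so the plan is simply to chain them together. First I would invoke \Cref{prop_Rps}, whose hypothesis is exactly that $\Rep^{\Gamma_1}_{G,\pi}(\OO)$ is non-empty, to rewrite
$$R^{\ps}_{\Thetabar}\cong \varprojlim_I \OO(\PC^{\Gamma_1}_{G,\pi})_{\mm}/I,$$
the limit running over ideals $I$ of $\OO(\PC^{\Gamma_1}_{G,\pi})_{\mm}$ with finite quotient, where $\mm$ is the maximal ideal cut out by $\Thetabar|_{\Gamma_1}$ (which has residue field $k$ since $\rhobar$, hence $\Thetabar$, is valued in $G(k)$).

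Next I would apply \Cref{GIT_isos_2}, again using non-emptiness of $\Rep^{\Gamma_1}_{G,\pi}(\OO)$, to obtain an isomorphism of $\OO$-algebras $\OO(\PC^{\Gamma_1}_{G,\pi})\cong \OO[H_1(\Gamma_1,M)]$. Under this isomorphism $\mm$ goes over to some maximal ideal of the group algebra $\OO[H_1(\Gamma_1,M)]$ with residue field $k$; I do not need to identify it precisely.

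Finally I would feed this into \Cref{abstract_limit} with $\mathcal A=H_1(\Gamma_1,M)$: that lemma produces, for \emph{any} maximal ideal of $\OO[\mathcal A]$ with residue field $k$, an isomorphism $\OO\br{N}\cong \varprojlim_I \OO[\mathcal A]_{\mm}/I$ in $\widehat{\Aa}_{\OO}$, where $N=\mathcal A^{\wdp}$ and the limit is over ideals with finite quotient. Combining the three displays gives $R^{\ps}_{\Thetabar}\cong \OO\br{H_1(\Gamma_1,M)^{\wdp}}$, as required.

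\textbf{Main obstacle.} There is essentially no obstacle: all the content has been isolated in the earlier lemmas, and because \Cref{abstract_limit} is insensitive to the choice of maximal ideal with residue field $k$, no bookkeeping is needed to track how $\Thetabar|_{\Gamma_1}$ corresponds to a character of $H_1(\Gamma_1,M)$. The only point worth a sentence is that the inverse systems of finite quotients match up under the isomorphism of \Cref{GIT_isos_2}, which is immediate since that isomorphism is one of $\OO$-algebras and localisation at, together with passage to finite quotients of, the respective maximal ideals are preserved by ring isomorphisms.
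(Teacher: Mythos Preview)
Your proposal is correct and follows exactly the same route as the paper: apply \Cref{GIT_isos_2} to identify $\OO(\PC^{\Gamma_1}_{G,\pi})$ with $\OO[H_1(\Gamma_1,M)]$, then combine \Cref{prop_Rps} with \Cref{abstract_limit}. The paper simply states this chain in one sentence without the extra bookkeeping you provide.
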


\begin{proof} Since $\Rep^{\Gamma_1}_{G,\pi}(\OO)$ is non-empty Corollary \ref{GIT_isos_2} 
allows us to identify $\OO(\PC^{\Gamma_1}_{G,\pi})$ with  $\OO[H_1(\Gamma, M)]$.
The assertion follows from Proposition \ref{prop_Rps} and Lemma \ref{abstract_limit}.
\end{proof} 

\begin{lem}\label{Rps_madic} If $\Gamma_2^{\ab}/p \Gamma_2^{\ab}$ is a finite $p$-group and 
$\Rep^{\Gamma_1}_{G,\pi}(\OO)$ is non-empty
then $R^{\ps}_{\Thetabar}$ is noetherian and is naturally isomorphic 
to the $\mm$-adic completion of $\OO(\PC^{\Gamma_1}_{G,\pi})$, where $\mm$ is the maximal ideal of $\OO(\PC^{\Gamma_1}_{G,\pi})$ corresponding 
to $\Thetabar|_{\Gamma_1}$.
\end{lem}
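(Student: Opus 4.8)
The plan is to mimic the proof of \Cref{completion_def_square}, working on the pseudocharacter side rather than the representation side. Since $\Rep^{\Gamma_1}_{G,\pi}(\OO)$ is non-empty, \Cref{GIT_isos_2} identifies $\OO(\PC^{\Gamma_1}_{G,\pi})$ with the group algebra $\OO[H_1(\Gamma_1, M)]$, and under this identification $\mm$ becomes a maximal ideal of $\OO[H_1(\Gamma_1, M)]$ with residue field $k$. By \Cref{prop_Rps} we have $R^{\ps}_{\Thetabar}\cong \varprojlim_I \OO[H_1(\Gamma_1, M)]_{\mm}/I$, the limit being taken over ideals with finite quotient. So I would apply \Cref{noeth_comp} to the abelian group $\mathcal A := H_1(\Gamma_1, M)$: once condition (2) of that lemma is verified, it yields simultaneously that $\OO\br{\mathcal A^{\wdp}}$ is noetherian and that it is naturally isomorphic to the $\mm$-adic completion of $\OO[\mathcal A]$, which together with \Cref{prop_Rps} (equivalently \Cref{no_name}) gives both assertions of the lemma.

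The only point needing a genuine argument is therefore the finiteness of $H_1(\Gamma_1, M)/pH_1(\Gamma_1, M)$. For this I would use the exact sequence \eqref{exact_seq_nat},
\[
0\rightarrow H_1(\Gamma_1, M) \xrightarrow{\varphi_{\nat}} (\mathcal E\otimes M)_{\Delta} \xrightarrow{q} I_{\Delta}M \rightarrow 0,
\]
together with the observation that $I_{\Delta}M$, being a submodule of the finitely generated free $\ZZ$-module $M$, is itself free of finite rank, so that $\Tor_1^{\ZZ}(I_{\Delta}M, \ZZ/p)=0$. Tensoring the sequence with $\ZZ/p$ then gives an injection $H_1(\Gamma_1, M)/p \hookrightarrow (\mathcal E\otimes M)_{\Delta}/p$. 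As $M$ is finite free over $\ZZ$, and, as already recorded in the proof of \Cref{completion_def_square}, the hypothesis that $\Gamma_2^{\ab}/p\Gamma_2^{\ab}$ is a finite $p$-group forces $\mathcal E/p\mathcal E$ to be finite via \eqref{Eextension}, the group $(\mathcal E\otimes M)_{\Delta}/p$ is finite; hence so is $H_1(\Gamma_1, M)/p$.

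With this in hand, \Cref{noeth_comp} applies to $\mathcal A = H_1(\Gamma_1, M)$ and delivers both conclusions. I do not anticipate any serious obstacle: the argument is essentially a transcription of the proof of \Cref{completion_def_square}, with \Cref{Z1_Rep} replaced by \Cref{GIT_isos_2} and \Cref{limit_square} by \Cref{prop_Rps}; the one new ingredient is the short $\Tor$ computation above, which is routine precisely because $I_{\Delta}M$ is $\ZZ$-torsion free.
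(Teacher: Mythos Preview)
Your argument is correct and follows the same overall strategy as the paper: identify $\OO(\PC^{\Gamma_1}_{G,\pi})$ with $\OO[H_1(\Gamma_1,M)]$, reduce to checking that $H_1(\Gamma_1,M)/p$ is finite, and then invoke \Cref{noeth_comp} together with \Cref{no_name}/\Cref{prop_Rps}. The only difference is in how the finiteness of $H_1(\Gamma_1,M)/p$ is verified: you use the short exact sequence \eqref{exact_seq_nat} and the $\ZZ$-freeness of $I_{\Delta}M$ to embed $H_1(\Gamma_1,M)/p$ into $(\mathcal E\otimes M)_{\Delta}/p$, whereas the paper instead applies the long exact sequence in $\Gamma_1$-homology to $0\to I_{\Delta}\otimes M\to\ZZ[\Delta]\otimes M\to M\to 0$ (together with Shapiro's lemma, giving $H_1(\Gamma_2,M)\to H_1(\Gamma_1,M)\to (I_{\Delta}\otimes M)_{\Delta}$) to reach the same conclusion. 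Both routes are short and equivalent in spirit; yours has the mild advantage of reusing \eqref{exact_seq_nat} rather than introducing a new exact sequence.
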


\begin{proof} The exact sequence of $\Gamma_1$-modules $0\rightarrow I_{\Delta}\otimes M \rightarrow \ZZ[\Delta]\otimes M \rightarrow M\rightarrow 0$ 
induces an exact sequence in homology:
$$H_1(\Gamma_2, M)\rightarrow H_1(\Gamma_1, M)\rightarrow (I_{\Delta}\otimes M)_{\Delta}.$$
Since the action of $\Gamma_2$ on $M$ is trivial we have a canonical 
isomorphism $H_1(\Gamma_2, M)\cong \Gamma_2^{\ab}\otimes M$. Since 
$I_{\Delta}$ and $M$ are free $\ZZ$-modules of finite rank we conclude that 
the assumption $\Gamma_2^{\ab}/p \Gamma_2^{\ab}$ is finite implies 
that $\mathcal A/p \mathcal A$ is finite, where $\mathcal A=H_1(\Gamma_1, M)$. 
The assertion follows from Lemmas \ref{no_name} and \ref{noeth_comp}.
\end{proof}

\subsection{Moduli space of representations} We study the following schemes. 
\begin{defi} For $\mathcal G= \Gamma_1$ or $\widehat{\Gamma}_1$, let 
$X^{\gen, \mathcal G}_{G, \Thetabar}: R^{\ps}_{\Thetabar}\text{-}\alg\rightarrow \Set$ be the functor 
$$X^{\gen,\mathcal G}_{G,\Thetabar}(A):=\{ \rho\in \Rep^{\mathcal G}_{G, \pi}(A): \Theta_\rho = \Theta^u\otimes_{R^{\ps}_{\Thetabar}} A\},$$
where $\Theta^u\in D^{\ps}_{\Thetabar}(R^{\ps}_{\Thetabar})$ is the universal deformation of $\Thetabar$ and we consider its restriction to $\Gamma_1$ if $\mathcal G=\Gamma_1$.
\end{defi}

\begin{prop}\label{paris} Assume that $\Rep^{\widehat{\Gamma}_1}_{G,\pi}(\OO)$ is non-empty. The restriction to $\Gamma_1$ induces an isomorphism
$$X^{\gen,\widehat{\Gamma}_1}_{G,\Thetabar} \overset{\cong}{\longrightarrow}
X^{\gen,\Gamma_1}_{G,\Thetabar}.$$
In particular, $X^{\gen,\widehat{\Gamma}_1}_{G,\Thetabar}$ is representable
by the $R^{\ps}_{\Thetabar}$-algebra isomorphic to 
$$R^{\ps}_{\Thetabar}\otimes_{\OO(\PC^{\Gamma_1}_{G, \pi})} 
\OO(\Rep^{\Gamma_1}_{G, \pi})\cong R^{\ps}_{\Thetabar}[t_1^{\pm 1}, \ldots, t_s^{\pm 1}],$$
where $s= \rank_{\ZZ} M - \rank_{\ZZ} M_{\Delta}$.
\end{prop}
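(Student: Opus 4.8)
The plan is to prove the isomorphism $X^{\gen,\widehat{\Gamma}_1}_{G,\Thetabar} \cong X^{\gen,\Gamma_1}_{G,\Thetabar}$ by checking it on $A$-points for every $R^{\ps}_{\Thetabar}$-algebra $A$, and then to identify the representing ring explicitly using the description of $\OO(\Rep^{\Gamma_1}_{G,\pi})$ and $\OO(\PC^{\Gamma_1}_{G,\pi})$ from Sections \ref{sec_adm_rep} and \ref{Laf}. For the first part, restriction along $\Gamma_1 \hookrightarrow \widehat{\Gamma}_1$ gives a natural map $X^{\gen,\widehat{\Gamma}_1}_{G,\Thetabar}(A) \to X^{\gen,\Gamma_1}_{G,\Thetabar}(A)$; I need to show it is bijective. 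Injectivity: a continuous representation $\rho: \widehat{\Gamma}_1 \to G(A)$ is determined by its restriction to the dense subgroup $\Gamma_1$, so if two elements of $X^{\gen,\widehat{\Gamma}_1}_{G,\Thetabar}(A)$ agree on $\Gamma_1$ they are equal. Surjectivity is the point where I must be slightly careful: given $\rho \in X^{\gen,\Gamma_1}_{G,\Thetabar}(A)$, I must produce a continuous $\widehat{\Gamma}_1$-representation restricting to it. Since $A$ is an $R^{\ps}_{\Thetabar}$-algebra and $R^{\ps}_{\Thetabar}$ is a complete local (pseudo-compact) ring with finite residue field, $A$ is naturally a topological ring, and the condition $\Theta_\rho = \Theta^u \otimes_{R^{\ps}_{\Thetabar}} A$ forces the image of $\Gamma_1$ to land in $G$ of a quotient through which things are controlled; concretely one reduces to the case of finite discrete $A$ by writing $A$ as a filtered limit along the $\mm$-adic (or rather ideal-of-finite-quotient) filtration, uses \Cref{cPC_PC_bijection} together with \Cref{comes_from_rep} to lift, and passes to the limit. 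The continuity of the resulting $\widehat{\Gamma}_1$-representation over finite $A$ is automatic since the target is finite; over general $A$ it follows by taking the limit of the finite-level representations, which is where the pseudo-compactness of $R^{\ps}_{\Thetabar}$ (hence of $A$ when $A$ is in $\widehat{\Aa}_{\OO}$) is used, and one invokes that both sides are defined by a limit over finite-quotient conditions exactly as in \Cref{limit_square} and \Cref{prop_Rps}.

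Once the isomorphism of functors is established, representability of $X^{\gen,\Gamma_1}_{G,\Thetabar}$ is immediate from its definition as a fibre product: by construction $X^{\gen,\Gamma_1}_{G,\Thetabar}(A)$ is the set of $\rho \in \Rep^{\Gamma_1}_{G,\pi}(A)$ whose associated pseudocharacter equals the pullback of $\Theta^u$, so it is represented by
$$R^{\ps}_{\Thetabar} \otimes_{\OO(\PC^{\Gamma_1}_{G,\pi})} \OO(\Rep^{\Gamma_1}_{G,\pi}),$$
where the map $\OO(\PC^{\Gamma_1}_{G,\pi}) \to R^{\ps}_{\Thetabar}$ is the one from \Cref{prop_Rps} (realising $R^{\ps}_{\Thetabar}$ as the completion of $\OO(\PC^{\Gamma_1}_{G,\pi})$ at $\mm$) and the map $\OO(\PC^{\Gamma_1}_{G,\pi}) \to \OO(\Rep^{\Gamma_1}_{G,\pi})$ is \eqref{rep_pc_without_git}. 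Here I use \Cref{GIT_isos_2} to identify $\OO(\PC^{\Gamma_1}_{G,\pi}) \cong \OO[H_1(\Gamma_1, M)]$ and \Cref{Z1_Rep} together with \Cref{M_ffr} to identify $\OO(\Rep^{\Gamma_1}_{G,\pi}) \cong \OO[(\mathcal E\otimes M)_{\Delta}] \cong \OO[H_1(\Gamma_1, M)][t_1^{\pm 1}, \ldots, t_s^{\pm 1}]$ with $s = \rank_{\ZZ} M - \rank_{\ZZ} M_{\Delta}$, the map between them being induced by $\varphi_{\nat}$ (by \Cref{cor_PC_inv} and \Cref{comes_from_rep}). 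Base-changing the last polynomial presentation along $\OO[H_1(\Gamma_1,M)] \to R^{\ps}_{\Thetabar}$ gives
$$R^{\ps}_{\Thetabar} \otimes_{\OO[H_1(\Gamma_1,M)]} \OO[H_1(\Gamma_1,M)][t_1^{\pm 1},\ldots,t_s^{\pm 1}] \cong R^{\ps}_{\Thetabar}[t_1^{\pm 1},\ldots,t_s^{\pm 1}],$$
which is the claimed description.

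A subtle point to get right is that the fibre product $R^{\ps}_{\Thetabar} \otimes_{\OO(\PC^{\Gamma_1}_{G,\pi})} \OO(\Rep^{\Gamma_1}_{G,\pi})$ genuinely represents $X^{\gen,\Gamma_1}_{G,\Thetabar}$ as a functor on \emph{all} $R^{\ps}_{\Thetabar}$-algebras, not just finite ones: this holds because $\Rep^{\Gamma_1}_{G,\pi}$ and $\PC^{\Gamma_1}_{G,\pi}$ are honest affine schemes over $\OO$ (\Cref{Z1_Rep} and the representability of $\PC^{\Gamma_1}_{G,\pi}$ recalled in \Cref{Laf}), so the condition $\Theta_\rho = \Theta^u\otimes A$ is exactly the condition that the two induced maps $\OO(\PC^{\Gamma_1}_{G,\pi}) \rightrightarrows A$ (one via $R^{\ps}_{\Thetabar}$, one via $\Rep^{\Gamma_1}_{G,\pi}$) coincide, which is precisely the universal property of the tensor product. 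The main obstacle is the surjectivity in the first paragraph — more precisely, correctly reducing the lifting of a $\Gamma_1$-representation to a continuous $\widehat{\Gamma}_1$-representation to the finite-level statement \Cref{cPC_PC_bijection} and verifying that the pseudocharacter condition is preserved under the passage to the limit; everything else is bookkeeping with the identifications already established. Note that the hypothesis $\Rep^{\widehat{\Gamma}_1}_{G,\pi}(\OO)$ non-empty guarantees (by restriction) that $\Rep^{\Gamma_1}_{G,\pi}(\OO)$ is non-empty, so all the cited results from Sections \ref{sec_adm_rep}--\ref{sec_profinite} apply.
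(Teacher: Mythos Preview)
Your identification of the representing ring of $X^{\gen,\Gamma_1}_{G,\Thetabar}$ in the second half is correct and matches the paper. The gap is in the first half, where you compare with $X^{\gen,\widehat{\Gamma}_1}_{G,\Thetabar}$.

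The functor $X^{\gen,\widehat{\Gamma}_1}_{G,\Thetabar}(A)$ is defined using $\Rep^{\widehat{\Gamma}_1}_{G,\pi}(A)$, and this is the set of \emph{abstract} group homomorphisms $\widehat{\Gamma}_1\to G(A)$ (Section~\ref{sec_adm_rep} puts no topology on either side, and $A$ is an arbitrary $R^{\ps}_{\Thetabar}$-algebra, not assumed to lie in $\widehat{\Aa}_{\OO}$). So your injectivity argument, ``a continuous representation $\rho:\widehat{\Gamma}_1\to G(A)$ is determined by its restriction to the dense subgroup $\Gamma_1$'', does not apply: there is no continuity hypothesis to invoke. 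Likewise your surjectivity argument reduces to finite $A$ via an $\mm$-adic limit, but $A$ can be, e.g., $R^{\ps}_{\Thetabar}[t_1^{\pm1},\dots,t_s^{\pm1}]$ itself, which is not pro-artinian and not a limit of finite quotients in any useful sense; the limiting procedure and the appeal to \Cref{cPC_PC_bijection} are not available for such $A$.

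The paper sidesteps both issues by a purely algebraic computation. It applies the machinery of Sections~\ref{sec_adm_rep}--\ref{Laf} to $\widehat{\Gamma}_1$ \emph{as an abstract group}: using the chosen $\rho_0\in\Rep^{\widehat{\Gamma}_1}_{G,\pi}(\OO)$ (and its restriction to $\Gamma_1$) together with \Cref{Z1_Rep}, \Cref{M_ffr}, and \Cref{GIT_isos_2}, one gets for each $\mathcal G\in\{\Gamma_1,\widehat{\Gamma}_1\}$ an isomorphism
\[
\OO(\Rep^{\mathcal G}_{G,\pi})\;\cong\;\OO(\PC^{\mathcal G}_{G,\pi})[t_1^{\pm1},\dots,t_s^{\pm1}],
\]
where the Laurent variables correspond to a fixed basis of $I_{\Delta}M$, hence are the \emph{same} on both sides. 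Tensoring with $R^{\ps}_{\Thetabar}$ over $\OO(\PC^{\mathcal G}_{G,\pi})$ gives $R^{\ps}_{\Thetabar}[t_1^{\pm1},\dots,t_s^{\pm1}]$ in both cases, and under these identifications the restriction map is the identity. This avoids any pointwise comparison of representations of $\widehat{\Gamma}_1$ and works uniformly for all $R^{\ps}_{\Thetabar}$-algebras.
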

\begin{proof} We let $\mathcal G$ be either $\Gamma_1$ or its profinite 
completion $\widehat{\Gamma}_1$ in the proof. The functor $X^{\gen,\mathcal G}_{G,\Thetabar}$ is representable 
by $R^{\ps}_{\Thetabar}\otimes_{\OO(\PC^{\mathcal G}_{G, \pi})} 
\OO(\Rep^{\mathcal G}_{G, \pi})$. Let $\rho_0 \in \Rep^{\widehat{\Gamma}_1}_{G,\pi}(\OO)$ then $\rho_0|_{\Gamma_1}$ is 
in $\Rep^{\Gamma_1}_{G,\pi}(\OO)$, and using these representations 
we may identify $\Rep^{\mathcal G}_{G,\pi}$ with the space of 
$1$-cocycles $Z^1(\mathcal G, \DD(M)(-))$ by Proposition \ref{Z1_Rep}.
Corollaries \ref{M_ffr} and \ref{GIT_isos_2} imply that after choosing a basis of 
$I_{\Delta}M$ as a $\ZZ$-module we may identify 
$$ \OO(\Rep^{\mathcal G}_{G,\pi})\cong \OO(\PC^{\mathcal G}_{G, \pi})[t_1^{\pm 1}, \ldots, t_s^{\pm 1}],$$
thus for both $\mathcal G= \Gamma_1$ and $\mathcal G=\widehat{\Gamma}_1$ we have
\begin{equation}\label{Rps_curly_G}
R^{\ps}_{\Thetabar}\otimes_{\OO(\PC^{\mathcal G}_{G, \pi})} 
\OO(\Rep^{\mathcal G}_{G, \pi})\cong R^{\ps}_{\Thetabar}[t_1^{\pm 1}, \ldots, t_s^{\pm 1}]
\end{equation}
and under theses isomorphisms the restriction to $\Gamma_1$ is just the identity map. 
\end{proof}

The following Lemma will allow us to relate the scheme $X^{\gen, \widehat{\Gamma}_1}_{G, \Thetabar}$ to the scheme $X^{\gen}_{G, \rhobarss}$ 
introduced in \cite{defG}. 

\begin{lem}\label{defG1} Let $\tau: G\hookrightarrow \mathbb A^n$ be a closed immersion 
of $\OO$-schemes, let $A$ be an $R^{\ps}_{\Thetabar}$-algebra  and let $\rho\in X^{\gen, \widehat{\Gamma}_1}_{G, \Thetabar}(A)$. 
Assume that $\Rep^{\widehat\Gamma_1}_{G, \pi}(\OO)$ is non-empty. 
Then $\tau(\rho(\widehat{\Gamma}_1))$ is contained in a finitely generated
$R^{\ps}_{\Thetabar}$-submodule of $A^n = \mathbb A^n(A)$. 
\end{lem}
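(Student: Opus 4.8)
The plan is to deduce this as a formal consequence of \Cref{condensed}, applied to the \emph{abstract} group $\widehat{\Gamma}_1$, together with the observation that membership in $X^{\gen,\widehat{\Gamma}_1}_{G,\Thetabar}$ forces the relevant structure homomorphism to factor through $R^{\ps}_{\Thetabar}$.

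First I would record the book-keeping that lets us apply \Cref{sec_adm_rep} to $\widehat{\Gamma}_1$: since $\Delta$ is finite, $\pi\colon\Gamma_1\twoheadrightarrow\Delta$ extends uniquely to a surjection $\widehat{\Gamma}_1\twoheadrightarrow\Delta$, with kernel $\widehat{\Gamma}_2$ an abstract group containing $\Gamma_2$; the pair $(\widehat{\Gamma}_1,\widehat{\Gamma}_2)$ then sits in the setting of \Cref{sec_Z1}, and the hypothesis that $\Rep^{\widehat{\Gamma}_1}_{G,\pi}(\OO)$ is non-empty is exactly what is needed to invoke \Cref{condensed} and \Cref{GIT_isos_2} with $\widehat{\Gamma}_1$ in place of $\Gamma_1$ (this is the same way these sections are used elsewhere in \Cref{sec_profinite}, e.g. in the proof of \Cref{paris}). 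Given $\rho\in X^{\gen,\widehat{\Gamma}_1}_{G,\Thetabar}(A)$, it is in particular an element of $\Rep^{\widehat{\Gamma}_1}_{G,\pi}(A)$, so \Cref{condensed} provides a finitely generated $\OO[H_1(\widehat{\Gamma}_1,M)]$-submodule $N\subseteq A^n$ containing $\tau(\rho(\widehat{\Gamma}_1))$, where $A^n$ carries the $\OO[H_1(\widehat{\Gamma}_1,M)]$-module structure coming from the homomorphism $\OO[H_1(\widehat{\Gamma}_1,M)]\cong\OO(\PC^{\widehat{\Gamma}_1}_{G,\pi})\to A$ that classifies the pseudocharacter $\Theta_{\rho}$ (using \Cref{GIT_isos_2}, whose hypothesis holds).

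Next I would exploit the defining condition of $X^{\gen,\widehat{\Gamma}_1}_{G,\Thetabar}$, namely $\Theta_{\rho}=\Theta^u\otimes_{R^{\ps}_{\Thetabar}}A$. Viewing the universal continuous pseudocharacter $\Theta^u$ as an abstract $G$-pseudocharacter of $\widehat{\Gamma}_1$ valued in $R^{\ps}_{\Thetabar}$, it is classified by a ring homomorphism $\OO(\PC^{\widehat{\Gamma}_1}_{G,\pi})\to R^{\ps}_{\Thetabar}$, and the equality $\Theta_{\rho}=\Theta^u\otimes_{R^{\ps}_{\Thetabar}}A$ means precisely that the classifying map of $\Theta_{\rho}$ is the composite $\OO(\PC^{\widehat{\Gamma}_1}_{G,\pi})\to R^{\ps}_{\Thetabar}\to A$. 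Therefore the image of $\OO[H_1(\widehat{\Gamma}_1,M)]$ in $A$ is contained in the image of $R^{\ps}_{\Thetabar}$, so if $v_1,\dots,v_k\in A^n$ generate $N$ over $\OO[H_1(\widehat{\Gamma}_1,M)]$ then $N$ is contained in the $R^{\ps}_{\Thetabar}$-submodule of $A^n$ generated by $v_1,\dots,v_k$; this is a finitely generated $R^{\ps}_{\Thetabar}$-submodule of $A^n$ containing $\tau(\rho(\widehat{\Gamma}_1))$, which is what we want.

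I do not expect a genuine obstacle here: the content is already contained in \Cref{condensed}, and the new input is only the mild factorisation statement in the previous paragraph. The one point deserving care is the first one — that \Cref{condensed} (and the identifications of \Cref{GIT_isos_2}) are applicable to the abstract group $\widehat{\Gamma}_1$, including the subtlety that $\tau$ is merely a closed immersion of $\OO$-schemes, which is fine since the proof of \Cref{condensed} only uses the $\DD(M)$-equivariance reduction and the linearity of $\OO(G)$ as an $\OO[M]$-comodule, not any group-scheme structure on $\mathbb{A}^n$. One could instead route the argument through the isomorphism $X^{\gen,\widehat{\Gamma}_1}_{G,\Thetabar}\cong X^{\gen,\Gamma_1}_{G,\Thetabar}$ of \Cref{paris}, but this does not help, since one would still have to bound $\tau$ on all of $\widehat{\Gamma}_1$ rather than on its dense subgroup $\Gamma_1$.
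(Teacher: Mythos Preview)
Your proposal is correct and follows essentially the same approach as the paper: apply \Cref{condensed} to the abstract group $\widehat{\Gamma}_1$ (legitimate since $\Rep^{\widehat{\Gamma}_1}_{G,\pi}(\OO)$ is non-empty), then use \Cref{GIT_isos_2} to identify $\OO[H_1(\widehat{\Gamma}_1,M)]\cong\OO(\PC^{\widehat{\Gamma}_1}_{G,\pi})$ and observe that the structure map to $A$ factors through $R^{\ps}_{\Thetabar}$. The paper phrases this last factorisation via the tensor-product description $X^{\gen,\widehat{\Gamma}_1}_{G,\Thetabar}=\Spec\bigl(R^{\ps}_{\Thetabar}\otimes_{\OO(\PC^{\widehat{\Gamma}_1}_{G,\pi})}\OO(\Rep^{\widehat{\Gamma}_1}_{G,\pi})\bigr)$ rather than through the universal property of $\Theta^u$, but this is the same content.
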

\begin{proof} Since $X^{\gen, \widehat{\Gamma}_1}_{G, \Thetabar}$ 
is represented by $R^{\ps}_{\Thetabar}\otimes_{\OO(\PC^{\widehat{\Gamma}_1}_{G, \pi})} 
\OO(\Rep^{\widehat \Gamma_1}_{G, \pi})$ the assertion follows from Lemma \ref{condensed} applied to $\widehat{\Gamma}_1$, and \Cref{GIT_isos_2}.
\end{proof}

\subsection{Irreducible components} \label{sec_irr_comp}

Let $N$ be a finitely generated $\Zp$-module and let $\OO\br{N}$ be the completed group algebra of $N$, where $\OO$ is  the ring of integers in a finite extension $L$ of $\Qp$. 
Let $\fDN: \widehat{\Aa}_{\OO} \rightarrow \Ab$ be a formal group scheme, given by 
\begin{equation}\label{pts_fDN}
\fDN(A)\colonequals \Hom_{\widehat{\Aa}_{\OO}}(\OO\br{N}, A)= \Hom^{\cont}_{\Group}(N, A^{\times}).
\end{equation}
Multiplication in $\fDN$ is induced by the map 
\begin{equation}\label{comult}
c: \OO\br{N}\rightarrow \OO\br{N}\wtimes_{\OO} \OO\br{N}, \quad n\mapsto n\wtimes n , \quad \forall n\in N.
\end{equation}
Let $\mu$ be the torsion subgroup of $N$. Then we have a non-canonical isomorphism $N\cong \mu \oplus \Zp^r$, where $r=\dim_{\Qp} N\otimes_{\Zp} \Qp$. 
This induces an isomorphism 
\begin{equation}\label{non_can_iso}
\OO\br{N}\cong \OO[\mu]\br{x_1, \ldots, x_r}.
\end{equation}
We assume that $L$ contains all the $p^m$-th roots of unity, where $p^m$ is the order of $\mu$. Then the group $\mathrm X(\mu)$  of characters
$\chi: \mu\rightarrow \OO^{\times}$ also has order $p^m$. The following Lemma 
is an immediate consequence of  \eqref{non_can_iso}: 
\begin{lem}\label{easy_lem} The following hold:
\begin{enumerate}
\item the irreducible components of $\Spec \OO\br{N}$ are in canonical bijection with $\mathrm X(\mu)$, so that 
the irreducible component corresponding to $\chi\in\mathrm X(\mu)$ is given by $\Spec ( \OO\br{N}\otimes_{\OO[\mu], \chi}\OO)$;
\item every irreducible component of $\Spec \OO\br{N}$ contains an $\OO$-rational point $\psi\in \fDN(\OO)$;
\item a point $\psi\in \fDN(\OO)=\Hom^{\cont}_{\Group}(N, \OO^{\times})$ lies 
on the irreducible component corresponding to $\chi\in\mathrm X(\mu)$ if and only if $\psi(x)=\chi(x)$ for all $x\in \mu$. 
\end{enumerate}
\end{lem}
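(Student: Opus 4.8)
The plan is to deduce all three assertions from the explicit isomorphism \eqref{non_can_iso}, the only real work being to describe $\Spec\OO[\mu]$ and then propagate that description through $\OO\br{N}\cong\OO[\mu]\br{x_1,\ldots,x_r}$.

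\emph{The ring $\OO[\mu]$.} First I would record that each character $\chi\in\mathrm{X}(\mu)$ extends to a surjection of $\OO$-algebras $\chi\colon\OO[\mu]\twoheadrightarrow\OO$ (surjective since $1$ is in the image), whose kernel $\mathfrak{p}_{\chi}$ is therefore a prime ideal with $\OO[\mu]/\mathfrak{p}_{\chi}\cong\OO$. The product of these maps is an injection $\OO[\mu]\hookrightarrow\prod_{\chi}\OO$: after inverting $\varpi$ it becomes the classical isomorphism $L[\mu]\xrightarrow{\sim}\prod_{\chi}L$, which holds because $|\mu|=p^m$ is invertible in $L$ and $L$ contains all $p^m$-th roots of unity by hypothesis. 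Hence $\OO[\mu]$ is reduced and $\OO$-torsion-free, so $\varpi$ lies in no minimal prime and $\Spec L[\mu]\hookrightarrow\Spec\OO[\mu]$ is dense. Contraction along this localisation then identifies the minimal primes of $\OO[\mu]$ bijectively with the (finitely many) minimal primes of $L[\mu]=\prod_{\chi}L$, i.e.\ with $\mathrm{X}(\mu)$, the one attached to $\chi$ being exactly $\mathfrak{p}_{\chi}$. (Concretely, this is also visible from $\OO[\mu]\cong\bigotimes_i\OO[z_i]/((1+z_i)^{p^{e_i}}-1)$.)

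\emph{Passing to $\OO\br{N}$ and proving (1).} Since $\OO[\mu]$ is noetherian, the minimal primes of the power series ring $\OO[\mu]\br{x_1,\ldots,x_r}$ are precisely the extensions $\mathfrak{p}_{\chi}\OO[\mu]\br{x_1,\ldots,x_r}$, with quotient $(\OO[\mu]/\mathfrak{p}_{\chi})\br{x_1,\ldots,x_r}\cong\OO\br{x_1,\ldots,x_r}$, which is a regular domain; here I use the standard facts that minimal primes of a power series ring over a noetherian ring are extended from the base, and that power series over a reduced (resp.\ integral) noetherian ring are reduced (resp.\ integral). Transporting along \eqref{non_can_iso} gives (1). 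The bijection with $\mathrm{X}(\mu)$ is canonical because the torsion subgroup $\mu=N_{\mathrm{tors}}\subseteq N$, and hence the map $\OO[\mu]\to\OO\br{N}$, are intrinsic, and the component attached to $\chi$ is the vanishing locus of $\mathfrak{p}_{\chi}\OO\br{N}=\ker\bigl(\OO\br{N}\to\OO\br{N}\otimes_{\OO[\mu],\chi}\OO\bigr)$; the isomorphism \eqref{non_can_iso} is used only to verify that these closed subschemes are irreducible and regular, not to index them.

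\emph{Proving (3) and (2).} For (3): a point $\psi\in\fDN(\OO)=\Hom^{\cont}_{\Group}(N,\OO^{\times})$ corresponds to an $\OO$-algebra map $\OO\br{N}\to\OO$, and it lies on the component attached to $\chi$ iff this map kills $\mathfrak{p}_{\chi}\OO\br{N}$, iff the composite $\OO[\mu]\to\OO\br{N}\xrightarrow{\psi}\OO$ equals $\chi$ — the only $\OO$-algebra map it can be, as it must factor through $\OO[\mu]/\mathfrak{p}_{\chi}\cong\OO$ — iff $\psi(x)=\chi(x)$ for all $x\in\mu$. For (2): choose a retraction $s\colon N\twoheadrightarrow\mu$ (equivalently a splitting $N\cong\mu\oplus\Zp^{r}$ as in \eqref{non_can_iso}); then $\chi\circ s\in\fDN(\OO)$ restricts to $\chi$ on $\mu$, so by (3) it lies on the component attached to $\chi$, showing every component carries an $\OO$-point. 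The argument is essentially bookkeeping; the one point requiring care is keeping the canonical data (the index set $\mathrm{X}(\mu)$ and the ideals $\mathfrak{p}_{\chi}$, all built from $\mu=N_{\mathrm{tors}}$) separate from what genuinely depends on the non-canonical \eqref{non_can_iso}.
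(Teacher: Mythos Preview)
Your proof is correct and follows exactly the approach the paper indicates: the paper gives no argument beyond declaring the lemma ``an immediate consequence of \eqref{non_can_iso}'', and you have simply written out that consequence in detail. Your care in separating the canonical data (the inclusion $\mu=N_{\mathrm{tors}}\hookrightarrow N$ and the primes $\mathfrak{p}_\chi$) from the non-canonical splitting is a nice touch that justifies the word ``canonical'' in part (1).
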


Let $R$ be a complete local noetherian $\OO$-algebra with residue field $k$. Let ${\mathcal X:\widehat{\Aa}_{\OO} \rightarrow \Set}$ be the functor 
$\mathcal X(A)= \Hom_{\widehat{\Aa}_{\OO}}(R, A)$. This functor is represented 
by a formal scheme $\Spf R$. Let us suppose that we have a faithful and 
transitive action of $\fDN$ on $\mathcal X$. Concretely, this means that for 
all $A\in \widehat{\Aa}_{\OO}$ we have a faithful and transitive  action of 
the group $\fDN(A)$ on the set $\mathcal X(A)$, which is functorial in $A$. It is enough to 
restrict to $A\in \Aa_{\OO}$ as the general case follows by continuity. 
The action map $\fDN\times \mathcal X \rightarrow \mathcal X$ gives us 
a morphism in $\widehat{\Aa}_{\OO}$: 
\begin{equation}\label{action_map} 
\alpha: R \rightarrow \OO\br{N}\wtimes_{\OO} R.
\end{equation}
Let us assume that $\mathcal X(\OO)$ is non-empty and choose $x\in \mathcal X(\OO)$. Since the action map is faithful and transitive for every $A\in \widehat{\Aa}_{\OO}$ the map 
\begin{equation}\label{bijective_action}
\fDN(A) \rightarrow \mathcal X(A), \quad \psi\mapsto \psi\cdot x_A
\end{equation} 
is bijective, where $x_A$ is the image of $x$ in $\mathcal X(A)$. This implies 
that the composition 
$$ \alpha_x: R \overset{\alpha}{\longrightarrow} \OO\br{N}\wtimes_{\OO} R\rightarrow (\OO\br{N}\wtimes_{\OO} R)\wtimes_{R, x} \OO\cong \OO\br{N}$$
is an isomorphism. 
\begin{examp} If $\mathcal X=\fDN$ and the action is given by left translations, 
then it follows from \eqref{pts_fDN}, \eqref{comult} that $\alpha_{\psi}$ corresponds 
to the character $N \rightarrow \OO\br{N}^{\times}$, $n \mapsto \psi(n)n$.
\end{examp}

\begin{lem}\label{pts_R_comp} Assume that $\mathcal X(\OO)$ is non-empty. Then the following hold:
\begin{enumerate} 
\item every irreducible component of $\Spec R$ has an $\OO$-rational point $x\in \mathcal X(\OO)$;
\item every $x\in \mathcal X(\OO)$ lies on a unique irreducible component of $\Spec R$;
\item if  $x, y\in \mathcal X(\OO)$ then there exists a unique $\psi\in \fDN(\OO)$ such that $\psi\cdot x = y$; 
\item let  $x$, $y$ and $\psi$ be as in part $(3)$. Then $x$, $y$ lie on the same irreducible component of $\Spec R$ 
if and only if $\psi|_{\mu} =1$.
\end{enumerate}
\end{lem}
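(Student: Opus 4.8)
The plan is to transfer everything to the group algebra $\OO\br{N}$ via the isomorphism $\alpha_x$ and apply \Cref{easy_lem}. First I would fix $x\in \mathcal X(\OO)$ and use the hypothesis that the action of $\fDN$ on $\mathcal X$ is faithful and transitive, so that by \eqref{bijective_action} the composite $\alpha_x: R\to \OO\br N$ is an isomorphism of objects of $\widehat{\Aa}_{\OO}$. Under $\alpha_x$, the point $x$ itself is sent to the augmentation map $\OO\br N\to \OO$, i.e.\ to the identity element of $\fDN(\OO)$; more generally, for $\psi\in \fDN(\OO)$ the point $\psi\cdot x$ corresponds under $\alpha_x$ to the homomorphism $\OO\br N\to\OO$ sending $n\mapsto \psi(n)$, which is exactly the point $\psi\in \fDN(\OO)=\Hom^{\cont}_{\Group}(N,\OO^\times)$ viewed inside $\Spec\OO\br N$. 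This is the content of the Example preceding the lemma, so I would simply invoke it. Parts (1) and (2) then follow immediately from parts (1) and (2) of \Cref{easy_lem} applied to $\Spec\OO\br N\cong \Spec R$, since the bijection \eqref{bijective_action} shows that $\mathcal X(\OO)$, transported along $\alpha_x$, is precisely the set of $\OO$-rational points $\fDN(\OO)$ of $\Spec\OO\br N$ appearing in \Cref{easy_lem}(2).

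Next, for part (3): given $x,y\in\mathcal X(\OO)$, transitivity of the action gives existence of $\psi\in\fDN(\OO)$ with $\psi\cdot x=y$, and faithfulness gives uniqueness. (One only needs the action at the level of $\OO$-points here, not the formal-scheme structure.) For part (4), I would transport the situation along $\alpha_x$: the point $x$ goes to the trivial character (augmentation), and $y=\psi\cdot x$ goes to the character $n\mapsto\psi(n)$ of $N$, by the Example. By \Cref{easy_lem}(3), the trivial character lies on the component indexed by the trivial character $\Eins\in\mathrm X(\mu)$, and the character $\psi$ lies on the component indexed by $\psi|_\mu\in\mathrm X(\mu)$. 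Hence $x$ and $y$ lie on the same irreducible component of $\Spec R$ if and only if $\psi|_\mu=\Eins$, i.e.\ $\psi|_\mu=1$, which is the claim. Here I should note that we need $\OO$ to contain the $p^m$-th roots of unity for \Cref{easy_lem} to apply, which is the standing assumption in this subsection.

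The only genuine subtlety — and the step I would be most careful about — is checking that the identification of $\mathcal X(\OO)$ with $\fDN(\OO)\subseteq\Spec\OO\br N$ under $\alpha_x$ really does send $\psi\cdot x$ to the point $\psi$, and in particular that the bijection \eqref{bijective_action} is compatible with the isomorphism $\alpha_x$ defined via the action map $\alpha: R\to \OO\br N\wtimes_{\OO}R$. This is a diagram chase: unwinding the definition of $\alpha_x$ as the composite $R\xrightarrow{\alpha}\OO\br N\wtimes_\OO R\to (\OO\br N\wtimes_\OO R)\wtimes_{R,x}\OO\cong\OO\br N$ and using that $\alpha$ co-represents the action map $\fDN\times\mathcal X\to\mathcal X$, one sees that composing $\alpha_x$ with a point $\psi\in\fDN(\OO)=\Hom_{\widehat{\Aa}_\OO}(\OO\br N,\OO)$ yields exactly $\psi\cdot x_\OO\in\mathcal X(\OO)$; this is precisely the Example, applied with $\mathcal X=\fDN$ acting on itself by translation, together with the observation that $\alpha_x$ intertwines the two actions. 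Once this compatibility is in hand, everything else is a direct appeal to \Cref{easy_lem}, and no further computation is needed.
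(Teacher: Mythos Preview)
Your proposal is correct and follows essentially the same route as the paper: fix $x\in\mathcal X(\OO)$, use the isomorphism $\alpha_x:R\cong\OO\br{N}$ to reduce to the case $\mathcal X=\fDN$ acting on itself by translation, and then appeal to \Cref{easy_lem}. The ``subtlety'' you spell out in your last paragraph --- that composing $\alpha_x$ with $\psi\in\fDN(\OO)$ recovers the point $\psi\cdot x$, i.e.\ that \eqref{bijective_action} is $\fDN(\OO)$-equivariant for left translation on the source --- is exactly the content of the paper's one-line remark ``the bijection \eqref{bijective_action} is $\fDN(A)$-equivariant for the action by left translations on the source.''
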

\begin{proof} If $\mathcal X= \fDN$ with the action given by 
left translations then the assertions follow readily from \Cref{easy_lem}. 

In the general case, we pick $x\in \mathcal X(\OO)$. 
The isomorphism $\alpha_x: R\rightarrow \OO\br{N}$ of $\OO$-algebras 
allows us to reduce the question to the case above. For parts (3) and (4), we note that the bijection \eqref{bijective_action} is $\fDN(A)$-equivariant 
for the action by left translations on the source. 
\end{proof}

\begin{lem}\label{lem_irr_comp} Assume that $\mathcal X(\OO)$ is non-empty. Then the action of $\fDN(\OO)$ on $\mathcal X(\OO)$ induces
a canonical action of $\mathrm X(\mu)$ on the set of irreducible components of $\Spec R$. This action is 
faithful and transitive. 
\end{lem}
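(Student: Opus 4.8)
The plan is to realise the set of irreducible components of $\Spec R$ as a torsor under $\mathrm X(\mu)$ by pushing forward the $\fDN(\OO)$-torsor structure on $\mathcal X(\OO)$ along an appropriate quotient map. First I would record the group homomorphism obtained by restricting characters along the inclusion $\mu\hookrightarrow N$,
$$ r\colon \fDN(\OO)=\Hom^{\cont}_{\Group}(N,\OO^{\times})\longrightarrow \mathrm X(\mu),\qquad \psi\mapsto \psi|_{\mu}, $$
and set $H\colonequals\ker r=\{\psi\in\fDN(\OO): \psi|_{\mu}=\mathbf 1\}$. The map $r$ is surjective: the splitting $N\cong\mu\oplus\Zp^{r}$ underlying \eqref{non_can_iso} provides a section, namely the composite $\mathrm X(\mu)=\Hom_{\widehat{\Aa}_{\OO}}(\OO[\mu],\OO)\hookrightarrow\Hom_{\widehat{\Aa}_{\OO}}(\OO\br{N},\OO)=\fDN(\OO)$ induced by the projection $\OO\br{N}\to\OO[\mu]$ that kills the $x_i$. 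Since $\fDN(\OO)$ is abelian, the standing hypothesis that it acts faithfully and transitively on $\mathcal X(\OO)$ means precisely that $\mathcal X(\OO)$ is a torsor under $\fDN(\OO)$, and $H$ is a normal subgroup.

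Next I would compare $H$-orbits in $\mathcal X(\OO)$ with irreducible components using \Cref{pts_R_comp}. By parts (1) and (2) the assignment $\pi$ sending $x\in\mathcal X(\OO)$ to the unique irreducible component of $\Spec R$ through $x$ is well defined and surjective. For $x,y\in\mathcal X(\OO)$, let $\psi\in\fDN(\OO)$ be the unique element with $\psi\cdot x=y$ provided by part (3); then part (4) says $\pi(x)=\pi(y)$ if and only if $\psi|_{\mu}=\mathbf 1$, i.e.\ if and only if $\psi\in H$, i.e.\ if and only if $y$ lies in the $H$-orbit of $x$. Hence the fibres of $\pi$ are exactly the $H$-orbits, and $\pi$ descends to a bijection $\overline\pi\colon H\backslash\mathcal X(\OO)\xrightarrow{\ \sim\ }\{\text{irreducible components of }\Spec R\}$.

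Finally, since $H$ is normal in $\fDN(\OO)$, the quotient group $\fDN(\OO)/H\cong\mathrm X(\mu)$ acts on $H\backslash\mathcal X(\OO)$; this action is transitive because $\fDN(\OO)$ acts transitively on $\mathcal X(\OO)$, and free because the $\fDN(\OO)$-action is free, so $H\backslash\mathcal X(\OO)$ is a torsor under $\mathrm X(\mu)$. Transporting along $\overline\pi$ endows the set of irreducible components of $\Spec R$ with a simply transitive — in particular faithful and transitive — $\mathrm X(\mu)$-action, given explicitly by $\chi\cdot Z=\pi(\psi\cdot x)$ for any $x\in\mathcal X(\OO)$ with $\pi(x)=Z$ and any $\psi\in\fDN(\OO)$ with $r(\psi)=\chi$; since $r$, $\pi$ and the torsor structure are all canonical, so is this action. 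The one point that needs genuine care (rather than formal torsor bookkeeping) is the identification of the fibres of $\pi$ with the $H$-orbits — this is exactly where all four parts of \Cref{pts_R_comp} enter — together with checking that the displayed formula for $\chi\cdot Z$ does not depend on the choices of $x$ and $\psi$, which follows from commutativity of $\fDN(\OO)$ and part (4) of \Cref{pts_R_comp}.
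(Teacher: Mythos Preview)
Your proof is correct and follows essentially the same approach as the paper: you define the restriction map $r\colon\fDN(\OO)\to\mathrm X(\mu)$, identify irreducible components with orbits of its kernel via \Cref{pts_R_comp}, and push the torsor structure forward. The paper's argument is the same but more terse; your additional checks (surjectivity of $r$, well-definedness of $\chi\cdot Z$) are all sound.
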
 
\begin{proof} Since $\mu$ is a direct summand of $N$ the map $\psi\mapsto \psi|_{\mu}$ induces a
surjective group homomorphism $\fDN(\OO)\twoheadrightarrow \mathrm X(\mu)$. Let $K$ be the kernel of this map. 
\Cref{pts_R_comp} implies that there is a natural bijection between the set of irreducible components 
of $\Spec R$ and the set of $K$-orbits in $\mathcal X(\OO)$. The action $\fDN(\OO)$ on the set of 
$K$-orbits factors through the action of $\mathrm X(\mu)$, which induces the sought after action on the set of 
irreducible components. Since the action of $\fDN(\OO)$ on $\mathcal X(\OO)$ is faithful and transitive the same applies 
to the action of $\mathrm X(\mu)$. 
\end{proof}

We will now  get back to our particular example. Let 
$\Zhat^1: \Aa_{\OO}\rightarrow \Ab$
be the functor such that
$\Zhat^1(A)$ is the set of continuous $1$-cocycles $\Phi:\widehat{\Gamma}_1\rightarrow \Hom(M, 1+\mm_A)$ for the discrete topology on the target. 
\begin{lem} The functor $\Zhat^1$ is pro-represented by 
$\OO\br{((\mathcal E\otimes M)_{\Delta})^{\wdp}}$. 
\end{lem}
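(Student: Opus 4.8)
The plan is to identify $\widehat Z^1$ with a completion of the representable functor $A\mapsto Z^1(\Gamma_1,\Hom(M,A^\times))$ studied in \Cref{sec_Z1}, exactly in the spirit of \Cref{limit_square} and \Cref{abstract_limit}. First I would note that a continuous $1$-cocycle $\Phi:\widehat\Gamma_1\to\Hom(M,1+\mm_A)$ for $A\in\Aa_\OO$ is the same thing as a $1$-cocycle $\Gamma_1\to\Hom(M,1+\mm_A)$ whose restriction to the image of $\Gamma_1$ in $\widehat\Gamma_1$ extends continuously; since $A$ is finite discrete and the image of $\Gamma_1$ is dense in $\widehat\Gamma_1$, such a cocycle is determined by its restriction to $\Gamma_1$, and conversely any $1$-cocycle $\Gamma_1\to\Hom(M,1+\mm_A)$ (with values in an $\OO$-valued point, hence landing in a finite subgroup of $1+\mm_A$) automatically factors through a finite quotient of $\Gamma_1$ on $\Gamma_2$ and hence extends to $\widehat\Gamma_1$. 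This gives a natural bijection $\widehat Z^1(A)\cong Z^1(\Gamma_1,\Hom(M,1+\mm_A))$; because $A\in\Aa_\OO$ the group $1+\mm_A$ is a finite abelian $p$-group, so this is the subset of $Z^1(\Gamma_1,\Hom(M,A^\times))$ landing in $1+\mm_A$.

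Next I would invoke \Cref{cor_rings_Z1}, which says $A\mapsto Z^1(\Gamma_1,\Hom(M,A^\times))$ is represented by $\ZZ[(\mathcal E\otimes M)_\Delta]$, hence $A\mapsto Z^1(\Gamma_1,\Hom(M,A^\times))$ on $\OO$-algebras is represented by $\OO[(\mathcal E\otimes M)_\Delta]$. Let $\mm$ be the maximal ideal of $\OO[(\mathcal E\otimes M)_\Delta]$ corresponding to the trivial cocycle, i.e.\ generated by $\varpi$ and the augmentation ideal. For $A\in\Aa_\OO$, a ring map $\OO[(\mathcal E\otimes M)_\Delta]\to A$ whose associated cocycle lands in $1+\mm_A$ is precisely one sending the augmentation ideal into $\mm_A$, hence factors through $\OO[(\mathcal E\otimes M)_\Delta]_\mm$ and, since $A$ is finite, through a finite-length quotient. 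Therefore $\widehat Z^1$ is pro-represented by $\varprojlim_I \OO[(\mathcal E\otimes M)_\Delta]_\mm/I$, the limit over ideals $I$ of finite colength. Now \Cref{abstract_limit} applied to $\mathcal A=(\mathcal E\otimes M)_\Delta$ identifies this inverse limit with $\OO\br{N}$, where $N$ is the pro-$p$ completion of $(\mathcal E\otimes M)_\Delta$, i.e.\ $N=((\mathcal E\otimes M)_\Delta)^{\wdp}$. This yields $\widehat Z^1\cong\Spf\OO\br{((\mathcal E\otimes M)_\Delta)^{\wdp}}$, as claimed.

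The only genuinely delicate point is the extension/continuity bookkeeping in the first step: one must check that a $1$-cocycle on the abstract group $\Gamma_1$ valued in a finite module really does extend (uniquely and continuously) to the profinite completion, and that the extension is again a cocycle. This is where one uses that the target module is finite (so the cocycle, being a set map into a finite set satisfying a closed condition, factors through an open normal subgroup after one corrects by the $\Gamma_2$-part), that $\Gamma_2$ acts trivially on $1+\mm_A$, and density of $\Gamma_1$ in $\widehat\Gamma_1$; a clean way to organize it is to observe, as in the proof of \Cref{limit_square}, that specialising the universal cocycle $\Phi_\nat$ over $\OO[(\mathcal E\otimes M)_\Delta]_\mm/I$ produces a cocycle of $\Gamma_1$ into a finite group, which then extends uniquely to $\widehat\Gamma_1$. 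Everything else is a direct citation of \Cref{cor_rings_Z1} and \Cref{abstract_limit}, so I expect the write-up to be short.
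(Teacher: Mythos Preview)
Your proposal is correct and follows essentially the same route as the paper: identify $\Zhat^1$ with a completion of the functor represented by $\OO[(\mathcal E\otimes M)_\Delta]$ via \Cref{cor_rings_Z1}, then invoke \Cref{abstract_limit} with $\mathcal A=(\mathcal E\otimes M)_\Delta$ at the augmentation maximal ideal. The paper packages your direct extension-to-$\widehat\Gamma_1$ argument by first identifying $\Zhat^1$ with $D^\square_{\rhobar}$ for $G=\DD(M)\rtimes\underline\Delta$ and $\rhobar(\gamma)=(1,\pi(\gamma))$ (via $\Phi\mapsto\Phi\rho_0$ as in \Cref{Z1_Rep}), so that \Cref{limit_square} can be quoted verbatim; the underlying mechanism is the same.
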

\begin{proof} Let $G=\DD(M)\rtimes \underline{\Delta}$,  $\rhobar: \widehat{\Gamma}_1\rightarrow G(k), \gamma \mapsto (1, \pi(\gamma))$ and let $\rho_0: \widehat{\Gamma}_1\rightarrow G(\OO), 
\gamma\mapsto (1, \pi(\gamma))$. The map $\Phi \mapsto \Phi\rho_0$ induces a natural bijection between $\Zhat^1(A)$
and $D^{\square}_{\rhobar}(A)$ for all $A\in \Aa_{\OO}$; this can be shown by the same argument as in the proof of \Cref{Z1_Rep}.
Thus $\Zhat^1$ is pro-represented by $R^{\square}_{\rhobar}$ and the assertion follows from
\Cref{Z1_Rep},  \Cref{limit_square}  and \Cref{abstract_limit}.
%This is a special case of Lemma \ref{abstract_limit}
%applied with $G=\DD(M)\rtimes \underline{\Delta}$ and $\rhobar: \Gamma_F\rightarrow G(k), \gamma \mapsto (1, \pi(\gamma))$ via 
%Proposition \ref{Z1_Rep} with $\rho_0: \Gamma_F \rightarrow G(\OO), 
%\gamma\mapsto (1, \pi(\gamma))$.
\end{proof}

It follows from Proposition \ref{Z1_Rep} that if $\Phi\in \Zhat^1(A)$ and $\rho\in D^{\square}_{\rhobar}(A)$ 
then $\gamma\mapsto \Phi(\gamma) \rho(\gamma)$ defines 
a representation $\Phi\rho\in D^{\square}_{\rhobar}(A)$ and 
the map $\Zhat^1\times D^{\square}_{\rhobar}\rightarrow D^{\square}_{\rhobar}$, $(\Phi, \rho) \mapsto \Phi\rho$ 
defines a faithful and transitive action of $\Zhat^1$ on $D^{\square}_{\rhobar}$.

\begin{prop}\label{components1} Assume that $\Gamma_2^{\ab}/p \Gamma_2^{\ab}$ is finite and let  $\mu$ be the torsion subgroup of $((\mathcal E\otimes M)_{\Delta})^{\wdp}$.
Assume further that $\Rep^{\Gamma_1}_{G, \pi}(\OO)$ is non-empty and 
$\OO$ contains all the $p^m$-th roots of unity, where $p^m$ is the order of $\mu$.
Then there is a canonical action of the character group $\mathrm X(\mu)$ on 
the set of irreducible components of $\Spec R^{\square}_{\rhobar}$, 
$\Spec R^{\ps}_{\Thetabar}$ and $X^{\gen}_{G, \Thetabar}$, respectively. 
Moreover, the following hold:
\begin{enumerate}
\item this action is faithful and transitive;
\item irreducible components of $\Spec R^{\square}_{\rhobar}$ and 
$\Spec R^{\ps}_{\Thetabar}$ are formally smooth over $\OO$;
\item irreducible components of $X^{\gen}_{G, \Thetabar}$ are 
of the form $$\Spec \OO\br{x_1, \ldots, x_r}[t_1^{\pm 1}, \ldots, t_s^{\pm 1}],$$ 
where $r=\rank_{\Zp} H_1(\Gamma_1, M)^{\wdp}$ and $r+s=\rank_{\Zp} ((\mathcal E\otimes M)_{\Delta})^{\wdp}$.
\end{enumerate}
\end{prop}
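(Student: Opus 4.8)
The plan is to reduce all three cases to the abstract setup of Lemmas \ref{pts_R_comp} and \ref{lem_irr_comp}, applied with $N = ((\mathcal E\otimes M)_{\Delta})^{\wdp}$, $R = R^{\square}_{\rhobar}$ and $\mathcal X = D^{\square}_{\rhobar}$, and then transport the structure along the maps $R^{\ps}_{\Thetabar}\to R^{\square}_{\rhobar}$ and $R^{\ps}_{\Thetabar}\to A^{\gen}_{G,\Thetabar}$.

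First I would verify the hypotheses of the abstract machine for $R^{\square}_{\rhobar}$. By the discussion immediately preceding the Proposition, the functor $\Zhat^1$ is pro-represented by $\OO\br{((\mathcal E\otimes M)_{\Delta})^{\wdp}} = \fDN$, and acts faithfully and transitively on $D^{\square}_{\rhobar}$; since $\Rep^{\Gamma_1}_{G,\pi}(\OO)$ is non-empty, $D^{\square}_{\rhobar}(\OO) = \mathcal X(\OO)$ is non-empty (one can take the lift $\rho_0$, possibly after checking it extends to $\widehat\Gamma_1$, which it does since it factors through a finite quotient at each Artinian level). The hypothesis $\Gamma_2^{\ab}/p\Gamma_2^{\ab}$ finite guarantees via \eqref{Eextension} that $\mathcal E/p\mathcal E$ is finite, hence $N$ is a finitely generated $\Zp$-module by Lemma \ref{noeth_comp}, so $\fDN$ is of the form considered in \Cref{sec_irr_comp} and $R^{\square}_{\rhobar}$ is noetherian by Lemma \ref{completion_def_square}. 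Lemma \ref{lem_irr_comp} then gives the canonical faithful transitive action of $\mathrm X(\mu)$ on the irreducible components of $\Spec R^{\square}_{\rhobar}$; part (2) for $R^{\square}_{\rhobar}$ follows from Lemma \ref{easy_lem}(1), since $\alpha_x: R^{\square}_{\rhobar}\iso \OO\br{N}$ and each component $\Spec(\OO\br{N}\otimes_{\OO[\mu],\chi}\OO)$ is isomorphic to $\Spec\OO\br{x_1,\dots,x_r}$, which is formally smooth over $\OO$.

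Next, for $R^{\ps}_{\Thetabar}$: by Lemma \ref{no_name} (using $\Rep^{\Gamma_1}_{G,\pi}(\OO)\ne\emptyset$) we have $R^{\ps}_{\Thetabar}\cong \OO\br{H_1(\Gamma_1,M)^{\wdp}}$, and by Lemma \ref{Rps_madic} it is noetherian. The torsion subgroup of $H_1(\Gamma_1,M)^{\wdp}$ must be identified with $\mu$: from the exact sequence \eqref{exact_seq_nat}, $0\to H_1(\Gamma_1,M)\to (\mathcal E\otimes M)_{\Delta}\to I_{\Delta}M\to 0$ with $I_{\Delta}M$ free of finite rank over $\ZZ$, so after pro-$p$ completion $H_1(\Gamma_1,M)^{\wdp}$ and $((\mathcal E\otimes M)_{\Delta})^{\wdp}$ differ by a free $\Zp$-module, hence have the same torsion subgroup $\mu$. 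Then directly by Lemma \ref{easy_lem}, the components of $\Spec R^{\ps}_{\Thetabar}$ are in canonical $\mathrm X(\mu)$-equivariant bijection with $\mathrm X(\mu)$, are of the form $\Spec\OO\br{y_1,\dots,y_{r}}$ with $r=\rank_{\Zp}H_1(\Gamma_1,M)^{\wdp}$, and so are formally smooth over $\OO$, giving (1) and (2) for $R^{\ps}_{\Thetabar}$. One should also check compatibility: the $\mathrm X(\mu)$-actions on $\Spec R^{\square}_{\rhobar}$ and $\Spec R^{\ps}_{\Thetabar}$ match under the formally smooth map $R^{\ps}_{\Thetabar}\to R^{\square}_{\rhobar}$ — this follows because that map is equivariant for the $\fDN$-actions (the $\Zhat^1$-action on deformations is compatible with passing to pseudocharacters via $\varphi_{\nat}$, by \Cref{prop_represent} and \Cref{action_corresponds}).

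Finally, for $X^{\gen}_{G,\Thetabar}=X^{\gen,\widehat\Gamma_1}_{G,\Thetabar}$: Proposition \ref{paris} gives $X^{\gen}_{G,\Thetabar}\cong R^{\ps}_{\Thetabar}[t_1^{\pm1},\dots,t_s^{\pm1}]$ with $s=\rank_{\ZZ}M-\rank_{\ZZ}M_{\Delta}$. Combining with the description of $R^{\ps}_{\Thetabar}$ above, each irreducible component of $X^{\gen}_{G,\Thetabar}$ is the base change of a component $\Spec\OO\br{x_1,\dots,x_r}$ of $\Spec R^{\ps}_{\Thetabar}$ along $-[t_1^{\pm1},\dots,t_s^{\pm1}]$ — note a Laurent polynomial ring over an integral (indeed regular) ring is again integral, and $\Spec$ of it is connected, so components correspond bijectively — giving the shape in (3). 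The $\mathrm X(\mu)$-action on components is induced from that on $\Spec R^{\ps}_{\Thetabar}$ via the (faithfully flat) structure map, hence is again faithful and transitive, giving (1) for $X^{\gen}_{G,\Thetabar}$. The main obstacle I anticipate is purely bookkeeping: making the identification of the torsion subgroup $\mu$ uniform across the three rings and checking that the three $\mathrm X(\mu)$-actions are genuinely the same action (rather than merely abstractly isomorphic) under the comparison maps; everything else is a direct appeal to \Cref{easy_lem}, \Cref{lem_irr_comp} and \Cref{paris}.
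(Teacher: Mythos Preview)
Your proposal is correct and follows essentially the same route as the paper: apply \Cref{lem_irr_comp} to the $\Zhat^1$-action on $D^{\square}_{\rhobar}$, identify the torsion subgroup $\mu$ of $((\mathcal E\otimes M)_{\Delta})^{\wdp}$ with that of $H_1(\Gamma_1,M)^{\wdp}$ via the split exact sequence \eqref{exact_seq_nat}, and transport the resulting $\mathrm X(\mu)$-action along $R^{\square}_{\rhobar}\cong R^{\ps}_{\Thetabar}\br{t_1,\ldots,t_s}$ and $A^{\gen}_{G,\Thetabar}\cong R^{\ps}_{\Thetabar}[t_1^{\pm 1},\ldots,t_s^{\pm 1}]$ from \Cref{paris}. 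One small point to tighten: the hypothesis only gives an element of $\Rep^{\Gamma_1}_{G,\pi}(\OO)$, not a lift of $\rhobar$, so your argument that $D^{\square}_{\rhobar}(\OO)\neq\emptyset$ by ``taking $\rho_0$'' needs an extra step; the paper instead first deduces $R^{\square}_{\rhobar}\cong \OO\br{N}$ from \Cref{limit_square} and \Cref{abstract_limit} (via \Cref{Z1_Rep}) and reads off non-emptiness from that.
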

\begin{proof} The assumption that $\Gamma_2^{\ab}/p \Gamma_2^{\ab}$ is finite implies 
that $((\mathcal E\otimes M)_{\Delta})^{\wdp}$ is a finitely generated 
$\Zp$-module by \Cref{noeth_comp}, and hence $\mu$ is a finite $p$-group.

The assumption that $\Rep^{\Gamma_1}_{G, \pi}(\OO)$ is non-empty implies 
via Proposition \ref{Z1_Rep} that $\OO(\Rep^{\Gamma_1}_{G, \pi})\cong 
\OO[(\mathcal E\otimes M)_{\Delta}]$. It follows from Lemmas \ref{limit_square},,
and \ref{abstract_limit} that $R^{\square}_{\rhobar}\cong \OO\br{((\mathcal E\otimes M)_{\Delta})^{\wdp}}$. In particular, $D^{\square}_{\rhobar}(\OO)$
is non-empty. It follows from Lemma \ref{lem_irr_comp} that the action 
of $\Zhat^1(\OO)$ on $D^{\square}_{\rhobar}(\OO)$ induces 
a transitive and faithful action of $\mathrm X(\mu)$ on the set of 
irreducible components of $R^{\square}_{\rhobar}$.

Let us spell out Lemma \ref{pts_R_comp} in our context. Given an irreducible component $X$ of $\Spec R^{\square}_{\rhobar}$, $X(\OO)$ is non-empty and 
we pick any $\rho\in X(\OO)$; given $\chi\in \mathrm X(\mu)$ we pick 
any $\Phi\in \Zhat^1(\OO)$ such that $\Phi(\gamma)=\chi(\gamma)$ for all
$\gamma\in \mu$. Then $\chi\cdot X$ is the unique irreducible component 
of $\Spec R^{\square}_{\rhobar}$ such that $\Phi \rho\in (\chi\cdot X)(\OO)$. 

It follows from \Cref{prop_represent}
that 
$$((\mathcal E\otimes M)_{\Delta})^{\wdp}\cong H_1(\Gamma_1, M)^{\wdp}\oplus 
\Zp^s$$ for some $s\ge 0$. Hence, $R^{\square}_{\rhobar}\cong R^{\ps}_{\Thetabar}\br{t_1, \ldots, t_s}$ and $\mu$ is the torsion subgroup of 
$H_1(\Gamma_1, M)^{\wdp}$. Thus the map $\Spec R^{\square}_{\rhobar} \rightarrow 
\Spec R^{\ps}_{\Thetabar}$ is $\mathrm X(\mu)$-equivariant and induces
an $\mathrm X(\mu)$-equivariant bijection between the irreducible components. 

Since $A^{\gen}_{G, \Thetabar}\cong R^{\ps}_{\Thetabar}[t_1^{\pm 1}, \ldots, 
t_s^{\pm 1}]$ by \Cref{paris} the map $X^{\gen}_{G, \Thetabar}\rightarrow 
\Spec R^{\ps}_{\Thetabar}$ is $\mathrm X(\mu)$-equivariant and induces 
an $\mathrm X(\mu)$-equivariant bijection  between the sets of 
irreducible components.

The isomorphism $R^{\ps}_{\Thetabar} \cong \OO\br{H_1(\Gamma, M)^{\wdp}}$ 
allows us to consider $R^{\ps}_{\Thetabar}$ as an $\OO[\mu]$-algebra. This 
is non-canonical: it amounts to distinguishing one irreducible component
of $\Spec R^{\ps}_{\Thetabar}$. Once we do this the other irreducible
are given by $R^{\ps}_{\Thetabar}\otimes_{\OO[\mu],\chi} \OO$ for 
$\chi\in \mathrm X(\mu)$, and the special component corresponds to the trivial 
character. These are isomorphic to 
$\OO\br{H_1(\Gamma, M)^{\wdp}/\mu}$ and hence are formally smooth. 

Similarly irreducible components of $\Spec R^{\square}_{\rhobar}$ 
and $X^{\gen}_{G, \Thetabar}$ are given by 
$$R^{\square}_{\rhobar}\otimes_{\OO[\mu], \chi}\OO\cong \OO\br{((\mathcal E\otimes M)_{\Delta})^{\wdp}/\mu}$$
and 
$$A^{\gen}_{G, \Thetabar}\otimes_{\OO[\mu], \chi}\OO\cong 
\OO\br{H_1(\Gamma, M)^{\wdp}/\mu}[t_1^{\pm 1}, \ldots, t_s^{\pm 1}],$$
respectively. 
\end{proof}

\section{Rank calculations}\label{sec_rank}

Let $E$ be a finite Galois extension of $F$, let $\Delta\colonequals\Gal(E/F)$ and 
let $M$ be a free $\ZZ$-module of finite rank with an action of $\Delta$. 
In this section we compute the $\Zp$-rank and the torsion subgroup of the 
pro-$p$ completion of $(E^{\times}\otimes M)^{\Delta}$ and related modules. 
These calculations are used in the next section. 

If $\mathcal A$ is an abelian group 
we denote its pro-$p$ completion by $\mathcal A^{\wedge, p}$. If $N$ is a $\Zp$-module 
we define $\rank_{\Zp} N:= \dim_{\Qp} (N\otimes_{\Zp} \Qp)$. 

Let $\Gamma_E^{\ab}$ be the maximal abelian pro-finite quotient of 
$\Gamma_E$ and let $\Gamma_E^{\abp}$ be the maximal abelian pro-$p$ 
quotient of $\Gamma_E$. The Artin map $\Art_E: E^{\times}\rightarrow \Gamma_E^{\ab}$ induces an isomorphism between the profinite completion 
of $E^{\times}$ and $\Gamma_E^{\ab}$. Thus $(E^{\times}\otimes M)^{\wdp}\cong 
\Gamma_E^{\abp}\otimes M$.

\begin{lem}\label{tate_finite} Let $N$ be a finitely generated $\ZZ[\Delta]$
(resp. $\Zp[\Delta]$) module. Then the Tate 
cohomology groups $\widehat{H}^i( \Delta, N)$
are finite for all $i\in \ZZ$.
\end{lem}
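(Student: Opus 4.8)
The plan is to reduce to the case where $\Delta$ is cyclic and $N$ is finitely generated, where finiteness of Tate cohomology is classical. First I would recall that for a finite group $\Delta$ and any $\ZZ[\Delta]$-module $N$, the Tate cohomology groups $\widehat H^i(\Delta, N)$ are annihilated by $|\Delta|$, since they are annihilated both by the norm-related maps and by the order of the group (the standard argument: multiplication by $|\Delta|$ is null-homotopic on a complete resolution after passing through cores­triction/restriction, or directly $|\Delta|\cdot\widehat H^i=0$ follows from $\mathrm{cor}\circ\mathrm{res}=|\Delta|$ applied to the trivial subgroup). Thus each $\widehat H^i(\Delta,N)$ is a module over $\ZZ/|\Delta|\ZZ$ (resp.\ over $\Zp/|\Delta|_p\Zp$ in the $\Zp[\Delta]$ case, where it is automatically $p$-primary). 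Hence it suffices to show each $\widehat H^i(\Delta,N)$ is finitely generated as an abelian group.

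For finite generation, I would use that $N$ is a finitely generated $\ZZ[\Delta]$-module and $\ZZ[\Delta]$ is a noetherian ring, so $N$ admits a resolution by finitely generated projective (even free) $\ZZ[\Delta]$-modules; equivalently one can compute $\widehat H^i(\Delta,N)$ for $i\ge 1$ as the cohomology of the complex $\Hom_{\Delta}(P_\bullet, N)$ where $P_\bullet$ is a resolution of $\ZZ$ by finitely generated free $\ZZ[\Delta]$-modules (such a resolution exists since $\ZZ[\Delta]$ is noetherian and $\ZZ$ is finitely presented over it). Then $\Hom_{\Delta}(P_i,N)\cong N^{n_i}$ is a finitely generated abelian group, hence noetherian, so its subquotients $\widehat H^i(\Delta,N)$ are finitely generated abelian groups for $i\ge 1$; the negative-degree Tate groups are handled symmetrically using the homological side $(P_\bullet\otimes_{\ZZ[\Delta]} N)$, which is again a complex of finitely generated abelian groups, and $\widehat H^0,\widehat H^{-1}$ are subquotients of $N$ and of $N_\Delta$ respectively. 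Combining: $\widehat H^i(\Delta,N)$ is a finitely generated abelian group annihilated by $|\Delta|$, hence finite. The $\Zp[\Delta]$ case is identical, replacing $\ZZ$ by $\Zp$ throughout, with $\Zp[\Delta]$ again noetherian.

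The main obstacle is purely bookkeeping: making sure the reduction to ``finitely generated abelian group annihilated by a fixed integer'' is carried out uniformly across all $i\in\ZZ$, including the low degrees $i=0,-1$ where $\widehat H^i$ is defined as a kernel/cokernel of the norm map rather than as ordinary (co)homology. There is no real difficulty here since the complete resolution can be taken to consist of finitely generated free $\ZZ[\Delta]$-modules (splice the standard projective resolution of $\ZZ$ with its dual), so every term of $\Hom_{\Delta}(\text{complete resolution},N)$ is a finitely generated abelian group and we conclude as above.
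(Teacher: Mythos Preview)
Your argument is correct and is essentially the same as the paper's: both observe that the Tate groups are finitely generated over $\ZZ$ (resp.\ $\Zp$) because the complex computing them consists of finitely generated modules, and are annihilated by $|\Delta|$, hence finite. The paper simply cites Cassels--Fr\"ohlich for the $\ZZ$-case and notes the $\Zp$-case is identical, whereas you spell out the noetherian/complete-resolution details. One remark: your opening sentence announces a reduction to cyclic $\Delta$, but you never carry this out (nor need to); you should drop that sentence to avoid confusing the reader.
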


\begin{proof} If $N$ is finitely generated over $\ZZ$ then the statement is proved in \cite[Corollary 2, p. 105]{CasselsFroehlich}. The same argument carries over if 
$N$ is finitely generated over $\Zp$: the cohomology
groups are finitely generated $\Zp$-modules, since the complex 
computing cohomology 
consists of finitely
generated $\Zp$-modules, moreover they are annihilated by the order
of $\Delta$.
\end{proof}

\begin{lem}\label{lem_rank_one} $\rank_{\Zp} (M^{\wdp} \otimes I_{\Delta})_{\Delta} = \rank_{\ZZ} M - \rank_{\ZZ} M_{\Delta}.$
\end{lem}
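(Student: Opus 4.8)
The plan is to reduce the statement to a computation with $\Qp[\Delta]$-modules, where $\Qp[\Delta]$ is semisimple because $\Qp$ has characteristic zero, and then to combine the tensor identity with the decomposition $\Qp[\Delta]\cong\Qp\oplus I_{\Delta,\Qp}$, where $I_{\Delta,\Qp}$ denotes the augmentation ideal of $\Qp[\Delta]$.

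First I would pass to $\Qp$-coefficients. Since $M$ is free over $\ZZ$ we have $M^{\wdp}\cong M\otimes_\ZZ\Zp$, so $M^{\wdp}\otimes_\ZZ I_\Delta\cong M^{\wdp}\otimes_{\Zp}I_{\Delta,\Zp}$ with $I_{\Delta,\Zp}$ the augmentation ideal of $\Zp[\Delta]$; as $-\otimes_{\Zp}\Qp$ is exact and commutes with $\otimes$ and with $\Delta$-coinvariants, tensoring over $\Zp$ with $\Qp$ gives
\[
 (M^{\wdp}\otimes_\ZZ I_\Delta)_\Delta\otimes_{\Zp}\Qp\;\cong\;(V\otimes_\Qp I_{\Delta,\Qp})_\Delta,\qquad V\colonequals M\otimes_\ZZ\Qp,
\]
so it is enough to show $\dim_\Qp(V\otimes_\Qp I_{\Delta,\Qp})_\Delta=\rank_\ZZ M-\rank_\ZZ M_\Delta$. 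Now the short exact sequence of $\Qp[\Delta]$-modules $0\to I_{\Delta,\Qp}\to\Qp[\Delta]\to\Qp\to0$ splits, so tensoring it with $V$ over $\Qp$ and applying the additive functor of $\Delta$-coinvariants yields
\[
 (V\otimes_\Qp\Qp[\Delta])_\Delta\;\cong\;(V\otimes_\Qp I_{\Delta,\Qp})_\Delta\oplus V_\Delta.
\]
By the tensor identity $v\otimes d\mapsto d^{-1}v\otimes d$ there is an isomorphism of $\Delta$-modules $V\otimes_\Qp\Qp[\Delta]\cong V^{\triv}\otimes_\Qp\Qp[\Delta]$, where $V^{\triv}$ is $V$ with trivial action and $\Delta$ acts on the right factor only; hence $(V\otimes_\Qp\Qp[\Delta])_\Delta\cong V^{\triv}\otimes_\Qp(\Qp[\Delta])_\Delta\cong V$ is of dimension $\rank_\ZZ M$. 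On the other hand $V_\Delta=(M\otimes_\ZZ\Qp)_\Delta\cong M_\Delta\otimes_\ZZ\Qp$ by right-exactness of coinvariants, so $\dim_\Qp V_\Delta=\rank_\ZZ M_\Delta$. Subtracting gives the claim.

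There is no genuine obstacle: the argument is elementary linear algebra over $\Qp$. The only points that need a little care are the formal identifications of the base change $-\otimes_{\Zp}\Qp$ with the various tensor and coinvariant functors (these use only flatness of $\Qp$ over $\Zp$ and freeness of the modules involved over $\ZZ$ or $\Zp$) and the semisimplicity of $\Qp[\Delta]$, which holds whatever the relation between $p$ and $|\Delta|$ since $\Qp$ is of characteristic zero. If one prefers to avoid inverting $p$, an alternative is to start from the exact sequence $H_1(\Delta,M^{\wdp})\to(M^{\wdp}\otimes_\ZZ I_\Delta)_\Delta\to(M^{\wdp}\otimes_\ZZ\ZZ[\Delta])_\Delta\to(M^{\wdp})_\Delta\to0$ obtained by tensoring $0\to I_\Delta\to\ZZ[\Delta]\to\ZZ\to0$ with the flat $\ZZ$-module $M^{\wdp}$ and taking coinvariants, observe via \Cref{tate_finite} that $H_1(\Delta,M^{\wdp})$ is finite, and compute $\Zp$-ranks using that $(M^{\wdp}\otimes_\ZZ\ZZ[\Delta])_\Delta\cong M^{\wdp}$; but the route through $\Qp[\Delta]$ is shorter.
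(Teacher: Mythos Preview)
Your proof is correct. The paper's own argument is exactly the ``alternative'' you sketch at the end: it tensors the augmentation sequence $0\to I_\Delta\to\ZZ[\Delta]\to\ZZ\to0$ with $M^{\wdp}$, applies $\Delta$-coinvariants to get the four-term sequence
\[
H_1(\Delta,M^{\wdp})\to (M^{\wdp}\otimes I_\Delta)_\Delta\to M^{\wdp}\to (M^{\wdp})_\Delta\to 0,
\]
identifies the middle term via the projection formula, and invokes \Cref{tate_finite} for the finiteness of $H_1$. Your main route---first base-changing to $\Qp$ and then using that the augmentation sequence splits over the semisimple ring $\Qp[\Delta]$---is a mild repackaging of the same idea: passing to $\Qp$ kills the finite $H_1$ up front, so you can compute by additivity rather than via a long exact sequence. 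The tensor identity you use is the same projection-formula step the paper uses. Neither approach has an advantage in generality here; yours is marginally shorter.
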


\begin{proof} The long exact sequence in homology attached to
$$0 \rightarrow M^{\wdp}\otimes I_{\Delta}\rightarrow M^{\wdp}\otimes \ZZ[\Delta]\rightarrow M^{\wdp}\rightarrow 0$$
yields an exact sequence 
$$H_1(\Delta, M^{\wdp})\rightarrow (M^{\wdp}\otimes I_{\Delta})_{\Delta} \rightarrow M^{\wdp} \rightarrow (M^{\wdp})_{\Delta}\rightarrow 0,$$
as $M \otimes \ZZ[\Delta]$ is induced by the projection formula. Since we may swap coinvariants with completions and $M_{\Delta}$ is finitely generated we have
$$ \rank_{\Zp} (M^{\wdp})_{\Delta} = \rank_{\Zp} (M_{\Delta})^{\wdp}=\rank_{\ZZ} M_{\Delta}.$$
Since $M^{\wdp}$ is finitely generated over $\Zp$ and $\Delta$ is finite,  the group is $H_1(\Delta, M^{\wdp})$ is finite by Lemma \ref{tate_finite}. This implies the assertion. 
\end{proof}

\begin{lem}\label{tate_coinv_inv} $\rank_{\Zp} (\Gamma_E^{\ab,p}\otimes M)_{\Delta}=\rank_{\Zp} ((E^{\times}\otimes M)^{\Delta})^{\wedge,p}.$
\end{lem}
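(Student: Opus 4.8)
The plan is to compare the $\Zp$-ranks of the coinvariants $(\Gamma_E^{\ab,p}\otimes M)_{\Delta}$ and the pro-$p$ completion of the invariants $(E^{\times}\otimes M)^{\Delta}$ by passing through a common intermediary. Since $\rank_{\Zp}$ is additive in short exact sequences of finitely generated $\Zp$-modules and insensitive to finite groups, and since taking $\Delta$-coinvariants commutes with pro-$p$ completion, it is natural to work throughout with the finitely generated $\Zp[\Delta]$-module $N \colonequals (E^{\times}\otimes M)^{\wdp} \cong \Gamma_E^{\ab,p}\otimes M$ (the last isomorphism being the one recorded just before the lemma, coming from the Artin map). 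Thus the left-hand side is $\rank_{\Zp} N_{\Delta}$, and the goal is to show this equals $\rank_{\Zp} (N^{\Delta})$, because $(N^{\Delta})$ and $((E^{\times}\otimes M)^{\Delta})^{\wdp}$ differ only by finite groups: indeed, $((E^{\times}\otimes M)^{\Delta})^{\wdp}$ and $\big((E^{\times}\otimes M)^{\wdp}\big)^{\Delta} = N^{\Delta}$ have the same $\Zp$-rank since pro-$p$ completion is exact on the relevant category up to finite error, and $N^{\Delta}$ is a $\Zp$-submodule of the finitely generated module $N$. (More carefully: the cokernel of $(E^{\times}\otimes M)^{\Delta} \to N^{\Delta}$, and the relation between completing before or after taking invariants, can be controlled using that $M$ and hence the relevant lattices are finitely generated.)

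The main mechanism is then Tate cohomology. By Lemma~\ref{tate_finite}, applied to the finitely generated $\Zp[\Delta]$-module $N$, all Tate cohomology groups $\widehat H^i(\Delta, N)$ are finite. The Tate exact sequence
$$0 \to \widehat H^{-1}(\Delta, N) \to N_{\Delta} \xrightarrow{\mathrm{Nm}} N^{\Delta} \to \widehat H^0(\Delta, N) \to 0$$
therefore exhibits $N_{\Delta}$ and $N^{\Delta}$ as having equal $\Zp$-rank, since the two outer terms are finite. Combining this with the previous paragraph's identification of the two sides of the lemma with $\rank_{\Zp} N_{\Delta}$ and $\rank_{\Zp} N^{\Delta}$ respectively gives the claim.

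The only delicate point — and the step I expect to require the most care — is the interchange of pro-$p$ completion with the formation of $\Delta$-invariants, i.e.\ justifying that $((E^{\times}\otimes M)^{\Delta})^{\wdp}$ has the same $\Zp$-rank as $\big((E^{\times}\otimes M)^{\wdp}\big)^{\Delta}$. Coinvariants, being a right-exact (colimit-type) construction, commute with pro-$p$ completion on the nose, which is why the left-hand side is painless; but invariants are left exact and need not. The cleanest way around this is to avoid completing the invariants directly: write $E^{\times} \cong \mu \oplus \ZZ^{\aleph_0} \oplus (\text{finitely generated part})$ in the standard way (or rather use that $E^{\times}\otimes M$ and its pro-$p$ completion fit into sequences with finitely generated $\Zp$-pieces after tensoring down), and observe that both $(E^{\times}\otimes M)^{\Delta}$ after pro-$p$ completion and $N^{\Delta}$ compute, up to finite groups and up to a free $\Zp$-module of the same rank, the $\Delta$-invariants of the "pro-$p$ relevant" part of $E^{\times}\otimes M$. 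Since $\Delta$ is finite and $\rank_{\Zp}$ ignores finite error, one checks the ranks agree. Once this bookkeeping is done, the Tate sequence finishes the proof immediately.
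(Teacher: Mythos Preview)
Your overall mechanism --- the Tate exact sequence relating $N_{\Delta}$ and $N^{\Delta}$ for a $\Delta$-module $N$ with finite Tate cohomology --- is exactly the tool the paper uses. The difference is in the order of operations, and that difference matters.

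The paper applies the Tate exact sequence to $E^{\times}\otimes M$ \emph{before} completing. This requires knowing that $\widehat{H}^i(\Delta, E^{\times}\otimes M)$ is finite, which is not literally an instance of Lemma~\ref{tate_finite} since $E^{\times}\otimes M$ is not finitely generated over $\ZZ$ or $\Zp$; the paper handles this with a one-line filtration argument using $0\to(1+\pp_E)\otimes M\to E^{\times}\otimes M\to (E^{\times}/(1+\pp_E))\otimes M\to 0$, where the two outer terms are finitely generated over $\Zp$ and $\ZZ$ respectively. Once Tate is applied, one takes pro-$p$ completions of both $(E^{\times}\otimes M)_{\Delta}$ and $(E^{\times}\otimes M)^{\Delta}$; the coinvariant side becomes $(\Gamma_E^{\ab,p}\otimes M)_{\Delta}$ because completion commutes with coinvariants, and the invariant side is already the right-hand side of the lemma. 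No interchange of completion with invariants is ever needed.

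Your route --- complete first, then apply Tate to $N=(E^{\times}\otimes M)^{\wdp}$ --- makes the Tate step immediate (now Lemma~\ref{tate_finite} applies directly) but relocates all the work into the ``delicate point'' you flag: comparing $((E^{\times}\otimes M)^{\Delta})^{\wdp}$ with $N^{\Delta}$. This is not mere bookkeeping; it is essentially the content of Lemma~\ref{duck_duck}, which the paper proves separately and later using a cofinal system of open subgroups $V_n=\exp(\pp_E^n)$ and the vanishing of $H^1(\Delta, V_n\otimes M)$. Your sketch of this step is also off in detail: $E^{\times}$ for a $p$-adic local field has no $\ZZ^{\aleph_0}$ summand; it is $\ZZ\times k_E^{\times}\times(1+\pp_E)$ with $(1+\pp_E)$ finitely generated over $\Zp$. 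So while your argument can be completed, the paper's ordering is the cleaner one for this lemma, and you should adopt it: apply Tate to $E^{\times}\otimes M$, then complete.
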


\begin{proof} 
    Since $M$ is a free $\ZZ$-module, we have an exact sequence of $\Delta$-modules 
    $$0\rightarrow (1+\pp_E)\otimes M \rightarrow E^{\times}\otimes M \rightarrow (E^{\times}/(1+\pp_E))\otimes M\rightarrow 0.$$
    Since $1+\pp_E$ is a finitely generated $\Zp$-module and $E^{\times}/(1+\pp_E)$ is a finitely generated 
    $\ZZ$-module, we deduce that $\widehat{H}^i(\Delta,  E^{\times}\otimes M)$ are finite for all $i\in \ZZ$.
    From the exact sequence: 
    $$ 0\rightarrow \widehat{H}^{-1}(\Delta, E^{\times}\otimes M)\rightarrow 
    (E^{\times}\otimes M)_{\Delta} \rightarrow (E^{\times}\otimes M)^{\Delta}\rightarrow 
    \widehat{H}^0(\Delta, E^{\times}\otimes M)\rightarrow 0$$
    we deduce that the pro-$p$ completions of $(E^{\times}\otimes M)_{\Delta}$ and of 
    $(E^{\times}\otimes M)^{\Delta}$ have the same $\Zp$-rank. 
    It follows from the universal property of pro-$p$ completion, that it commutes with taking $\Delta$-coinvariants. Hence the pro-$p$ completion 
    of $(E^{\times}\otimes M)_{\Delta}$ is isomorphic to $((E^{\times})^{\wdp}\otimes M)_{\Delta}\cong (\Gamma_E^{\abp}\otimes M)_{\Delta}$.
\end{proof}

\begin{lem}\label{lem_rank_two} $\rank_{\ZZ_p} (\Gamma_E^{\abp}\otimes M)_{\Delta} = \rank_{\ZZ} M \cdot [F:\QQ_p] + \rank_{\ZZ} M_{\Delta}$.
\end{lem}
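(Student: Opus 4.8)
The plan is to compute $\rank_{\Zp}(\Gamma_E^{\abp}\otimes M)_{\Delta}$ by reducing to the local class field theory description of $\Gamma_E^{\abp}$ and then to a rank computation for $\Zp[\Delta]$-modules. First I would use the isomorphism $\Gamma_E^{\abp}\cong (E^\times)^{\wdp}$ coming from the Artin map, which is $\Delta$-equivariant, so that $(\Gamma_E^{\abp}\otimes M)_{\Delta}\cong ((E^\times)^{\wdp}\otimes M)_{\Delta}$, and since pro-$p$ completion commutes with $\Delta$-coinvariants and with $\otimes M$ for $M$ free of finite rank, this equals $((E^\times\otimes M)_{\Delta})^{\wdp}$. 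The key structural input is the short exact sequence of $\Delta$-modules
\begin{equation*}
0\to (1+\pp_E)\otimes M\to E^\times\otimes M\to (E^\times/(1+\pp_E))\otimes M\to 0,
\end{equation*}
together with the observation that $E^\times/(1+\pp_E)\cong \ZZ\times \mathfrak{k}_E^\times$, whose pro-$p$ part is $\Zp$ (the valuation) up to finite torsion, since $\mathfrak k_E^\times$ has order prime to $p$. Thus, after pro-$p$ completion, up to finite groups $E^\times\otimes M$ becomes $((1+\pp_E)\otimes M)\oplus M$ as $\Delta$-modules, where the second summand carries the $\Delta$-action twisted by nothing (the valuation is $\Delta$-invariant).

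Next I would pin down $(1+\pp_E)^{\wdp}$ as a $\Zp[\Delta]$-module. Since $E/\Qp$ is finite, $1+\pp_E$ is a finitely generated $\Zp$-module of $\Zp$-rank $[E:\Qp]$, and by the normal basis theorem for $E/F$ (or rather its integral refinement: an open subgroup of $1+\pp_E$ is $\Delta$-isomorphic to $\mathfrak o_F^{[F:\Qp]}\otimes_{\mathfrak o_F}\mathfrak o_E$-lattice, hence to $(\Zp[\Delta])^{[F:\Qp]}$ up to finite index and torsion), one gets that $(1+\pp_E)^{\wdp}\otimes\Qp\cong \Qp[\Delta]^{[F:\Qp]}$ as $\Qp[\Delta]$-modules. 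Therefore $\rank_{\Zp}((1+\pp_E)^{\wdp}\otimes M)_{\Delta}=[F:\Qp]\cdot\rank_{\Zp}(\Zp[\Delta]\otimes M)_{\Delta}=[F:\Qp]\cdot\rank_{\ZZ}M$, using the projection formula $(\ZZ[\Delta]\otimes M)_{\Delta}\cong M$. Adding the contribution $\rank_{\Zp}(M)_{\Delta}=\rank_{\ZZ}M_{\Delta}$ from the valuation summand, and noting that all the error terms introduced (finite cohomology groups, finite parts of $\mathfrak k_E^\times$, finite-index discrepancies) contribute nothing to the $\Zp$-rank by \Cref{tate_finite}, yields
\begin{equation*}
\rank_{\Zp}(\Gamma_E^{\abp}\otimes M)_{\Delta}=\rank_{\ZZ}M\cdot[F:\Qp]+\rank_{\ZZ}M_{\Delta}.
\end{equation*}

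The main obstacle will be making the $\Delta$-module structure of $(1+\pp_E)^{\wdp}$ precise enough to extract the rational $\Qp[\Delta]$-module type $\Qp[\Delta]^{[F:\Qp]}$; the clean way is to invoke the normal basis theorem to get $E\cong F[\Delta]$ as $F[\Delta]$-modules, deduce $E\otimes_{\Qp}\Qp\cong \Qp[\Delta]^{[F:\Qp]}$ (being careful that $[E:\Qp]=[F:\Qp]\cdot|\Delta|$), and then use that the $p$-adic logarithm gives a $\Delta$-equivariant isomorphism between an open subgroup of $1+\pp_E$ and an open $\Zp$-lattice in $E$, so that $(1+\pp_E)^{\wdp}\otimes_{\Zp}\Qp\cong E$ as $\Qp[\Delta]$-modules. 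Everything else is a routine rank bookkeeping using that taking $\Delta$-coinvariants is right exact and that $H_i(\Delta,-)$ of a finitely generated $\Zp[\Delta]$-module is finite.
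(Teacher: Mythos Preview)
Your proposal is correct and follows essentially the same route as the paper: split $(E^\times)^{\wdp}\otimes M$ via the valuation exact sequence, note the valuation quotient contributes $\rank_{\ZZ} M_{\Delta}$ since $\Delta$ acts trivially on the valuation, and handle the principal unit part via the $p$-adic logarithm and the normal basis theorem, with all discrepancies absorbed into finite groups via \Cref{tate_finite}. The only cosmetic difference is that the paper works integrally, invoking the isomorphism $\OO_E\cong \OO_F[\Delta]$ of $\Zp[\Delta]$-modules (so that $(\OO_E\otimes M)_{\Delta}\cong \OO_F\otimes M$ on the nose), whereas you pass to $\Qp$ and argue with $(1+\pp_E)\otimes_{\Zp}\Qp\cong E\cong \Qp[\Delta]^{[F:\Qp]}$; since only ranks are at stake, both are fine.
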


\begin{proof}
    We consider the long exact sequence in homology
    $$ H_1(\Delta, M^{\wdp}) \to ((\OO_E^{\times})^{\wdp} \otimes M)_{\Delta} \to ((E^{\times})^{\wdp} \otimes M)_{\Delta} \to (M^{\wdp})_{\Delta} \to 0.$$
    Since completion commutes with taking $\Delta$-coinvariants we have 
    $$\rank_{\Zp} (M^{\wdp})_{\Delta}=\rank_{\ZZ} M_{\Delta}.$$ 
    Since $H_1(\Delta, M^{\wdp})$ is finite by \Cref{tate_finite}, we are left to compute the $\Zp$-rank of  $((\OO_E^{\times})^{\wdp} \otimes M)_{\Delta}$. We note that $(\OO_E^{\times})^{\wdp}$ is equal to $1+\pp_E$ and another application of \Cref{tate_finite} shows that the rank does not change 
    if we replace $(\OO_E^{\times})^{\wdp}$ with any open $\Delta$-stable subgroup $V$ of $1+\pp_E$. 
    
    We choose $V$ to be  the image of a $p$-adic exponential function defined on $\pp_E^n$ for some large enough $n \geq 1$. We then have an isomorphism $\pp_E^n \otimes M \cong V \otimes M$ of $\Delta$-modules. 
    Since $\pp_E^n \cong \OO_E$ is isomorphic to $\OO_F[\Delta]$ as $\ZZ_p[\Delta]$-module, see the proof of \cite[Section 1.4]{serre_brighton}, $\OO_E \otimes M$ is free and thus $(\OO_E \otimes M)_{\Delta} \cong \OO_F \otimes M$. Thus 
    $\rank_{\Zp} ((\OO_E^{\times})^{\wdp} \otimes M)_{\Delta}= [F:\Qp] \cdot \rank_{\ZZ} M$ and the assertion follows. 
\end{proof}

\begin{lem}\label{torsion} The torsion subgroup of 
$((E^{\times}\otimes M)^{\Delta})^{\wdp}$ is equal to $(\mu_{p^{\infty}}(E)\otimes M)^{\Delta}$. 
\end{lem}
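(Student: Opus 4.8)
The plan is to separate the ``$p$-part'' of $E^{\times}$ by means of the short exact sequence of $\Delta$-modules
$$0\to (1+\pp_E)\otimes M \to E^{\times}\otimes M \to C\otimes M\to 0,\qquad C\colonequals E^{\times}/(1+\pp_E),$$
already used in the proof of \Cref{tate_coinv_inv}; it is exact because $M$ is $\ZZ$-flat. Here $C$ is an extension of $\ZZ$ (via the valuation) by the finite group $\OO_E^{\times}/(1+\pp_E)$, which has order prime to $p$, so $C$ is a finitely generated $\ZZ$-module whose torsion subgroup has order prime to $p$; and $1+\pp_E$ is a finitely generated $\Zp$-module whose torsion subgroup is $\mu_{p^{\infty}}(E)$, so the torsion subgroup of $(1+\pp_E)\otimes M$ is $\mu_{p^{\infty}}(E)\otimes M$.

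First I would apply $(-)^{\Delta}$ to obtain a short exact sequence of abelian groups
$$0\to N_1\to N\to P'\to 0,$$
where $N_1\colonequals((1+\pp_E)\otimes M)^{\Delta}$, $N\colonequals (E^{\times}\otimes M)^{\Delta}$ and $P'\colonequals \Image\bigl(N\to (C\otimes M)^{\Delta}\bigr)$. Since $\Zp$ is noetherian, $N_1$ is a finitely generated $\Zp$-module, hence $p$-adically complete and separated, so $N_1^{\wdp}=N_1$. Since $\ZZ$ is noetherian, $P'$ is a finitely generated $\ZZ$-module, and as $P'\subseteq C\otimes M$ its torsion has order prime to $p$; in particular $P'[p^k]=0$ for all $k$, and $P'^{\wdp}$ is a torsion-free $\Zp$-module (isomorphic to $\Zp^a$ for some $a$).

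Next I would pass to pro-$p$ completions. Because $P'[p^k]=0$, applying $-\otimes_{\ZZ}\ZZ/p^k$ to the sequence above yields short exact sequences $0\to N_1/p^k\to N/p^k\to P'/p^k\to 0$; their transition maps are the canonical surjections, so the associated $\varprojlim^1$ vanishes and taking the limit over $k$ gives a short exact sequence
$$0\to N_1\to N^{\wdp}\to P'^{\wdp}\to 0.$$
As $P'^{\wdp}$ is torsion free, the torsion subgroup of $N^{\wdp}$ is contained in $N_1$, hence equals the torsion subgroup of $N_1$. Finally, since $\mu_{p^{\infty}}(E)\otimes M$ is the torsion subgroup of $(1+\pp_E)\otimes M$ and is a $\Delta$-submodule, the torsion subgroup of $N_1=((1+\pp_E)\otimes M)^{\Delta}$ is exactly $(\mu_{p^{\infty}}(E)\otimes M)^{\Delta}$, which gives the claim.

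The one point to be careful about is that $N$ itself is not finitely generated over $\ZZ$, so one cannot argue directly via a decomposition into free and torsion parts; everything has to be pushed through the short exact sequence, and the observation that makes pro-$p$ completion behave well there is precisely that $P'$ has no $p$-torsion (together with surjectivity of the transition maps in the relevant inverse systems). The auxiliary structural facts — the structure of $1+\pp_E$ as a $\Zp$-module and of $C$ as an abelian group — are standard and I would only invoke them briefly.
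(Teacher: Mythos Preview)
Your proof is correct and follows essentially the same route as the paper: separate the $p$-adic piece $(1+\pp_E)\otimes M$ from the prime-to-$p$ quotient and observe that after taking $\Delta$-invariants and pro-$p$ completion only the torsion in $((1+\pp_E)\otimes M)^{\Delta}$, namely $(\mu_{p^{\infty}}(E)\otimes M)^{\Delta}$, survives. The only cosmetic difference is that the paper invokes the Teichm\"uller splitting $\OO_E^{\times}\cong(1+\pp_E)\oplus k_E^{\times}$ to obtain a \emph{direct sum} decomposition of $(E^{\times}\otimes M)^{\Delta}$ (using that the image in $(E^{\times}/\OO_E^{\times})\otimes M$ is $\ZZ$-free), which sidesteps the $\varprojlim^1$ argument you needed to push the short exact sequence through completion.
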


\begin{proof}
    The image of  $(E^{\times}\otimes M)^{\Delta}\rightarrow(E^{\times}/\OO_E^{\times})\otimes M$ is a free $\ZZ$-module of finite rank, which we denote by $s$, as the target is a free $\ZZ$-module of finite rank. The kernel of this map is equal to $(\OO_E^{\times}\otimes M)^{\Delta}$. 
    The Teichm\"uller lift gives an isomorphism of $\Delta$-modules 
    $\OO_{E}^{\times} \cong (1+\pp_E) \oplus k_E^{\times}$ and hence we have an isomorphism of abelian groups
    \begin{equation}\label{duck}
    (E^{\times}\otimes M)^{\Delta}\cong ((1+\pp_E)\otimes M)^{\Delta} \oplus(k_E^{\times}\otimes M)^{\Delta} \oplus \ZZ^s.
    \end{equation}
     Since the order of $k_E^{\times}$ is prime to $p$ and 
    $((1+\pp_E)\otimes M)^{\Delta}$ is closed in $(1+\pp_E)\otimes M$ and hence $p$-adically complete,
    we conclude that the torsion subgroup in the pro-$p$ completion of 
    $(E^{\times}\otimes M)^{\Delta}$ coincides with the torsion subgroup in 
    $((1+\pp_E)\otimes M)^{\Delta}$, which is equal to $(\mu_{p^{\infty}}(E)\otimes M)^{\Delta}$.
\end{proof}

\begin{lem}\label{finite_swap} If $\mathcal A$ is a finitely generated $\ZZ[\Delta]$-module 
then $(\mathcal A^{\wdp})^{\Delta}\cong (\mathcal A^{\Delta})^{\wdp}$.
\end{lem}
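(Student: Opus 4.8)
The plan is to reduce to the structure theory of finitely generated $\ZZ[\Delta]$-modules and the compatibility of pro-$p$ completion with finite limits (in particular, with kernels of maps between finitely generated $\Zp$-modules). First I would observe that for a finitely generated $\ZZ[\Delta]$-module $\mathcal A$ the pro-$p$ completion is simply $\mathcal A^{\wdp} = \mathcal A \otimes_{\ZZ} \Zp$, which is a finitely generated $\Zp[\Delta]$-module, and that $(-)\otimes_{\ZZ}\Zp$ is exact since $\Zp$ is flat over $\ZZ$. The element-wise description $\mathcal A^{\Delta} = \Ker\bigl(\mathcal A \xrightarrow{\ \prod_{d\in\Delta}(d-1)\ } \mathcal A^{\oplus|\Delta|}\bigr)$ exhibits $\mathcal A^{\Delta}$ as a kernel of a map of finitely generated $\ZZ$-modules; applying the exact functor $(-)\otimes_{\ZZ}\Zp$ commutes with this kernel, giving $(\mathcal A^{\Delta})\otimes_{\ZZ}\Zp \cong (\mathcal A\otimes_{\ZZ}\Zp)^{\Delta} = (\mathcal A^{\wdp})^{\Delta}$.

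The one subtlety is that $(\mathcal A^{\Delta})^{\wdp}$ means the pro-$p$ completion of the abelian group $\mathcal A^{\Delta}$, and I must check this coincides with $\mathcal A^{\Delta}\otimes_{\ZZ}\Zp$. This holds because $\mathcal A^{\Delta}$ is a finitely generated abelian group: for any finitely generated abelian group $B$ one has $B^{\wdp}\cong B\otimes_{\ZZ}\Zp$, as both sides are computed from the structure $B \cong \ZZ^r \oplus (\text{finite})$ and pro-$p$ completion kills the prime-to-$p$ part of the torsion while completing $\ZZ^r$ to $\Zp^r$. Since $\mathcal A$ is finitely generated over $\ZZ[\Delta]$ and $\Delta$ is finite, $\mathcal A$ is finitely generated over $\ZZ$, hence so is its submodule $\mathcal A^{\Delta}$ (as $\ZZ$ is noetherian). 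Likewise $\mathcal A^{\wdp} = \mathcal A\otimes_{\ZZ}\Zp$ is finitely generated over $\Zp$, so $(\mathcal A^{\wdp})^{\Delta}$ is already $p$-adically complete and finitely generated over $\Zp$, and there is nothing further to complete on that side.

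Putting these together: $(\mathcal A^{\Delta})^{\wdp} \cong \mathcal A^{\Delta}\otimes_{\ZZ}\Zp \cong (\mathcal A\otimes_{\ZZ}\Zp)^{\Delta} = (\mathcal A^{\wdp})^{\Delta}$, where the middle isomorphism is exactness of $-\otimes_{\ZZ}\Zp$ applied to the displayed kernel presentation of $\mathcal A^{\Delta}$, and this isomorphism is functorial, hence respects the residual $\Delta$-action on both sides (which is trivial, as everything is $\Delta$-fixed, but the identification as abelian groups is all that is claimed). I do not expect any real obstacle here; the only point requiring a moment's care is the elementary fact that pro-$p$ completion agrees with $-\otimes_{\ZZ}\Zp$ on finitely generated abelian groups, and that finite generation over $\ZZ$ is inherited by $\mathcal A^{\Delta}$ so that this fact applies.
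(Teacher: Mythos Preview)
Your proof is correct and essentially identical to the paper's: both identify pro-$p$ completion with $-\otimes_{\ZZ}\Zp$ on finitely generated abelian groups (the paper cites \stackcite{00MA}, you invoke the structure theorem), then use flatness of $\Zp$ over $\ZZ$ applied to the kernel presentation $\mathcal A^{\Delta}=\Ker\bigl(\mathcal A\to\bigoplus_{\delta\in\Delta}\mathcal A,\ a\mapsto(\delta a-a)_{\delta}\bigr)$.
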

\begin{proof} We have $\mathcal A^{\wdp}\cong \mathcal A\otimes \Zp$ and
$(\mathcal A^{\Delta})^{\wdp}\cong \mathcal A^{\Delta}\otimes \Zp$ by 
\cite[\href{https://stacks.math.columbia.edu/tag/00MA}{Tag 00MA}]{stacks-project}.
We may express 
$\mathcal A^{\Delta}$ as the kernel of 
$$\mathcal A\rightarrow \bigoplus_{\delta\in \Delta} \mathcal A, \quad a\mapsto (\delta a -a)_{\delta\in \Delta}.$$
Since $\Zp$ is a flat $\ZZ$-module we conclude that 
$\mathcal A^{\Delta}\otimes \Zp \cong (\mathcal A\otimes \Zp)^{\Delta}.$
\end{proof}
\begin{lem}\label{duck_duck} $((E^{\times}\otimes M)^{\Delta})^{\wdp}\cong (\Gamma_E^{\abp}\otimes M)^{\Delta}$.
\end{lem}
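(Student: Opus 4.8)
The plan is to reduce the statement $((E^{\times}\otimes M)^{\Delta})^{\wdp}\cong (\Gamma_E^{\abp}\otimes M)^{\Delta}$ to a combination of two ingredients already available: the fact (recorded just before \Cref{tate_finite}) that the profinite completion of $E^{\times}$ is $\Gamma_E^{\ab}$ via the Artin map, hence $(E^{\times}\otimes M)^{\wdp}\cong \Gamma_E^{\abp}\otimes M$; and the commutation of pro-$p$ completion with $\Delta$-invariants for finitely generated $\ZZ[\Delta]$-modules, which is \Cref{finite_swap}. The obvious obstacle is that $E^{\times}$ itself is not a finitely generated $\ZZ[\Delta]$-module (it has the infinitely generated pro-$p$ part $1+\pp_E$), so \Cref{finite_swap} does not apply directly to $\mathcal A = E^{\times}\otimes M$; one has to isolate the finitely generated part and handle the pro-$p$ part separately.

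First I would use the decomposition from the proof of \Cref{torsion}: the Teichmüller lift gives $\OO_E^{\times}\cong (1+\pp_E)\oplus k_E^{\times}$ as $\Delta$-modules, and the valuation gives $(E^{\times}\otimes M)^{\Delta}\cong ((1+\pp_E)\otimes M)^{\Delta}\oplus (k_E^{\times}\otimes M)^{\Delta}\oplus \ZZ^s$ for some $s$. Taking pro-$p$ completion kills $(k_E^{\times}\otimes M)^{\Delta}$ (prime-to-$p$) and leaves $((1+\pp_E)\otimes M)^{\Delta}$ unchanged, since $(1+\pp_E)\otimes M$ is a finitely generated $\Zp$-module and its $\Delta$-invariants form a closed, hence $p$-adically complete, subgroup. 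So the left side is $((1+\pp_E)\otimes M)^{\Delta}\oplus \Zp^s$. On the right side, the same exact sequence $0\to \OO_E^{\times}\to E^{\times}\to E^{\times}/\OO_E^{\times}\to 0$ tensored with $M$ and pro-$p$-completed, together with $(\OO_E^{\times})^{\wdp}=1+\pp_E$, identifies $\Gamma_E^{\abp}\otimes M\cong (E^{\times}\otimes M)^{\wdp}$ as an extension of a free $\Zp$-module of the same rank $s$ by $(1+\pp_E)\otimes M$; applying \Cref{finite_swap} to the finitely generated $\ZZ[\Delta]$-module $\OO_E^{\times}\otimes M$ gives $((1+\pp_E)\otimes M)^{\Delta}$ as the invariants of the torsion-free-part, and one checks the extension splits compatibly after taking invariants so that $(\Gamma_E^{\abp}\otimes M)^{\Delta}\cong ((1+\pp_E)\otimes M)^{\Delta}\oplus \Zp^s$ as well.

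Alternatively, and perhaps more cleanly, I would write $E^{\times}\otimes M$ as a filtered union of finitely generated $\ZZ[\Delta]$-submodules: since $1+\pp_E^n$ for $n\gg 0$ is isomorphic to $\OO_E\cong \OO_F[\Delta]$ as $\Zp[\Delta]$-module (as used in \Cref{lem_rank_two}), each $\OO_E^{\times}$-lattice level together with a finitely generated subgroup of $E^{\times}/\OO_E^{\times}$ gives a finitely generated $\ZZ[\Delta]$-submodule whose pro-$p$ completion already captures everything $p$-adic; then \Cref{finite_swap} applies at each finite level and I pass to the limit, using that pro-$p$ completion and $\Delta$-invariants both commute with the relevant (co)limits and that the transition maps stabilise on pro-$p$ completions. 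The main obstacle in either route is the bookkeeping needed to see that the prime-to-$p$ and the archimedean-valuation parts do not interfere — concretely, that the torsion subgroup $k_E^{\times}\otimes M$ disappears under $(-)^{\wdp}$ and that the free quotient $(E^{\times}/\OO_E^{\times})\otimes M$ contributes the same $\Zp^s$ on both sides — but this is exactly the content already worked out in the proofs of \Cref{torsion} and \Cref{tate_coinv_inv}, so I would cite those computations rather than redo them.
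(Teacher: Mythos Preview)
Your first approach is essentially correct but contains one misstep and glosses over the crux. The misstep: $\OO_E^{\times}\otimes M$ is \emph{not} a finitely generated $\ZZ[\Delta]$-module (the factor $1+\pp_E$ is uncountable), so \Cref{finite_swap} does not apply to it. This is harmless, since you already observed separately that $((1+\pp_E)\otimes M)^{\Delta}$ is $p$-adically complete. The real content you skip is the verification that the cokernel piece on the right-hand side also has $\Zp$-rank exactly $s$: comparing the connecting maps, the image of $(E^{\times}\otimes M)^{\Delta}$ in $M^{\Delta}$ is the kernel of $M^{\Delta}\to H^1(\Delta,\OO_E^{\times}\otimes M)$ while the image of $(\Gamma_E^{\abp}\otimes M)^{\Delta}$ in $(M^{\wdp})^{\Delta}$ is the kernel of $(M^{\wdp})^{\Delta}\to H^1(\Delta,(1+\pp_E)\otimes M)$; these differ by a group that injects into $H^1(\Delta,k_E^{\times}\otimes M)$, which is finite of order prime to $p$, so the two kernels agree after $\otimes\,\Zp$. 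Once this is filled in your argument does give the natural isomorphism induced by the Artin map, not just an abstract one.

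The paper takes a different and somewhat cleaner route: instead of the Teichm\"uller/valuation splitting, it filters by the open subgroups $V_n:=\exp(\pp_E^n)$ for $n\gg 0$. Since $\pp_E^n\cong\OO_F[\Delta]$ as $\Zp[\Delta]$-module, one has $H^1(\Delta,V_n\otimes M)=0$, so taking $\Delta$-invariants of $0\to V_n\otimes M\to E^{\times}\otimes M\to (E^{\times}/V_n)\otimes M\to 0$ stays exact. The quotients $E^{\times}/V_n$ are genuinely finitely generated over $\ZZ$, and that is where \Cref{finite_swap} is invoked. Both sides of the lemma are then identified with the same inverse limit $\varprojlim_n(((E^{\times}/V_n)\otimes M)^{\wdp})^{\Delta}$. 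What this buys over your approach is that the single cohomological vanishing replaces the separate bookkeeping for $k_E^{\times}$ and the valuation part, and the natural isomorphism drops out directly.

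Your second approach does not work as stated: pro-$p$ completion does not commute with filtered colimits, and a finitely generated $\ZZ[\Delta]$-submodule of $E^{\times}\otimes M$ cannot contain any open subgroup of $(1+\pp_E)\otimes M$, so such a union never ``captures everything $p$-adic'' at a finite stage. The correct direction is the one the paper takes: pass to \emph{quotients} by the $V_n$ (which are finitely generated over $\ZZ$) rather than to submodules.
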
 
\begin{proof} Let $n_0$ be an integer such that $\exp: \pp_E^n \rightarrow 1+\pp_E^n$ converges for all $n\ge n_0$ and let $V_n:= \exp(\pp_E^n)$. Then 
$V_n$ for $n\ge n_0$ form a basis of open neighbourhoods of $1$ in $1+\pp_E$. 
Since $\pp_E^n\cong \OO_E \cong \OO_F[\Delta]$ as $\Delta$-module and 
$\exp$ is $\Delta$-equivariant we have an isomorphism 
$V_n\otimes M \cong \Ind^{\Delta}_{1} (\OO_F \otimes M)$ and hence 
$H^1(\Delta, V_n\otimes M)=0$. Thus for all $n\ge n_0$ we obtain an exact 
sequence 
\begin{equation}\label{boing}
0\rightarrow (V_n\otimes M)^{\Delta} \rightarrow (E^{\times}\otimes M)^{\Delta}
\rightarrow ((E^{\times}/ V_n)\otimes M)^{\Delta}\rightarrow 0.
\end{equation}
The completion and $\varprojlim_n$ are both limits and hence commute with each other. Lemma \ref{finite_swap} and \eqref{boing} imply that
\begin{equation}\label{bng}
((E^{\times}\otimes M)^{\Delta})^{\wdp}\cong \varprojlim_n ((E^{\times}/ V_n)\otimes M)^{\Delta})^{\wdp}\cong \varprojlim_n ((E^{\times}/ V_n)\otimes M)^{\wdp})^{\Delta}.
\end{equation}
The isomorphism $(E^{\times}\otimes M)^{\wdp}\cong \Gamma_E^{\abp}\otimes M$ 
induces an isomorphism
\begin{equation}\label{boing_boing}
((E^\times/V_n)\otimes M)^{\wdp}\cong (\Gamma_E^{\abp}/\Art_E(V_n))\otimes M.
\end{equation}
Since $H^1(\Delta, V_n \otimes M)=0$ we have an exact sequence
\begin{equation}
0\rightarrow (V_n\otimes M)^{\Delta} \rightarrow (\Gamma_E^{\abp}\otimes M)^{\Delta}
\rightarrow ((\Gamma_E^{\abp}/ \Art_E(V_n))\otimes M)^{\Delta}\rightarrow 0.
\end{equation}
We thus have 
$$(\Gamma^{\abp}_E \otimes M)^{\Delta}\cong \varprojlim_n ((\Gamma_E^{\abp}/ \Art_E(V_n))\otimes M)^{\Delta}$$
and the assertion follows from \eqref{boing_boing} and \eqref{bng}.
\end{proof}

\begin{cor}\label{rank_tors} There is an isomorphism of $\Zp$-modules:
$$(\Gamma_E^{\abp}\otimes M)^{\Delta}\cong (\mu_{p^{\infty}}(E)\otimes M)^{\Delta} \times \Zp^r,$$
where $r=\rank_{\ZZ} M \cdot [F:\Qp] + \rank_{\ZZ} M_{\Delta}$.
\end{cor}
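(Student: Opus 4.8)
The plan is to combine the structural results already established in this section. By \Cref{duck_duck} there is an isomorphism of $\Zp$-modules $((E^{\times}\otimes M)^{\Delta})^{\wdp}\cong (\Gamma_E^{\abp}\otimes M)^{\Delta}$, so it suffices to analyse $N\colonequals ((E^{\times}\otimes M)^{\Delta})^{\wdp}$. First I would observe that $N$ is a finitely generated $\Zp$-module: indeed $(E^{\times}\otimes M)^{\Delta}$ is a submodule of the finitely generated $\ZZ$-module $E^{\times}\otimes M$ modulo its maximal divisible (here, prime-to-$p$ and uniquely $p$-divisible) parts — more concretely this follows from the decomposition \eqref{duck} in the proof of \Cref{torsion}, which exhibits $(E^{\times}\otimes M)^{\Delta}$ as a direct sum of $((1+\pp_E)\otimes M)^{\Delta}$, a finite prime-to-$p$ group, and a free $\ZZ$-module of finite rank, all of which have finitely generated pro-$p$ completions. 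Hence $N$ is a finitely generated $\Zp$-module, and any such module splits (non-canonically) as $N\cong N_{\tors}\oplus \Zp^{r}$ with $r=\rank_{\Zp}N$.

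Next I would identify the two invariants on the right-hand side. For the torsion subgroup, \Cref{torsion} gives $N_{\tors}=(\mu_{p^{\infty}}(E)\otimes M)^{\Delta}$ directly. For the rank, \Cref{tate_coinv_inv} gives $\rank_{\Zp}N=\rank_{\Zp}((E^{\times}\otimes M)^{\Delta})^{\wdp}=\rank_{\Zp}(\Gamma_E^{\abp}\otimes M)_{\Delta}$, and then \Cref{lem_rank_two} evaluates the latter as $\rank_{\ZZ}M\cdot[F:\Qp]+\rank_{\ZZ}M_{\Delta}$. Putting these together yields the claimed isomorphism $(\Gamma_E^{\abp}\otimes M)^{\Delta}\cong (\mu_{p^{\infty}}(E)\otimes M)^{\Delta}\times \Zp^{r}$ with $r$ as stated.

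There is essentially no obstacle here: the corollary is a bookkeeping consequence of \Cref{torsion}, \Cref{tate_coinv_inv}, \Cref{lem_rank_two}, and \Cref{duck_duck}. The only point deserving a sentence of care is the finite generation of $N$ over $\Zp$ — needed so that the abstract structure theorem for finitely generated modules over the PID $\Zp$ applies and the splitting $N\cong N_{\tors}\oplus\Zp^{r}$ is legitimate — and, as noted, this is immediate from the decomposition \eqref{duck}. I would also remark in passing that if one prefers to avoid invoking \Cref{duck_duck}, one can argue entirely on the $E^{\times}$ side: \eqref{duck} already displays $((E^{\times}\otimes M)^{\Delta})^{\wdp}$ as $((1+\pp_E)\otimes M)^{\Delta, \wdp}\oplus\Zp^{s'}$ for the appropriate $s'$, and the rank of the first summand is computed exactly as in \Cref{lem_rank_two}; but routing through \Cref{tate_coinv_inv} and \Cref{lem_rank_two} as above is the shortest path.
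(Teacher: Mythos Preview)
Your proposal is correct and follows essentially the same route as the paper: the paper's proof is the single sentence ``This follows from Lemmas \ref{duck_duck}, \ref{torsion}, \ref{lem_rank_two}, \ref{tate_coinv_inv}'', and you have simply unpacked this, adding the (implicit) observation that $N$ is finitely generated over $\Zp$ so the structure theorem applies.
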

\begin{proof} This follows from Lemmas \ref{duck_duck}, \ref{torsion}, \ref{lem_rank_two}, \ref{tate_coinv_inv}.
\end{proof}

\begin{prop}\label{main_rank} Let $0 \rightarrow E^{\times}\rightarrow \mathcal E\rightarrow I_{\Delta}\rightarrow 0$ be any extension of $\ZZ[\Delta]$-modules.
\begin{equation}\label{eq_rank_one}
\rank_{\Zp} ((\mathcal E\otimes M)_{\Delta})^{\wedge,p}= ([F:\Qp]+1) \rank_\ZZ M.
\end{equation}
\end{prop}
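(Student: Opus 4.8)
The plan is to reduce the computation of $\rank_{\Zp}((\EE\otimes M)_{\Delta})^{\wdp}$ to the two rank computations already available in this section, namely \Cref{lem_rank_one} and \Cref{lem_rank_two}. First I would tensor the defining extension $0\to E^{\times}\to \EE\to I_{\Delta}\to 0$ with the free $\ZZ$-module $M$; since $M$ is $\ZZ$-flat this stays exact, giving $0\to E^{\times}\otimes M\to \EE\otimes M\to I_{\Delta}\otimes M\to 0$ as $\ZZ[\Delta]$-modules. Taking $\Delta$-coinvariants yields an exact sequence
\begin{equation}\label{coinv_seq}
H_1(\Delta, I_{\Delta}\otimes M)\to (E^{\times}\otimes M)_{\Delta}\to (\EE\otimes M)_{\Delta}\to (I_{\Delta}\otimes M)_{\Delta}\to 0.
\end{equation}
Now I would apply pro-$p$ completion, which is right exact (it is a tensor with $\Zp$ on finitely generated pieces, or in general an fpqc-harmless left-adjoint-type operation that preserves right exactness of such sequences), and observe that since all the groups occurring are finitely generated over $\ZZ[\Delta]$ — here I use that $E^{\times}$ is an extension of a finitely generated $\ZZ$-module by the finitely generated $\Zp$-module $1+\pp_E$, exactly as in the proof of \Cref{tate_coinv_inv} — the completion commutes with $\Delta$-coinvariants. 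This turns \eqref{coinv_seq} into an exact sequence of $\Zp$-modules relating $((E^{\times}\otimes M)_{\Delta})^{\wdp}$, $((\EE\otimes M)_{\Delta})^{\wdp}$ and $((I_{\Delta}\otimes M)_{\Delta})^{\wdp}$, modulo the image of $H_1(\Delta, I_{\Delta}\otimes M)^{\wdp}$.

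The next step is to kill the error terms using finiteness of Tate cohomology. Since $I_{\Delta}\otimes M$ is finitely generated over $\ZZ$, the group $H_1(\Delta, I_{\Delta}\otimes M)$ is finite by \Cref{tate_finite}, so its pro-$p$ completion contributes nothing to $\Zp$-rank. Hence from the (completed, coinvariant) version of \eqref{coinv_seq} I get additivity of $\Zp$-rank:
\begin{equation}\label{rank_additive}
\rank_{\Zp}((\EE\otimes M)_{\Delta})^{\wdp}=\rank_{\Zp}((E^{\times}\otimes M)_{\Delta})^{\wdp}+\rank_{\Zp}((I_{\Delta}\otimes M)_{\Delta})^{\wdp}.
\end{equation}
For the second summand: $((I_{\Delta}\otimes M)_{\Delta})^{\wdp}$ has the same $\Zp$-rank as $(M^{\wdp}\otimes I_{\Delta})_{\Delta}$ (completion commutes with the coinvariants and with the flat base change to $\Zp$ on the $M$-factor), which by \Cref{lem_rank_one} equals $\rank_{\ZZ}M-\rank_{\ZZ}M_{\Delta}$. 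For the first summand: by \Cref{tate_coinv_inv} (whose proof shows $((E^{\times}\otimes M)_{\Delta})^{\wdp}\cong(\Gamma_E^{\abp}\otimes M)_{\Delta}$ and that this has the same rank as $((E^{\times}\otimes M)^{\Delta})^{\wdp}$) combined with \Cref{lem_rank_two}, this rank is $\rank_{\ZZ}M\cdot[F:\Qp]+\rank_{\ZZ}M_{\Delta}$. Adding the two via \eqref{rank_additive} gives
\begin{equation*}
\bigl(\rank_{\ZZ}M\cdot[F:\Qp]+\rank_{\ZZ}M_{\Delta}\bigr)+\bigl(\rank_{\ZZ}M-\rank_{\ZZ}M_{\Delta}\bigr)=([F:\Qp]+1)\rank_{\ZZ}M,
\end{equation*}
which is \eqref{eq_rank_one}.

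The one point that requires genuine care — and which I expect to be the main obstacle in writing this up cleanly — is the interchange of pro-$p$ completion with the exact sequence \eqref{coinv_seq}: pro-$p$ completion is not exact in general, only right exact, and one must check that the relevant modules are finitely generated over $\ZZ$ (or over $\Zp$) so that completion is simply $-\otimes_{\ZZ}\Zp$ (resp.\ tensoring with $\Zp$ over $\Zp$) and hence that it commutes with the finite limits defining $\Delta$-coinvariants and with the cokernel. This is exactly the kind of bookkeeping already carried out in \Cref{tate_coinv_inv} and \Cref{finite_swap}, so I would simply cite those and the finite-generation of $E^{\times}$ as a $\ZZ[\Delta]$-module (being an extension of the finitely generated $\ZZ$-module $E^{\times}/(1+\pp_E)$ by the finitely generated $\Zp$-module $1+\pp_E$). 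Everything else is routine rank additivity along short exact sequences of $\Zp$-modules, using finiteness of the first homology error term from \Cref{tate_finite}.
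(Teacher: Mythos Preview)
Your approach is essentially the paper's: tensor the extension with $M$, pass to $\Delta$-coinvariants and pro-$p$ completion, kill the homology error term via \Cref{tate_finite}, and invoke \Cref{lem_rank_one} and \Cref{lem_rank_two}. The only real difference is the order of operations. You take coinvariants first and then complete, whereas the paper completes first and then takes coinvariants. The paper's order makes the point you flag as delicate simply disappear: since $I_{\Delta}\otimes M$ is $\ZZ$-free, the surjection $\EE\otimes M\twoheadrightarrow I_{\Delta}\otimes M$ splits as abelian groups, so pro-$p$ completion trivially preserves the short exact sequence $0\to E^{\times}\otimes M\to\EE\otimes M\to I_{\Delta}\otimes M\to 0$ as a sequence of $\Zp[\Delta]$-modules; only then does one take coinvariants, obtaining the long exact sequence with finite error term $H_1(\Delta,I_\Delta\otimes M^{\wdp})\cong H_2(\Delta,M^{\wdp})$. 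Your order can also be made to work, but the justification you sketch has a slip: $E^{\times}$ is \emph{not} finitely generated over $\ZZ[\Delta]$ (the subgroup $1+\pp_E$ is only finitely generated over $\Zp$), and \Cref{finite_swap} concerns invariants, not exactness of completion. The clean statement you actually need is that every term $A$ in your four-term coinvariant sequence has $A/pA$ finite, whence $A^{\wdp}=A\otimes_{\ZZ}\Zp$, and then flatness of $\Zp$ over $\ZZ$ gives the exactness you want.
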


\begin{proof} Since $I_{\Delta}\otimes M$ is a free $\ZZ$-module the surjection 
$\mathcal E\otimes M\twoheadrightarrow I_{\Delta}\otimes M$ has a section, and hence we have an exact 
sequence of $\Zp[\Delta]$-modules 
$$0\rightarrow (E^{\times}\otimes M)^{\wedge, p}\rightarrow  (\mathcal E\otimes M)^{\wedge,p} 
\rightarrow (I_{\Delta}\otimes M)^{\wedge,p}\rightarrow 0.$$
Since $M$ is a free $\ZZ$-module of finite rank  and $(E^{\times})^{\wdp}\cong \Gamma^{\ab, p}_E$ we have
$$(E^{\times}\otimes M)^{\wdp}\cong (E^{\times})^{\wdp}\otimes M\cong \Gamma^{\ab,p}_E\otimes M.$$
Since $I_{\Delta}$ is a free $\ZZ$-module  $(I_{\Delta}\otimes M)^{\wdp}\cong I_{\Delta}\otimes M^{\wdp}$, so that we obtain an 
exact sequence of $\Zp[\Delta]$-modules: 
$$ 0\rightarrow \Gamma_E^{\ab,p}\otimes M \rightarrow (\mathcal E\otimes M)^{\wdp}\rightarrow I_{\Delta}\otimes M^{\wdp}\rightarrow 0.$$
Taking $\Delta$-coinvariants and observing that $H_1(\Delta, I_{\Delta}\otimes M^{\wdp})\cong 
H_2(\Delta, M^{\wdp})$ we obtain an exact sequence:
\begin{equation}\label{long_exact}
H_2(\Delta, M^{\wdp})\rightarrow (\Gamma_E^{\ab, p} \otimes M)_{\Delta}\rightarrow 
((\mathcal E\otimes M)^{\wdp})_{\Delta} \rightarrow (I_{\Delta}\otimes M^{\wdp})_{\Delta}\rightarrow 0.
\end{equation}
Since $H_2(\Delta, M^{\wdp})$ is a torsion module by Lemma \ref{tate_finite}, we deduce that 
$$\rank_{\Zp} ((\mathcal E\otimes M)^{\wdp})_{\Delta} = \rank_{\Zp} (\Gamma_E^{\ab, p} \otimes M)_{\Delta}
+ \rank_{\Zp} (I_{\Delta}\otimes M^{\wdp})_{\Delta}.$$
The assertion \eqref{eq_rank_one} follows from Lemmas \ref{lem_rank_one} and \ref{lem_rank_two}.
\end{proof}

\section{Galois deformations} 
\label{sec_gal_def}

We will apply the machinery developed in the previous sections in an arithmetic situation. Let $F$ be a finite extension of $\Qp$. We fix an algebraic 
closure $\overline F$ and let $\Gamma_F:= \Gal(\overline F/F)$. 
Let $\rhobar: \Gamma_F \rightarrow G(k)$ be a continuous representation, 
where $G$ is a generalised torus over $\OO$ and let $\Thetabar$ be the 
$G$-pseudocharacter associated to $\rhobar$. Let $\Gamma_E$ be the kernel 
of the composition $\Gamma_F \overset{\rhobar}{\longrightarrow} G(k)\rightarrow 
(G/G^0)(k)$ and let $\Delta:=\Gal(E/F)$. Let $\pi: \Gamma_F \rightarrow \Delta$ and $\Pi: G\rightarrow G/G^0$ 
be the projection maps. 

We let $D^{\ps}_{\Thetabar}: \Aa_{\OO} \rightarrow \Set$ be the functor 
$$D^{\ps}_{\Thetabar}(A)=\{\Theta\in \cPC^{\Gamma_F}_{G,\pi}(A): \Theta\otimes_A k= \Thetabar\}$$
and let $R^{\ps}_{\Thetabar}\in \widehat{\Aa}_{\OO}$ be the 
ring pro-representing $D^{\ps}_{\Thetabar}$ and let $\Theta^u$ be the universal 
deformation of $\Thetabar$. 

We let $X^{\gen}_{G, \Thetabar}: R^{\ps}_{\Thetabar}\text{-}\alg\rightarrow \Set$
be the functor $$X^{\gen}_{G, \Thetabar}(A)=\{ \rho\in \Rep^{\Gamma_F}_{G, \pi}(A): \Theta_{\rho}=\Theta^u\otimes_{R^{\ps}_{\Thetabar}} A\}.$$ Let 
$A^{\gen}_{G, \Thetabar}$ be the $R^{\ps}_{\Thetabar}$-algebra representing 
$X^{\gen}_{G, \Thetabar}$.

We let $D^{\square}_{\rhobar}: \Aa_{\OO} \rightarrow \Set$ be the functor
such that $D^{\square}_{\rhobar}(A)$ is the set of continuous 
representations $\rho: \Gamma_F\rightarrow G(A)$ such that $\rho\equiv \rhobar
\mod{\mm_A}$. Let $R^{\square}_{\rhobar}\in\widehat{\Aa}_{\OO}$ be the 
the ring pro-representing $D^{\square}_{\rhobar}$.

The Weil group $W_{E/F}$ fits into a short exact sequence
$$0\rightarrow E^{\times} \rightarrow W_{E/F}\rightarrow \Delta\rightarrow 0,$$
corresponding to the fundamental class $[u_{E/F}]\in H^2(\Delta, E^{\times})$ by \cite[(1.2)]{tate}. Let $\widehat{W}_{E/F}$ be the profinite completion of 
$W_{E/F}$. The Artin map $\Art_E: E^{\times}\rightarrow \Gamma_E^{\ab}$ 
induces an isomorphism between the profinite completion of $E^{\times}$ and 
$\Gamma_E^{\ab}$. This yields a natural isomorphism 
\begin{equation}\label{semi_artin}
\widehat{W}_{E/F} \cong \Gamma_F/ [\Gamma_E, \Gamma_E],
\end{equation} 
where $[\Gamma_E, \Gamma_E]$ is the closure of the subgroup generated by the commutators in $\Gamma_E$.

\begin{lem}\label{factor_one} Every continuous representation $\rho: \Gamma_F \rightarrow G(A)$ satisfying $\Pi\circ \rho= \pi$, 
where $A$ is a finite discrete $\OO$-algebra, 
factors through the quotient $\Gamma_F\twoheadrightarrow \widehat{W}_{E/F}$. 
\end{lem}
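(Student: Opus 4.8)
The plan is to show that $\rho$ kills the closure of $[\Gamma_E,\Gamma_E]$ inside $\Gamma_F$, so that it factors through $\Gamma_F/\overline{[\Gamma_E,\Gamma_E]}\cong\widehat{W}_{E/F}$ via \eqref{semi_artin}. First I would observe that since $\Pi\circ\rho=\pi$ and $\Gamma_E=\ker\pi$, the restriction $\rho|_{\Gamma_E}$ takes values in $\ker(\Pi)(A)=G^0(A)$. Because $G^0=\DD(M)$ is commutative, $\rho|_{\Gamma_E}:\Gamma_E\to G^0(A)$ is a homomorphism to an abelian group, hence factors through the abelianization $\Gamma_E^{\ab}=\Gamma_E/[\Gamma_E,\Gamma_E]$. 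Moreover $\rho$ is continuous and $G^0(A)$ is discrete (as $A$ is finite discrete), so $\rho|_{\Gamma_E}$ factors through the \emph{profinite} abelianization, i.e.\ through $\Gamma_E/\overline{[\Gamma_E,\Gamma_E]}$; equivalently $\rho$ is trivial on $\overline{[\Gamma_E,\Gamma_E]}$.

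Next I would upgrade this from ``$\rho|_{\Gamma_E}$ is trivial on $\overline{[\Gamma_E,\Gamma_E]}$'' to ``$\rho$ is trivial on $\overline{[\Gamma_E,\Gamma_E]}$''. This is immediate since $\overline{[\Gamma_E,\Gamma_E]}\subseteq\Gamma_E$, so the two statements coincide; the only subtlety is that $\overline{[\Gamma_E,\Gamma_E]}$ is normal in $\Gamma_F$ (it is characteristic in $\Gamma_E$, which is normal in $\Gamma_F$), so the quotient $\Gamma_F/\overline{[\Gamma_E,\Gamma_E]}$ makes sense as a topological group and $\rho$ descends to a continuous representation of it. Finally I would invoke \eqref{semi_artin}: the Artin map $\Art_E:E^\times\to\Gamma_E^{\ab}$ induces an isomorphism of the profinite completion of $W_{E/F}$ with $\Gamma_F/\overline{[\Gamma_E,\Gamma_E]}$, so $\rho$ factors through $\Gamma_F\twoheadrightarrow\widehat{W}_{E/F}$, as claimed.

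There is essentially no hard part here: the statement is a formal consequence of commutativity of $G^0$, continuity of $\rho$, discreteness of $A$, and the definition of $\widehat{W}_{E/F}$ via \eqref{semi_artin}. The only point requiring a moment's care is the passage to the \emph{closure} of the commutator subgroup rather than the abstract commutator subgroup, which is forced by continuity and the discreteness of the target; this is exactly why the hypothesis that $A$ is finite discrete is used.

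\begin{proof}
Since $\Pi\circ\rho=\pi$ and $\Gamma_E=\ker\pi$, the restriction $\rho|_{\Gamma_E}$ lands in $\ker\bigl(\Pi:G(A)\to(G/G^0)(A)\bigr)=G^0(A)$. As $G^0=\DD(M)$ is commutative, $\rho|_{\Gamma_E}:\Gamma_E\to G^0(A)$ is a homomorphism into an abelian group, hence kills $[\Gamma_E,\Gamma_E]$. Because $A$ is finite and discrete, $G^0(A)$ is discrete, so the continuous homomorphism $\rho|_{\Gamma_E}$ also kills the closure $\overline{[\Gamma_E,\Gamma_E]}$. The subgroup $\overline{[\Gamma_E,\Gamma_E]}$ is characteristic in $\Gamma_E$ and $\Gamma_E$ is normal in $\Gamma_F$, so $\overline{[\Gamma_E,\Gamma_E]}$ is a closed normal subgroup of $\Gamma_F$, and $\rho$ factors through a continuous representation of $\Gamma_F/\overline{[\Gamma_E,\Gamma_E]}$. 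By \eqref{semi_artin} this quotient is canonically isomorphic to $\widehat{W}_{E/F}$, which gives the claim.
\end{proof}
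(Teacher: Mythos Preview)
Your proof is correct and follows the same approach as the paper's: since $\Pi\circ\rho=\pi$, the restriction $\rho|_{\Gamma_E}$ lands in the commutative group $G^0(A)$, and continuity plus Hausdorffness of the target forces $\rho$ to kill $\overline{[\Gamma_E,\Gamma_E]}$, so \eqref{semi_artin} gives the factorisation. One small quibble: at this point in Section~\ref{sec_gal_def} the torus $G^0$ has not yet been assumed split, so writing $G^0=\DD(M)$ is premature---but this is harmless, since only commutativity of $G^0(A)$ is used, and that holds for any torus.
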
 
\begin{proof} Since $\rho(\Gamma_E) \subseteq G^0(A)$ and $G^0(A)$ is commutative and Hausdorff the assertion follows from \eqref{semi_artin}.
\end{proof} 

\begin{lem}\label{factor_two} Let $A\in R^{\ps}_{\Thetabar}\text{-}\alg$. Then every 
$\rho\in X^{\gen}_{G, \Thetabar}(A)$ factors through the quotient 
$\Gamma_F\twoheadrightarrow \widehat{W}_{E/F}$.
\end{lem}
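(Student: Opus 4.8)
The plan is to reduce the statement for $X^{\gen}_{G,\Thetabar}(A)$ to the already-established \Cref{factor_one}, which handles the case of a finite discrete coefficient ring. The key point is that \Cref{factor_one} tells us that \emph{any} continuous homomorphism $\rho:\Gamma_F\to G(B)$ with $\Pi\circ\rho=\pi$ and $B$ a finite discrete $\OO$-algebra kills $[\Gamma_E,\Gamma_E]$; and since $X^{\gen}_{G,\Thetabar}(A)\subseteq \Rep^{\Gamma_F}_{G,\pi}(A)$, every such $\rho$ has $\Pi\circ\rho=\pi$ by definition of the subscript $\pi$. So the only issue is to pass from finite discrete coefficient rings to an arbitrary $R^{\ps}_{\Thetabar}$-algebra $A$, on which there is no a priori topology.

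First I would recall that $X^{\gen}_{G,\Thetabar}$ is representable, by an $R^{\ps}_{\Thetabar}$-algebra $A^{\gen}_{G,\Thetabar}$, and carries a universal representation $\rho^{\univ}:\Gamma_F\to G(A^{\gen}_{G,\Thetabar})$, with $\rho^{\univ}$ obtained by restriction to $\Gamma_F$ of the tautological representation on $\Rep^{\Gamma_F}_{G,\pi}$ via the structure map; every $\rho\in X^{\gen}_{G,\Thetabar}(A)$ is the pushforward of $\rho^{\univ}$ along a unique $\OO$-algebra homomorphism $A^{\gen}_{G,\Thetabar}\to A$. Hence it suffices to prove the statement for the single representation $\rho^{\univ}$, i.e.\ that $\rho^{\univ}$ kills $[\Gamma_E,\Gamma_E]$; the general case then follows by functoriality. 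Next, to handle $\rho^{\univ}$, fix $g\in [\Gamma_E,\Gamma_E]$ and $x\in G$; I need to show that $f(\rho^{\univ}(g))=f(1)$ for every $f$ in a generating set of $\OO(G)$, equivalently that the element $f(\rho^{\univ}(g))-f(1)\in A^{\gen}_{G,\Thetabar}$ is zero. For this I would test against all maximal ideals, or more precisely against all $\OO$-algebra homomorphisms $A^{\gen}_{G,\Thetabar}\to B$ with $B$ a finite discrete $\OO$-algebra: any such specialisation sends $\rho^{\univ}$ to some $\rho_B\in X^{\gen}_{G,\Thetabar}(B)\subseteq \Rep^{\Gamma_F}_{G,\pi}(B)$, which extends to a continuous homomorphism $\widetilde\rho_B:\Gamma_F\to G(B)$ since $B$ is finite, and $\Pi\circ\widetilde\rho_B=\pi$, so by \Cref{factor_one} $\widetilde\rho_B(g)=1$, whence the image of $f(\rho^{\univ}(g))-f(1)$ in $B$ vanishes.

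It then remains to argue that an element of $A^{\gen}_{G,\Thetabar}$ that dies in every finite discrete quotient must be zero. This is where I expect the only real subtlety to lie, but by \Cref{paris} we have $A^{\gen}_{G,\Thetabar}\cong R^{\ps}_{\Thetabar}[t_1^{\pm1},\dots,t_s^{\pm1}]$, and under the running hypothesis (which is needed for \Cref{paris}, namely $\Rep^{\widehat\Gamma_1}_{G,\pi}(\OO)$ non-empty, or in the arithmetic setting the conclusion of \Cref{7_7}) the ring $R^{\ps}_{\Thetabar}$ is a completed group algebra $\OO\br{H_1(\Gamma_F,M)^{\wdp}}$, hence $\OO$-flat and in particular $\OO$-torsion free, and $A^{\gen}_{G,\Thetabar}$ is then a Laurent polynomial ring over it; so $A^{\gen}_{G,\Thetabar}$ is reduced and $\OO$-torsion free, and in fact embeds into its generic fibre, which is a finite type algebra over a field and hence Jacobson, so is separated by its finite residue field quotients. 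More directly: $R^{\ps}_{\Thetabar}$ is a complete local noetherian domain (or at least its Artinian quotients separate points), and a Laurent polynomial ring over it is again separated by maps to Artinian, hence finite discrete, $\OO$-algebras. Thus $f(\rho^{\univ}(g))=f(1)$ for all $f$, so $\rho^{\univ}(g)=1$, i.e.\ $\rho^{\univ}$ factors through $\Gamma_F/[\Gamma_E,\Gamma_E]$, which by \eqref{semi_artin} is exactly $\widehat W_{E/F}$, and pushing forward along $A^{\gen}_{G,\Thetabar}\to A$ gives the statement for arbitrary $A$.
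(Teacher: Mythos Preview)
Your argument has two gaps. First, and most seriously, the invocation of \Cref{paris} is circular. In Section~\ref{sec_gal_def} the functor $X^{\gen}_{G,\Thetabar}$ is defined using representations of $\Gamma_F$, whereas \Cref{paris} is stated in Section~\ref{sec_profinite} for an abstract group $\Gamma_1$ and its profinite completion $\widehat{\Gamma}_1$. In the arithmetic application one takes $\Gamma_1 = W_{E/F}$, so \Cref{paris} describes $X^{\gen,\widehat{W}_{E/F}}_{G,\Thetabar}$, not the $\Gamma_F$-version; the identification of these two functors is precisely what \Cref{factor_two} (together with \Cref{factor_one}) is needed for---see the proof of \Cref{main}, where these lemmas are invoked to replace $\Gamma_F$ by $\widehat{W}_{E/F}$ \emph{before} \Cref{paris} is applied. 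Moreover, \Cref{paris} requires $\Rep^{\widehat{\Gamma}_1}_{G,\pi}(\OO)$ to be nonempty, which is not a hypothesis of \Cref{factor_two} and is only verified later (and possibly after extending scalars) via \Cref{7_7}.

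Second, the step ``$\rho_B$ extends to a continuous homomorphism since $B$ is finite'' is not justified: elements of $X^{\gen}_{G,\Thetabar}(B)$ are abstract homomorphisms $\Gamma_F\to G(B)$ with no continuity imposed, so \Cref{factor_one} does not directly apply to them. The paper's proof avoids both issues by a different route. Since $G^0$ is commutative, $G^0$-pseudocharacters coincide with $G^0$-representations by \Cref{pc_is_rep}, so the universal continuous deformation $\Psi^u$ of the $G^0$-pseudocharacter $\Psibar$ of $\rhobar|_{\Gamma_E}$ is the pseudocharacter of a \emph{continuous} homomorphism $\psi^u:\Gamma_E\to G^0(R^{\ps}_{\Psibar})$; continuity then forces $\psi^u$ to kill $\overline{[\Gamma_E,\Gamma_E]}$. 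For any $\rho\in X^{\gen}_{G,\Thetabar}(A)$ one has $\Theta_{\rho|_{\Gamma_E}}=\Psi^u\otimes_{R^{\ps}_{\Psibar}} A$ via the map $R^{\ps}_{\Psibar}\to R^{\ps}_{\Thetabar}\to A$, and the injectivity in \Cref{pc_is_rep} gives $\rho|_{\Gamma_E}=\psi^u\otimes A$, so $\rho$ also kills $\overline{[\Gamma_E,\Gamma_E]}$. This establishes the result with no structural knowledge of $A^{\gen}_{G,\Thetabar}$ and no extra hypotheses.
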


\begin{proof} The restriction of $\rhobar$ to $\Gamma_E$ takes values in $G^0(k)$. 
Let $\Psibar$ be the $G^0$-pseu\-do\-character of $\rhobar|_{\Gamma_E}$ and let $\Psi^u\in D^{\ps}_{\Psibar}(R^{\ps}_{\Psibar})$ 
be the universal deformation of $\Psibar$.  
Since $G^0$ is commutative, Lemma \ref{pc_is_rep} implies that there is a continuous group homomorphism 
$\psi^u: \Gamma_E \rightarrow G^0(R^{\ps}_{\Psibar})$ such that $\Psi^u = \Theta_{\psi^u}$. In particular, 
\begin{equation}\label{commu}
\psi^u(\gamma)=1, \quad  \forall \gamma \in [\Gamma_E, \Gamma_E].
\end{equation}

Since $\Theta^u|_{\Gamma_E}$ is a deformation of $\Psibar$ to $R^{\ps}_{\Thetabar}$ there is a homomorphism
$R^{\ps}_{\Psibar}\rightarrow R^{\ps}_{\Thetabar}$ such that
\begin{equation}\label{eq_univ}
\Theta^u|_{\Gamma_E} = \Psi^u\otimes_{R^{\ps}_{\Psibar}} R^{\ps}_{\Thetabar}.
\end{equation}
If $\rho\in X^{\gen}_{G, \Thetabar}(A)$ then $\Theta_{\rho}= \Theta^u\otimes_{R^{\ps}_{\Thetabar}} A$, and it follows from 
\eqref{eq_univ} that $\rho|_{\Gamma_E}= \psi^u \otimes_{R^{\ps}_{\Psibar}} A$. The assertion follows from \eqref{commu}.
\end{proof}

\begin{thm}\label{main} 
    There is a finite extension $L'$ of $L$ with the ring of integers $\OO'$, such that the following hold: 
    \begin{enumerate}
    \item $G^0_{\OO'}$ is split and $(G/G^0)_{\OO'}$ 
    is a constant group scheme;
    \item $\Rep^{\Gamma_F}_{G, \pi}(\OO')$ is non-empty.
    \end{enumerate} 
    Moreover, if $(1)$ and $(2)$ hold then there are isomorphisms of $\OO'$-algebras: 
    \begin{itemize}
    \item[(3)] $R^{\ps}_{\Thetabar}\otimes_{\OO} \OO' \cong \OO'\br{(\Gamma_E^{\ab,p}\otimes M)^{\Delta}}\cong \OO'[(\mu_{p^{\infty}}(E)\otimes M)^{\Delta}]\br{x_1, \ldots, x_r}$;
    \item[(4)] $A^{\gen}_{G, \Thetabar}\otimes_{\OO} \OO'\cong R^{\ps}_{\Thetabar}\otimes_{\OO} \OO'[t_1^{\pm 1}, \ldots, t_s^{\pm 1}]$;
    \item[(5)] $R^{\square}_{\rhobar}\otimes_{\OO} \OO'\cong 
    (R^{\ps}_{\Thetabar}\otimes_{\OO} \OO')\br{t_1, \ldots, t_s}\cong \OO'[(\mu_{p^{\infty}}(E)\otimes M)^{\Delta}]\br{z_1, \ldots, z_{r+s}}$;
    \end{itemize}
    where $M$ is the character lattice of $G^0_{\OO'}$, $r= \rank_{\ZZ} M \cdot [F:\Qp] +\rank_{\ZZ} M_{\Delta}$, $s=\rank_\ZZ M -\rank_{\ZZ} M_{\Delta}$.
\end{thm}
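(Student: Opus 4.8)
The plan is to feed the group-algebra descriptions of Sections \ref{sec_Z1}--\ref{sec_profinite} into Mazur's obstruction theory together with the Euler--Poincar\'e characteristic formula for $\Gamma_F$. Part (1) is the observation from the introduction: replacing $L$ by a sufficiently large finite unramified extension makes $G^0$ split and $G/G^0$ constant, and the functors $D^{\square}_{\rhobar}$, $D^{\ps}_{\Thetabar}$, $X^{\gen}_{G,\Thetabar}$ and the (formal) schemes representing them base change along $\OO\to\OO'$ to the corresponding objects attached to $\rhobar_{k'}$ (as in the proof of \Cref{7_7}); so I assume (1). By \eqref{semi_artin} and Lemmas \ref{factor_one} and \ref{factor_two} (the latter with \Cref{pc_is_rep} for the pseudocharacter functor) all the representations and pseudocharacters entering these functors factor through $\Gamma_F\twoheadrightarrow\widehat{W}_{E/F}$, and $\widehat{W}_{E/F}$ is the profinite completion of the abstract Weil group. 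Thus I am in the situation of \Cref{sec_profinite} with $\Gamma_1=W_{E/F}$, $\Gamma_2=E^{\times}$, $\Delta=\Gal(E/F)$, with $\mathcal E$ the extension \eqref{Eextension} coming from the fundamental class $[u_{E/F}]$, and with $\Gamma_2^{\ab}/p\Gamma_2^{\ab}=E^{\times}/pE^{\times}$ finite.

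The heart of the argument is (2). Here I would apply \Cref{lci} with $\Gamma=\Gamma_F$ and $\widehat{\Gamma}_1=\widehat{W}_{E/F}$. Writing $h^i$ for $h^i_{\cont}(\Gamma_F,\ad\rhobar)$, these numbers are finite, and the Euler--Poincar\'e formula for the $p$-adic field $F$ gives $h^0-h^1+h^2=-[F:\Qp]\dim_k\Lie G_k=-[F:\Qp]\rank_{\ZZ}M$, so that $h^1-h^0-h^2=[F:\Qp]\rank_{\ZZ}M$; on the other hand \Cref{main_rank} gives $\rank_{\Zp}((\mathcal E\otimes M)_{\Delta})^{\wdp}=([F:\Qp]+1)\rank_{\ZZ}M$, whence $\rank_{\Zp}((\mathcal E\otimes M)_{\Delta})^{\wdp}-\dim G_k=[F:\Qp]\rank_{\ZZ}M$ as well. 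Thus the inequality \eqref{inequality} holds --- in fact with equality --- and \Cref{lci} shows $R^{\square}_{\rhobar}$ is a complete intersection, $\OO$-flat of relative dimension $([F:\Qp]+1)\rank_{\ZZ}M$. Its generic fibre is therefore non-empty, and \Cref{7_7} produces a lift $\rho_0\in\Rep^{\Gamma_F}_{G,\pi}(\OO')$ over the ring of integers $\OO'$ of a finite extension of $L$, together with an isomorphism $\OO'\otimes_{\OO}R^{\square}_{\rhobar}\cong\OO'\br{((\mathcal E\otimes M)_{\Delta})^{\wdp}}$; this is (2).

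Now assume (1) and (2), and work over $\OO'$ (replacing $\OO$ by $\OO'$, using the base change compatibilities above); restricting the lift from (2) to $W_{E/F}$ shows $\Rep^{W_{E/F}}_{G,\pi}(\OO)$ is non-empty. For (3), \Cref{no_name} identifies $R^{\ps}_{\Thetabar}$ with $\OO\br{H_1(W_{E/F},M)^{\wdp}}$, and combining $H_1(W_{E/F},M)\cong(E^{\times}\otimes M)^{\Delta}$ from \eqref{intro_fundamental}, $((E^{\times}\otimes M)^{\Delta})^{\wdp}\cong(\Gamma_E^{\abp}\otimes M)^{\Delta}$ from \Cref{duck_duck}, and $(\Gamma_E^{\abp}\otimes M)^{\Delta}\cong(\mu_{p^{\infty}}(E)\otimes M)^{\Delta}\times\Zp^{r}$ from \Cref{rank_tors} (with $r=\rank_{\ZZ}M\cdot[F:\Qp]+\rank_{\ZZ}M_{\Delta}$) yields (3). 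Part (4) is \Cref{paris}, giving $A^{\gen}_{G,\Thetabar}\cong R^{\ps}_{\Thetabar}[t_1^{\pm1},\dots,t_s^{\pm1}]$ with $s=\rank_{\ZZ}M-\rank_{\ZZ}M_{\Delta}$. For (5), by (2) and \Cref{7_7} we have $R^{\square}_{\rhobar}\cong\OO\br{((\mathcal E\otimes M)_{\Delta})^{\wdp}}$; since $M$ is $\ZZ$-free of finite rank the sequence \eqref{exact_seq_nat} splits (\Cref{M_ffr}), so $((\mathcal E\otimes M)_{\Delta})^{\wdp}\cong H_1(W_{E/F},M)^{\wdp}\oplus\Zp^{s}\cong(\mu_{p^{\infty}}(E)\otimes M)^{\Delta}\times\Zp^{r+s}$, and hence $R^{\square}_{\rhobar}\cong R^{\ps}_{\Thetabar}\br{t_1,\dots,t_s}\cong\OO[(\mu_{p^{\infty}}(E)\otimes M)^{\Delta}]\br{z_1,\dots,z_{r+s}}$.

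I expect (2) to be the only genuine obstacle: parts (3)--(5) amount to unwinding the group algebras of the previous sections, whereas (2) is where the arithmetic of the local field really intervenes --- one must combine Mazur's obstruction theory and the Euler--Poincar\'e formula (through \Cref{lci}) with the a priori independent rank computation of \Cref{main_rank} and observe that the two numbers match exactly, which is precisely what forces $R^{\square}_{\rhobar}$ to be $\OO$-flat and thereby produces the characteristic-zero lift.
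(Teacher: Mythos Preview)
Your proof is correct and follows the same route as the paper: reduce to $\Gamma_1=W_{E/F}$, match the Euler--Poincar\'e characteristic of $\ad\rhobar$ against the rank computation of \Cref{main_rank} to feed \Cref{lci} and \Cref{7_7} for (2), then read off (3)--(5) from \Cref{no_name}, \eqref{fundamental}, \Cref{duck_duck}, \Cref{rank_tors}, \Cref{paris}, and the splitting of \eqref{exact_seq_nat}. The only point to tighten is the reduction of $D^{\ps}_{\Thetabar}$ from $\Gamma_F$ to $\widehat{W}_{E/F}$: this goes via \Cref{comes_from_rep} (not \Cref{pc_is_rep}, which only applies when $G^0$ acts trivially on all $\OO(G^m)$), realising $\Theta^u=\Theta_{\rho}$ for some $\rho\in X^{\gen}_{G,\Thetabar}(R^{\ps}_{\Thetabar})$ and then invoking \Cref{factor_two} --- and since \Cref{comes_from_rep} needs $\Rep^{\Gamma_1}_{G,\pi}(\OO)\neq\emptyset$, this step must come \emph{after} (2), exactly as the paper orders it.
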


\begin{proof} 
If $L'$ is a finite extension of $L$ with the ring of integers $\OO'$ and residue field $k'$ then the 
functor $D^{\ps}_{\Thetabar_{k'}}: \Aa_{\OO'}\rightarrow \Set$ is 
pro-representable by $R^{\ps}_{\Thetabar}\otimes_{\OO} \OO'$ and 
analogous statements hold for $X^{\gen}_{G, \Thetabar_{k'}}$ and $D^{\square}_{\rhobar_{k'}}$. Thus it is enough to prove the statement 
for $\OO'=\OO$, after replacing $L$ by a finite extension. 

Since $G$ is a generalised torus, after replacing $L$ by a finite unramified extension we may assume that $G^0$ is split and $G/G^0$ is
a constant group scheme. We may assume that  $G/G^0=\underline{\Delta}$ as replacing $G$ by the preimage of $\underline{\Delta}$
does not change the functors under consideration. The character lattice $M$ of $G^0$ does not change if we further replace $L$ by a finite 
extension. As explained at the beginning of \Cref{sec_adm_rep} we have an action of $\Delta$ on $M$.

We will apply the results of previous sections with  $\Gamma_1=W_{E/F}$ and $\Gamma_2=E^{\times}$. It is a fundamental result of Langlands proved in \cite{langlands_tori} (see also a nice exposition 
by Birkbeck \cite[Proposition 2.0.3]{birkbeck}) that there are natural isomorphisms:
\begin{equation}\label{fundamental}
H_1(W_{E/F}, M) \cong H_1(E^{\times}, M)^{\Delta} \cong (E^{\times}\otimes M)^{\Delta}.
\end{equation}
For each $c\in \Delta$ we choose a coset representative $\bar{c}\in W_{E/F}$. We have constructed 
a $\Delta$-action on $\mathcal E\colonequals E^{\times}\oplus I_{\Delta}$, which depends on this choice,  such that we have an extension of $\ZZ[\Delta]$-modules
$$0\rightarrow E^{\times} \rightarrow \mathcal E \rightarrow I_{\Delta}\rightarrow 0$$
and the image of the extension class in $H^2(\Delta, E^{\times})$ under the isomorphisms
$$\Ext^1_{\ZZ[\Delta]}(I_{\Delta}, E^{\times})\cong \Ext^2_{\ZZ[\Delta]}(\ZZ, E^{\times})\cong H^2(\Delta, E^{\times})$$ is equal to $[u_{E/F}]$. 

 Let $N$ be the pro-$p$ completion of $(\mathcal E\otimes M)_{\Delta}$.
 It follows from Proposition \ref{main_rank} and Euler--Poincar\'e characteristic formula that 
 \begin{equation}\label{eq_EP}
 h^1_{\cont}(\Gamma_F, \ad\rhobar)-h^0_{\cont}(\Gamma_F, \ad \rhobar)-
 h^2_{\cont}(\Gamma_F, \ad \rhobar) =\rank_{\Zp} N -\dim G_k.
 \end{equation}
Lemma \ref{factor_one} and \eqref{eq_EP} ensure that the assumptions of Proposition \ref{lci} are satisfied, 
and hence after replacing $L$ by a finite extension we may ensure that $\Rep^{\widehat{W}_{E/F}}_{G, \pi}(\OO)$ is non-empty
 by Corollary \ref{7_7}. In particular, $\Rep^{\Gamma_F}_{G,\pi}(\OO)$ and 
$\Rep^{W_{E/F}}_{G, \pi}(\OO)$ are also non-empty. 

Lemma \ref{comes_from_rep} implies that there is $\rho\in X^{\gen}_{G, \rhobarss}(R^{\ps}_{\Thetabar})$ such that $\Theta^u = \Theta_{\rho}$. 
Since $\rho$ factors through the quotient $\Gamma_F \twoheadrightarrow \widehat{W}_{E/F}$ by Lemma \ref{factor_two} 
we conclude that $\Theta^u$ is obtained from  a $G$-pseudocharacter of $\widehat{W}_{E/F}$ via inflation to $\Gamma_F$. 
This together with Lemmas \ref{factor_one} and \ref{factor_two} imply that in the definitions of the functors 
$D^{\ps}_{\Thetabar}$, $D^{\square}_{\rhobar}$ and $X^{\gen}_{G, \Thetabar}$ we can replace $\Gamma_F$ with $\widehat{W}_{E/F}$
without changing the functors themselves.  

Lemma \ref{no_name} and \eqref{fundamental} imply that $R^{\ps}_{\Thetabar}$ is isomorphic to $\OO\br{N}$, where $N$ is the pro-$p$ completion 
of $(E^{\times}\otimes M)^{\Delta}$. It follows from Lemma \ref{duck_duck} 
that $N\cong (\Gamma^{\ab, p}_E\otimes M)^{\Delta}$, 
which is a finitely generated $\Zp$-module of rank $r$ and torsion subgroup isomorphic to $(\mu_{p^{\infty}}(E)\otimes M)^{\Delta}$ by \Cref{rank_tors}.
This yields the isomorphisms in part (3). Part (4) follows from Proposition \ref{paris}. 

 The map $\rho\mapsto \Theta_{\rho}$ 
induces a map of local $\OO$-algebras $R^{\ps}_{\Thetabar}\rightarrow R^{\square}_{\rhobar}$
and hence a map of $R^{\ps}_{\Thetabar}$-algebras
$A^{\gen}_{G, \Thetabar}\rightarrow R^{\square}_{\rhobar}$. 
Since $E^{\times}/ (E^{\times})^p$ is finite, Lemma \ref{completion_def_square} implies that 
$R^{\square}_{\rhobar}$ is the completion of 
$\OO(\Rep^{W_{E/F}}_{G,\pi})$ with respect to the maximal ideal corresponding to $\rhobar$.
By considering the  composition $\OO(\Rep^{W_{E/F}}_{G,\pi}) \rightarrow A^{\gen}_{G, \Thetabar}\rightarrow R^{\square}_{\rhobar}$ we deduce that  
that it induces a natural isomorphism between $R^{\square}_{\rhobar}$ and the  completion of $A^{\gen}_{G, \overline{\Theta}}$ with respect to the maximal ideal corresponding to $\rhobar$. Part (5) then follows
from parts (3) and (4).
%In the course of the proof of \Cref{paris} we have established that
%$$\OO(\Rep^{W_{E/F}}_{G,\pi})\cong %\OO(\PC^{W_{E/F}}_{G,\pi})[t_1^{\pm 1}, \ldots, 
%t_s^{\pm 1}].$$
%It follows from \Cref{prop_Rps} that  $R^{\square}_{\rhobar}$ is the completion of 
%$R^{\ps}_{\Thetabar}[t_1^{\pm 1}, \ldots, 
%t_s^{\pm 1}]$ 
%
%Part (5) follows from parts (3), (4) and Lemma \ref{completion_def_square} as 
%$E^{\times}/ (E^{\times})^p$ is finite and the map $\rho\mapsto \Theta_{\rho}$ 
%induces a map of local $\OO$-algebras $R^{\ps}_{\Thetabar}\rightarrow R^{\square}_{\rhobar}$. 
\end{proof}

We will now deduce some corollaries, which hold without extending the scalars.

\begin{cor}\label{form_smooth} The map $R^{\ps}_{\Thetabar} \rightarrow R^{\square}_{\rhobar}$ 
is formally smooth. In particular, it is flat and induces a bijection 
between the sets of irreducible components. 
\end{cor}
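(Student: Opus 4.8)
The plan is to deduce \Cref{form_smooth} from \Cref{main}. The extension of scalars is harmless for the purpose of checking formal smoothness and the behaviour on irreducible components, so the first step is to recall that $R^{\ps}_{\Thetabar}\otimes_\OO\OO'$ and $R^{\square}_{\rhobar}\otimes_\OO\OO'$ pro-represent the deformation functors of $\Thetabar_{k'}$ and $\rhobar_{k'}$, and that a map of complete local noetherian $\OO$-algebras is formally smooth if and only if it becomes so after a faithfully flat base change such as $\OO\to\OO'$; likewise flatness and the bijection on irreducible components can be checked after this base change since $\Spec(-\otimes_\OO\OO')\to\Spec(-)$ is faithfully flat with geometrically connected fibres (indeed $\OO'$ is a finite free $\OO$-module and a DVR). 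So we may assume $\OO=\OO'$ and that conditions (1), (2) of \Cref{main} hold.

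Under that assumption, part (5) of \Cref{main} gives $R^{\square}_{\rhobar}\cong R^{\ps}_{\Thetabar}\br{t_1,\dots,t_s}$, and one checks directly from the construction of this isomorphism in the proof of \Cref{main} — it is induced by the natural transformation $\rho\mapsto\Theta_\rho$, i.e.\ by the map $A^{\gen}_{G,\Thetabar}\to R^{\square}_{\rhobar}$ of $R^{\ps}_{\Thetabar}$-algebras together with part (4) $A^{\gen}_{G,\Thetabar}\cong R^{\ps}_{\Thetabar}[t_1^{\pm1},\dots,t_s^{\pm1}]$ — that it is an isomorphism of $R^{\ps}_{\Thetabar}$-algebras, where on the left $R^{\ps}_{\Thetabar}$ acts through the structure map $R^{\ps}_{\Thetabar}\to R^{\square}_{\rhobar}$. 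A power series ring $R\br{t_1,\dots,t_s}$ over a complete local noetherian ring $R$ is formally smooth over $R$ (for the adic topologies), so $R^{\ps}_{\Thetabar}\to R^{\square}_{\rhobar}$ is formally smooth. Formally smooth maps of complete local noetherian rings are flat (one may see this from the local criterion: a formally smooth algebra over a complete local noetherian base is a filtered colimit of smooth, hence flat, algebras, or simply because $R\br{t_i}$ is visibly $R$-flat), which gives the flatness assertion.

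Finally, for the statement about irreducible components: since $R^{\square}_{\rhobar}\cong R^{\ps}_{\Thetabar}\br{t_1,\dots,t_s}$ and $R^{\ps}_{\Thetabar}$ is noetherian (it is, being isomorphic to $\OO\br{N}$ with $N$ a finitely generated $\Zp$-module by part (3), via \Cref{noeth_comp}), the minimal primes of $R^{\square}_{\rhobar}$ are exactly the extensions $\pp\br{t_1,\dots,t_s}$ of the minimal primes $\pp$ of $R^{\ps}_{\Thetabar}$: the map $\Spec R^{\square}_{\rhobar}\to\Spec R^{\ps}_{\Thetabar}$ is faithfully flat with irreducible (affine-space-like) fibres, so pulling back a prime preserves both the property of being minimal and the containment relations, giving the desired bijection. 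I do not expect a serious obstacle here; the only point requiring a little care is to verify that the isomorphism in part (5) of \Cref{main} is genuinely $R^{\ps}_{\Thetabar}$-linear for the structure map coming from $\rho\mapsto\Theta_\rho$, rather than some other $\OO$-algebra isomorphism, but this is exactly how that isomorphism is produced in the proof of \Cref{main} (via the factorisation through $A^{\gen}_{G,\Thetabar}$ and the completion identification at the end of that proof).
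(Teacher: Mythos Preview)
Your proposal is correct and follows essentially the same approach as the paper: reduce by faithfully flat descent along $\OO\to\OO'$ (the paper cites \stackcite{06CM} for this step) and then invoke part (5) of \Cref{main} to see that the map becomes a power series extension after base change. Your extra care in verifying that the isomorphism in part (5) is genuinely $R^{\ps}_{\Thetabar}$-linear is well placed and is indeed how the isomorphism is produced in the proof of \Cref{main}.
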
 

\begin{proof} Since the map $R^{\ps}_{\Thetabar}\rightarrow \OO'\otimes_{\OO}R^{\ps}_{\Thetabar}$ is faithfully flat, 
the assertion follows from \cite[\href{https://stacks.math.columbia.edu/tag/06CM}{Tag 06CM}]{stacks-project} and part (5) of Theorem \ref{main}.
\end{proof}

\begin{cor} The map $R^{\ps}_{\Thetabar}\rightarrow A^{\gen}_{G, \Thetabar}$
is smooth and induces a bijection between the sets of irreducible components.
Moreover, $A^{\gen}_{G, \Thetabar}$
is flat over $\OO$ of relative dimension $\dim G_k \cdot ([F:\Qp]+1)$.
\end{cor}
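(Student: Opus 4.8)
The plan is to deduce this corollary almost entirely from Theorem \ref{main} together with the earlier structural results, so that no new arithmetic input is needed. First I would recall from Theorem \ref{main}(4) that after a finite base change $\OO \to \OO'$ there is an isomorphism $A^{\gen}_{G,\Thetabar}\otimes_{\OO}\OO' \cong (R^{\ps}_{\Thetabar}\otimes_{\OO}\OO')[t_1^{\pm 1},\dots,t_s^{\pm 1}]$. Since $R^{\ps}_{\Thetabar}\to R^{\ps}_{\Thetabar}\otimes_{\OO}\OO'$ is faithfully flat (as $\OO\to\OO'$ is), smoothness of $R^{\ps}_{\Thetabar}\to A^{\gen}_{G,\Thetabar}$ can be checked after this base change, where it becomes the statement that a Laurent polynomial extension is smooth — which is standard. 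The descent of smoothness along faithfully flat base change is exactly the content of \stackcite{06CM}, applied just as in the proof of Corollary \ref{form_smooth}.

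For the statement about irreducible components, I would use that $A^{\gen}_{G,\Thetabar}\cong R^{\ps}_{\Thetabar}[t_1^{\pm1},\dots,t_s^{\pm1}]$ already over $\OO$ — this is the first isomorphism in Theorem \ref{main}(4), which after removing the $-\otimes_{\OO}\OO'$ still holds because by the reduction at the start of the proof of Theorem \ref{main} the relevant functors are unchanged by the (unramified, then further finite) extension needed there, and more directly because \Cref{paris} gives this isomorphism over the base ring already. Spec of a Laurent polynomial ring over a local ring has the same irreducible components as the base (the map is flat with geometrically integral fibres, so pulling back along $\Spec A^{\gen}_{G,\Thetabar}\to\Spec R^{\ps}_{\Thetabar}$ gives a bijection on irreducible components), so $R^{\ps}_{\Thetabar}\to A^{\gen}_{G,\Thetabar}$ induces a bijection between the sets of irreducible components.

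Finally, for flatness over $\OO$ and the relative dimension, I would argue as follows. By Theorem \ref{main}(3)–(4) (after base change, which is harmless for $\OO$-flatness by faithfully flat descent, and which does not change relative dimension), $A^{\gen}_{G,\Thetabar}\otimes_{\OO}\OO'$ is isomorphic to $\OO'[(\mu_{p^{\infty}}(E)\otimes M)^{\Delta}]\br{x_1,\dots,x_r}[t_1^{\pm1},\dots,t_s^{\pm1}]$. The group algebra of a finite abelian $p$-group over $\OO'$ is free of finite rank as an $\OO'$-module, hence $\OO'$-flat; adjoining power series variables and Laurent variables preserves flatness. For the relative dimension: $\OO'[(\mu_{p^{\infty}}(E)\otimes M)^{\Delta}]$ has relative dimension $0$ over $\OO'$, each power series variable contributes $1$ and each Laurent variable contributes $1$, so the relative dimension is $r+s = \big(\rank_{\ZZ}M\cdot[F:\Qp]+\rank_{\ZZ}M_{\Delta}\big) + \big(\rank_{\ZZ}M-\rank_{\ZZ}M_{\Delta}\big) = \rank_{\ZZ}M\cdot([F:\Qp]+1)$. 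Since $\rank_{\ZZ}M = \dim G_k$, this equals $\dim G_k\cdot([F:\Qp]+1)$, as claimed.

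The only mild subtlety — the "main obstacle", though it is not a serious one — is being careful that the various isomorphisms in Theorem \ref{main} are stated after a base change $\OO\to\OO'$, and arguing cleanly that smoothness, $\OO$-flatness, relative dimension and the set of irreducible components are all insensitive to this base change (faithfully flat descent for the first two and the last, and the obvious invariance for relative dimension). The bijection on irreducible components for the Laurent extension itself is the one place to be slightly careful, but it follows from the fact that $\Spec R^{\ps}_{\Thetabar}[t_1^{\pm1},\dots,t_s^{\pm1}]\to\Spec R^{\ps}_{\Thetabar}$ is faithfully flat with irreducible fibres.
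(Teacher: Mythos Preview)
Your argument for smoothness and for $\OO$-flatness and relative dimension is correct and matches the paper's proof: both use Theorem \ref{main}(4) together with faithfully flat descent along $\OO\to\OO'$ (citing \stackcite{06CM}), and both compute the dimension from the explicit form $\OO'[(\mu_{p^{\infty}}(E)\otimes M)^{\Delta}]\br{x_1,\dots,x_r}[t_1^{\pm1},\dots,t_s^{\pm1}]$ together with $\rank_{\ZZ}M=\dim G_k$.

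The one genuine gap is in your treatment of irreducible components. You claim that $A^{\gen}_{G,\Thetabar}\cong R^{\ps}_{\Thetabar}[t_1^{\pm1},\dots,t_s^{\pm1}]$ already over $\OO$, appealing to \Cref{paris}. But \Cref{paris} (and the whole setup of Sections \ref{sec_adm_rep}--\ref{sec_profinite}) assumes that $G^0$ is split, $G/G^0$ is constant, and $\Rep^{\widehat{\Gamma}_1}_{G,\pi}(\OO)$ is non-empty; these are precisely conditions (1) and (2) of Theorem \ref{main}, which are only asserted to hold after passing to $\OO'$. So you cannot invoke \Cref{paris} over $\OO$. Your fallback suggestion --- that the bijection on irreducible components descends along $\OO\to\OO'$ by faithfully flat descent --- does not work as stated either: the number of irreducible components is not in general stable under faithfully flat base change, so a bijection upstairs does not formally imply one downstairs.

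The paper fixes this cleanly: once smoothness is established, the map $X^{\gen}_{G,\Thetabar}\to\Spec R^{\ps}_{\Thetabar}$ is flat of finite presentation, hence open; by a standard topological lemma it then suffices to show all fibres are irreducible. But \emph{geometric} fibres are unchanged by the base change $\OO\to\OO'$, and over $\OO'$ each geometric fibre is visibly $\kappa[t_1^{\pm1},\dots,t_s^{\pm1}]$ by Theorem \ref{main}(4). This is essentially your ``flat with geometrically integral fibres'' idea, but grounded in the $\OO'$-isomorphism rather than an unjustified $\OO$-isomorphism.
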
 

\begin{proof} The map $R^{\ps}_{\Thetabar}\rightarrow A^{\gen}_{G, \Thetabar}$
is formally smooth by the same argument as in Corollary \ref{form_smooth} using 
part (4) of Theorem \ref{main}. Since it is of finite type it is smooth by 
\cite[\href{https://stacks.math.columbia.edu/tag/00TN}{Tag 00TN}]{stacks-project}.
The map $X^{\gen}_{G, \Thetabar}\rightarrow X^{\ps}_{\Thetabar}:=\Spec R^{\ps}_{\Thetabar}$ is flat and of finite presentation and hence 
open by \cite[\href{https://stacks.math.columbia.edu/tag/00I1}{Tag 00I1}]{stacks-project}. By \cite[\href{https://stacks.math.columbia.edu/tag/004Z}{Tag 004Z}]{stacks-project} it is enough to show the fibres of $X^{\gen}_{G, \Thetabar} \rightarrow X^{\ps}_{\Thetabar}$ are irreducible. 

To ease the notation we let $R=R^{\ps}_{\Thetabar}$, $A=A^{\gen}_{G, \Thetabar}$, $R'=R\otimes_{\OO} \OO'$ and $A'=A\otimes_{\OO} \OO'$.
Let $x:\Spec \kappa \rightarrow \Spec R$ be a geometric point and 
let $x': \Spec \kappa\rightarrow \Spec R'$ be a point above $x$. The map 
$$ A\otimes_{R, x} \kappa \rightarrow A'\otimes_{R', x'}\kappa$$  
is an isomorphism. Part (2) of Theorem \ref{main} implies that 
$$A'\otimes_{R', x'}\kappa\cong \kappa[t_1^{\pm 1}, \ldots, t_s^{\pm 1} ]$$ 
Thus the fibres of $X^{\gen}_{G, \Thetabar} \rightarrow X^{\ps}_{\Thetabar}$ are irreducible and the result follows. 

The last assertion follows from the fact that $\OO'$ is finite and free over $\OO$, parts (3) and (4) of Theorem \ref{main}
and $\dim G_k = \rank_{\ZZ} M$. 
\end{proof} 

\begin{lem}\label{defG2} Let $A\in R^{\ps}_{\Thetabar}\text{-}\alg$, let $\rho\in X^{\gen}_{G,\Thetabar}(A)$ and let $\tau: G\hookrightarrow \mathbb A^n$ 
be a closed immersion of $\OO$-schemes. Then $\tau(\rho(\Gamma_F))$ 
is contained in a finitely generated $R^{\ps}_{\Thetabar}$-submodule 
of $A^n=\mathbb A^n(A)$.
\end{lem}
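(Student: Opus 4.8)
The plan is to reduce the statement about $\Gamma_F$ to the already-established statement about $\widehat W_{E/F}$, namely Lemma \ref{defG1}, by exploiting the fact that the representations in $X^{\gen}_{G,\Thetabar}$ factor through $\widehat W_{E/F}$. First I would recall that by the construction in the proof of Theorem \ref{main}, in the definition of $X^{\gen}_{G,\Thetabar}$ one may replace $\Gamma_F$ by its quotient $\widehat W_{E/F}$: indeed Lemma \ref{factor_two} shows that every $\rho\in X^{\gen}_{G,\Thetabar}(A)$ factors through $\Gamma_F\twoheadrightarrow \widehat W_{E/F}$, and this identifies the functor $X^{\gen}_{G,\Thetabar}$ with the functor $X^{\gen,\widehat W_{E/F}}_{G,\Thetabar}$ attached to the group $\widehat W_{E/F}$ (with $\Gamma_1 = W_{E/F}$, $\Gamma_2 = E^{\times}$, $\Delta = \Gal(E/F)$). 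Here I also use that $\Rep^{\Gamma_F}_{G,\pi}(\OO)$ is non-empty by part (2) of Theorem \ref{main}, hence so is $\Rep^{\widehat W_{E/F}}_{G,\pi}(\OO)$ and $\Rep^{W_{E/F}}_{G,\pi}(\OO)$, so the hypotheses needed below are satisfied.

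The key step is then to invoke Lemma \ref{defG1} directly. Given $A\in R^{\ps}_{\Thetabar}\text{-}\alg$, $\rho\in X^{\gen}_{G,\Thetabar}(A)$ and a closed immersion $\tau: G\hookrightarrow \mathbb A^n$ of $\OO$-schemes, Lemma \ref{factor_two} gives a continuous representation $\rho':\widehat W_{E/F}\to G(A)$ with $\rho'\in X^{\gen,\widehat W_{E/F}}_{G,\Thetabar}(A)$ such that $\rho$ is the inflation of $\rho'$ along $\Gamma_F\twoheadrightarrow\widehat W_{E/F}$. In particular $\rho(\Gamma_F) = \rho'(\widehat W_{E/F})$, so $\tau(\rho(\Gamma_F)) = \tau(\rho'(\widehat W_{E/F}))$. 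By Lemma \ref{defG1} applied with the group $\widehat W_{E/F}$, the set $\tau(\rho'(\widehat W_{E/F}))$ is contained in a finitely generated $R^{\ps}_{\Thetabar}$-submodule of $A^n = \mathbb A^n(A)$, which is exactly the assertion.

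I do not expect any serious obstacle here: the only point requiring a little care is to make sure that the identification of $X^{\gen}_{G,\Thetabar}$ with $X^{\gen,\widehat W_{E/F}}_{G,\Thetabar}$ is exactly the one used in Theorem \ref{main}, so that $\Thetabar$, the universal pseudocharacter $\Theta^u$, and the ring $R^{\ps}_{\Thetabar}$ all match up, and that Lemma \ref{defG1} applies with the same $R^{\ps}_{\Thetabar}$. Once this bookkeeping is in place the statement is a formal consequence of Lemmas \ref{factor_two} and \ref{defG1}.
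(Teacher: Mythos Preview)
Your overall strategy---reduce to $\widehat W_{E/F}$ via Lemma~\ref{factor_two} and then invoke Lemma~\ref{defG1}---is exactly the paper's route, but there is a genuine gap in your execution. You write that $\Rep^{\Gamma_F}_{G,\pi}(\OO)$ is non-empty ``by part (2) of Theorem~\ref{main}'', but that theorem only produces a point over a finite extension $\OO'$, not over $\OO$ itself. Moreover, Lemma~\ref{defG1} is stated under the standing hypotheses of Sections~\ref{sec_adm_rep}--\ref{sec_profinite}, which include that $G^0$ is split and $G/G^0$ is constant; in Section~\ref{sec_gal_def} $G$ is an arbitrary generalised torus over $\OO$, so neither the splitness of $G^0$ nor the non-emptiness hypothesis of Lemma~\ref{defG1} is available without first passing to $\OO'$.

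The paper fixes this by observing that it suffices to prove the statement after extending scalars to the $\OO'$ of Theorem~\ref{main}: one replaces $A$ by $A\otimes_{\OO}\OO'$ and $R^{\ps}_{\Thetabar}$ by $R^{\ps}_{\Thetabar}\otimes_{\OO}\OO'$, applies Lemma~\ref{factor_two} and Lemma~\ref{defG1} over $\OO'$, and then uses that $\OO'$ is finite free over $\OO$ to descend (a finitely generated $R^{\ps}_{\Thetabar}\otimes_{\OO}\OO'$-module is finitely generated over $R^{\ps}_{\Thetabar}$, and projecting back to $A^n$ along an $\OO$-linear splitting of $\OO\hookrightarrow\OO'$ preserves finite generation). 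Once you insert this scalar-extension step, your argument coincides with the paper's.
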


\begin{proof} It is enough to verify the assertion after extending the 
scalars to $\OO'$ given by Theorem \ref{main}. This follows from Lemma 
\ref{factor_two} and Lemma \ref{defG2}.
\end{proof}

\begin{remar} Lemma \ref{defG2} implies that the scheme $X^{\gen}_{G, \Thetabar}$ 
coincides with the scheme $X^{\gen}_{G, \rhobarss}$ defined in \cite{defG}
for a generalised reductive group $G$, see \cite[Proposition 7.3]{defG}.
\end{remar}

\begin{cor}\label{components2} 
    Let $p^m$ be the order of $(\mu_{p^{\infty}}(E)\otimes M)^{\Delta}$. 
    Assume that $\OO$ contains all the $p^m$-th roots of unity and (1) and (2) 
    in Theorem \ref{main} hold with $\OO'=\OO$. 
    Then there is a canonical  action of the character group 
    $\mathrm X((\mu_{p^{\infty}}(E)\otimes M)^{\Delta})$ on the set of irreducible 
    components of $\Spec R^{\ps}_{\Thetabar}$, $\Spec R^{\square}_{\rhobar}$ and $X^{\gen}_{G, \Thetabar}$, respectively. Moreover, the following hold:
    \begin{enumerate}
    \item this action is faithful and transitive;
    \item irreducible components of $\Spec R^{\square}_{\rhobar}$ and 
    $\Spec R^{\ps}_{\Thetabar}$ are formally smooth over $\OO$;
    \item irreducible components of $X^{\gen}_{G, \Thetabar}$ are 
    of the form $$\Spec \OO\br{x_1, \ldots, x_r}[t_1^{\pm 1}, \ldots, t_s^{\pm 1}],$$ 
    where $r$ and $s$ are as in Theorem \ref{main}.
    \end{enumerate}
\end{cor}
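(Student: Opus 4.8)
The plan is to deduce \Cref{components2} directly from \Cref{components1} (Proposition~6.14) by verifying that its hypotheses are met in the arithmetic setting, once we have identified the relevant groups. First I would observe that Theorem~\ref{main} has already done the heavy lifting: with $\OO'=\OO$ we have $R^{\square}_{\rhobar}\cong \OO\br{((\mathcal E\otimes M)_{\Delta})^{\wdp}}$ and $R^{\ps}_{\Thetabar}\cong \OO\br{(\Gamma_E^{\abp}\otimes M)^{\Delta}}$ and $A^{\gen}_{G,\Thetabar}\cong R^{\ps}_{\Thetabar}[t_1^{\pm 1},\ldots,t_s^{\pm 1}]$, where $\mathcal E$ is the extension attached to the fundamental class of $W_{E/F}$, so that $\Gamma_1=W_{E/F}$ and $\Gamma_2=E^{\times}$. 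In particular $\Gamma_2^{\ab}/p\Gamma_2^{\ab}=E^{\times}/(E^{\times})^p$ is finite, and $\Rep^{\Gamma_1}_{G,\pi}(\OO)$ is non-empty by Theorem~\ref{main}, so all the running hypotheses of \Cref{components1} are satisfied.

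Next I would identify the torsion subgroup $\mu$ of $((\mathcal E\otimes M)_{\Delta})^{\wdp}$ appearing in \Cref{components1} with $(\mu_{p^{\infty}}(E)\otimes M)^{\Delta}$. By \Cref{prop_represent} there is a (functorial) split exact sequence $0\to H_1(\Gamma_1,M)\to (\mathcal E\otimes M)_{\Delta}\to I_{\Delta}M\to 0$ with $I_{\Delta}M$ free, hence after pro-$p$ completion $((\mathcal E\otimes M)_{\Delta})^{\wdp}\cong H_1(\Gamma_1,M)^{\wdp}\oplus \Zp^{s}$, so its torsion subgroup equals that of $H_1(\Gamma_1,M)^{\wdp}=H_1(W_{E/F},M)^{\wdp}$. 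Now \eqref{fundamental} gives $H_1(W_{E/F},M)\cong (E^{\times}\otimes M)^{\Delta}$, and by \Cref{torsion} the torsion subgroup of $((E^{\times}\otimes M)^{\Delta})^{\wdp}$ is exactly $(\mu_{p^{\infty}}(E)\otimes M)^{\Delta}$. Thus $\mu=(\mu_{p^{\infty}}(E)\otimes M)^{\Delta}$, and the hypothesis that $\OO$ contains all $p^m$-th roots of unity is precisely the hypothesis of \Cref{components2}, so \Cref{components1} applies verbatim and yields a canonical faithful transitive action of $\mathrm X((\mu_{p^{\infty}}(E)\otimes M)^{\Delta})$ on the irreducible components of $\Spec R^{\square}_{\rhobar}$, $\Spec R^{\ps}_{\Thetabar}$ and $X^{\gen}_{G,\Thetabar}$, establishing assertion~(1).

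For (2) and (3) I would again just translate: \Cref{components1}(2) gives that the irreducible components of $\Spec R^{\square}_{\rhobar}$ and $\Spec R^{\ps}_{\Thetabar}$ are formally smooth over $\OO$ (they are of the form $\OO\br{N/\mu}$ for the appropriate finitely generated $\Zp$-module $N$), and \Cref{components1}(3) gives that the components of $X^{\gen}_{G,\Thetabar}$ are of the form $\Spec\OO\br{x_1,\ldots,x_r}[t_1^{\pm 1},\ldots,t_s^{\pm 1}]$ with $r=\rank_{\Zp}H_1(\Gamma_1,M)^{\wdp}$ and $r+s=\rank_{\Zp}((\mathcal E\otimes M)_{\Delta})^{\wdp}$; by \Cref{rank_tors} and \Cref{main_rank} these ranks are $r=\rank_{\ZZ}M\cdot[F:\Qp]+\rank_{\ZZ}M_{\Delta}$ and $s=\rank_{\ZZ}M-\rank_{\ZZ}M_{\Delta}$, matching the $r,s$ of Theorem~\ref{main}. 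The only real work is the bookkeeping in the previous paragraph, namely pinning down that the abstract torsion group $\mu$ of \Cref{components1} really is $(\mu_{p^{\infty}}(E)\otimes M)^{\Delta}$ and not merely abstractly isomorphic to it; I expect that to be the main (mild) obstacle, and it is handled cleanly by the functoriality of \eqref{exact_seq_nat} together with Lemmas~\ref{torsion}, \ref{duck_duck} and \ref{rank_tors}.
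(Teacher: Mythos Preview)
Your proposal is correct and follows exactly the paper's approach: the paper's proof is the single line ``The assertion follows from \Cref{components1},'' and you have simply unpacked what that invocation entails (verifying the hypotheses of \Cref{components1} in the setting $\Gamma_1=W_{E/F}$, $\Gamma_2=E^{\times}$, and identifying the abstract torsion group $\mu$ with $(\mu_{p^{\infty}}(E)\otimes M)^{\Delta}$ via \Cref{prop_represent}, \eqref{fundamental}, and \Cref{torsion}). The bookkeeping you flag as the ``only real work'' is indeed implicit in the paper's one-line proof and is handled precisely as you describe.
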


\begin{proof} The assertion  follows from \Cref{components1}.
\end{proof}

\begin{cor}
    Let $\varphi : G \to H$ be a surjection of generalised tori over $\OO$ and let $\rhobar : \Gamma_F \to G(k)$ be a continuous representation.
    Then the map $(R^{\square}_{\varphi\circ \rhobar}/\varpi)^{\red}\rightarrow (R^{\square}_{\rhobar}/\varpi)^{\red}$
    is flat, where $\red$ indicates reduced rings, 
    and the fibre at the closed point has dimension $([F:\Qp]+1)(\dim G_k - \dim H_k)$.
\end{cor}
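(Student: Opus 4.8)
The plan is to reduce everything to the explicit descriptions in Theorem~\ref{main} and a little commutative algebra. First I would pass to a finite extension $\OO'$ of $\OO$ as in Theorem~\ref{main}, so that both $G$ and $H$ have split identity components and constant component groups, and so that $\Rep^{\Gamma_F}_{G,\pi}(\OO')$ and $\Rep^{\Gamma_F}_{H,\pi'}(\OO')$ are non-empty (a lift of $\rhobar$ pushes forward along $\varphi$ to a lift of $\varphi\circ\rhobar$, so condition~(2) for $H$ follows from condition~(2) for $G$). Since $\OO\to\OO'$ is finite free, $R^{\square}_{\rhobar}\otimes_{\OO}\OO'$ pro-represents $D^{\square}_{\rhobar_{k'}}$, flatness of $(R^{\square}_{\varphi\circ\rhobar}/\varpi)^{\red}\to(R^{\square}_{\rhobar}/\varpi)^{\red}$ can be checked after the faithfully flat base change to $\OO'/\varpi'$, and the fibre dimension at the closed point is unchanged; so I may assume $\OO'=\OO$.

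Next I would make the homomorphism on deformation rings explicit. Write $M_G$, $M_H$ for the character lattices of $G^0$, $H^0$; the surjection $\varphi$ induces an injection $M_H\hookrightarrow M_G$ of $\Delta$-modules with free cokernel $Q:=M_G/M_H$ (free because $H^0\to G^0$ is a closed immersion of split tori over a field, hence $M_G\to M_H$ is surjective with kernel a $\ZZ$-direct summand — wait, the direction is the other way: $\varphi:G\twoheadrightarrow H$ gives $M_H\hookrightarrow M_G$, and $G^0\to H^0$ surjective forces the cokernel $Q$ to be torsion-free, hence free of finite rank). Via Theorem~\ref{main}(5) we have $R^{\square}_{\rhobar}\otimes\OO'\cong\OO'\br{((\EE_G\otimes M_G)_{\Delta})^{\wdp}}$ and likewise for $H$, and by naturality of the construction in Section~\ref{sec_Z1} (functoriality of $\EE$ and of the isomorphism \eqref{eq_bij_Z1} in $M$), the map $R^{\square}_{\varphi\circ\rhobar}\to R^{\square}_{\rhobar}$ is, modulo $\varpi$, the completed group algebra map induced by the homomorphism of finite-type $\Fp$-groups $((\EE_H\otimes M_H)_{\Delta})^{\wdp}\to((\EE_G\otimes M_G)_{\Delta})^{\wdp}$ coming from $M_H\hookrightarrow M_G$ (using $\EE_H=\Gamma_2^{\ab}\oplus I_\Delta$ and $\EE_G=\Gamma_2^{\ab}\oplus I_\Delta$ with the same $\Gamma_2^{\ab}$, so $\EE_H\otimes M_H\to\EE_G\otimes M_G$ has cokernel $\EE_G\otimes Q$).

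Then I would compute the two sides. By Theorem~\ref{main}(5), $R^{\square}_{\rhobar}/\varpi\cong k[(\mu_{p^\infty}(E)\otimes M_G)^{\Delta}]\br{z_1,\dots,z_{n_G}}$ with $n_G=\dim G_k\cdot([F:\Qp]+1)$, and its reduction is $(R^{\square}_{\rhobar}/\varpi)^{\red}\cong k\br{z_1,\dots,z_{n_G}}$ — indeed the group algebra of the finite $p$-group $(\mu_{p^\infty}(E)\otimes M_G)^{\Delta}$ over the characteristic-$p$ field $k$ is local artinian with residue field $k$, so it contributes nothing to the reduction. Likewise $(R^{\square}_{\varphi\circ\rhobar}/\varpi)^{\red}\cong k\br{w_1,\dots,w_{n_H}}$ with $n_H=\dim H_k\cdot([F:\Qp]+1)$. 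Under these identifications the map $(R^{\square}_{\varphi\circ\rhobar}/\varpi)^{\red}\to(R^{\square}_{\rhobar}/\varpi)^{\red}$ is a map of formal power series rings over $k$ which, by the explicit description above, sends the $w_i$ into the maximal ideal and is formally smooth: concretely $R^{\square}_{\rhobar}\otimes\OO'\cong (R^{\square}_{\varphi\circ\rhobar}\otimes\OO')\br{t_1,\dots,t_d}$ for an appropriate $d$, because the exact sequence $0\to\EE_H\otimes M_H\to\EE_G\otimes M_G\to\EE_G\otimes Q\to0$ splits ($\EE_G\otimes Q$ is $\ZZ$-free) and taking $\Delta$-coinvariants and pro-$p$ completions preserves the resulting direct-sum decomposition up to finite groups, which disappear on reductions. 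A map $k\br{w_1,\dots,w_{n_H}}\to k\br{w_1,\dots,w_{n_H}}\br{t_1,\dots,t_d}$ is visibly flat, and its closed fibre is $k\br{t_1,\dots,t_d}$, of dimension $d=n_G-n_H=([F:\Qp]+1)(\dim G_k-\dim H_k)$.

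The main obstacle I anticipate is bookkeeping the functoriality in the character lattice carefully enough to identify the map $R^{\square}_{\varphi\circ\rhobar}\to R^{\square}_{\rhobar}$ with the group-algebra map induced by $M_H\hookrightarrow M_G$, and in particular verifying that the relevant short exact sequence of $\Zp[\Delta]$-modules splits after pro-$p$ completion so that $R^{\square}_{\rhobar}\otimes\OO'$ is genuinely a power series ring over $R^{\square}_{\varphi\circ\rhobar}\otimes\OO'$; once that structural statement is in place, the flatness and the fibre-dimension count are immediate. An alternative, perhaps cleaner route avoiding the explicit lattice combinatorics is to cite directly the presentation $R^{\square}_{\rhobar}\cong R^{\square}_{\varphi\circ\rhobar}\br{t_1,\dots,t_d}$ after base change from part~(5) of Theorem~\ref{main} applied to $G$ and to $H$ together with the compatibility of the universal objects, reduce mod $\varpi$, and take reduced rings; I expect this to be the shortest path.
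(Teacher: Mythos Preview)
Your identification of both reduced rings as formal power series rings over $k$ is correct, but the step where you claim the map between them is a power-series extension---i.e.\ that $(R^{\square}_{\rhobar}/\varpi)^{\red}$ is formally smooth over $(R^{\square}_{\varphi\circ\rhobar}/\varpi)^{\red}$---is a genuine gap. Your justification is that the sequence $0\to\EE\otimes M_H\to\EE\otimes M_G\to\EE\otimes Q\to0$ splits over $\ZZ$ and that the torsion obstructions ``disappear on reductions''. But taking $\Delta$-coinvariants followed by pro-$p$ completion only tells you that the kernel of $\mathcal P_1:=((\EE\otimes M_H)_\Delta)^{\wdp}\to\mathcal P_2:=((\EE\otimes M_G)_\Delta)^{\wdp}$ is torsion; it does \emph{not} imply that $\mathcal P_1^{\tf}$ is a direct summand of $\mathcal P_2^{\tf}$. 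An injection of free $\Zp$-modules can perfectly well have torsion cokernel (think $\Zp\xrightarrow{p}\Zp$), so ``finite groups disappear on torsion-free quotients'' fails in general, and there is no reason the cokernel $((\EE\otimes Q)_\Delta)^{\wdp}$ should be torsion-free. Your alternative route via Theorem~\ref{main}(5) does not help either: that result presents $R^{\square}_{\rhobar}\otimes_\OO\OO'$ as a power series ring over the group algebra of a finite $p$-group, not over $R^{\square}_{\varphi\circ\rhobar}\otimes_\OO\OO'$.

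The paper's argument avoids formal smoothness entirely and does not extend scalars. It uses Lemma~\ref{over_k} directly to write both reduced rings as $k\br{\mathcal P_i^{\tf}}$, and then observes only that $\mathcal P_1\to\mathcal P_2$ has torsion kernel (via the long exact sequence \eqref{long_exact} and Lemma~\ref{tate_finite}), so $\mathcal P_1^{\tf}\hookrightarrow\mathcal P_2^{\tf}$ is injective. The closed fibre is then $k\br{\mathcal P_2^{\tf}/\mathcal P_1^{\tf}}$, whose Krull dimension is $\rank_{\Zp}\mathcal P_2-\rank_{\Zp}\mathcal P_1$, and miracle flatness \stackcite{00R4} (regular source, Cohen--Macaulay target, additive dimension formula) yields flatness without ever asking whether the cokernel is torsion-free. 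The fibre dimension then follows from Proposition~\ref{main_rank}.
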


\begin{proof} 
    If $\mathcal P$ is a finitely generated $\Zp$-module and $R=k\br{\mathcal P}$ is the completed group algebra
    of $\mathcal P$ then $R^{\red}\cong k\br{\mathcal P^{\tf}}$, where $\mathcal P^{\tf}$ is the maximal torsion free
    quotient of $\mathcal P$. In particular, $R^{\red}\cong k\br{x_1,\ldots, x_d}$, where $d= \dim_{\Qp} (\mathcal P\otimes_{\Zp} \Qp)$.
    
    Let  $f:\mathcal P_1\rightarrow \mathcal P_2$ be a homomorphism of 
    finitely generated $\Zp$-modules and let $R_i=k\br{\mathcal P_i}$.  
    If $\ker f$ is torsion then $\mathcal P_1^{\tf}$ is a submodule of 
    $\mathcal P_2^{\tf}$ and the fibre $k\otimes_{R_1^{\red}} R_2^{\red}$ 
    is isomorphic to $k\br{\mathcal P^{\tf}_2/\mathcal P^{\tf}_1}$.
    Since the dimension of the fibre is equal to $d_2-d_1$, where
    $d_i= \dim_{\Qp} (\mathcal P_i\otimes_{\Zp} \Qp)$, we deduce 
    from \cite[\href{https://stacks.math.columbia.edu/tag/00R4}{Tag 00R4}]{stacks-project} that $R_1^{\red}\rightarrow R_2^{\red}$
    is flat.

    Let $M$ be the character lattice 
    of $G^0$ and let $N$ be the character lattice 
    of $H^0$. Then $N\subseteq M$ and it follows from
    \eqref{long_exact} and \Cref{tate_finite} that the kernel of the map $((\mathcal E \otimes N)^{\wdp})_{\Delta}\rightarrow ((\mathcal E \otimes M)^{\wdp})_{\Delta}$
    is torsion. Since $R_{\rhobar}^{\square}/\varpi\cong 
    k\br{((\mathcal E \otimes M)_{\Delta})^{\wdp}}$ by Lemma \ref{over_k}
    and we may swap pro-$p$ completion with $\Delta$-coinvariants, we 
    obtain the assertion by letting 
    $\mathcal P_1=((\mathcal E \otimes N)_{\Delta})^{\wdp}$ and 
    $\mathcal P_2=((\mathcal E \otimes M)_{\Delta})^{\wdp}$. The assertion about the dimension of the fibre follows from \Cref{main_rank}.
\end{proof}

\bibliographystyle{plain}
\bibliography{Ref}

\end{document}